\newtheorem{theorem}{Theorem}[section]
\newtheorem{lemma}[theorem]{Lemma}
\newtheorem{corollary}[theorem]{Corollary}
\newtheorem{claim}[theorem]{Claim}
\theoremstyle{definition}
\newtheorem{definition}[theorem]{Definition}
\theoremstyle{remark}
\newtheorem{remark}[theorem]{Remark}
\crefname{subsection}{Subsection}{Subsections}
\crefname{claim}{Claim}{Claims}
\newcommand{\pref}[1]{(\ref{#1})}
\newcommand{\E}{\mathbb{E}}
\newcommand{\Var}{\text{Var}}
\newcommand{\rhg}{\mathcal{H}}
\newcommand{\pg}{\Psi}
\DeclareMathOperator*{\argmax}{\text{argmax}}
\DeclareMathOperator*{\argmin}{\text{argmin}}
\renewcommand{\b}{\big}
\newcommand{\Bc}{\mathcal{B}_C}
\newcounter{condition}
\newcommand{\condition}[1]{
\unskip\refstepcounter{condition}($\ast$\arabic{condition})\label{#1}}
\newcommand{\refcondition}[1]{
\unskip($\ast$\ref{#1})\unskip
}
\newcommand{\1}{\mathds{1}}
\newcommand{\ind}[1]{\1\{#1\}}
\newcommand{\defeq}{\triangleq}
\newcommand{\proj}{\mathrm{Proj}}
\newcommand{\projw}{\mathrm{Proj}_W}
\newcommand{\Cli}{\mathrm{Cli}}
\title{Partial and Exact Recovery of a Random Hypergraph from its Graph Projection}
\author{Guy Bresler\thanks{Department of EECS at MIT. Email: guy@mit.edu.}, Chenghao Guo\thanks{Department of EECS at MIT. Email: chenghao@mit.edu.}, Yury Polyanskiy\thanks{Department of EECS at MIT. Email: yp@mit.edu.}, Andrew Yao\thanks{Department of EECS at MIT. Email: ajyao@mit.edu.}}
\date{\today}
\begin{document}
\maketitle

\begin{abstract}
Consider a $d$-uniform random hypergraph on $n$ vertices in which hyperedges are included iid 
so that the average degree is $n^\delta$. The projection of a hypergraph is a graph on the same $n$ vertices where an edge connects two vertices if and only if they belong to some hyperedge. The goal is to reconstruct the hypergraph given its projection. 
An earlier work of~\citep{reconstruct} showed that exact recovery for $d=3$ is possible if and only if $\delta < 2/5$.
This work completely resolves the question for all values of $d$ for both exact and partial recovery and for both cases of whether multiplicity information about each edge is available or not. In addition, we show that the reconstruction fidelity undergoes an all-or-nothing transition at a threshold. In particular, this resolves all conjectures from~\citep{reconstruct}.
\end{abstract}

\section{Introduction}
\label{sec:intro}

The overall goal of community detection is recovering clusters of vertices given observations of noisy associations between them. The most popular version is that of pairwise associations~\citep{newman2006modularity}: in friendship graphs, social networks, co-purchase data, etc., an edge between two vertices appears more often when vertices belong to the same latent cluster. However, oftentimes the interactions are of higher order. For instance, in scientific communities, the data comes in the form of co-authorship~\citep{newman2004coauthorship}, which is a form of multi-vertex association relation. A statistical model for this kind of data, known as the hypergraph stochastic block model (HSBM), has been actively investigated recently, starting from~\citep{ghoshdastidar2014consistency,angelini2015spectral}.

This paper focuses on a particular subproblem in this general research area: recovery of the multi-vertex association data (i.e. a hypergraph) from the pairwise association data (i.e. a graph). As an example, consider the case of a large group of people working together on multiple projects (project membership becomes the multi-association data, a hypergraph), but due to data collection limitations we only observe the existence of email exchanges between pairs of co-workers \citep{klimt2004introducing}, which for simplicity we further simplify into a graph with an edge between any pair of people who exchanged an email. Clearly, the mapping from a hypergraph to a graph generally loses information. The question we study in this work is how well we can infer the original hypergraph given only the \textit{projected graph}, i.e. a graph that connects any pair of vertices that belong to some hyperedge. 

The problem of recovery from the projected graph has been studied in the last few years, both theoretically and empirically. \cite{graphstohypergraphs} assumes access to a sample of a hypergraph from the underlying distribution and proposes a scoring method to select hyperedges based on their similarity to the sampled hypergraph. Another algorithmic approach is to sample from the posterior distribution given the projected graph \cite{young2021hypergraph,lizotte2023hypergraph}. In \cite{relationallearning} a large foundational model is used to recover a weighted hypergraph from a sample of its hyperedges, where each hyperedge is assigned a probability proportional to its weight.

Theoretically, interest in reconstructing a hypergraph from its projection arose following the work of \cite{hypersbm}, which proposed to solve the HSBM community detection task by leveraging well-known graph algorithms. This motivated~\cite{reconstruct} to investigate the amount of information loss introduced by projecting the hypergraph data to graph data. Somewhat surprisingly, Subsection 1.2.1 \textit{ibid} demonstrated no loss, thus validating this general approach to HSBM. 

Note that as the hypergraph becomes more dense (contains more hyperedges), the recovery task becomes harder. While in HSBM the hypergraph is so sparse that the projection can be ``undone'' without error, a 
more general question of exact recovery threshold was only answered in~\cite{reconstruct} for $d=3$-uniform hypergraphs. In this work, we not only settle the value of the threshold for all $d$, but also establish the value of the (generally higher) threshold for partial recovery of the hypergraph, in which the algorithm is allowed to miss a fraction of the hyperedges. 

In addition to finding the value of the threshold, we also establish a so-called ``all-or-nothing'' phenomena for our problem.
It was previously demonstrated in \cite{graphmatch}, which considers exact and partial recovery for graph matching and exhibits
that at a threshold the best algorithm transitions from recovering almost all of the assignment to almost none of the assignment. Even earlier, this was demonstrated in the problem of sparse linear regression \cite{sparselinear}. Similarly, we demonstrate that for partial recovery, the fraction of hyperedges recovered transitions from $1-o_n(1)$ to $o_n(1)$ while for exact recovery, the probability of success transitions from $1-o_n(1)$ to $o_n(1)$. 

To understand what \textit{threshold} we are talking about, let us specify the random hypergraph model that we consider. It is a natural extension of the Erd\H{o}s-R\'enyi random graph model $G(n, p)$. Suppose that $d\geq 3$ and $n$ is a positive integer. In this paper we often denote the set of $d$-uniform hypergraphs with vertex set $[n]$ as $\{0, 1\}^{\binom{[n]}{d}}$. Letting $\delta\in (-\infty, d-1)$ and taking $$p\propto (1+o_n(1))n^{-d+1+\delta}\,,$$ the random $d$-uniform hypergraph $\mathcal{H}$ is sampled from the product measure $\text{Ber}(p)^{\otimes \binom{[n]}{d}}$. So the recovery threshold corresponds to the largest value of $\delta$ for which recovery is possible. See Table~\ref{tab:result-compare} for a summary of previous and new results.

For partial recovery, the efficient algorithm that outputs the set of cliques in the projection of $\mathcal{H}$ achieves a partial recovery loss of $o_n(1)$ when partial recovery is possible, see \Cref{subsec:maxcover}. For exact recovery, the MAP algorithm is efficient with high probability when exact recovery is possible, see \cite[Theorem 10]{reconstruct}.

Notice that the hypergraph reconstruction problem is a particular example of a planted constraint satisfaction problem (CSP): the planted (latent) assignment is the hypergraph and the set of constraints is defined by the observed edges in the projection. The problem of reconstruction given the weighted projection is also a linear inverse problem, since we can describe the problem as $AH=W$, where $A\in\{0,1\}^{\binom{n}{2}\times\binom{n}{d}}$ is a fixed (non-random) matrix encoding the incidence relation between edges and hyperedges, $H\in\{0,1\}^{\binom{n}{d}\times 1}$ is the iid Bernoulli planted hypergraph, and $W\in \mathbb{Z}_{\geq 0}^{\binom{n}{2}\times 1}$ is the observed weighted projection. (For the unweighted projection, we have $W\in\{0, 1\}^{\binom{n}{2}\times 1}$ and the problem is a generalized linear inverse problem: $\min(AH, 1) = W$.) Thus, our work can be seen as deriving sharp thresholds for exact and partial recovery of these planted CSPs, contributing to a long line of work of \cite{quietsolutions, planted_csp, glm, amp} and many others. 

\subsection{Main results}
\label{subsec:main}

We consider two recovery objectives given the projection of a random hypergraph $H$. In \textit{exact recovery} the algorithm is required to precisely recover the hypergraph. The metric is the probability of correct recovery. In \textit{partial recovery}, the algorithm is required to produce a hypergraph $\widehat{\mathcal{H}}$ with a small \textit{partial recovery loss}, which equals the ratio of incorrect hyperedges (either missing or spurious) to the total number of true hyperedges. See \Cref{subsubsec:partial} for more precise definitions.

\begin{theorem}[Partial Recovery]
\label{thm:mainpartial}
If $\delta<\frac{d-1}{d+1}$ then the partial recovery loss is $o_n(1)$ and if $\delta>\frac{d-1}{d+1}$ then the partial recovery loss is $1-o_n(1)$.
\end{theorem}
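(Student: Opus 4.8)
The plan is to treat the two directions separately.

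\medskip
\noindent\textbf{Achievability ($\delta<\tfrac{d-1}{d+1}$).} I would analyze the estimator $\widehat{\mathcal H}$ that outputs every $d$-clique of the projected graph $G$. Since each hyperedge of $\mathcal H$ is a $d$-clique of $G$, this estimator never misses a true hyperedge, so its partial recovery loss equals $N_{\mathrm{sp}}/|\mathcal H|$, where $N_{\mathrm{sp}}$ is the number of \emph{spurious} $d$-cliques of $G$, i.e.\ $d$-cliques that are not hyperedges. Because $|\mathcal H|$ concentrates around $\Theta(n^{1+\delta})$, it suffices to show $\mathbb{E}[N_{\mathrm{sp}}]=O(n^{1+\delta-\varepsilon})$ for some fixed $\varepsilon>0$; Markov's inequality then gives loss $o_n(1)$ with high probability (and in expectation after truncating the negligible event that $|\mathcal H|$ is much smaller than $n^{1+\delta}$).

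For the first moment, each spurious $d$-clique on a vertex set $S$ carries a \emph{minimal} family $e_1,\dots,e_m$ of hyperedges whose projections together cover all $\binom d2$ edges of $K_S$. Minimality forces $m\le\binom d2$ and $|e_i\cap S|\ge 2$, while $e_i\ne S$ forces $|e_i\cap S|\le d-1$. Recording the isomorphism type of $(S;e_1,\dots,e_m)$ yields finitely many ``shapes'' $T$, with $v_T$ vertices and $m_T\le\binom d2$ hyperedges, and
\begin{equation}
\mathbb{E}[N_{\mathrm{sp}}]\ \le\ \sum_{T} O\!\bigl(n^{v_T}p^{m_T}\bigr)\ =\ \sum_{T} O\!\bigl(n^{\,v_T-m_T(d-1-\delta)}\bigr).
\end{equation}
Writing $s_i=|e_i\cap S|$, the non-$S$ vertices number at most $\sum_i(d-s_i)$, so $v_T\le d+\sum_i(d-s_i)$ and hence $v_T-m_T(d-1-\delta)\le d+m_T(1+\delta)-\sum_i s_i$. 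The covering requirement $\sum_i\binom{s_i}2\ge\binom d2$, together with convexity of $s\mapsto\binom s2$ (so the minimum of $\sum_i s_i$ is attained when the $s_i$ take the extreme values $2$ and $d-1$), forces $\sum_i s_i\ge(d-1)\bigl(1+\tfrac{2m_T}{d}\bigr)$. Substituting,
\begin{equation}
v_T-m_T(d-1-\delta)\ \le\ 1+\frac{m_T}{d}\bigl(d\delta-d+2\bigr),
\end{equation}
and an elementary check shows the right side is strictly less than $1+\delta$ for every $m_T\le\binom d2$ precisely when $\delta<\tfrac{d-1}{d+1}$, the binding shape being $m_T=\binom d2$ with one hyperedge per edge of $K_S$ and all extra vertices distinct. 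Taking $\varepsilon$ to be the (positive) minimum over the finitely many shapes of $(1+\delta)-\bigl(v_T-m_T(d-1-\delta)\bigr)$ finishes this direction.

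\medskip
\noindent\textbf{Impossibility ($\delta>\tfrac{d-1}{d+1}$): the all-or-nothing claim.} Here the target is that \emph{every} estimator incurs loss $1-o_n(1)$; since the empty hypergraph already has loss exactly $1$, it suffices to prove a matching lower bound. Write $\hat p_e(G)=\Pr[e\in\mathcal H\mid G]$. For any estimator $\widehat{\mathcal H}=\widehat{\mathcal H}(G)$, deciding each coordinate separately,
\begin{equation}
\mathbb{E}\,\bigl|\widehat{\mathcal H}\,\triangle\,\mathcal H\bigr|\ \ge\ \sum_{e}\mathbb{E}\bigl[\min(\hat p_e,1-\hat p_e)\bigr]\ \ge\ \mu-O\!\Bigl(\textstyle\sum_e\mathbb{E}[\hat p_e^2]\Bigr),\qquad \mu:=\binom nd p,
\end{equation}
and, since $\hat p_e=\mathbb{E}[\1\{e\in\mathcal H\}\mid G]$, one has $\sum_e\mathbb{E}[\hat p_e^2]=\binom nd\,p\cdot\mathbb{E}\bigl[\hat p_e(G)\mid e\in\mathcal H\bigr]$. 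Combined with the concentration of $|\mathcal H|$ around $\mu$, it is therefore enough to prove that for a fixed $d$-set $e$,
\begin{equation}\label{eq:target}
\mathbb{E}\bigl[\hat p_e(G)\mid e\in\mathcal H\bigr]=o_n(1)
\end{equation}
— equivalently, a fresh sample from the posterior given $G$ agrees with $\mathcal H$ on only $o_n(|\mathcal H|)$ hyperedges. Morally: in the regime $\delta>\tfrac{d-1}{d+1}$ the number of spurious $d$-cliques already exceeds $|\mathcal H|$ (with exponent $d+\binom d2(\delta-1)>1+\delta$, the very exponent appearing in the first moment above), and this surplus of cliques provides enough combinatorial room to reconfigure the hypergraph around almost every true hyperedge while preserving its projection, so that no single hyperedge can be pinned down. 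I would prove \eqref{eq:target} through a (truncated / conditional) second-moment estimate on the likelihood ratio $\Pr[G\mid e\in\mathcal H]/\Pr[G\mid e\notin\mathcal H]$, controlling the dependence among the $\binom d2$ ``slots'' incident to $e$ and the prior-weight fluctuations created by the reconfigurations; the range $\delta\ge1$, where $G$ is essentially complete so that almost every $d$-set is a clique, would be handled separately by a cruder argument.

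\medskip
The main obstacle is precisely this last step: upgrading ``there are more spurious cliques than hyperedges'' into the quantitative statement that the per-coordinate posteriors collapse to $o_n(1)$, in the delicate window $\tfrac{d-1}{d+1}<\delta<1$ where most edges of $G$ are covered by a unique hyperedge. In that window the naive move of deleting a true hyperedge and re-covering its $K_d$ by fresh cliques is too costly in prior weight, so one genuinely has to understand the global geometry of near-minimum clique covers of $G$ and show that it is flexible enough to wash out all per-edge information.
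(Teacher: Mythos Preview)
Your achievability argument is correct and is exactly the paper's: output all $d$-cliques and bound the expected number of spurious cliques by a first moment over covering shapes. Your secant bound $\sum_i s_i\ge(d-1)(1+2m_T/d)$ is equivalent to the paper's substitution $y_i=\binom{s_i}{2}$ and yields the same threshold.

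The impossibility direction has a genuine gap. You correctly reduce to showing $\mathbb{E}\bigl[|E(\mathcal{H})\cap E(\mathcal{H}')|\bigr]=o\bigl(p\binom{n}{d}\bigr)$ for two independent posterior samples $\mathcal{H},\mathcal{H}'$ --- this is exactly the paper's reduction --- but then offer only a vague ``second moment on the likelihood ratio'' and explicitly concede in your final paragraph that the step is unresolved. The paper's argument is neither a per-hyperedge likelihood computation nor a study of near-minimum clique covers; it is a global \emph{counting} of preimage pairs. First one restricts to the high-probability event $\mathcal{Z}$ on which $e(\mathcal{H})\approx p\binom{n}{d}$ and, crucially, the number of $d$-cliques of $\proj(\mathcal{H})$ concentrates at $q\binom{n}{d}$ with $q/p\to\infty$; this clique-count concentration is itself a nontrivial second-moment calculation that your proposal does not mention. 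Then, given $H\in\mathcal{Z}$ and a target intersection size $I\ge Mp\binom{n}{d}$, every $H'\in\mathcal{Z}$ with the same projection arises by choosing $I$ hyperedges from $E(H)$ and the remaining $\approx p\binom{n}{d}-I$ from the $\approx q\binom{n}{d}$ available cliques of $\proj(H)$; this bounds $\sum_{H,H'}\Pr[\mathcal{H}{=}H]\Pr[\mathcal{H}{=}H']$ over such pairs. Cauchy--Schwarz against the crude estimate $\sum_{H,H'}\Pr[\mathcal{H}{=}H]\Pr[\mathcal{H}{=}H']/\Pr[\proj(\mathcal{H}){=}\proj(H)]^2\le|\proj(\mathcal{Z})|\le\bigl(e^{\binom{d}{2}}/q\bigr)^{(1+o(1))p\binom{n}{d}}$ then gives $\Pr\bigl[\mathcal{H},\mathcal{H}'\in\mathcal{Z},\ |E(\mathcal{H})\cap E(\mathcal{H}')|\ge Mp\binom{n}{d}\bigr]=o(1)$ for every fixed $M>0$, because the factor $(p/q)^M$ beats everything else. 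All the ``combinatorial room'' you were seeking is captured by the single inequality $q\gg p$ together with the concentration of the clique count; no local reconfiguration is needed.
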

This theorem statement follows by combining \Cref{corr:dense}, \Cref{thm:epthreshold}, and \Cref{thm:apimpossible}. We prove \Cref{thm:mainpartialweighted} in \Cref{subsec:weightedpartial} using a similar framework and we prove \Cref{thm:mainexact,thm:mainexactweighted} in \Cref{sec:exactproof}.

Next, we also study the problem of recovering from a \textit{weighted projection graph}, which counts the multiplicity of each edge inside true hyperedges thus making the recovery problem easier. However, as the next result shows, this extra information is not able to shift the partial recovery threshold.

\begin{theorem}[Partial Recovery for Weighted Projection]
\label{thm:mainpartialweighted}
If $\delta<\frac{d-1}{d+1}$ then the weighted partial recovery loss is $o_n(1)$ and if $\delta>\frac{d-1}{d+1}$ then the weighted partial recovery loss is $1-o_n(1)$.
\end{theorem}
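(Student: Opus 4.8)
The plan is to follow the two‑part structure behind \Cref{thm:mainpartial} — an achievability half for $\delta<\frac{d-1}{d+1}$ and an impossibility half for $\delta>\frac{d-1}{d+1}$ — and to observe that here the achievability half is essentially immediate, so that only the impossibility half requires a genuine modification.

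For the achievability direction there is nothing new to do. The weighted projection determines the unweighted projection (threshold every edge multiplicity at $1$), so one may discard the multiplicities and run the clique‑listing estimator of \Cref{subsec:maxcover} on the resulting unweighted graph; the output hypergraph is scored by exactly the same loss functional, so \Cref{thm:mainpartial} (equivalently \Cref{corr:dense} together with \Cref{thm:epthreshold}) shows that the weighted partial recovery loss is at most the unweighted one, namely $o_n(1)$. Hence the only real content is the impossibility half.

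For the impossibility direction I would re‑use the framework of \Cref{thm:apimpossible}: conditioned on the observed (now weighted) projection, show that the posterior law of $\mathcal{H}$ is spread out by exhibiting a rich family of alternative hypergraphs consistent with the observation, and then, via the accompanying conditional moment computation, conclude that any estimator disagrees with $\mathcal{H}$ on all but an $o_n(1)$ fraction of its hyperedges. The single change is in what ``consistent with the observation'' means. In the unweighted problem an alternative $\mathcal{H}'$ only has to induce the same edge set, whereas now it must induce the same edge \emph{multiplicities}; in the matrix notation $A\mathcal{H}=W$ of \Cref{sec:intro} this means $A\mathcal{H}'=A\mathcal{H}$, i.e.\ $\mathcal{H}'-\mathcal{H}\in\ker A$. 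Concretely, the elementary rearrangements used to generate the ambiguity family in \Cref{thm:apimpossible} must be replaced by \emph{multiplicity‑balanced} rearrangements: a block of hyperedges removed together with a block added whose total incidence with every vertex pair cancels exactly. The prototype, for $d=3$, is the octahedron $K_{2,2,2}$, whose eight triangular faces split into two $4$‑sets each covering all twelve of its edges once, so that swapping one $4$‑set for the other leaves every pair‑multiplicity unchanged; for general $d$ one uses the corresponding extremal gadget from the analysis of \Cref{thm:apimpossible}, which (as for the octahedron) one verifies is multiplicity‑balanced.

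The step I expect to be the main obstacle is showing that imposing the extra $\binom{n}{2}$ linear constraints $A\mathcal{H}'=A\mathcal{H}$ does not push the threshold up — i.e.\ that multiplicity‑balanced witnesses remain abundant for exactly the range $\delta>\frac{d-1}{d+1}$. This amounts to re‑running the first‑ and second‑moment estimates underlying \Cref{thm:apimpossible} with the balanced gadgets substituted for the unweighted rearrangements and checking that the dominant contribution still carries the same exponent in $n$; the bookkeeping is heavier because each gadget now comes with a rigid system of cancellation identities rather than mere coverage conditions, and one must rule out that these identities thin the family below criticality. The reason to expect this to go through is that the configurations controlling the unweighted threshold are already multiplicity‑balanced, or admit balanced variants with the same vertex‑and‑hyperedge budget, so the relevant exponent is unchanged. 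Finally, for $\delta$ near the dense end $d-1$ one instead invokes the weighted analogue of \Cref{corr:dense} — there the projection is so heavily over‑covered that its multiplicities are themselves uninformative — and the two regimes together cover all of $\bigl(\tfrac{d-1}{d+1},\,d-1\bigr)$ and yield loss $1-o_n(1)$.
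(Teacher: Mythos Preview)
Your achievability half is correct and matches the paper: since the unweighted projection is a deterministic function of the weighted one, \Cref{lemma:fundamental} immediately gives $\ell_W(\mathcal{B}_W^*)\le\ell(\mathcal{B}^*)=o_n(1)$ for $\delta<\frac{d-1}{d+1}$.

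The impossibility half, however, rests on a misreading of the unweighted argument. The proof of \Cref{thm:apimpossible} does \emph{not} proceed by exhibiting local ``rearrangement gadgets'' and showing they are abundant. It is a global counting argument: via \Cref{lemma:exactpartialequiv} one must show $\E[|E(\mathcal H)\cap E(\mathcal H')|]=o_n(p\binom{n}{d})$, and \Cref{thm:highintersect} does this by Cauchy--Schwarz, bounding on one side the number of pairs $(H,H')$ with the same projection and prescribed overlap (using only that $E(H')$ lies in the $\approx q\binom{n}{d}$ cliques of $\proj(H)$), and on the other side $\sum_{H,H'}\Pr[\mathcal H=H]\Pr[\mathcal H=H']/\Pr[\pg=\proj(H)]^2\le|\mathcal G'|$ (\Cref{lemma:squareprob}). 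No combinatorial gadgets enter.

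Consequently the ``single change'' you need is not to find multiplicity-balanced swaps; it is to adapt those two bounds. The pair-count bound is \emph{easier} in the weighted case (the constraint $\projw(H)=\projw(H')$ is stronger), so the unweighted estimate \eqref{eq:sumprobpair} carries over verbatim. The only place that could get worse is the analogue of \Cref{lemma:squareprob}, since now the observation space $\mathcal G_W$ is larger. The paper's fix is a one-line probabilistic observation: for any fixed $k$, with probability $1-o_n(1)$ every pair $\{i,j\}$ lies in fewer than $k$ hyperedges of $\mathcal H$ (a union bound on $O(n^{2+k(\delta-1)})$). Restricting to this event, each weighted projection is determined by its unweighted support together with a label in $\{1,\dots,k-1\}$ on each of the at most $(1+\epsilon)\binom{d}{2}p\binom{n}{d}$ edges, so $|\mathcal G''|\le (k-1)^{(1+\epsilon)\binom{d}{2}p\binom{n}{d}}|\mathcal G'|$. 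This constant-to-the-$p\binom{n}{d}$ factor is absorbed by the $(p/q)^{Mp\binom{n}{d}}$ term exactly as $\alpha$ and $e^{\binom d 2}$ are in the proof of \Cref{thm:highintersect}. That is the entire modification (see \Cref{thm:weightedapimpossible}).

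Your octahedron/$\ker A$ idea is the right intuition for a different question: it is precisely how the paper constructs the \emph{weighted-ambiguous} graph $G_{a,d}^w$ in \Cref{subsec:weightedambiguous}, which governs \emph{exact} recovery from the weighted projection. For partial recovery, though, exhibiting balanced local swaps would not by itself yield loss $1-o_n(1)$: you would still need to show that the posterior mass is so spread that a fresh sample $\mathcal H'$ shares $o_n(1)$ fraction of its hyperedges with $\mathcal H$, and your proposal does not indicate how the first/second-moment count of gadgets would feed into that overlap statement.
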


\begin{theorem}[Exact Recovery]
\label{thm:mainexact}
Suppose $3\leq d\leq 5$. If $\delta<\frac{2d-4}{2d-1}$, then the probability of exact recovery is $1-o_n(1)$. Conversely, if $\delta>\frac{2d-4}{2d-1}$, then the probability of exact recovery is $o_n(1)$. 

Suppose $d\geq 5$. If $\delta<\frac{d-1}{d+1}$, then the probability of exact recovery is $1-o_n(1)$. Conversely, if $\delta>\frac{d-1}{d+1}$, then the probability of exact recovery is $o_n(1)$.
\end{theorem}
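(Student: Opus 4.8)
The plan is to establish the threshold $\delta^\star_{\mathrm{exact}}$ via a first-moment (union bound) argument for the positive side and a second-moment / planted-pair argument for the converse, in each case working with the MAP estimator and exploiting the fact (already available from \cite{reconstruct}) that exact recovery succeeds iff the planted hypergraph is the unique hypergraph consistent with the observed projection. So the whole problem reduces to understanding when there exists a \emph{second} hypergraph $\mathcal{H}' \neq \mathcal{H}$ with the same edge set, i.e.\ a nonempty ``rewiring'' of $\mathcal{H}$ that leaves $\proj(\mathcal{H})$ invariant. The two regimes $3\le d\le 5$ and $d\ge 5$ arise because the cheapest such rewiring changes character: for small $d$ the dominant obstruction is a small local configuration (a pair of hyperedges that can be swapped for another pair covering exactly the same edges — the $(2d-4)/(2d-1)$ arithmetic comes from counting vertices/edges in the minimal such gadget, generalizing the $d=3$ ``bowtie'' computation of \cite{reconstruct}), whereas once $d\ge 5$ the binding constraint is instead that \emph{every} hyperedge must be certified by a private edge not covered by any other hyperedge, which fails exactly at the partial-recovery threshold $(d-1)/(d+1)$. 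Note the two formulas agree at $d=5$, which is why the case split is stated with overlap.

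First I would handle the \textbf{achievability} direction. For $\delta < \min\{(2d-4)/(2d-1),\,(d-1)/(d+1)\}$ (which is $(2d-4)/(2d-1)$ for $d\le 5$ and $(d-1)/(d+1)$ for $d\ge 5$), I want to show that with probability $1-o_n(1)$ the planted $\mathcal{H}$ is the unique preimage. I would enumerate the possible ``defects'': a candidate alternative hypergraph $\mathcal{H}'$ differs from $\mathcal{H}$ on some set $S$ of hyperedges removed and $T$ of hyperedges added, with $\proj$ unchanged; minimality lets me assume $S,T$ form a connected gadget. Then a union bound over all such gadgets, parametrized by their number of vertices $v$ and their number of hyperedges, gives a failure probability of order $\sum n^{v} p^{|S|+|T|}$ up to constants, and one checks the exponent is negative precisely below the claimed threshold. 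The delicate point is to show the minimal gadget is exactly the one whose exponent defines the threshold — i.e.\ that no cleverer global rewiring beats the local gadget — which is handled by the structural analysis already needed for \Cref{thm:mainpartial} together with a careful optimization over gadget parameters (this is where the $\max$ of the two linear-programming-type bounds shows up).

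Next, the \textbf{impossibility} direction for $\delta$ above the threshold. For $d\ge 5$ and $\delta > (d-1)/(d+1)$ this is essentially immediate from \Cref{thm:apimpossible}/\Cref{thm:mainpartial}: if even a constant fraction of hyperedges cannot be pinned down, then certainly exact recovery fails, so $P(\text{success}) = o_n(1)$. For $3\le d\le 5$ and $(2d-4)/(2d-1) < \delta < (d-1)/(d+1)$ the argument is genuinely at the level of individual hyperedges: I would show that in expectation there are $\omega(1)$ hyperedges $e\in\mathcal{H}$ admitting a local swap partner — i.e.\ a disjoint (or nearly-disjoint) set of $d-1$ other hyperedges plus one alternative hyperedge that together re-cover all $\binom{d}{2}$ edges of $e$ — and then a second-moment argument on the count of such swappable configurations shows at least one exists with high probability, so MAP (indeed any estimator) cannot distinguish $\mathcal{H}$ from the swapped hypergraph and errs with probability $\tfrac12 - o_n(1)$. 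The \textbf{main obstacle} I anticipate is the converse second moment computation: swappable gadgets are not independent, they can share hyperedges and vertices, so controlling $\E[(\#\text{gadgets})^2]/(\E[\#\text{gadgets}])^2 \to 1$ requires carefully bounding the contribution of overlapping gadget pairs and ruling out that the minimal swap can be ``blocked'' by an ambient edge of the projection forcing one of the supposedly-absent hyperedges to actually be present — this conditioning on the projection is what makes the analysis more subtle than a plain planted-clique second moment and is presumably where most of the work in \Cref{sec:exactproof} goes.
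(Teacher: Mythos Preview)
Your impossibility argument is more complicated than needed, and the worry you flag as the ``main obstacle'' never arises. The paper does not run a second-moment count of swappable gadgets, nor does it condition on the projection. Instead it fixes a single hypergraph $h$ consisting of $m$ vertex-disjoint copies of the ambiguous gadget $G_{a,d}$ and observes that (i) by the standard subgraph threshold (\Cref{lemma:threshold}, a cited black box), $\Pr[h\subset\mathcal{H}]=1-o_n(1)$ once $\delta>\tfrac{2d-4}{2d-1}$; and (ii) on the event $h\subset\mathcal{H}$, each of the $m$ copies can be independently flipped to produce another preimage with the same number of hyperedges, so there are at least $m+1$ minimal preimages in the fiber and MAP succeeds with conditional probability at most $\tfrac{1}{m+1}$. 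Letting $m\to\infty$ finishes. There is no ``blocking by ambient edges'' issue: the argument only needs to \emph{exhibit} an alternative minimal preimage, not to certify that the swap is forced, so extra edges in $\pg$ cannot hurt. Also, your description of the gadget is off: $G_{a,d}$ has $2d-1$ hyperedges on $2d^2-5d+5$ vertices, and the swap replaces one hyperedge by another single hyperedge, not a pair for a pair.

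On the achievability side your sketch is closer to the paper, which likewise reduces (via \Cref{lemma:ambiguous}, imported from \cite{reconstruct}) to a union bound over preimages of ambiguous graphs. But you misplace the real work. Your union-bound exponent $n^v p^{|S|+|T|}$ is not right (only the planted side $S$ carries a $p$ factor), and more importantly the claim that ``the minimal gadget is exactly the one whose exponent defines the threshold'' is precisely \Cref{thm:ambiguous_threshold}: \emph{every} preimage $h$ of \emph{any} ambiguous graph satisfies $d-1-\tfrac{1}{m(h)}\ge\tfrac{2d-4}{2d-1}$. This combinatorial lower bound is not shared with the partial-recovery analysis and is by far the longest part of the proof---\Cref{thm:min4} for $d=4$ is an extensive case analysis and \Cref{thm:general} for $d\ge 5$ is a delicate vertex-charging argument built on the optimization \Cref{ratio}. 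Your phrase ``careful optimization over gadget parameters'' does not capture this; the difficulty is ruling out all \emph{non}-extremal ambiguous structures, not locating the extremal one.
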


In particular, \Cref{thm:mainexact} verifies the conjecture from {\cite[Appendix C]{reconstruct}} that the exact recovery threshold is $\frac{2d-4}{2d-1}$ for $d=4, 5$.

Going to recovery from a weighted projected graph, surprisingly we discover that unlike in partial recovery, the side information is able to move the threshold, but only for $d=3,4$.

\begin{theorem}[Exact Recovery for Weighted Projection]
\label{thm:mainexactweighted}
Suppose $d\geq 3$. If $\delta<\frac{d-1}{d+1}$, then the probability of weighted exact recovery is $1-o_n(1)$. If $\delta>\frac{d-1}{d+1}$, then the probability of weighted exact recovery is $o_n(1)$.
\end{theorem}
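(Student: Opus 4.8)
The theorem breaks into a converse ($\delta>\frac{d-1}{d+1}$) and an achievability statement ($\delta<\frac{d-1}{d+1}$), and the converse needs no new work. Weighted exact recovery by a given estimator forces the weighted partial recovery loss of that estimator to be $0$; but \Cref{thm:mainpartialweighted} is a hardness statement saying that for $\delta>\frac{d-1}{d+1}$ \emph{every} estimator has loss $1-o_n(1)$ with high probability, hence loss $>0$ with high probability, so the probability of weighted exact recovery is $o_n(1)$.

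The substance is achievability: when $\delta<\frac{d-1}{d+1}$, I would show that with high probability $\mathcal{H}$ is the \emph{unique} hypergraph with weighted projection $W=A\mathcal{H}$. This is exactly what the MAP estimator needs: any two preimages have the same number of hyperedges (sum the coordinates of the identity $A(\mathcal{H}-\mathcal{H}')=0$ and use that the all-ones vector lies in the row space of $A$), so the posterior given $W$ is uniform over preimages, and uniqueness of the preimage holding with probability $1-o_n(1)$ yields the claim. I would reformulate non-uniqueness in the following way. Call a pair $(B,C)$ of disjoint finite sets of $d$-subsets an \emph{ambiguity gadget} if $\sum_{S\in B}\1_{\binom{S}{2}}=\sum_{S\in C}\1_{\binom{S}{2}}$ as edge-multiplicity vectors, and say $\mathcal{H}$ \emph{realizes} it if $B\subseteq\mathcal{H}$ and $C\cap\mathcal{H}=\emptyset$. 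Then $\mathcal{H}$ has a non-unique preimage if and only if it realizes some ambiguity gadget: given such a gadget take $\mathcal{H}'=(\mathcal{H}\setminus B)\cup C$, and conversely split the symmetric difference of two preimages. Comparing total edge multiplicity on the two sides forces $|B|=|C|=:k$. Since the map $A$ is local, the vertex-support of any gadget decomposes into connected pieces that are themselves gadgets, so it suffices to bound the probability that $\mathcal{H}$ realizes a \emph{connected} gadget, and among those only the minimal ones (those containing no proper sub-gadget).

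The heart of the matter is a density estimate: every connected ambiguity gadget on $v$ vertices with $k$ hyperedges on each side satisfies $v\le\frac{d(d-1)}{d+1}\,k$, and this is the correct constant — already for $d=3$ the two ways of splitting the edge set of $K_{2,2,2}$ (six vertices) into four triangles form a realized ambiguity gadget with $v/k=3/2$. Granting the density bound, achievability follows by a first moment computation: the expected number of realized connected gadgets with parameters $(v,k)$ is at most a constant times $n^{v}p^{k}=n^{\,v-k(d-1-\delta)}$, and the hypothesis $\delta<\frac{d-1}{d+1}$ is precisely the inequality $d-1-\delta>\frac{d(d-1)}{d+1}\ge v/k$, so each such term is $n^{-\Omega(k)}$; summing over the (finitely many) shapes of each size $k$ and then over $k$ gives $o_n(1)$. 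Hence with high probability $\mathcal{H}$ realizes no ambiguity gadget and is the unique preimage of $W$.

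I expect the density bound $v\le\frac{d(d-1)}{d+1}k$ for minimal connected gadgets to be the main obstacle; combinatorially it amounts to understanding edge-multigraphs that decompose into cliques $K_d$ in two ways without a common clique, and optimizing how sparse such a multigraph can be relative to its clique count (a first, crude step is that every vertex of a gadget must lie in at least two hyperedges of $B$, since otherwise that hyperedge would be forced into $C$ as well). The remaining technical issue — bounding the number of gadget shapes of each (unbounded) size well enough for the sum over all $k$ to converge — is routine once one works exclusively with minimal gadgets and exploits the geometric decay in $k$.
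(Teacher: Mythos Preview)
Your overall strategy is the paper's: the converse is immediate from \Cref{thm:mainpartialweighted}, and achievability goes via a density bound on ``ambiguity gadgets'' combined with a first-moment/union bound. The observation that any two weighted preimages have the same number of hyperedges, so MAP reduces to uniqueness, is correct and matches the paper.

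However, you have the difficulty levels exactly backwards. Your ``first, crude step'' --- every vertex of a minimal gadget lies in at least two hyperedges of $B$ --- is not a preliminary observation but the \emph{entire} density argument the paper uses (their \Cref{lemma:vertexdegree} and \Cref{thm:weightedratio}). Counting incidences it gives $2v\le dk$, i.e.\ $v/k\le d/2$, and for $d\ge 3$ one has $d/2\le \tfrac{d(d-1)}{d+1}$, so your target bound follows immediately and with room to spare except at $d=3$. There is no further combinatorics to do here; your $K_{2,2,2}$ example shows that $v/k=d/2$ is tight at $d=3$.

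Conversely, the step you call ``routine'' --- summing over gadgets of unbounded size $k$ --- is where the actual care is needed. A naive count of gadget shapes grows super-exponentially in $k$, so the geometric factor $n^{-\Omega(k)}$ does not obviously dominate, and the first-moment sum over all $k$ does not converge without further input. The paper sidesteps this by invoking the (weighted analogue of) \Cref{lemma:ambiguous}: in the regime $\delta<\tfrac{d-1}{d+1}$ a separate structural result guarantees that the two-connected components of $\mathcal{H}_c$ are of bounded size with high probability, reducing the problem to checking finitely many fixed gadget shapes. Your write-up would need either this lemma or an explicit argument controlling the tail in $k$.
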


We remark that for the unweighted projection when $d=3,4$, there are two separate (both all-or-nothing) thresholds of $\frac{d-1}{d+1}$ and $\frac{2d-4}{2d-1}$ for partial recovery and exact recovery, respectively; when $d\geq 5$, both thresholds are at $\frac{d-1}{d+1}$. Interestingly, for the weighted projection, there is a single threshold of $\frac{d-1}{d+1}$ for both exact and partial recovery. The reason for this is that the bottleneck for exact recovery, which is called an ambiguous graph, in the two models are different. We will discuss the definition of ambiguous graphs in \Cref{subsec:ambiguous}.

The contributions of this paper as compared to previously established exact recovery thresholds for $d\geq 4$ are summarized in  \Cref{tab:result-compare}.
\begin{table}[!ht]
    \centering
\begin{tabular}{ |c|c|c|c| } 
 \hline
 $d$ & Previous work (exact recovery) & New (exact recovery) & New  (partial recovery)\\ 
\hline 
$3$ & $2/5$ & $2/5$ & $1/2$ \\
 $4$ & $[1/2, 4/7]$ & $4/7$ & $3/5$ \\ 
 
 $5$ & $[1/2, 2/3]$ & $ 2/3$ & $2/3$ \\
 
 $\geq 6$ & $[\frac{d-3}{d}, \frac{d^2-d-2}{d^2-d+2}]$ & $\frac{d-1}{d+1}$ & $\frac{d-1}{d+1}$ \\
 \hline
\end{tabular}
    \caption{Recovery thresholds (critical value of $\delta$): comparison of known results and this work. Note that exact recovery threshold for weighted projection graph coincides with the third column (partial recovery).}
    \label{tab:result-compare}
\end{table}

\subsection{Notation}
\label{subsec:notation}

For a hypergraph $H=(V,E)$ let the \textit{projection} of $H$ be the graph with vertex set $V$ and edge set equal to the set of $\{i,j\}$ such that $i,j\in V$, $i\not=j$, and there exists $h\in E$ such that $\{i,j\}\subset h$; we denote the projection of $H$ by $\proj(H)$. Let the \textit{weighted projection} of $H$ be the weighted graph with vertex set $V$ and edge weight of $\{i,j\}$ equal to the number of $h\in H$ that contain $\{i,j\}$ for all $i,j\in V$, $i\not=j$; we denote the weighted projection of $H$ by $\projw(H)$.

Suppose $d\geq 2$ and $H$ is a $d$-uniform hypergraph. Let $E(H)$ and $V(H)$ denote the sets of edges and vertices of $H$, respectively and let $e(H)=|E(H)|$ and $v(H)=|V(H)|$. Furthermore let $\alpha(H)=\frac{e(H)}{v(H)}$ and $m(H)=\max_{K\leq H} \alpha(H)$, where the maximum is over subgraphs $K$ of $H$.

Let $\mathcal{G}$ (resp. $\mathcal{G}_W$) be the set of projections (resp. weighted projections) of some $d$-uniform hypergraph. For $G\in\mathcal{G}$ (resp. $\mathcal{G}_W$), the hypergraph $H\in\{0,1\}^{\binom{[n]}{d}}$ is a \textit{preimage} of $G$ if $\proj(H)=G$ (resp. $\projw(H)=G$). 

\subsection{Random graph model and recovery criteria}

We now introduce the random hypergraph model from \cite{reconstruct} that we study. Suppose $\delta\in(-\infty, d-1)$ and $c\in (0, \infty)$. Unless otherwise stated assume that $p=(c+o_n(1))n^{-d+1+\delta}$. Furthermore, we assume that $\delta<1$ in all sections of the paper except for \Cref{sec:intro} and \Cref{subsec:dense}. Suppose $\mathcal{H}\subset\{0, 1\}^{\binom{[n]}{d}}$ the random variable such that each element of $\binom{[n]}{d}$ is a hyperedge of $\mathcal{H}$ independently with probability $p$. Furthermore we denote by $\mathcal{H}_c\in\{0,1\}^{\binom{[n]}{d}}$ the random $d$-uniform hypergraph with edge set equal to the set of $d$-cliques in $\proj(\mathcal{H})$. We observe $\proj(\mathcal{H})$ and use this projected graph to make statistical inferences about $\mathcal{H}$. We also sometimes use the random variable $\pg$ to denote $\proj(\mathcal{H})$, for brevity.

\begin{remark}
The regime of $p$ we consider differs by a constant factor from the regime $p=n^{-d+1+\delta}$ that \cite{reconstruct} considers because we hope to be more precise. Despite this, many of the results from the paper \cite{reconstruct} are still true and provable with the same methods, so we do not repeat the proofs after citing results from that paper.
\end{remark}

We sometimes refer to elements of $E(\mathcal{H}_c)\backslash E(\mathcal{H})$ as \textit{fake hyperedges}. Let $q\defeq\Pr[[d]\in E(\mathcal{H}_c)]$; note that $q$ is the probability that any element of $\binom{[n]}{d}$ is an edge of $\mathcal{H}_c$ by symmetry, so $q$ is the hyperedge density of $\mathcal{H}_c$.

\subsubsection{Partial recovery}
\label{subsubsec:partial}
Suppose $\mathcal{B}: \mathcal{G}\rightarrow \{0,1\}^{\binom{[n]}{d}}$ is a partial recovery algorithm for the unweighted projection. We define the loss of $\mathcal{B}$ to be
\[
\ell(\mathcal{B}) \defeq \frac{\E[|\mathcal{B}(\proj(\mathcal{H}))\Delta E(\mathcal{H})|]}{p\binom{n}{d}},
\]
where $\Delta$ denotes the symmetric difference. Note that $p\binom{n}{d} = \E_\mathcal{H}[|E(\mathcal{H})|]$, so $\ell$ corresponds to the fraction of edges in $H$ that are predicted incorrectly. Hence, the optimal unweighted partial recovery algorithm is $\mathcal{B}^*:\mathcal{G}\rightarrow\{0,1\}^{\binom{[n]}{d}}$ where 
\[
\mathcal{B}^*(G)=\left\{h\in\binom{[n]}{d}: \Pr[h\in\mathcal{H}|\proj(\mathcal{H})=G]\geq \frac{1}{2}\right\}.
\]

For a partial recovery algorithm $\mathcal{B}_W: \mathcal{G}_W \rightarrow \{0,1\}^{\binom{[n]}{d}}$ for the weighted projection, the loss of $\mathcal{B}_W$ is 
\[
\ell_W(\mathcal{B}_W) \defeq \frac{\mathbb{E}[|\mathcal{B}_W(\projw(\mathcal{H}))\Delta E(\mathcal{H})|]}{p\binom{n}{d}}.
\]
The optimal weighted partial recovery algorithm $\mathcal{B}_W^*$ is defined analogously to $\mathcal{B}^*$. 

\begin{definition}
\label{def:partialloss}
The \textit{partial recovery loss} is $\ell(\mathcal{B}^*)$. If $\ell(\mathcal{B}^*) = o_n(1)$, then \textit{almost exact recovery} is possible.

The \textit{weighted partial recovery loss} is $\ell_W(\mathcal{B}_W^*)$. If $\ell_W(\mathcal{B}_W^*)=o_n(1)$, then \textit{almost exact weighted recovery} is possible.
\end{definition}

Partial recovery is considered in other problems such as the HSBM~\cite{weakrecovery_hsbm} and almost exact recovery is also considered in graph matching~\cite{graphmatch} and the HSBM~\cite{optimalexact}.

\subsubsection{Exact recovery}
\label{subsubsec:exact}

Suppose $\mathcal{A}: \mathcal{G}\rightarrow\{0,1\}^{\binom{[n]}{d}}$ is an exact recovery algorithm for the unweighted projection. Its probability of error is $\Pr[\mathcal{A}(\proj(\mathcal{H}))\not=\mathcal{H}]$. As discussed in \cite[Section 2.3]{reconstruct}, the algorithm minimizing the probability of error is the MAP algorithm $\mathcal{A}^*:\mathcal{G}\rightarrow\{0,1\}^{\binom{[n]}{d}}$ where (assuming that $p< \frac{1}{2}$, which is true if $n$ is sufficiently large)
\[
\mathcal{A}^*(G)\in\argmax_{H:\,\proj(H)=G}\Pr[\mathcal{H}=H] = \argmin_{H:\,\proj(H)=G}e(H).
\]

Similarly, for an exact recovery algorithm $\mathcal{A}_W:\mathcal{G}_W\rightarrow\{0,1\}^{\binom{[n]}{d}}$ for the weighted projection, its probability of error is $\Pr[\mathcal{A}_W(\projw(\mathcal{H}))\not=\mathcal{H}]$. The algorithim minimizing the probability error is the MAP algorithm $\mathcal{A}^*_W: \mathcal{G}_W\rightarrow\{0,1\}^{\binom{[n]}{d}}$, which is analogous to $\mathcal{A}^*$. 

\begin{definition}
\label{def:probabilityexact}
The \textit{probability of exact recovery} is $\Pr[\mathcal{A}^*(\proj(\mathcal{H}))=\mathcal{H}]$. The \textit{weighted probability of exact recovery} is $\Pr[\mathcal{A}^*_W(\projw(\mathcal{H}))=\mathcal{H}]$.
\end{definition}

\textbf{Acknowledgments.} 
 This material is based upon work partially supported by the National Science
    Foundation under Grant No CCF-2131115, and by the MIT-IBM Watson AI Lab.

\section{Overview of main ideas}
\subsection{Weighted Versus Unweighted Projection}
We remark that recovery from the weighted projection is always easier than from the unweighted projection, for  both partial and exact recovery. This is because we can always obtain the vanilla projection from the weighted one by forgetting the weights. Therefore, all algorithms for unweighted projections also apply to the weighted case. In particular:

\begin{lemma}
\label{lemma:fundamental}
$\ell(\mathcal{B}^*) \geq \ell_W(\mathcal{B}_W^*)$ and $\Pr[\mathcal{A}^*(\proj(\mathcal{H}))=\mathcal{H}] \leq \Pr[\mathcal{A}^*_W(\proj_W(\mathcal{H}))=\mathcal{H}]$.
\end{lemma}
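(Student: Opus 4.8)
The plan is to exploit the fact that the unweighted projection is a deterministic function of the weighted projection, so that the weighted observation model is strictly more informative and every unweighted recovery rule can be simulated in the weighted model with identical performance; this is the hypergraph-reconstruction instance of the classical data-processing principle. Concretely, first I would introduce the ``forget-the-weights'' map $\phi:\mathcal{G}_W\to\mathcal{G}$ defined by replacing every positive edge weight with $1$, i.e. $\phi(G)_{\{i,j\}}=\min(G_{\{i,j\}},1)$. Directly from the definitions of $\proj$ and $\projw$ in \Cref{subsec:notation}, one has the pointwise identity $\phi(\projw(H))=\proj(H)$ for every $d$-uniform hypergraph $H$, since an edge of the projection is exactly an edge whose multiplicity in the weighted projection is positive; in particular $\phi(\projw(\mathcal{H}))=\proj(\mathcal{H})$ almost surely.

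Next, given any unweighted partial recovery algorithm $\mathcal{B}:\mathcal{G}\to\{0,1\}^{\binom{[n]}{d}}$, define the weighted algorithm $\mathcal{B}_W\defeq\mathcal{B}\circ\phi:\mathcal{G}_W\to\{0,1\}^{\binom{[n]}{d}}$. Then $\mathcal{B}_W(\projw(\mathcal{H}))=\mathcal{B}(\proj(\mathcal{H}))$ as random variables, and since the normalizing constant $p\binom{n}{d}$ is the same in the definitions of $\ell$ and $\ell_W$, we get $\ell_W(\mathcal{B}_W)=\ell(\mathcal{B})$. Applying this with $\mathcal{B}=\mathcal{B}^*$ and invoking the optimality of $\mathcal{B}_W^*$ among all weighted partial recovery algorithms yields $\ell_W(\mathcal{B}_W^*)\le\ell_W(\mathcal{B}^*\circ\phi)=\ell(\mathcal{B}^*)$, which is the first inequality. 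The second inequality is proved identically: for an unweighted exact recovery rule $\mathcal{A}$, the composed rule $\mathcal{A}_W\defeq\mathcal{A}\circ\phi$ satisfies $\Pr[\mathcal{A}_W(\projw(\mathcal{H}))\ne\mathcal{H}]=\Pr[\mathcal{A}(\proj(\mathcal{H}))\ne\mathcal{H}]$, so taking $\mathcal{A}=\mathcal{A}^*$ and using optimality of $\mathcal{A}_W^*$ gives $\Pr[\mathcal{A}_W^*(\projw(\mathcal{H}))\ne\mathcal{H}]\le\Pr[\mathcal{A}^*(\proj(\mathcal{H}))\ne\mathcal{H}]$, i.e. the claimed bound on success probabilities.

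There is essentially no analytical obstacle; the work is purely bookkeeping. The points to check are that $\phi\circ\projw=\proj$ holds exactly (immediate from the definitions), that the two loss functionals carry the same denominator so equality of expected symmetric differences transfers to equality of losses, and—should one wish to permit randomized algorithms—that the same composition argument goes through verbatim after conditioning on the algorithm's internal randomness. I would also remark in passing that, as this is a genuine Blackwell domination, no matching reverse inequality can hold in general, which is consistent with the strictly different exact-recovery thresholds for the weighted and unweighted models when $d=3,4$.
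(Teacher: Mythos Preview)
Your proposal is correct and is exactly the argument the paper has in mind: the paragraph preceding \Cref{lemma:fundamental} already says that one can obtain the unweighted projection from the weighted one by forgetting the weights, so every unweighted algorithm lifts to a weighted one with identical performance, and the paper does not spell out any further proof. Your write-up simply makes this ``forget-the-weights'' composition explicit.
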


To introduce the main ideas, we will focus on unweighted projections for simplicity. The impossibility of partial recovery is stronger for weighted projections than for unweighted projections, but we defer the discussion of weighted projections to \Cref{sec:partial}, since the proof techniques are similar.

\subsection{Maximum Clique Cover Algorithm}
\label{subsec:maxcover}

Let us focus on partial recovery first.
We know that every hypergraph in $\rhg$ corresponds to a $d$-clique in $\pg$, where $\pg=\text{Proj}(\mathcal{H})$ by definition. A naive algorithm would be to output the hypergraph $\mathcal{H}_c$, which consists of all $d$-cliques in $\pg$. We call this algorithm \emph{Maximum Clique Cover} and denote it by $\Bc$. However, since only a subset of cliques correspond to hyperedges, $\Bc$ is not optimal; the loss of $\Bc$ is the following:
\begin{align*}
    \ell(\Bc) &= \frac{\E\b[\sum_{T\subset [n],\,|T|=d}\ind{\binom{T}{2}\subset \pg} - |\rhg|\b]}{p\binom{n}{d}} = \frac{\Pr\b(\binom{[d]}{2}\subset E(\pg)\b)}{p}-1.
\end{align*}

As a reminder, $q= \Pr\b(\binom{[d]}{2}\subset E(\pg)\b)$ is the hyperedge density of $\mathcal{H}_c$. For the proof of the following result, see \Cref{subsec:fakeedge}. The main idea is that a hyperedge of $\mathcal{H}_c$ is either a hyperedge of $\mathcal{H}$, which occurs with probability $p$, or each of its edges is contained in a hyperedge of $\mathcal{H}$. Then, we can approximate the probability of a hyperedge appearing in $\mathcal{H}_c$ with the sum of $p$ and the product of the probabilities of each of its edges being contained in a hyperedge of $\mathcal{H}$.

\begin{theorem}
\label{thm:fakeedge}
\[
q=(1+o_n(1))(p+\left(\frac{cn^{\delta-1}}{(d-2)!}\right)^{\binom{d}{2}}).
\]
\end{theorem}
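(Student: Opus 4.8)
The plan is to compute $q = \Pr\big(\binom{[d]}{2}\subset E(\pg)\big)$ by conditioning on whether $[d]$ is itself a hyperedge of $\mathcal{H}$. If it is, which happens with probability $p$, then certainly all $\binom{d}{2}$ pairs inside $[d]$ are present in $\pg$. Otherwise, for each pair $\{i,j\}\subset [d]$ to appear in $\pg$, there must be a (different) hyperedge of $\mathcal{H}$ containing $\{i,j\}$. The heuristic is that these $\binom{d}{2}$ events are \emph{approximately independent} (they involve mostly disjoint sets of potential hyperedges, since a single hyperedge of $\mathcal{H}$ can cover at most $\binom{d}{2}$ pairs but generically covers only one pair of $[d]$), and that the probability a \emph{fixed} pair $\{i,j\}$ is covered by some hyperedge is $(1+o_n(1))\cdot (\text{expected number of such hyperedges})$, by a Poisson/first-moment approximation. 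The expected number of hyperedges of $\mathcal{H}$ containing a fixed pair is $p\binom{n-2}{d-2} = (1+o_n(1))\, p\, n^{d-2}/(d-2)! = (1+o_n(1))\, \frac{c\,n^{\delta-1}}{(d-2)!}$, using $p = (c+o_n(1))n^{-d+1+\delta}$. Raising to the $\binom{d}{2}$ power gives the stated second term, and adding the probability-$p$ event of a genuine hyperedge gives the claimed formula.

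**Key steps in order.** First, I would write the exact decomposition
\[
q = p + (1-p)\cdot \Pr\big(\text{every pair in }\tbinom{[d]}{2}\text{ is covered by some hyperedge} \mid [d]\notin E(\mathcal{H})\big),
\]
and note $1-p = 1-o_n(1)$; moreover conditioning on $[d]\notin E(\mathcal{H})$ only changes probabilities by a $(1+o_n(1))$ factor, so it can be dropped. Second, for a single fixed pair, let $N_{ij}$ be the number of hyperedges of $\mathcal{H}$ containing $\{i,j\}$ (other than $[d]$ itself); then $N_{ij}\sim\bino(\binom{n-2}{d-2} - O(n^{d-3}), p)$, its mean is $\mu\defeq(1+o_n(1))\frac{c n^{\delta-1}}{(d-2)!}$, and since $\mu = n^{\delta-1+o_n(1)} = o_n(1)$ (recall $\delta<1$ in this part of the paper), a second-moment or Poisson-approximation bound gives $\Pr(N_{ij}\geq 1) = \mu(1+o_n(1))$. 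Third, I would handle the joint event: by inclusion–exclusion or a direct union bound, the probability that all $\binom{d}{2}$ pairs are covered differs from $\prod_{\{i,j\}}\Pr(N_{ij}\geq 1) = \mu^{\binom{d}{2}}(1+o_n(1))$ by lower-order terms. The cleanest way is to upper-bound by $\prod \Pr(N_{ij}\geq 1)$ directly if the events are positively correlated (FKG-type, since all are increasing in $\mathcal{H}$ — this gives $\le$ in the wrong direction, so instead use it for the matching lower bound via a different argument) and to lower-bound by restricting to the event that each pair is covered by a hyperedge whose other $d-2$ vertices avoid $[d]$, making the $\binom{d}{2}$ events genuinely independent; these sets of "spread-out" hyperedges are disjoint, the restriction costs only $(1+o_n(1))$, and independence then yields $\mu^{\binom{d}{2}}(1+o_n(1))$ exactly. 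Finally, combine: $q = p + (1+o_n(1))\mu^{\binom{d}{2}} = (1+o_n(1))\big(p + (\frac{c n^{\delta-1}}{(d-2)!})^{\binom{d}{2}}\big)$, where I should check that the two terms $p \sim c\,n^{\delta-1}\cdot n^{-(d-2)}$ and $\mu^{\binom{d}{2}} \sim n^{(\delta-1)\binom{d}{2}}$ are never of incomparable orders in a way that breaks the $(1+o_n(1))$ — but since each is individually captured with a $(1+o_n(1))$ factor, their sum is too.

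**Main obstacle.** The delicate point is the third step: controlling the dependence among the $\binom{d}{2}$ covering events tightly enough to get a clean $(1+o_n(1))$ multiplicative error rather than just matching exponents. The correlation comes from hyperedges that contain two or more pairs of $[d]$ simultaneously (e.g. a hyperedge containing three vertices of $[d]$ covers $3$ pairs at once); one must show the contribution of such "multiply-covering" configurations is negligible compared to the main term $\mu^{\binom{d}{2}}$, and also that the independent "spread-out" lower bound loses only $(1+o_n(1))$. This is a routine but careful second-moment / disjointness computation — the kind of argument the remark preceding the theorem is gesturing at ("approximate the probability ... with the sum of $p$ and the product of the probabilities"), and it is where essentially all the work lies.
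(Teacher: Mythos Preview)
Your decomposition and the lower-bound argument match the paper's exactly: condition on $[d]\in E(\mathcal{H})$, then use Harris (FKG) to lower-bound the joint covering probability by the product of marginals, each of which is $(1+o_n(1))\frac{cn^{\delta-1}}{(d-2)!}$. Your alternative lower bound via disjoint ``spread-out'' hyperedges also works and is equivalent up to the same $(1+o_n(1))$.

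The upper bound, which you correctly flag as the main obstacle, is \emph{not} handled in the paper by a second-moment or variance computation. Instead, the paper writes a union bound over all minimal covers $\mathcal{U}\subset 2^{[d]}$ of $\binom{[d]}{2}$, where each $u\in\mathcal{U}$ is the trace on $[d]$ of one covering hyperedge; the contribution of a fixed cover is $\Theta_n\big(n^{(1+\delta)|\mathcal{U}|-\sum_{u\in\mathcal{U}}|u|}\big)$. To show every intermediate cover is negligible, the paper relaxes $|u|$ to a real variable via $|u|=\frac{1+\sqrt{1+8y_u}}{2}$ with $y_u=\binom{|u|}{2}\ge 1$ and $\sum_u y_u\ge\binom{d}{2}$, then uses convexity of the resulting exponent in the $y_u$ to reduce to the extremal cases $|\mathcal{U}|=1$ and $|\mathcal{U}|=\binom{d}{2}$. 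So your intuition that ``multiply-covering configurations are negligible'' is exactly the right one, but the mechanism the paper uses is a union bound over cover types plus a small convex optimization, not a correlation or second-moment bound; calling it ``routine'' understates the specific trick needed.
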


\begin{corollary}
\label{corr:asyqp}
\[
q =
\begin{cases}
(1+o_n(1))p & \text{if $\delta<\frac{d-1}{d+1}$}, \\
p+\Theta_n(p) & \text{if $\delta=\frac{d-1}{d+1}$},  \\
\omega_n(p) &\text{if $\delta>\frac{d-1}{d+1}$}.
\end{cases}
\]
\end{corollary}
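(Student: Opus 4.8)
The plan is to derive everything from \Cref{thm:fakeedge} by comparing polynomial orders. Write $X \defeq \left(\frac{cn^{\delta-1}}{(d-2)!}\right)^{\binom{d}{2}}$, so that \Cref{thm:fakeedge} reads $q = (1+o_n(1))(p + X)$; the whole task reduces to deciding, as a function of $\delta$, which of $p$ and $X$ is larger. Since $p = (c+o_n(1))\,n^{\delta-d+1}$ and $X = \frac{c^{\binom{d}{2}}}{((d-2)!)^{\binom{d}{2}}}\, n^{\binom{d}{2}(\delta-1)}$, their ratio satisfies
\[
\frac{X}{p} \;=\; (1+o_n(1))\,\frac{c^{\binom{d}{2}-1}}{((d-2)!)^{\binom{d}{2}}}\; n^{g(\delta)}, \qquad g(\delta)\defeq \binom{d}{2}(\delta-1)-(\delta-d+1).
\]

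First I would record that $g(\delta) = \bigl(\binom{d}{2}-1\bigr)\delta - \binom{d}{2} + d - 1$ is affine with slope $\binom{d}{2}-1 \ge 2$ for $d\ge 3$, hence strictly increasing, so it has a unique root $\delta^\ast$. Solving $g(\delta^\ast)=0$ gives $\delta^\ast = \frac{\binom{d}{2}-d+1}{\binom{d}{2}-1}$, and using $\binom{d}{2}-(d-1) = \binom{d-1}{2} = \frac{(d-1)(d-2)}{2}$ together with $\binom{d}{2}-1 = \frac{(d-2)(d+1)}{2}$ this simplifies to $\delta^\ast = \frac{d-1}{d+1}$, the advertised threshold.

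Then I would split into the three cases. If $\delta < \frac{d-1}{d+1}$ then $g(\delta)<0$, so $X = o_n(p)$, whence $p+X = (1+o_n(1))p$ and $q = (1+o_n(1))p$. If $\delta = \frac{d-1}{d+1}$ then $g(\delta)=0$, so $\frac{X}{p}\to \kappa\defeq \frac{c^{\binom{d}{2}-1}}{((d-2)!)^{\binom{d}{2}}}>0$, whence $q = (1+o_n(1))(1+\kappa)p = p+\Theta_n(p)$. If $\delta > \frac{d-1}{d+1}$ then $g(\delta)>0$, so $p = o_n(X)$, whence $q = (1+o_n(1))X$, and since $X/p\to\infty$ this is $\omega_n(p)$.

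The argument is elementary once \Cref{thm:fakeedge} is in hand; the only points needing care are the bookkeeping of the $(1+o_n(1))$ factors — in particular keeping the constant $\kappa$ at the critical exponent rather than swallowing it — and the binomial-coefficient identity that pins $\delta^\ast$ to exactly $\frac{d-1}{d+1}$. I do not expect a real obstacle beyond this.
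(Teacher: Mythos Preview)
Your proposal is correct and matches the paper's approach: the paper states this as an immediate corollary of \Cref{thm:fakeedge} with no separate proof, and your argument---comparing the exponents of $p$ and $X$ and locating the unique crossover at $\delta=\frac{d-1}{d+1}$---is exactly the routine verification that justifies the word ``Corollary.'' There is nothing missing.
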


The following theorem tells us that $\mathcal{B}_C$ achieves a partial recovery loss of $o_n(1)$ whenever $\delta$ is below $\frac{d-1}{d+1}$, because $\mathcal{B}_C$ falsely recovers $\frac{q}{p}-1 = o_n(1)$ of the hyperedges of $\mathcal{H}$ on average. 

\begin{theorem}
\label{thm:epthreshold}
Almost exact recovery is possible if $\delta<\frac{d-1}{d+1}$.
\end{theorem}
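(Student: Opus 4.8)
The plan is to use the Maximum Clique Cover algorithm $\Bc$ and show directly that its loss is $o_n(1)$ when $\delta < \frac{d-1}{d+1}$; since the optimal algorithm $\cB^*$ has loss at most $\ell(\Bc)$, this immediately gives almost exact recovery. The computation of $\ell(\Bc)$ is already set up in the excerpt: we have
\[
\ell(\Bc) = \frac{q}{p} - 1,
\]
where $q = \Pr\bigl(\binom{[d]}{2} \subset E(\pg)\bigr)$ is the hyperedge density of $\mathcal{H}_c$. So the whole statement reduces to showing $q/p - 1 = o_n(1)$, i.e.\ $q = (1+o_n(1))p$, in the regime $\delta < \frac{d-1}{d+1}$.

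This last fact is exactly the first case of \Cref{corr:asyqp}, which in turn follows from \Cref{thm:fakeedge}: we have $q = (1+o_n(1))\bigl(p + (cn^{\delta-1}/(d-2)!)^{\binom{d}{2}}\bigr)$, and one simply checks that when $\delta < \frac{d-1}{d+1}$ the ``fake edge'' term $(cn^{\delta-1}/(d-2)!)^{\binom{d}{2}}$ is $o_n(p)$. Concretely, $p = \Theta(n^{-d+1+\delta})$ and the fake-edge term is $\Theta\bigl(n^{\binom{d}{2}(\delta-1)}\bigr)$; the ratio of the fake term to $p$ is $\Theta\bigl(n^{\binom{d}{2}(\delta-1) + d - 1 - \delta}\bigr)$, and the exponent
\[
\binom{d}{2}(\delta-1) + (d-1) - \delta = \Bigl(\binom{d}{2} - 1\Bigr)\delta - \binom{d}{2} + d - 1
\]
is negative precisely when $\delta < \frac{\binom{d}{2} - d + 1}{\binom{d}{2} - 1} = \frac{(d-1)(d-2)/2}{(d+1)(d-2)/2} = \frac{d-1}{d+1}$. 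Hence $q = (1+o_n(1))p$ and $\ell(\Bc) = o_n(1)$, so $\ell(\cB^*) \le \ell(\Bc) = o_n(1)$, which is the definition of almost exact recovery being possible.

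There is essentially no obstacle here: the entire content has been front-loaded into \Cref{thm:fakeedge} and \Cref{corr:asyqp}, and the present theorem is just the observation that the clique-cover algorithm's loss is governed by $q/p$. The only things to write are (i) the reduction $\ell(\cB^*) \le \ell(\Bc) = q/p - 1$ (already displayed in the excerpt), and (ii) the invocation of the first branch of \Cref{corr:asyqp}. The real work, deferred to \Cref{subsec:fakeedge}, is the proof of \Cref{thm:fakeedge} — establishing that a fake hyperedge of $\mathcal{H}_c$ arises, up to $(1+o_n(1))$ factors, either by being a genuine hyperedge of $\mathcal{H}$ (probability $p$) or by having each of its $\binom{d}{2}$ edges independently covered by some hyperedge of $\mathcal{H}$ (probability $\approx (cn^{\delta-1}/(d-2)!)$ per edge, since the expected number of hyperedges through a fixed pair is $\binom{n-2}{d-2}p \sim cn^{\delta-1}/(d-2)!$). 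Controlling the correlations between these $\binom{d}{2}$ edge-covering events — in particular ruling out the contribution of configurations where a single hyperedge of $\mathcal{H}$ covers two or more of the $\binom{d}{2}$ pairs — is the genuinely delicate estimate, but that is outside the scope of the statement we are asked to prove and is handled separately.
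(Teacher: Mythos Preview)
Your proposal is correct and follows exactly the paper's approach: the paper proves \Cref{thm:epthreshold} in the sentence immediately preceding its statement, by observing that $\ell(\Bc)=q/p-1$ and then invoking \Cref{corr:asyqp} (the first case) to conclude $q/p-1=o_n(1)$ when $\delta<\frac{d-1}{d+1}$. Your write-up simply makes the exponent computation behind that corollary explicit.
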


This simple algorithm actually achieves the optimal threshold for almost exact recovery and partial recovery, as we will show in the next section.

\subsection{Relating partial recovery loss to preimage overlap}

We discuss some fundamental equalities for the partial recovery loss introduced in \Cref{subsubsec:partial}. First we have that for $G\in\mathcal{G}$,
\begin{equation}
\label{eq:optloss}
\E_{\mathcal{H}|\pg=G}\b[|\mathcal{B}^*(G)\Delta\mathcal{H}|\b] = \sum_{h\in\binom{[n]}{d}} \min\b\{\Pr[h\in\mathcal{H}|\pg=G], \Pr[h\notin\mathcal{H}|\pg=G]\b\}.
\end{equation}
Furthermore, $\ell(\mathcal{B}^*) \leq 1$ since the trivial algorithm that always outputs the empty set has a loss of $1$.

The partial recovery loss can be captured by the expected size of the intersection of two random hypergraphs with the same projection: Let $\rhg'$ be another random hypergraph that is independently and identically distributed as $\rhg$ after conditioning on the projected graph; that is, when $\proj(\mathcal{H}) = G$ for some $G\in\mathcal{G}$, $\mathcal{H}'$ is sampled independently of $\mathcal{H}$ from the distribution $p_{\mathcal{H}|\proj(\mathcal{H})=G}$. Because $\mathcal{H}$ and $\mathcal{H}'$ are independent after conditioning on $\proj(\mathcal{H})$, we have the Markov chain $\mathcal{H}\rightarrow \proj(\mathcal{H})=\proj(\mathcal{H}')\rightarrow \mathcal{H}'$.

\begin{lemma}
\label{lemma:exactpartialequiv} The partial recovery loss can be expressed in terms of the overlap:
\begin{enumerate}
\item[(a)] $\ell(\mathcal{B}^*) \geq 1-\frac{\E_{\rhg,\rhg'} [E(\rhg)\cap E(\rhg')]}{p\binom{n}{d}} \geq \ell(\mathcal{B}^*)/2$. If
$
\E_{\rhg,\rhg'}[E(\rhg)\cap E(\rhg')] = o_n\left(p\binom{n}{d}\right)$
then the partial recovery loss is $1-o_n(1)$.
\item[(b)] $\E_{\rhg,\rhg'} [E(\rhg)\cap E(\rhg')]=\binom{n}{d}\sum_{G\in\mathcal{G}} \Pr[\pg=G]\Pr[[d]\in \mathcal{H}|\pg=G]^2$.
\end{enumerate}
\end{lemma}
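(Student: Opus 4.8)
The plan is to prove part (b) first, purely from linearity of expectation plus the conditional independence of $\rhg$ and $\rhg'$ given $\pg$, and then to deduce part (a) from two elementary pointwise inequalities relating $\min\{x,1-x\}$ to $x(1-x)$.

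For (b): by linearity of expectation, $\E_{\rhg,\rhg'}[|E(\rhg)\cap E(\rhg')|] = \sum_{h\in\binom{[n]}{d}}\Pr[h\in\rhg,\ h\in\rhg']$. Conditioning on $\pg=G$ and using that $\rhg,\rhg'$ are i.i.d.\ given $\{\pg=G\}$ (with $\proj(\rhg)=\proj(\rhg')=G$), the joint probability factors, so this equals $\sum_{h}\sum_{G\in\mathcal{G}}\Pr[\pg=G]\,\Pr[h\in\rhg\mid\pg=G]^2$. To collapse the sum over $h$ into $\binom{n}{d}$ copies of the $h=[d]$ term, I would invoke the vertex-relabeling symmetry of the prior $\mathrm{Ber}(p)^{\otimes\binom{[n]}{d}}$: for any permutation $\sigma$ of $[n]$, applying $\sigma$ jointly to $\rhg$ and to $\pg$ preserves the joint law, hence the quantity $\sum_{G}\Pr[\pg=G]\Pr[h\in\rhg\mid\pg=G]^2$ is independent of $h$; choosing $\sigma$ with $\sigma(h)=[d]$ then gives the claimed identity.

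For (a): write $x_h(G)=\Pr[h\in\rhg\mid\pg=G]$. Starting from \eqref{eq:optloss}, taking expectation over $\pg$, and recalling that $p\binom{n}{d}=\E[|E(\rhg)|]=\sum_h\E_G[x_h(G)]$, we get $\ell(\mathcal{B}^*)\,p\binom{n}{d}=\E_G\big[\sum_h\min\{x_h(G),\,1-x_h(G)\}\big]$. The elementary bounds $x(1-x)\le\min\{x,1-x\}\le 2x(1-x)$, valid for all $x\in[0,1]$, then give, after summing over $h$ and taking expectation over $G$, the chain $\E_G[\sum_h(x_h-x_h^2)]\le\ell(\mathcal{B}^*)\,p\binom{n}{d}\le 2\,\E_G[\sum_h(x_h-x_h^2)]$. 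Now $\E_G[\sum_h x_h]=p\binom{n}{d}$, while $\E_G[\sum_h x_h^2]=\E_{\rhg,\rhg'}[|E(\rhg)\cap E(\rhg')|]$ by the same factorization used in (b). Dividing the chain by $p\binom{n}{d}$ yields $1-\frac{\E_{\rhg,\rhg'}[|E(\rhg)\cap E(\rhg')|]}{p\binom{n}{d}}\le\ell(\mathcal{B}^*)\le 2\big(1-\frac{\E_{\rhg,\rhg'}[|E(\rhg)\cap E(\rhg')|]}{p\binom{n}{d}}\big)$, which is precisely the asserted pair of inequalities $\ell(\mathcal{B}^*)\ge 1-\frac{\E_{\rhg,\rhg'}[|E(\rhg)\cap E(\rhg')|]}{p\binom{n}{d}}\ge\ell(\mathcal{B}^*)/2$. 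The final sentence of (a) is then immediate: if $\E_{\rhg,\rhg'}[|E(\rhg)\cap E(\rhg')|]=o_n(p\binom{n}{d})$, the lower bound equals $1-o_n(1)$, and since $\ell(\mathcal{B}^*)\le 1$ always, the loss is $1-o_n(1)$.

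Everything here is routine; the only step requiring a bit of care is the symmetry argument in (b)—making precise that conditioning on a specific $G$ does not destroy vertex symmetry once one also averages over $G$—but this is standard and poses no genuine obstacle. It can in fact be bypassed altogether by stating the overlap directly as $\E_G[\sum_h x_h^2(G)]$, the representative-$[d]$ form being merely a convenient repackaging.
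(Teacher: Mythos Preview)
Your proof is correct and follows essentially the same route as the paper: both arguments reduce part (a) to the pointwise bounds $x(1-x)\le\min\{x,1-x\}\le 2x(1-x)$ (the paper writes these as $1\le\frac{\frac{1}{2}-|x-\frac{1}{2}|}{\frac{1}{4}-(x-\frac{1}{2})^2}\le 2$, which is the same thing), combined with the identities $\E_G[\sum_h x_h]=p\binom{n}{d}$ and $\E_G[\sum_h x_h^2]=\E[|E(\rhg)\cap E(\rhg')|]$. Part (b) is declared ``straightforward'' in the paper, and your symmetry argument is a correct way to fill that in.
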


\begin{proof}
The proof of (b) is straightforward. For (a), first observe that using \pref{eq:optloss} gives that
\begin{equation}
\label{eq:loss_equiv}
\begin{split}
p\binom{n}{d}\ell(\mathcal{B}^*) & = \E_{G\sim \pg}[\E_{\mathcal{H}|\pg=G}[|\mathcal{B}^*(G)\Delta\mathcal{H}|]] \\
& = \sum_{G\in\mathcal{G}} \sum_{h\in\binom{[n]}{d}} \Pr[\pg = G]\min(\Pr[h\in\mathcal{H}|\pg=G], \Pr[h\notin\mathcal{H}|\pg=G]) \\
& = \sum_{h\in\binom{[n]}{d}}\sum_{G\in\mathcal{G}} \Pr[\pg = G]\min(\Pr[h\in\mathcal{H}|\pg=G], \Pr[h\notin\mathcal{H}|\pg=G]) \\ 
& = \binom{n}{d} \sum_{G\in\mathcal{G}} \Pr[\pg = G]\min(\Pr[[d]\in\mathcal{H}|\pg=G], \Pr[[d]\notin\mathcal{H}|\pg=G]) \\
& =  \binom{n}{d} \sum_{G\in\mathcal{G}} \Pr[\pg = G]\left(\frac{1}{2} - \Big|\Pr[[d]\in\mathcal{H}|\pg=G] - \frac{1}{2}\Big|\right).
\end{split}
\end{equation}
Using this then gives that
\begin{align*}
& 1\leq \frac{\sum_{G\in\mathcal{G}} \Pr[\pg = G]\left(\frac{1}{2} - \left|\Pr[[d]\in\mathcal{H}|\pg=G] - \frac{1}{2}\right|\right)}{\sum_{G\in\mathcal{G}} \Pr[\pg = G]\left(\frac{1}{4} - \left(\Pr[[d]\in\mathcal{H}|\pg=G] - \frac{1}{2}\right)^2\right)} \leq 2 \\
\Leftrightarrow & 1\leq \frac{\sum_{G\in\mathcal{G}} \Pr[\pg = G]\left(\frac{1}{2} - \left|\Pr[[d]\in\mathcal{H}|\pg=G] - \frac{1}{2}\right|\right)}{p - \sum_{G\in\mathcal{G}} \Pr[\pg=G]\Pr[[d]\in\mathcal{H}|\pg=G]^2} \leq 2 \\
\Leftrightarrow \,& \ell(\mathcal{B}^*) \ge  
1-\frac{\E_{\rhg,\rhg'} [E(\rhg)\cap E(\rhg')]}{p\binom{n}{d}} \ge \ell(\mathcal{B}^*)/2.
\end{align*}
The remaining result of (a) is straightforward to prove.
\end{proof}

\subsection{Information-theoretic argument proving weaker impossibility of partial recovery}

To gain some intuition on why partial recovery is impossible, we can look at the question from an information-theoretic view.
Since we are recovering $\mathcal{H}$ after observing $\pg$, if $H(\pg)$ is significantly smaller than $H(\mathcal{H})$, we will not be able to recover much information about $\mathcal{H}$. This is the case when $\delta>\frac{d-1}{d+1}$ because $\mathcal{H}$ is sufficiently dense, see \Cref{lemma:condentropy}. A weaker version of the impossibility result in \Cref{thm:mainpartial} can thus be obtained.

\begin{lemma}
\label{lemma:weakpartial}
If $\delta>\frac{d-1}{d+1}$, then the partial recovery loss is $\Omega_n(1)$.
\end{lemma}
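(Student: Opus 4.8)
The plan is to run the standard information-theoretic (Fano-type / entropy-counting) argument, using the fact that the projection $\pg$ carries fewer bits than $\mathcal{H}$ when $\delta > \frac{d-1}{d+1}$. First I would write down the chain of inequalities coming from the Markov chain $\mathcal{H}\to\pg\to\widehat{\mathcal{H}}$: for any reconstruction algorithm, the number of hyperedges predicted incorrectly is controlled below by $H(\mathcal{H}\mid\pg) = H(\mathcal{H}) - I(\mathcal{H};\pg) \geq H(\mathcal{H}) - H(\pg)$. Since $\mathcal{H}$ is a product of $\binom{n}{d}$ iid $\mathrm{Ber}(p)$ coordinates, $H(\mathcal{H}) = \binom{n}{d}\, h_2(p) = (1+o_n(1))\, p\binom{n}{d}\log\frac{1}{p}$, where $h_2$ is the binary entropy; this scales like $p\binom{n}{d}$ up to the $\log\frac1p = \Theta(\log n)$ factor. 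On the other side, $H(\pg) \le \binom{n}{2}$ trivially, but that crude bound is not enough — I would instead invoke \Cref{lemma:condentropy} (referenced in the text) to get a sharp-enough upper bound on $H(\pg)$, or equivalently on the number of typical projected graphs, of the form $H(\pg) = O\!\big(n^{2}\cdot(\text{edge density of }\pg)\cdot\log(\cdot)\big)$, and then compare exponents.

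The key comparison is of the polynomial exponents. The expected number of true hyperedges is $p\binom{n}{d} = \Theta(n^{1+\delta})$. The expected number of edges in the projection $\pg$ is, up to logs, $\Theta(n^{2}\cdot(cn^{\delta-1}/(d-2)!)^{?})$ — more directly, an edge $\{i,j\}$ is in $\pg$ iff some hyperedge contains it, and the probability of that is $\Theta(n^{\delta-1})$ when $\delta<1$, so $\E|E(\pg)| = \Theta(n^{1+\delta})$ as well; thus one must be careful. The resolution is that the right quantity to bound $H(\pg)$ is not $|E(\pg)|$ but a finer description length, and \Cref{lemma:condentropy} is presumably stated precisely so that $H(\pg) = o_n\!\big(p\binom{n}{d}\log n\big)$ exactly when $\delta>\frac{d-1}{d+1}$. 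Granting that, we get $H(\mathcal{H}\mid\pg) \geq H(\mathcal{H}) - H(\pg) = (1-o_n(1))\,p\binom{n}{d}\log\frac1p$.

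To convert this entropy lower bound into a lower bound on the loss $\ell(\mathcal{B}^*)$, I would use the following counting/Fano estimate. Condition on $\pg=G$; the conditional law of $\mathcal{H}$ is supported on preimages of $G$, and by \pref{eq:optloss} the conditional expected loss of $\mathcal{B}^*$ equals $\sum_h \min(\Pr[h\in\mathcal{H}\mid G],\Pr[h\notin\mathcal{H}\mid G])$. If this were $o_n(1)\cdot p\binom nd$ for a typical $G$, then the conditional distribution of $\mathcal{H}$ would be concentrated near a single hypergraph, forcing $H(\mathcal{H}\mid\pg=G)$ to be small — quantitatively, if the expected symmetric difference to the MAP estimate is $L$ then $H(\mathcal{H}\mid\pg=G) \le \log\binom{\binom nd}{\le ?}$-type bound $\lesssim L\log\frac{\binom nd}{L} = O(L\log n)$ (a reverse-Pinsker / subset-counting argument: a distribution whose average distance to a fixed point is $L$ has entropy $O(L\log n)$ in the Hamming cube of this dimension). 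Averaging over $G$ gives $H(\mathcal{H}\mid\pg) = O\!\big(p\binom nd\,\ell(\mathcal{B}^*)\cdot\log n\big)$, plus lower-order terms. Combining with the previous paragraph, $\ell(\mathcal{B}^*) = \Omega_n(1)$, which is the claim.

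The main obstacle I anticipate is getting the two $\log n$ factors (one from $H(\mathcal{H})\sim p\binom nd\log\frac1p$, one from the entropy-of-a-concentrated-distribution bound $\lesssim L\log n$) to cancel cleanly rather than leaving a spurious $1/\log n$ gap that would only give $\ell(\mathcal{B}^*)=\Omega(1/\log n)$ instead of $\Omega(1)$. If the naive bookkeeping loses a log, the fix is to not discard $I(\mathcal{H};\pg)$ entirely but to bound $H(\pg)$ sharply with its own $\log\frac1{q}\sim\log\frac1p$ factor (both quantities live at density $\Theta(n^{\delta-1+\binom d2(\delta-1)})$-type scales, so the per-coordinate entropies match), so that the comparison is genuinely between $p\binom nd\log n$ and $(\text{smaller count})\cdot\log n$ with the same log on both sides. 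This is exactly the kind of matching that \Cref{lemma:condentropy} should be designed to supply, so I would lean on its precise statement. (I note this is only the weaker bound $\Omega_n(1)$, not the sharp $1-o_n(1)$ of \Cref{thm:mainpartial}; the stronger statement requires the more refined argument via \Cref{lemma:exactpartialequiv} and second-moment control of the overlap, which is deferred.)
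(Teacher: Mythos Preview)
Your proposal is correct and follows essentially the same Fano-type argument as the paper. The paper executes it more cleanly by using symmetry to write $\ell(\mathcal{B}^*)=P_e/p$ for the single-hyperedge error probability $P_e$, then chaining $\binom{n}{d}H_B(P_e)\geq \sum_h H(\1\{h\in\mathcal{H}\}\mid\pg)\geq H(\mathcal{H}\mid\pg)=\Omega_n\!\big(\binom{n}{d}H_B(p)\big)$ directly --- which dissolves your log-factor worry, since both sides carry the same binary-entropy scaling and one concludes $P_e=\Omega_n(p)$ immediately.
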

\begin{proof}
From \Cref{lemma:condentropy}, if $\delta>\frac{d-1}{d+1}$ then $H(\pg)=(1-\Omega_n(1))H(\mathcal{H})$. Afterwards, $H(\mathcal{H}|\pg)=H(\mathcal{H}, \pg) - H(\pg) = H(\mathcal{H})-H(\pg)$, so $H(\mathcal{H}|\pg)=\Omega_n(H(\mathcal{H}))$. 

Let $P_e$ be the probability that $\mathcal{B}^*$ predicts $\1\{[d]\in\mathcal{H}\}$ incorrectly given $\pg$. Then, $\ell(\mathcal{B}^*) = \frac{P_e}{p}$. Applying Fano's lemma gives that
\begin{align*}
& \binom{n}{d}H_B(P_e) \geq \binom{n}{d}H(\1\{[d]\in \mathcal{H}\}|\pg) = \sum_{h\in \binom{[n]}{d}} H(\1\{h\in\mathcal{H}\}|\pg) \\
& \geq H(\mathcal{H}|\pg) = \Omega_n(H(\mathcal{H})) = \binom{n}{d}\Omega_n(H_B(p))
\Rightarrow P_e = \Omega_n(p) \Rightarrow \ell(\mathcal{B}^*) = \frac{P_e}{p} = \Omega_n(1).
\end{align*}
\end{proof}

However, we cannot extend this argument to prove that $\ell(\mathcal{B}^*)=1-o_n(1)$ if $\delta>\frac{d-1}{d+1}$. For this to be the case, we must show that $P_e=(1-o_n(1))p$ since $\ell(\mathcal{B}^*) = \frac{P_e}{p}$; then, using the previous chain of inequalities, we require that $H(\mathcal{H}|\pg) = (1-o_n(1))H(\mathcal{H})$. We will see in \Cref{thm:entropycompare} or \Cref{lemma:entropyorder} that $H(\pg) = \Omega_n(H(\mathcal{H}))$, i.e.  $H(\mathcal{H}|\pg) = (1-\Omega_n(1))H(\mathcal{H})$. So we cannot extend the argument. In the next section, we explain how we circumvent this problem.

\subsection{Impossibility of partial recovery}
\label{subsec:impossibility}

The goal of this subsection is to prove that partial recovery is impossible when $\delta>\frac{d-1}{d+1}$, which is stated in the following result. 

\begin{theorem}
\label{thm:apimpossible} Suppose $\delta>\frac{d-1}{d+1}$. Then the partial recovery loss is $1-o_n(1)$.
\end{theorem}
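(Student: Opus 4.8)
The plan is to reduce \Cref{thm:apimpossible} to a concentration statement about the posterior probability of a single planted hyperedge. By \Cref{lemma:exactpartialequiv} it suffices to show $\E_{\mathcal{H},\mathcal{H}'}\big[|E(\mathcal{H})\cap E(\mathcal{H}')|\big]=o_n\!\big(p\binom{n}{d}\big)$, and by part~(b) of that lemma this equals $\binom{n}{d}\,\E_{\pg}[Z^2]$, where $Z=Z(\pg)\defeq\Pr[[d]\in\mathcal{H}\mid\pg]$; so the goal becomes $\E_\pg[Z^2]=o_n(p)$. Two elementary observations shape the argument. First, $Z=0$ unless $[d]\in E(\mathcal{H}_c)$, so $\E_\pg[Z]=p$ gives $\E\big[Z\mid [d]\in E(\mathcal{H}_c)\big]=p/q$, which is $o_n(1)$ precisely because $\delta>\tfrac{d-1}{d+1}$ forces $q=\omega_n(p)$ by \Cref{corr:asyqp}. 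Second, tilting the law of $\pg$ by $Z/p$ is the same as conditioning on $[d]\in\mathcal{H}$, so $\E_\pg[Z^2]/p=\E\big[Z(\pg)\mid [d]\in\mathcal{H}\big]$. Since $Z\le 1$ always, it therefore suffices to show: after planting $[d]$ as a true hyperedge, the posterior mass $Z(\pg)$ on $[d]$ is $o_n(1)$ with probability $1-o_n(1)$.

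To access $Z(\pg)$ I would use the explicit form of the posterior: every preimage of $G$ is a sub-hypergraph of $\mathcal{H}_c$ and the prior is a product measure, so $\mathcal{H}\mid\{\pg=G\}$ is distributed as $\mathrm{Ber}(p)$ independently on each $d$-clique of $G$, conditioned on the selected cliques covering every edge of $G$. Heuristically, $[d]$ is then just one of the $\approx q\binom nd$ cliques of $\pg$, a preimage uses only $\approx p\binom nd$ of them, and since $q=\omega_n(p)$ a ``symmetric'' selection would contain $[d]$ with probability $\approx p/q=o_n(1)$. The covering constraint biases the selection toward cliques that cover ``rare'' edges, but when $\delta<1$ essentially no edge of $\pg$ lies in a unique clique: in the regime $\delta>\tfrac{d-1}{d+1}$, a first/second-moment computation of the same flavor as the proof of \Cref{thm:fakeedge} shows that, conditionally on $[d]\in\mathcal{H}$, with probability $1-o_n(1)$ every edge of $[d]$ lies in $\omega_n(1)$ further $d$-cliques of $\pg$, so $[d]$ is not structurally forced. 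To convert this into a bound on $Z$, I would use a swapping bijection---pairing a preimage in which $[d]$ can be deleted with the preimage obtained by deleting it---to get
\[
Z(\pg)\ \le\ \frac{p}{1-p}\ +\ \sum_{e\in\binom{[d]}{2}}\Pr\big[\,[d]\in\mathcal{H}\ \text{and}\ e\ \text{is covered only by}\ [d]\ \text{in}\ \mathcal{H}\ \big|\ \pg\,\big],
\]
reducing the task to showing that, conditionally on planting $[d]$, the posterior probabilities of these ``locally forced'' events are $o_n(1)$ with high probability.

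The hardest part, and the main obstacle, is this last step, because it cannot be settled by purely local reasoning. The alternative $d$-cliques through an edge of $[d]$ are themselves stitched together from several other hyperedges of $\mathcal{H}$ and carry their own edges elsewhere in $\pg$, so deleting $[d]$ and re-covering its $\binom{d}{2}$ edges is not a bounded local move and couples to the covering constraint on the rest of $\pg$; indeed a first-order bound---e.g.\ estimating the right-hand side above by applying the FKG inequality to the increasing events ``a $\mathrm{Ber}(p)$ subset covers $\binom{[d]}{2}$'' and ``it covers the remaining edges''---only yields the conclusion when $\delta$ is close to $1$, whereas the full range $\delta>\tfrac{d-1}{d+1}$ requires accounting for the abundance of preimages that differ from $\mathcal{H}$ globally rather than by a bounded surgery. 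To handle this I would analyze the covering CSP on the (tree-like, Galton--Watson-type) local structure of $\pg$ around $[d]$---via a cavity/belief-propagation recursion for the marginal of the clique $[d]$, or a second-moment/interpolation estimate for the associated partition function---to show the marginal is $O(p/q)=o_n(1)$ on a $1-o_n(1)$ fraction of projections, and separately check that the exceptional projections carry negligible mass even after the $Z/p$ tilt. Finally, the weighted statement \Cref{thm:mainpartialweighted} follows from the same template with ``covers every edge'' replaced by ``matches every edge multiplicity''---which is if anything easier for impossibility, since the multiplicities only add constraints---and is carried out in \Cref{sec:partial}.
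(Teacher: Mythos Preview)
Your reformulation is sound: the reduction via \Cref{lemma:exactpartialequiv} to $\E_\pg[Z^2]=o_n(p)$ with $Z=\Pr[[d]\in\mathcal{H}\mid\pg]$, the tilting identity $\E_\pg[Z^2]/p=\E[Z\mid [d]\in\mathcal{H}]$, and the swapping bound $Z\le \tfrac{p}{1-p}+\sum_{e\in\binom{[d]}{2}}\Pr[\,[d]\in\mathcal{H}\text{ and }e\text{ covered only by }[d]\mid\pg\,]$ are all correct. The gap is exactly where you place it, and it is not closed. As you yourself observe, any first-order or FKG-type bound on the ``uniquely covered'' posterior terms is vacuous over most of $\tfrac{d-1}{d+1}<\delta<1$, because the \emph{prior} probability that an edge of $[d]$ is covered only by $[d]$ in $\mathcal{H}$ is $(1-p)^{\binom{n-2}{d-2}-1}=1-o_n(1)$. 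Your proposed fix---a cavity/belief-propagation recursion on a ``tree-like'' local structure of $\pg$, or an interpolation estimate on the covering partition function---is not a proof but a wish list: the neighborhood of $[d]$ in $\pg$ is not tree-like (every hyperedge projects to a $K_d$ and these cliques overlap heavily), and you provide no mechanism to make such a recursion rigorous or to control the exceptional set under the $Z/p$-tilt. The observation that each edge of $[d]$ lies in $\omega_n(1)$ other $d$-cliques of $\pg$ is a statement about $\pg$; the missing step is to convert it into a statement about a \emph{random preimage} of $\pg$, and that conversion is precisely the hard part.

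The paper bypasses the posterior entirely with a global counting argument. After restricting to the high-probability set $\mathcal{Z}$ on which $e(\mathcal{H})\approx p\binom{n}{d}$ and $e(\mathcal{H}_c)\approx q\binom{n}{d}$ (\Cref{lemma:highprob}, \Cref{corr:varnumclique}), the key \Cref{thm:highintersect} bounds $\Pr\big[\mathcal{H},\mathcal{H}'\in\mathcal{Z},\ |E(\mathcal{H})\cap E(\mathcal{H}')|\ge Mp\binom{n}{d}\big]$ as follows: for fixed $H\in\mathcal{Z}$ and intersection size $I$, the number of admissible $H'$ is at most $\binom{(1+\epsilon)p\binom{n}{d}}{I}\binom{(1+\epsilon)q\binom{n}{d}}{(1+\epsilon)p\binom{n}{d}-I}$, since $E(H')\setminus E(H)$ must be chosen among the $d$-cliques of $\proj(H)$. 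This is combined via Cauchy--Schwarz with the crude estimate $\sum_{H,H'\in\mathcal{Z}}\Pr[\mathcal{H}=H]\Pr[\mathcal{H}=H']/\Pr[\pg=\proj(H)]^2\le |\proj(\mathcal{Z})|\le \big((1+o_n(1))e^{\binom{d}{2}}/q\big)^{(1+\epsilon)p\binom{n}{d}}$ of \Cref{lemma:squareprob}. In the resulting bound the factor $(p/q)^{I/(p\binom{n}{d})}$ dominates once $I\ge Mp\binom{n}{d}$, because $q/p=n^{\Omega(1)}$ when $\delta>\tfrac{d-1}{d+1}$; letting $M\to 0$ finishes. No local analysis of the posterior is ever needed: instead of asking whether a random preimage uses $[d]$, one simply counts how few preimage pairs of a typical projection can share a constant fraction of their hyperedges. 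This combinatorial route is the idea your outline is missing.
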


To prove the theorem, we use \Cref{lemma:exactpartialequiv}, which means that we need to prove that $\E[|E(\mathcal{H})\cap E(\mathcal{H}')|] = o_n\left(p\binom{n}{d}\right)$. One of the main ideas is that we can restrict the set of $\mathcal{H}$ and $\mathcal{H}'$ we compute the expectation over by using expressions such as $\E[|E(\mathcal{H})\cap E(\mathcal{H}')|\1\{\mathcal{H}\in \mathcal{A}\}\1\{\mathcal{H}'\in\mathcal{A}'\}]$ for sets $\mathcal{A}$ and $\mathcal{A}'$ that contain $\mathcal{H}$ and $\mathcal{H}'$, respectively, with high probability. We use the following lemma for this approach.

\begin{lemma}
\label{lemma:highprob}
Suppose $\mathcal{U}_n\subset\{0,1\}^{\binom{[n]}{d}}$, $n\geq 1$ satisfy $\Pr[\mathcal{H}\in\mathcal{U}_n]=o_n(1)$. Then
\[
\E_{\rhg,\rhg'}[|E(\rhg)\cap E(\rhg')|\ind{\mathcal{H}\in\mathcal{U}_n}] = o_n\left(p\binom{n}{d}\right).
\]
\end{lemma}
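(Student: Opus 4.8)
The plan is to bound the quantity $\E_{\rhg,\rhg'}[|E(\rhg)\cap E(\rhg')|\ind{\mathcal{H}\in\mathcal{U}_n}]$ by conditioning on the projection $\pg$ and exploiting the conditional independence of $\mathcal{H}$ and $\mathcal{H}'$. First I would write
\[
\E_{\rhg,\rhg'}[|E(\rhg)\cap E(\rhg')|\ind{\mathcal{H}\in\mathcal{U}_n}] = \binom{n}{d}\sum_{G\in\mathcal{G}}\Pr[\pg=G]\,\Pr[[d]\in\mathcal{H},\ \mathcal{H}\in\mathcal{U}_n\mid\pg=G]\cdot\Pr[[d]\in\mathcal{H}'\mid\pg=G],
\]
which follows by linearity of expectation over $h\in\binom{[n]}{d}$, symmetry (so every $h$ contributes the same as $h=[d]$), and the Markov chain $\mathcal{H}\to\pg\to\mathcal{H}'$, which lets the indicator $\ind{\mathcal{H}\in\mathcal{U}_n}$ and the event $\{[d]\in\mathcal{H}'\}$ decouple given $\pg$. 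Note $\mathcal{H}$ and $\mathcal{H}'$ have the same conditional law given $\pg$, so $\Pr[[d]\in\mathcal{H}'\mid\pg=G]=\Pr[[d]\in\mathcal{H}\mid\pg=G]\le 1$, and hence the sum is at most
\[
\binom{n}{d}\sum_{G\in\mathcal{G}}\Pr[\pg=G]\,\Pr[[d]\in\mathcal{H},\ \mathcal{H}\in\mathcal{U}_n\mid\pg=G] = \binom{n}{d}\,\Pr[[d]\in\mathcal{H},\ \mathcal{H}\in\mathcal{U}_n].
\]

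Next I would bound $\Pr[[d]\in\mathcal{H},\ \mathcal{H}\in\mathcal{U}_n]$. The cleanest route is a change-of-measure / symmetrization argument: conditioning $\mathcal{H}$ on the event $[d]\in\mathcal{H}$ changes only the single coordinate indexed by $[d]$, so $\law(\mathcal{H}\mid [d]\in\mathcal{H})$ is mutually absolutely continuous with $\law(\mathcal{H})$ in a very controlled way. Concretely, for any event $\mathcal{A}$ one has $\Pr[\mathcal{H}\in\mathcal{A}\mid [d]\in\mathcal{H}] = \Pr[\mathcal{H}^{+[d]}\in\mathcal{A}]$, where $\mathcal{H}^{+[d]}$ is $\mathcal{H}$ with the coordinate $[d]$ forced to $1$. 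Thus $\Pr[[d]\in\mathcal{H},\ \mathcal{H}\in\mathcal{U}_n] = p\cdot\Pr[\mathcal{H}^{+[d]}\in\mathcal{U}_n]$. It then remains to show $\Pr[\mathcal{H}^{+[d]}\in\mathcal{U}_n] = o_n(1)$ given $\Pr[\mathcal{H}\in\mathcal{U}_n]=o_n(1)$; since the two laws differ in only one out of $\binom{n}{d}$ coordinates, and the total-variation distance between $\mathrm{Ber}(p)$ and the point mass at $1$ enters only through that coordinate, one gets $\Pr[\mathcal{H}^{+[d]}\in\mathcal{U}_n]\le \Pr[\mathcal{H}\in\mathcal{U}_n]/p$ directly (indeed $\Pr[\mathcal{H}\in\mathcal{U}_n]\ge \Pr[[d]\in\mathcal{H},\ \mathcal{H}\in\mathcal{U}_n] = p\,\Pr[\mathcal{H}^{+[d]}\in\mathcal{U}_n]$), which on its own is not enough. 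So instead I would simply use the trivial-but-sufficient bound $\Pr[[d]\in\mathcal{H},\ \mathcal{H}\in\mathcal{U}_n]\le\Pr[\mathcal{H}\in\mathcal{U}_n] = o_n(1)$, giving
\[
\E_{\rhg,\rhg'}[|E(\rhg)\cap E(\rhg')|\ind{\mathcal{H}\in\mathcal{U}_n}] \le \binom{n}{d}\cdot o_n(1),
\]
which is $o_n\!\left(\binom{n}{d}\right)$ but \emph{not} obviously $o_n\!\left(p\binom{n}{d}\right)$, since $p\to 0$.

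This gap is the main obstacle, and it is where the decoupling via the Markov chain must be used more carefully: rather than bounding $\Pr[[d]\in\mathcal{H}'\mid\pg=G]\le 1$, I should keep it. The point is that $\Pr[[d]\in\mathcal{H}\mid\pg=G]$ is typically of order $p$ (it equals $p$ unless $[d]$ is a clique in $G$ that could be "explained away"), so the product structure buys back the factor of $p$. Precisely, I would split the outer sum according to whether $\binom{[d]}{2}\subseteq E(G)$: if not, then $\Pr[[d]\in\mathcal{H}\mid\pg=G]=0$ and the term vanishes; if so, then I bound $\Pr[[d]\in\mathcal{H},\mathcal{H}\in\mathcal{U}_n\mid\pg=G]\le 1$ and use $\Pr[[d]\in\mathcal{H}'\mid\pg=G]$ as the small factor, so that
\[
\E_{\rhg,\rhg'}[|E(\rhg)\cap E(\rhg')|\ind{\mathcal{H}\in\mathcal{U}_n}] \le \binom{n}{d}\sum_{G\in\mathcal{G}}\Pr[\pg=G]\,\Pr[[d]\in\mathcal{H}\mid\pg=G]\,\ind{\mathcal{H}^{+[d]}\text{-type restricted to }\mathcal{U}_n},
\]
and then Cauchy--Schwarz or Hölder across the two factors, together with $\sum_G \Pr[\pg=G]\Pr[[d]\in\mathcal{H}\mid\pg=G] = p$ and $\Pr[\mathcal{H}\in\mathcal{U}_n]=o_n(1)$, closes the bound: one factor contributes $p$, the other contributes $o_n(1)$, yielding $o_n(p\binom{n}{d})$. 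I expect the clean way to state this is: condition on $[d]\in\mathcal{H}$, note this costs a factor $p$ in front, and observe the residual randomness still has $\Pr[\mathcal{H}^{+[d]}\in\mathcal{U}_n]=o_n(1)$ because forcing one coordinate to $1$ cannot turn a high-probability event into a low-probability one except through a loss of at most... — and here one does need a genuine argument that $\Pr[\mathcal{H}^{+[d]}\in\mathcal{U}_n]=o_n(1)$, which I would get from $\Pr[\mathcal{H}^{+[d]}\in\mathcal{U}_n]\le \Pr[\mathcal{H}\in\mathcal{U}_n] + \TV(\law(\mathcal{H}^{+[d]}),\law(\mathcal{H}))$ and $\TV(\law(\mathcal{H}^{+[d]}),\law(\mathcal{H}))\le \Pr[[d]\notin\mathcal{H}] = 1-p$ — which is useless — so the honest resolution is the Cauchy--Schwarz split above, treating the $o_n(1)$ smallness of $\Pr[\mathcal{H}\in\mathcal{U}_n]$ and the $p$-smallness of the marginal as two independent gains. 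That combination is the crux of the argument.
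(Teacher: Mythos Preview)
Your first displayed identity is incorrect: the set $\mathcal{U}_n$ is an arbitrary subset of $\{0,1\}^{\binom{[n]}{d}}$ and need not be permutation-invariant, so you cannot invoke symmetry to replace the sum over $h\in\binom{[n]}{d}$ by $\binom{n}{d}$ times the $h=[d]$ term. Without symmetry, bounding $\Pr[h\in\mathcal{H}'\mid\pg=G]\le 1$ and summing over $h$ and $G$ gives $\E[e(\mathcal{H})\ind{\mathcal{H}\in\mathcal{U}_n}]$, not $\binom{n}{d}\Pr[[d]\in\mathcal{H},\,\mathcal{H}\in\mathcal{U}_n]$; these differ in general.

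More importantly, none of your proposed ways to recover the missing factor of $p$ actually close the gap. The change-of-measure approach fails for the reason you note. Your final Cauchy--Schwarz idea (splitting $\Pr[[d]\in\mathcal{H},\mathcal{H}\in\mathcal{U}_n\mid\pg=G]$ against $\Pr[[d]\in\mathcal{H}\mid\pg=G]$ across $G$) yields at best a bound of order $\sqrt{p}\cdot\sqrt{\Pr[\mathcal{H}\in\mathcal{U}_n]}$, which is not $o_n(p)$ unless $\Pr[\mathcal{H}\in\mathcal{U}_n]=o_n(p)$, and that is not assumed. The paper's argument applies Cauchy--Schwarz at a different point: after the trivial bound $|E(H)\cap E(H')|\le e(H)$ and integrating out $H'$, one has exactly $\E[e(\mathcal{H})\ind{\mathcal{H}\in\mathcal{U}_n}]$, and then
\[
\E[e(\mathcal{H})\ind{\mathcal{H}\in\mathcal{U}_n}]\le \Pr[\mathcal{H}\in\mathcal{U}_n]^{1/2}\,\E[e(\mathcal{H})^2]^{1/2}=o_n(1)\cdot O\!\left(p\binom{n}{d}\right),
\]
since $e(\mathcal{H})\sim\mathrm{Binomial}(\binom{n}{d},p)$ has $\E[e(\mathcal{H})^2]=(p\binom{n}{d})^2+p(1-p)\binom{n}{d}=O((p\binom{n}{d})^2)$. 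The missing idea is that concentration of $e(\mathcal{H})$ (via its second moment) is what supplies the factor $p\binom{n}{d}$; you were trying to extract $p$ from the posterior $\Pr[h\in\mathcal{H}\mid\pg]$, which is the wrong place.
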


\begin{proof}
The Cauchy-Schwarz inequality gives that
\begin{align*}
& \sum_{\substack{H \in\mathcal{U}_n,\,H'\in\{0,1\}^{\binom{[n]}{d}}, \\ \proj(H)=\proj(H')}} \frac{\Pr[\mathcal{H}=H]\Pr[\mathcal{H}=H']}{\Pr[\proj(\mathcal{H})=\proj(H)]} |E(H)\cap E(H')|\leq\sum_{H\in\mathcal{U}_n}\Pr[\mathcal{H}=H] e(H) \\
& \leq\left(\sum_{H\in\mathcal{U}_n} \Pr[\mathcal{H}=H]\right)^{\frac{1}{2}}\left(\sum_{H\in\{0,1\}^{\binom{[n]}{d}}} \Pr[\mathcal{H}=H]e(H)^2\right)^{\frac{1}{2}} = o_n\left(p\binom{n}{d}\right).
\end{align*}
\end{proof}

First observe that $e(\mathcal{H})$ is concentrated around its mean $p\binom{n}{d}$ and \Cref{corr:varnumclique} gives that $e(\mathcal{H}_c)$ is concentrated around its mean $q\binom{n}{d}$. Suppose $\epsilon=o_n(1)$ satisfies $e(\mathcal{H})\in [(1-\epsilon)p\binom{n}{d}, (1+\epsilon)p\binom{n}{d}]$ and $e(\mathcal{H}_c)\in [(1-\epsilon) q\binom{n}{d}, (1+\epsilon)q\binom{n}{d}]$ with probability $1-o_n(1)$. Let $\mathcal{Z}$ be the set of $H\in\{0,1\}^{\binom{[n]}{d}}$ such that $e(H)\in [(1-\epsilon)p\binom{n}{d}, (1+\epsilon)p\binom{n}{d}]$ and $e(\mathcal{H}_c)\in[(1-\epsilon) q\binom{n}{d}, (1+\epsilon)q\binom{n}{d}]$. From \Cref{lemma:highprob}, 
\begin{equation}
\label{eq:intersectequiv}
\E[|E(\mathcal{H})\cap E(\mathcal{H}')|] = \E[|E(\mathcal{H})\cap E(\mathcal{H}')|\1\{\mathcal{H},\mathcal{H}'\in\mathcal{Z}\}] + o_n\left(p\binom{n}{d}\right),
\end{equation}
because $\Pr[\mathcal{H}\notin\mathcal{Z}] = \Pr[\mathcal{H}'\notin\mathcal{Z}]=o_n(1)$. This means we can focus only on the case where both $e(\mathcal{H})$ and $e(\mathcal{H}_c)$ are concentrated.

The concentration of $e(\mathcal{H})$ and $e(\mathcal{H}_c)$ is important for the proof of the key result \Cref{thm:highintersect}. Essentially, it allows us to count instances of $\mathcal{H}$ and $\mathcal{H}'$ with a particular set of overlapping hyperedges. See \Cref{subsec:highintersectproof} for the proof of the result, which we state next.

\begin{theorem}
\label{thm:highintersect}
Suppose $\delta>\frac{d-1}{d+1}$ and $M\in (0, 1)$. Then
\[
\Pr\left(\rhg,\rhg'\in \mathcal{Z}, |E(\rhg)\cap E(\rhg')|\ge Mp\binom{n}{d}\right) = o_n(1).
\]
\end{theorem}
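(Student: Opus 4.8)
The plan is a union bound over the set of common hyperedges, combined with a switching argument that controls, on a typical projection, how expensive it is to force many hyperedges into a preimage. Fix $M\in(0,1)$ and put $m=\lceil Mp\binom{n}{d}\rceil$. If $|E(\mathcal H)\cap E(\mathcal H')|\ge m$, some $m$-element set lies in $E(\mathcal H)\cap E(\mathcal H')$, so a union bound reduces the theorem to $\sum_{S\in\binom{\binom{[n]}{d}}{m}}\Pr[S\subseteq E(\mathcal H)\cap E(\mathcal H'),\ \mathcal H,\mathcal H'\in\mathcal Z]=o_n(1)$. Conditioning on $\Psi$ — which makes $\mathcal H,\mathcal H'$ i.i.d.\ with conditional law proportional to $(p/(1-p))^{e(\cdot)}$ on the preimages of $\Psi$ — the $S$-summand is $\sum_G\Pr[\Psi=G]\,\Pr[S\subseteq E(\mathcal H),\ \mathcal H\in\mathcal Z\mid\Psi=G]^2$. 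The trivial bound $\Pr[S\subseteq E(\mathcal H)\mid\Psi=G]\le1$ is useless; the point is that on a typical $\Psi$ each forced hyperedge costs a factor that tends to infinity, and this is where $\delta>\tfrac{d-1}{d+1}$ is used: by \Cref{thm:fakeedge} and \Cref{corr:asyqp}, $q=\omega_n(p)$, so $\mathcal H_c$ has vastly more $d$-cliques than $\mathcal H$ has hyperedges, i.e.\ alternative covering cliques are abundant.

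To make this precise I condition on $\mathcal H$ (since $\mathcal H'$ depends on $\mathcal H$ only through $\Psi$), so that $S$ ranges only over subsets of $E(\mathcal H)$:
\[
\Pr\big[\mathcal H,\mathcal H'\in\mathcal Z,\ |E(\mathcal H)\cap E(\mathcal H')|\ge m\big]\le\E_{\mathcal H}\Big[\1\{\mathcal H\in\mathcal Z\}\sum_{S\subseteq E(\mathcal H),\,|S|=m}\Pr\big[S\subseteq E(\mathcal H'),\ \mathcal H'\in\mathcal Z\mid\Psi=\proj(\mathcal H)\big]\Big].
\]
A moment computation (using $q=\omega_n(p)$ and \Cref{corr:varnumclique}) shows that, except with probability $\exp(-\omega_n(p\binom{n}{d}))$, the sample $\mathcal H$ lies in a ``nice'' set $\mathcal W\subseteq\mathcal Z$: on $\mathcal W$, $e(\mathcal H_c)=(1+o_n(1))q\binom{n}{d}$, and all but at most $\eta p\binom{n}{d}$ of the hyperedges $h\in E(\mathcal H)$ are \emph{flexible}, meaning that from any preimage $H'$ of $G=\proj(\mathcal H)$ with $h\in E(H')$ one obtains $\ge K_n$ distinct preimages in $\mathcal Z$ by deleting $h$ and adding a bounded (in $d$) number of other $d$-cliques of $\mathcal H_c$; here $K_n=n^{\Omega(1)}\to\infty$, and it is exactly the abundance of covering cliques, i.e.\ $q=\omega_n(p)$, that forces almost every hyperedge to be flexible with the stated failure probability. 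Since on $\mathcal Z$ the inner sum is at most $\binom{e(\mathcal H)}{m}\le e^{O_M(p\binom{n}{d})}$, the contribution of $\mathcal H\in\mathcal Z\setminus\mathcal W$ is $\exp(-\omega_n(p\binom{n}{d}))\cdot e^{O_M(p\binom{n}{d})}=o_n(1)$.

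For $\mathcal H\in\mathcal W$ with $G=\proj(\mathcal H)$, the key estimate is that for every $S\subseteq E(\mathcal H)$, $|S|=m$, consisting of flexible hyperedges, $\Pr[S\subseteq E(\mathcal H'),\ \mathcal H'\in\mathcal Z\mid\Psi=G]\le A_n^{\,|S|}$ with $A_n\asymp K_n^{-1}=o_n(1)$. Since all preimages in $\mathcal Z$ have $(p/(1-p))^{e(\cdot)}$ within a factor $(p/(1-p))^{\pm o_n(p\binom{n}{d})}$, this reduces to the analogous inequality for cardinalities, $\#\{H'\in\mathcal Z:\proj(H')=G,\ S\subseteq E(H')\}\le A_n^{(1-o_n(1))m}\cdot\#\{H'\in\mathcal Z:\proj(H')=G\}$, which one proves by a switching argument: map each preimage $H'\supseteq S$ to the $\ge K_n$ preimages obtained by deleting one flexible $h\in S$ and re-covering inequivalently, bound the multiplicity of the map using $e(\mathcal H_c)=(1+o_n(1))q\binom{n}{d}$, and iterate over the $\ge m-\eta p\binom{n}{d}=(1-o_n(1))m$ flexible members of $S$. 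Granting the estimate, the contribution of $\mathcal H\in\mathcal W$ is at most $\sum_{r\le\eta p\binom{n}{d}}\binom{\eta p\binom{n}{d}}{r}\binom{(1+o_n(1))p\binom{n}{d}}{m-r}A_n^{\,m-r}\le2^{\eta p\binom{n}{d}}\big(\tfrac{Ce}{M}A_n\big)^{(1-o_n(1))m}$, which is $o_n(1)$ once $\eta$ is fixed small, since $A_n\to0$ and $m\to\infty$; this completes the proof.

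The hard part is the key combinatorial estimate — and within it (i) defining ``flexible'' so that it simultaneously yields $\gg1$ re-coverings after a deletion and fails for only $o_n(p\binom{n}{d})$ hyperedges of a typical $\mathcal H$ with failure probability $\exp(-\omega_n(p\binom{n}{d}))$ (the two demands being reconciled precisely by $q=\omega_n(p)$), and (ii) controlling the multiplicity of the switching map, which is delicate because deleting a flexible $h$ may require adding several replacement cliques, so one must verify the image stays in $\mathcal Z$ and that from the image the deleted hyperedge and its replacements are recoverable up to bounded ambiguity, uniformly over the $(1-o_n(1))m$ simultaneous deletions. It is here that the concentration of $e(\mathcal H)$ and $e(\mathcal H_c)$ and the ``spread-out'' structure of a typical $\mathcal H\in\mathcal W$ (so that the deletions interact only boundedly) are genuinely used; the union bound, the conditioning reductions, and the binomial estimates are routine.
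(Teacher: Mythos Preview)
Your outline has two genuine gaps that prevent it from being a proof. First, you claim that ``except with probability $\exp(-\omega_n(p\binom{n}{d}))$'' the sample $\mathcal H$ lies in $\mathcal W$, and you cite \Cref{corr:varnumclique} for this. But that corollary only gives $\Var[e(\mathcal H_c)]=o_n(\E[e(\mathcal H_c)]^2)$, hence at best polynomial tails via Chebyshev, not the exponential tails you need to beat the factor $\binom{e(\mathcal H)}{m}=e^{\Theta_M(p\binom{n}{d})}$ coming from the union bound over $S$. The clique count $e(\mathcal H_c)$ is not a bounded-difference function of the independent hyperedge indicators (flipping one hyperedge can affect many cliques), so upgrading to exponential concentration is nontrivial and certainly not ``a moment computation.'' The same issue hits your flexibility claim: you need \emph{exponentially} small failure probability for the event that more than $\eta p\binom{n}{d}$ hyperedges are inflexible, and you offer no mechanism for this.

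Second, the key estimate $\Pr[S\subseteq E(\mathcal H'),\,\mathcal H'\in\mathcal Z\mid\Psi=G]\le A_n^{|S|}$ is asserted, not proved, and the switching argument you sketch has real obstacles you do not address: after deleting $h\in S$ and adding replacement cliques, the result must still lie in $\mathcal Z$ (so $e(\cdot)$ cannot drift), the reverse map must have controlled multiplicity uniformly in the \emph{product} of $(1-o_n(1))m$ switches, and the switches must be approximately independent despite sharing vertices. You concede these are ``the hard part'' and then simply grant them; that is the missing idea, not a technicality. By contrast, the paper's proof avoids all of this: it bounds $\sum_{H,H'}\Pr[\mathcal H=H]\Pr[\mathcal H=H']$ over pairs in $\mathcal Z$ with $|E(H)\cap E(H')|=I$ by directly counting choices of $H'$ given $H$ (choose the overlap inside $E(H)$ and the rest inside $E(\mathcal H_c)$), then applies Cauchy--Schwarz against the bound $\sum_{H,H'\in\mathcal Z,\,\proj(H)=\proj(H')}\frac{\Pr[\mathcal H=H]\Pr[\mathcal H=H']}{\Pr[\Psi=\proj(H)]^2}\le|\proj(\mathcal Z)|$, the latter controlled by counting graphs with at most $(1+\epsilon)\binom{d}{2}p\binom{n}{d}$ edges. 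No switching, no exponential tails, and the definition of $\mathcal Z$ is all the ``niceness'' required.
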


\begin{proof}[Proof of {\Cref{thm:apimpossible}}]
From \Cref{lemma:exactpartialequiv} and \pref{eq:intersectequiv}, it suffices to prove that 
\[
\E[|E(\mathcal{H})\cap E(\mathcal{H}')|\1\{\mathcal{H},\mathcal{H}'\in\mathcal{Z}\}] = o_n\left(p\binom{n}{d}\right).
\]
Observe that
\begin{align*}
& \E[|E(\mathcal{H})\cap E(\mathcal{H}')|\1\{\mathcal{H},\mathcal{H}'\in\mathcal{Z}\}] \\
&= \E\left[|E(\mathcal{H})\cap E(\mathcal{H}')|\1\{\mathcal{H},\mathcal{H}'\in\mathcal{Z},\,|E(\mathcal{H})\cap E(\mathcal{H}')| < Mp\binom{n}{d}\}\right] \\
& + \E\left[|E(\mathcal{H})\cap E(\mathcal{H}')|\1\{\mathcal{H},\mathcal{H}'\in\mathcal{Z},\,|E(\mathcal{H})\cap E(\mathcal{H}')| \geq Mp\binom{n}{d}\}\right] \\
& < Mp\binom{n}{d} + \Pr\left(\rhg,\rhg'\in \mathcal{Z}, |E(\rhg)\cap E(\rhg')|\ge Mp\binom{n}{d}\right)(1+\epsilon)p\binom{n}{d},
\end{align*}
since $\mathcal{H}\in\mathcal{Z}\Rightarrow e(\mathcal{H}) \leq (1+\epsilon)p\binom{n}{d}$. Using \Cref{thm:highintersect} and $\epsilon=o_n(1)$ then gives that 
\[
\E[|E(\mathcal{H})\cap E(\mathcal{H}')|\1\{\mathcal{H},\mathcal{H}'\in\mathcal{Z}\}] \leq (M+o_n(1))p\binom{n}{d}.
\]
Considering the limit as $M\rightarrow 0$ completes the proof.
\end{proof}

\subsection{Ambiguous graphs and exact recovery}
\label{subsec:ambiguous}

We now find the threshold for exact recovery. First, since the impossibility of partial recovery implies the impossibility of exact recovery, based on \Cref{thm:apimpossible}, exact recovery is impossible if $\delta>\frac{d-1}{d+1}$. Next, we explain how the exact recovery threshold is determined by the threshold for the existence of ambiguous graphs and the notion of two-connected components, which are both introduced in the paper \cite{reconstruct}. We also define an important ambiguous graph which the paper discovers, see \Cref{def:ambiguous}.

\begin{definition}
For $G\in\mathcal{G}$, we refer to a preimage $H$ of $G$ that minimizes $e(H)$ as a \textit{minimal preimage} of $G$. The graph $G$ is \textit{ambiguous} if it has at least two distinct minimal preimages. Also, the graph $G$ is \textit{weighted-ambiguous} if it has at least two distinct minimal preimages with respect to the weighted projection.
\end{definition}

Now we explain why the presence of ambiguous graphs implies the impossibility of exact recovery. Suppose that $H\in \{0,1\}^{\binom{[n]}{d}}$ and $G=\proj(H)$ is ambiguous. Then, there is a preimage $H'$ of $G$ different from $H$ with at most the same number of hyperedges. We have that $\Pr[\mathcal{H}=H] = p^{e(H)}(1-p)^{\binom{n}{d}-e(H)} \leq p^{e(H')}(1-p)^{\binom{n}{d}-e(H')} = \Pr[\mathcal{H}=H']$, since $p=o_n(1)<\frac{1}{2}$ when $n$ is sufficiently large; then, $\Pr[\mathcal{H}=H|\proj(\mathcal{H}) = G] \leq \Pr[\mathcal{H}=H'|\proj(\mathcal{H}) = G]$. So, whenever $\proj(\mathcal{H}) = G$, we will not be able to recover $\mathcal{H}$ with probability at least $\frac{1}{2}$. As discussed in \Cref{lemma:ambiguous}, the converse is also true: the absence of ambiguous graphs implies exact recovery. We will show the following ambiguous graph is the key bottleneck in exact recovery. 

\begin{definition}
\label{def:ambiguous}
Suppose $H=(V, E)$ is a $d$-uniform hypergraph such that:
\begin{itemize}
\item $V= \{u, w\}\sqcup \{v_i: 1\leq i\leq d-1\} \bigsqcup_{1\leq i\leq d-1} S_i^u\bigsqcup_{1\leq i\leq d-1} S_i^w$, where $|S_i^u|=|S_i^w|=d-2$ for $1\leq i\leq d-1$ (note the usage of disjoint unions).
\item $E$ consists of $\{u\}\cup \{v_i: 1\leq i\leq d-1\}$, $\{u, v_i\}\cup S_i^u$ for $1\leq i\leq d-1$, and $\{w, v_i\}\cup S_i^w$ for $1\leq i\leq d-1$.
\end{itemize}
Then, define $G_{a,d}:=\proj(H)$.
\end{definition}

See \Cref{fig:Ga3} for an illustration when $d=3.$ Observe that $G_{a,d}$ is an ambiguous graph, see \cite[Lemma 27]{reconstruct}, which explains that if $H'$ is $H$ with the hyperedge $\{u\}\cup\{v_i: 1\leq i\leq d-1\}$ replaced with $\{w\}\cup\{v_i: 1\leq i\leq d-1\}$, then $H$ and $H'$ are two distinct minimal preimages of $G_{a,d}$.

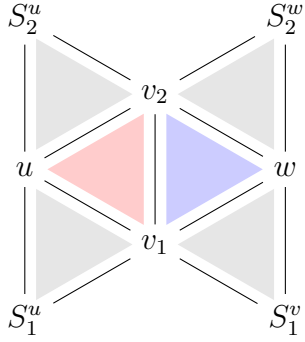
\begin{figure}[!ht]
\begin{center}
\begin{tikzpicture}[x=1cm, y=1cm]
\node (A) at (0,0) {$u$};
\node (B) at (3^0.5, -1) {$v_1$};
\node (C) at (2*3^0.5, 0) {$w$};
\node (D) at  (3^0.5, 1) {$v_2$};
\node (E) at (0, 2) {$S_2^u$};
\node (F) at (0, -2) {$S_1^u$};
\node (G) at (2*3^0.5, -2) {$S_1^v$};
\node (H) at (2*3^0.5, 2) {$S_2^w$};
\draw (A) -> (B);
\draw (A) -> (D);
\draw (A) -> (E);
\draw (A) -> (F);
\draw (B) -> (C);
\draw (B) -> (D); 
\draw (B) -> (F);
\draw (B) -> (G);
\draw (C) -> (D);
\draw (C) -> (H);
\draw (C) -> (G);
\draw (D) -> (E);
\draw (D) -> (H);
\fill[fill=red!20] ([shift=({0.3,0})] A.center)--([shift=({-0.15, 0.15*3^0.5})] B.center)--([shift=({-0.15, -0.15*3^0.5})] D.center) -- cycle;
\fill[fill=blue!20] ([shift=({-0.3,0})] C.center)--([shift=({0.15, 0.15*3^0.5})] B.center)--([shift=({0.15, -0.15*3^0.5})] D.center) -- cycle;
\fill[fill = gray!20] ([shift=({0.15, 0.15*3^0.5})] A.center)--([shift=({-0.3, 0})] D.center)--([shift=({0.15, -0.15*3^0.5})] E.center) -- cycle;
\fill[fill = gray!20] ([shift=({0.15, -0.15*3^0.5})] A.center)--([shift=({-0.3, 0})] B.center)--([shift=({0.15, 0.15*3^0.5})] F.center) -- cycle;
\fill[fill = gray!20] ([shift=({-0.15, 0.15*3^0.5})] C.center)--([shift=({0.3, 0})] D.center)--([shift=({-0.15, -0.15*3^0.5})] H.center) -- cycle;
\fill[fill = gray!20] ([shift=({-0.15, -0.15*3^0.5})] C.center)--([shift=({0.3, 0})] B.center)--([shift=({-0.15, 0.15*3^0.5})] G.center) -- cycle;
\end{tikzpicture}\vspace{-7mm}
\end{center}
\caption{\small This figure displays $G_{a,3}$. One minimal preimage consists of the gray and red triangles while the other consists of the gray and blue triangles. For $d>3$ the red and blue triangles are replaced with cliques of size $d$, the sets $S_i^u$, $S_i^w$ change to $(d-2)$-cliques and their number changes from 2 to $d-1$.}\label{fig:Ga3}
\end{figure}

We state the key lemma for exact recovery after introducing the notion of \textit{two-connected components}.

\begin{definition}
Suppose $H$ is a $d$-uniform hypergraph. The \textit{two-connected components} of $H$ are the connected components of the graph with vertex set $E(H)$ and edge set $\{\{h_1, h_2\}: h_1, h_2\in E(H),\, |h_1\cap h_2|\geq 2\}$.
\end{definition}

\begin{lemma}[{\cite[Lemma 19]{reconstruct}}]
\label{lemma:ambiguous}
Suppose $\delta<\frac{d-1}{d+1}$. If for all finite ambiguous graphs $G_a$,
\[
\Pr[\Cli(G_a) \text{ is a two-connected component of $\mathcal{H}_c$}] = o_n(1),
\]
then $\Pr[\mathcal{A}^*(\pg)=\mathcal{H}] = 1 - o_n(1)$.
\end{lemma}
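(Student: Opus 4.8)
The plan is to show that, under the hypotheses of the lemma, with probability $1-o_n(1)$ the planted hypergraph $\mathcal{H}$ is the \emph{unique} minimal preimage of $\pg=\proj(\mathcal{H})$; since $\mathcal{A}^*(\pg)$ is by definition some minimal preimage of $\pg$, uniqueness forces $\mathcal{A}^*(\pg)=\mathcal{H}$. The structural fact driving everything is that the minimal preimage problem decomposes along the two-connected components of $\mathcal{H}_c$: any two $d$-cliques of $\pg$ containing a common edge intersect in at least two vertices, so for a fixed edge $e\in E(\pg)$ all $d$-cliques of $\pg$ that can cover $e$ lie in one two-connected component $C$ of $\mathcal{H}_c$. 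Writing $G_C\subseteq\pg$ for the subgraph spanned by the cliques in $C$, one checks that $C=\Cli(G_C)$, that $\proj(\mathcal{H}\cap C)=G_C$ (every edge of $G_C$ lies in $\pg$ hence is covered by a genuine hyperedge of $\mathcal{H}$, which then lies in $C$), and that $H\subseteq\Cli(\pg)$ is a (minimal) preimage of $\pg$ if and only if $H\cap C$ is a (minimal) preimage of $G_C$ for every component $C$. Hence $\mathcal{H}$ is the unique minimal preimage of $\pg$ exactly when, for every two-connected component $C$ of $\mathcal{H}_c$: (a) $\mathcal{H}\cap C$ is a minimal preimage of $G_C$, and (b) $G_C$ is not ambiguous.

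For (a): if $\mathcal{H}\cap C$ is a non-minimal preimage then some $h\in\mathcal{H}$ is \emph{redundant}, meaning every one of its $\binom d2$ edges is also contained in another hyperedge of $\mathcal{H}$. By independence of coordinates, $\Pr[h\text{ redundant}]=p\cdot\Pr[\text{all edges of }h\text{ covered by }\mathcal{H}\setminus\{h\}]=\tfrac{p(q-p)}{1-p}$, so by \Cref{corr:asyqp} (which gives $q-p=o_n(p)$ when $\delta<\frac{d-1}{d+1}$) the expected number of redundant $h$ is $(1+o_n(1))\,p(q-p)\binom nd=o_n\!\big(p^2\binom nd\big)=o_n(1)$, the binding case being $d=3$ where $p^2\binom nd=\Theta(n^{2\delta-1})$. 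For (b) I will combine the hypothesis with a bound on component sizes: a first-moment count over two-connected ``trees'' of $k$ cliques of $\mathcal{H}_c$, which live on at most $(d-2)k+2$ vertices, shows that w.h.p.\ every two-connected component of $\mathcal{H}_c$ is supported on at most $K=K(d,\delta)$ vertices. For trees consisting only of genuine hyperedges this needs only $\delta<1$ — the expected count has exponent $k(\delta-1)+2$, negative once $k$ is a large enough constant and summable over $k$ — and the contribution of fake cliques is absorbed by using \Cref{corr:asyqp}/\Cref{thm:fakeedge} to see that fakes are sufficiently rare when $\delta<\frac{d-1}{d+1}$.

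With the component-size bound in hand, only finitely many isomorphism types of ambiguous graph (those on at most $K$ vertices) can occur as some $G_C$, and for each such type $G_a$ the hypothesis gives $\Pr[\Cli(G_a)\text{ is a two-connected component of }\mathcal{H}_c]=o_n(1)$; a union bound over this finite family yields $\Pr[\exists\,C:\ G_C\text{ ambiguous}]=o_n(1)$. Together with the $o_n(1)$ bounds on the existence of a redundant hyperedge and of an oversized two-connected component, all three ways for $\mathcal{H}$ to fail to be the unique minimal preimage of $\pg$ have probability $o_n(1)$, giving $\Pr[\mathcal{A}^*(\pg)=\mathcal{H}]=1-o_n(1)$. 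The step I expect to be the main obstacle is the component-size bound in the presence of fake hyperedges: the fakes of $\mathcal{H}_c$ are neither mutually independent nor independent of $\mathcal{H}$, and a single fake clique can a priori splice several small two-connected components of $\mathcal{H}$ into a large one, so the first-moment enumeration must be organized carefully — e.g.\ by charging each fake clique in a hypothetical large tree to a bounded certificate of genuine hyperedges witnessing it, noting that these witnesses themselves lie in the same component — so that the exponent stays negative throughout the range $\delta<\frac{d-1}{d+1}$.
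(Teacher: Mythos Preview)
The paper does not prove this lemma; it is quoted verbatim from \cite[Lemma 19]{reconstruct}, so there is no in-paper proof to compare against. Your overall architecture---decompose the minimal-preimage problem along two-connected components of $\mathcal{H}_c$, then show (a) $\mathcal{H}$ is minimal on each component and (b) no component $G_C$ is ambiguous---is exactly the approach of \cite{reconstruct}. Your structural claims (that the edge sets $G_C$ partition $E(\pg)$, that $C=\Cli(G_C)$, that $\proj(\mathcal{H}\cap C)=G_C$, and that preimages factor componentwise) are all correct, and your first-moment bound for redundant hyperedges in (a) is right: the expected count is $\Theta\!\big(n^{1+\delta+\binom{d}{2}(\delta-1)}\big)$, and one checks $\frac{d-1}{d+1}\le\frac{\binom{d}{2}-1}{\binom{d}{2}+1}$ for all $d\ge 3$.

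You have correctly identified the one genuine technical burden: bounding the size of two-connected components of $\mathcal{H}_c$ (not just of $\mathcal{H}$) so that only finitely many isomorphism types of ambiguous $G_C$ need to be union-bounded. Your sketch---charge each fake clique to a bounded family of genuine witnesses already in the component---points in the right direction but is not yet a proof. A cleaner way to organize this, and the one used in \cite{reconstruct}, is to do the first-moment count directly on two-connected trees of cliques in $\mathcal{H}_c$: for a fixed $d$-set $S$, the expected number of $d$-cliques of $\pg$ sharing at least two vertices with $S$ is, up to constants, $\sum_{m\ge 2}\binom{d}{m}\binom{n-d}{d-m}\Pr[\text{remaining }\binom{d}{2}-\binom{m}{2}\text{ edges lie in }\pg]$, and for $\delta<\frac{d-1}{d+1}$ the dominant contribution to each summand is the genuine-hyperedge term $p$, giving a total of $O(n^{d-2}p)=O(n^{\delta-1})=o(1)$. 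The dependence you worry about is handled by upper-bounding the probability that a two-connected tree on $k$ cliques appears, conditioning only on the overlap pattern (not on which cliques are fake), and using the projection-cover bounds of \Cref{subsec:projcover} rather than treating fakes as an afterthought. Once that branching-type estimate is in place, the rest of your argument goes through as written.
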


In order to analyze the probability of an ambiguous graph being a two-connected component, we analyze the probabilities of the graph's minimal preimages. The following lemma is useful for this purpose; $m(\cdot)$ is defined in the notation section.

\begin{lemma}[{\cite[Lemma 17]{reconstruct}}]
\label{lemma:threshold}
Suppose $K$ is a fixed $d$-uniform hypergraph. Then,
\[
\Pr[K\subset \mathcal{H}] = \begin{cases}
o_n(1) & \text{if } \delta < d-1-\frac{1}{m(K)},
\\ 1-o_n(1) & \text{if } \delta > d-1-\frac{1}{m(K)}, 
\\ \Omega_n(1), & \text{if } \delta = d-1-\frac{1}{m(K)},
\end{cases}
\]
where $\Pr[K\subset\mathcal{H}]$ is the probability that $K$ appears as an induced subgraph of $\mathcal{H}$.
\end{lemma}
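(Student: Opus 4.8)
This lemma is the $d$-uniform hypergraph analogue of the classical first/second-moment threshold for the appearance of a fixed subgraph in $G(n,p)$, so the plan is to run exactly that machinery while tracking how the law $p=(c+o_n(1))n^{-d+1+\delta}$ enters the exponents. The key bookkeeping identity is that for a fixed sub-hypergraph $F\leq K$ the expected number of labelled copies of $F$ in $\mathcal{H}$ (injections $V(F)\to[n]$ whose image carries all $e(F)$ hyperedges of $F$) is $\Theta(n^{v(F)}p^{e(F)})=\Theta(n^{\,v(F)-e(F)(d-1-\delta)})$, with implied constant depending only on $F$; this is $o_n(1)$ when $\delta<d-1-\tfrac1{\alpha(F)}$ and $\omega_n(1)$ when $\delta>d-1-\tfrac1{\alpha(F)}$. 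The threshold for $K$ itself is the largest of these individual thresholds, namely $d-1-\tfrac1{m(K)}$, attained at the densest $F\leq K$, since $m(K)=\max_{F\leq K}\alpha(F)$. I would work throughout with non-induced copy counts: as $p=o_n(1)$ and a fixed $K$ spans only $O(1)$ ``forbidden'' $d$-sets, passing to the induced count changes every expectation below by a factor $1\pm o_n(1)$ and the number of non-induced copies that fail to be induced has expectation only $O(p)$ times the total, so (combined with the second-moment bound below) the thresholds are unaffected.

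For the subcritical direction $\delta<d-1-\tfrac1{m(K)}$, pick $F^{\ast}\leq K$ with $\alpha(F^{\ast})=m(K)$; by the identity above the expected number of copies of $F^{\ast}$ is $o_n(1)$, so Markov's inequality shows $\mathcal{H}$ contains no copy of $F^{\ast}$ with probability $1-o_n(1)$, and since every copy of $K$ contains a copy of $F^{\ast}$ this gives $\Pr[K\subset\mathcal{H}]=o_n(1)$.

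For $\delta\geq d-1-\tfrac1{m(K)}$, let $X$ count labelled copies of $K$ and use the second moment. Here every $F\leq K$ satisfies $\alpha(F)\leq m(K)$, hence $d-1-\tfrac1{\alpha(F)}\leq d-1-\tfrac1{m(K)}\leq\delta$, so no sub-hypergraph copy-count tends to $0$; in particular $\E[X]=\Omega_n(1)$ (and $\omega_n(1)$ when $\delta$ strictly exceeds the threshold). The plan is to expand $\E[X^2]$ by grouping ordered pairs of embeddings $(\phi,\psi)$ by the isomorphism type of their overlap $J\leq K$, the sub-hypergraph carried by the hyperedges shared by the two images, so that the number of pairs of a given type is $\Theta(n^{v(K)}\cdot n^{v(K)-v(J)})$ and each contributes $p^{\,2e(K)-e(J)}$, whence
\[
\frac{\E[X^2]}{\E[X]^2}=\sum_{J}\Theta(n^{\,e(J)(d-1-\delta)-v(J)}),
\]
a finite sum over overlap types. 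The empty overlap contributes $\Theta(1)$; an edgeless overlap on $v(J)\geq1$ vertices contributes $\Theta(n^{-v(J)})=o_n(1)$; and an overlap with $e(J)\geq1$ contributes $\Theta(n^{\,e(J)(d-1-\delta)-v(J)})$, where $\alpha(J)\leq m(K)$ forces the exponent to be $\leq0$ at $\delta=d-1-\tfrac1{m(K)}$ and $<0$ for larger $\delta$. Hence if $\delta>d-1-\tfrac1{m(K)}$ then $\E[X^2]=(1+o_n(1))\E[X]^2$ and Chebyshev's inequality gives $\Pr[X>0]=1-o_n(1)$; and if $\delta=d-1-\tfrac1{m(K)}$ then $\E[X^2]=\Theta(\E[X]^2)$ with $\E[X]=\Omega_n(1)$, and the Paley--Zygmund inequality gives $\Pr[X>0]\geq\E[X]^2/\E[X^2]=\Omega_n(1)$.

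I expect the only genuinely delicate point to be the second-moment bookkeeping: verifying that the number of ordered pairs of embeddings of $K$ with a prescribed overlap type $J$ really is $\Theta(n^{2v(K)-v(J)})$ in the hypergraph setting (two images can share a hyperedge without sharing all of its incident structure, so one must be careful about what an overlap type is and confirm there are only finitely many) and that these types are exhausted by the three cases above. This is routine but slightly fiddlier than the graph case; everything else is the standard first/second-moment dichotomy. Since the statement is quoted as \cite[Lemma 17]{reconstruct}, in the final write-up I would simply cite it, noting as in the earlier remark that changing $p$ by a constant factor leaves the argument intact.
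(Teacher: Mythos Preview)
The paper does not prove this lemma; it is stated with attribution to \cite[Lemma~17]{reconstruct} and, per the remark following the description of the random hypergraph model, the proof is deliberately not repeated. Your first-moment/second-moment argument is the standard proof of such subgraph-appearance thresholds (the $d$-uniform analogue of the classical Bollob\'as result for $G(n,p)$) and is correct: the densest sub-hypergraph $F^\ast$ governs the first-moment side, and the overlap decomposition of $\E[X^2]/\E[X]^2$ with Paley--Zygmund at the threshold handles the other two cases; your treatment of the induced/non-induced distinction via the $(1-p)^{O(1)}=1-o_n(1)$ correction is also fine. As you yourself note, a citation suffices in the final write-up.
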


Observe that for the minimal preimage $H$ of $G_{a,d}$, we have that $d-1-\frac{1}{m(H)}=\frac{2d-4}{2d-1}$. So, $\frac{2d-4}{2d-1}$ is the threshold at which $G_{a,d}$ appears as an induced subgraph. In fact, we will prove that $G_{a,d}$ minimizes this threshold over ambiguous graphs when $d=3,4$, which is included in the following result.

\begin{theorem}
\label{thm:ambiguous_threshold}
Suppose $d\geq 3$ and $h$ is a preimage of an ambiguous graph. Then, $d-1-\frac{1}{m(h)}\geq \frac{2d-4}{2d-1}$.
\end{theorem}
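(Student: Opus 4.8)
The inequality $d-1-\frac{1}{m(h)}\ge\frac{2d-4}{2d-1}$ is equivalent to $\frac{1}{m(h)}\le d-1-\frac{2d-4}{2d-1}=\frac{2d^2-5d+5}{2d-1}$, i.e. to $m(h)\ge c_d$ where $c_d:=\frac{2d-1}{2d^2-5d+5}$. Since $m(h)=\max_{K\le h}\frac{e(K)}{v(K)}$, it suffices to exhibit one subhypergraph $K\le h$ with $(2d-1)v(K)\le(2d^2-5d+5)e(K)$. It is convenient to track the potential $\mu(K):=(2d-1)v(K)-(2d^2-5d+5)e(K)$; so the goal is $\mu(K)\le0$. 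A single hyperedge has $\mu=(2d-1)d-(2d^2-5d+5)=4d-5>0$, and adjoining a hyperedge contributing $t$ genuinely new vertices changes $\mu$ by $(2d-1)t-(2d^2-5d+5)$, which is negative exactly when $t\le d-2$, i.e. when the new hyperedge meets the current vertex set in at least two vertices, and equals $+(2d-4)$ when $t=d-1$. So $K$ must be assembled mostly from hyperedges overlapping in $\ge2$ vertices, with only a tightly controlled number of ``single-vertex attachments''. Also note that it suffices to treat irredundant preimages: passing from $h$ to an irredundant sub-preimage only removes hyperedges, hence can only decrease $m$, so the bound for the sub-preimage implies it for $h$.

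\textbf{Extracting the conflict configuration.} Assume $h$ is irredundant, so every hyperedge of $h$ has a private edge. Since $G$ is ambiguous it has two distinct minimal preimages; $h$ differs from at least one of them, and after relabeling we fix a minimal preimage $H_0$ with $h\neq H_0$. Set $\mathcal C:=E(h)\cap E(H_0)$, $\mathcal F:=E(h)\setminus E(H_0)$, $\mathcal G:=E(H_0)\setminus E(h)$, and let $W$ be the set of edges of $G$ not covered by $\bigcup\mathcal C$. Then $\mathcal F\neq\emptyset$ (else $h\subseteq H_0$, forcing $h=H_0$ by irredundancy of $H_0$), $\mathcal G\neq\emptyset$ (else $H_0\subseteq h$, forcing $h=H_0$ by irredundancy of $h$), and $W\neq\emptyset$ (else $\mathcal C\subsetneq E(H_0)$ already covers $G$, contradicting minimality of $H_0$). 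Moreover $\mathcal F$ and $\mathcal G$ are disjoint $d$-clique covers of $W$, with $\mathcal F$ irredundant for $W$ (the private edges inherited from $h$ lie in $W$). For each $f\in\mathcal F$ pick a private edge $e_f\subseteq f$; since $\mathcal G$ covers $e_f$ and $f\notin\mathcal G$, there is $g\in\mathcal G$ with $e_f\subseteq f\cap g$, so $|f\cap g|\ge2$. This disjoint-re-cover structure, together with irredundancy, is the engine that forces overlaps.

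\textbf{The dense subhypergraph and the core claim.} I would take $K$ to be the \emph{$h$-closure of $\mathcal F$}: starting from $\mathcal F$, repeatedly adjoin a hyperedge of $h$ covering any edge of $G$ incident to the current vertex set that is not yet covered; let $K$ be the (finite) result. For the canonical example $G=G_{a,d}$ and $\mathcal F=\{\{u\}\cup\{v_i\}\}$, every required edge $\{u,s\}$ with $s\in S_i^u$, and then every edge $\{v_i,s'\}$ with $s'\in S_i^w$, has a \emph{unique} $d$-clique of $G_{a,d}$ containing it, so the closure is forced and equals precisely the minimal preimage $H$ of \Cref{def:ambiguous}, for which the count in the remark before this theorem gives $v(H)=2d^2-5d+5$, $e(H)=2d-1$, hence $\mu(K)=0$. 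The core claim to establish in general is $\mu(K)\le0$. The argument should decompose $K$ along its two-connected-component structure (as in \Cref{subsec:ambiguous}): within each two-connected block every hyperedge after the first meets the block in $\ge2$ vertices, so a block of $b$ hyperedges contributes at most $4d-5-3(b-1)$ to $\mu$; blocks are glued along cut vertices, and one must bound the number and the ``cost'' of such gluings — in $G_{a,d}$ there is exactly one single-vertex attachment (the first $w$-side hyperedge, costing $+(2d-4)$), and the accumulated surplus from the fan of $u$-side hyperedges exactly pays for it, leaving $\mu=0$. The irredundancy of $\mathcal F$ and the existence of the disjoint cover $\mathcal G$ are used to forbid ``cheap'' configurations (a lone clique with no re-covering fan), which is what rules out $\mu>0$.

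\textbf{Main obstacle.} The hard part is precisely the global bookkeeping on how loosely the two-connected blocks of $K$ can be strung together along cut vertices while still arising from an ambiguous $G$. The dangerous case is when the ``movable'' clique $f$ and its alternative $g$ share many vertices (large $|f\cap g|$): then only the few edges of $f$ meeting $f\setminus g$ need re-covering by $\mathcal C$, and a priori these re-covering cliques can be spread out over many fresh vertices, so the naive subhypergraph $\{f\}\cup(\text{re-cover of }f)$ can be strictly sparser than $c_d$. One must show that in such a situation either the closure picks up enough additional forced overlap, or $G$ contains a genuinely denser ambiguous sub-configuration, with $G_{a,d}$ emerging as the unique extremal shape; I expect this to require first settling the two-connected case (a single movable clique with its re-covering fan) and then a discharging argument reducing the general case to a gluing of such pieces, using quantitatively that $\mathcal G$ is a $d$-clique cover of $W$ of size at most $|\mathcal F|$ and that $H_0$ is a \emph{minimum} cover of $G$.
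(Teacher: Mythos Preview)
Your conflict-configuration setup ($\mathcal C,\mathcal F,\mathcal G$, private edges) matches the paper's, and the potential $\mu$ is the right bookkeeping device, but the proposal is not a proof: you explicitly leave the core claim $\mu(K)\le 0$ open and defer to an unperformed discharging argument. The obstacle you name is genuine and is exactly where the scheme stalls. The block estimate $\mu\le 4d-5-3(b-1)$ is correct but too weak on its own---a block needs $b\ge\lceil(4d-2)/3\rceil$ hyperedges to reach $\mu\le0$, and nothing in your closure construction forces blocks to be that large or limits the number of single-vertex gluings (each costing $+(2d-4)$). The ``$h$-closure'' itself is not clearly well-controlled: which incident edges must be re-covered, and why does the process not sprawl into a sparse tree-like structure?

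The paper avoids building $K$ directly. Its key reduction is \Cref{ratio}: if
\[
\frac{(d-1)|E_h|-|U|+|\mathcal P|}{|E_h|+|\mathcal P|}\ \ge\ \gamma,
\]
where $E_h$ is your $\mathcal F$, $U$ is the vertex set of $\mathcal F\cup\mathcal G$, and $\mathcal P$ is the symmetric difference of their projected edge sets, then $d-1-\tfrac{1}{m(h)}\ge\min(\gamma,1)$. The lemma internalizes the covering of $\mathcal P$ by hyperedges of $I$ through the convex relaxation $|u|\leftrightarrow\binom{|u|}{2}$, so no explicit closure is needed; what remains is to bound the displayed ratio by summing vertex degrees over $U$. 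For $d\ge5$ (\Cref{thm:general}) one assumes the ratio bound fails, introduces a per-vertex weight $f(v)=\frac{d-1-\gamma}{2d}(d_h(v)+d_g(v))+\frac{1-\gamma}{2}(d_h^*(v)+d_g^*(v))$, and shows that failure forces an $i_h\in E_h$, $i_g\in E_g$ with $|i_h\setminus i_g|=1$ whose $d-1$ common vertices all have $d_h=d_g=1$---a literal $G_{a,d}$ subconfiguration---after which a second, local application of \Cref{ratio} gives exactly $\frac{2d-4}{2d-1}$. For $d=4$ (\Cref{thm:min4}) the ratio bound alone is insufficient and a substantial case analysis on how the six edges of a single $f\in E_h$ get covered is carried out; $d=3$ is quoted from prior work. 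The idea your proposal is missing is precisely this ratio reduction, which replaces global closure bookkeeping by a quantity controllable through local degree sums.
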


\begin{proof}
The $d=3$ case is resolved in \cite[Appendix D]{reconstruct}. The result is proved in a slightly more general setting for $d=4$ and $d\geq 5$ in the sense that the projected graph does not need to be ambiguous in \Cref{thm:min4} and \Cref{thm:general}, respectively.
\end{proof}

Essentially, this result and the partial recovery threshold tells us that exact recovery is possible if and only if $\delta<\min(\frac{d-1}{d+1}, \frac{2d-4}{2d-1})$; this is exactly the contents of \Cref{thm:mainexact}. For more details of the proof as well as the argument for weighted exact recovery, see \Cref{sec:exactproof}.

\section{Future directions}
In this paper we essentially fully close the exact and partial recovery questions for projections of hypergraphs. There are a number of interesting directions for future exploration. A few of these are:
\begin{enumerate}
    \item A potential future direction is to study the same recovery questions but for non-uniform hypergraphs, i.e., when the hyperedges have differing sizes. A natural model is to include a hyperedge on each given subset $S\subset [n]$ independently with probability $p_{|S|}$ depending only on the size of $S$. For each $d\geq 2$, we might take $p_d \propto (1+o_n(1))n^{-d+1+\delta_d}$, where $\delta_d\in [-\infty, d-1)$. 
    \item A specific instance of the above non-uniform hyperedge problem arises in the context of \textit{random intersection graphs} (RIG). Each node $i\in [n]$ is associated with an independent random subsets $S_i$ of $[m]$ such that $j\in [m]$ is an element of $S_i$ independently with probability $p$. The RIG includes edge $\{u,v\}$ if and only if $S_u$ and $S_v$ are not disjoint. For $j\in [m]$, define $C_j$ to be the set of $i\in [n]$ such that $j\in S_i$, and note that the graph has a clique on each $C_j$. When can one recover the sets $C_j$, given the graph? \cite{randomintersect} finds a phase transition for the total variation convergence of an RIG to Erd\"os-R\'enyi. We expect a phase transition to occur for the feasibility of recovering the cliques $C_j$.

    \item One might aim to recover a hypergraph from a \emph{noisy} unweighted projection; for example, we could flip each of the edges of $\text{Proj}(\mathcal{H})$ with some fixed probability. We expect that noise will lower the partial recovery threshold for $\delta$. What is the new recovery threshold?
\end{enumerate}

\bibliography{references.bib}

\begin{thebibliography}{21}
\providecommand{\natexlab}[1]{#1}
\providecommand{\url}[1]{\texttt{#1}}
\expandafter\ifx\csname urlstyle\endcsname\relax
  \providecommand{\doi}[1]{doi: #1}\else
  \providecommand{\doi}{doi: \begingroup \urlstyle{rm}\Url}\fi

\bibitem[Abbe and Montanari(2015)]{planted_csp}
Emmanuel Abbe and Andrea Montanari.
\newblock Conditional random fields, planted constraint satisfaction, and entropy concentration.
\newblock \emph{Theory of Computing}, 11\penalty0 (17):\penalty0 413--443, 2015.

\bibitem[Angelini et~al.(2015)Angelini, Caltagirone, Krzakala, and Zdeborov{\'a}]{angelini2015spectral}
Maria~Chiara Angelini, Francesco Caltagirone, Florent Krzakala, and Lenka Zdeborov{\'a}.
\newblock Spectral detection on sparse hypergraphs.
\newblock In \emph{2015 53rd Annual Allerton Conference on Communication, Control, and Computing (Allerton)}, pages 66--73. IEEE, 2015.

\bibitem[Barbier et~al.(2019)Barbier, Krzakala, Macris, Miolane, and Zdeborová]{glm}
Jean Barbier, Florent Krzakala, Nicolas Macris, Léo Miolane, and Lenka Zdeborová.
\newblock Optimal errors and phase transitions in high-dimensional generalized linear models.
\newblock \emph{Proceedings of the National Academy of Sciences}, 116\penalty0 (12):\penalty0 5451--5460, 2019.

\bibitem[Brennan et~al.(2020)Brennan, Bresler, and Nagaraj]{randomintersect}
Matthew Brennan, Guy Bresler, and Dheeraj~M. Nagaraj.
\newblock Phase transitions for detecting latent geometry in random graphs.
\newblock \emph{Probability Theory and Related Fields}, 178:\penalty0 1215 -- 1289, September 2020.

\bibitem[Bresler et~al.(2024)Bresler, Guo, and Polyanskiy]{reconstruct}
Guy Bresler, Chenghao Guo, and Yury Polyanskiy.
\newblock {Thresholds for Reconstruction of Random Hypergraphs From Graph Projections}.
\newblock In \emph{Proceedings of Thirty Seventh Conference on Learning Theory}, volume 247 of \emph{Proceedings of Machine Learning Research}, pages 632--647. PMLR, 2024.

\bibitem[Chen et~al.(2024)Chen, Fang, Lin, and Liu]{relationallearning}
Yang Chen, Cong Fang, Zhouchen Lin, and Bing Liu.
\newblock Relational learning in pre-trained models: a theory from hypergraph recovery perspective.
\newblock In \emph{Proceedings of the 41st International Conference on Machine Learning}, ICML'24. JMLR.org, 2024.

\bibitem[Dumitriu and Wang(2024)]{optimalexact}
Ioana Dumitriu and Haixiao Wang.
\newblock Optimal and exact recovery on general non-uniform hypergraph stochastic block model.
\newblock \emph{arXiv preprint arXiv:2304.13139}, 2024.

\bibitem[Feng et~al.(2022)Feng, Venkataramanan, Rush, and Samworth]{amp}
Oliver~Y. Feng, Ramji Venkataramanan, Cynthia Rush, and Richard~J. Samworth.
\newblock A unifying tutorial on approximate message passing.
\newblock \emph{Foundations and Trends® in Machine Learning}, 15\penalty0 (4):\penalty0 335--536, 2022.

\bibitem[Gaudio and Joshi(2023)]{hypersbm}
Julia Gaudio and Nirmit Joshi.
\newblock {Community Detection in the Hypergraph SBM: Exact Recovery Given the Similarity Matrix}.
\newblock In \emph{Proceedings of Thirty Sixth Conference on Learning Theory}, volume 195 of \emph{Proceedings of Machine Learning Research}, pages 469--510. PMLR, 12--15 Jul 2023.

\bibitem[Ghoshdastidar and Dukkipati(2014)]{ghoshdastidar2014consistency}
Debarghya Ghoshdastidar and Ambedkar Dukkipati.
\newblock Consistency of spectral partitioning of uniform hypergraphs under planted partition model.
\newblock \emph{Advances in Neural Information Processing Systems}, 27, 2014.

\bibitem[Gu and Polyanskiy(2023)]{weakrecovery_hsbm}
Yuzhou Gu and Yury Polyanskiy.
\newblock Weak recovery threshold for the hypergraph stochastic block model.
\newblock In \emph{Proceedings of Thirty Sixth Conference on Learning Theory}, volume 195 of \emph{Proceedings of Machine Learning Research}, pages 885--920. PMLR, 12--15 Jul 2023.

\bibitem[Klimt and Yang(2004)]{klimt2004introducing}
Bryan Klimt and Yiming Yang.
\newblock Introducing the enron corpus.
\newblock In \emph{CEAS}, volume~45, pages 92--96, 2004.

\bibitem[Krzakala and Zdeborov\'a(2009)]{quietsolutions}
Florent Krzakala and Lenka Zdeborov\'a.
\newblock Hiding quiet solutions in random constraint satisfaction problems.
\newblock \emph{Phys. Rev. Lett.}, 102:\penalty0 238701, Jun 2009.

\bibitem[Lizotte et~al.(2023)Lizotte, Young, and Allard]{lizotte2023hypergraph}
Simon Lizotte, Jean-Gabriel Young, and Antoine Allard.
\newblock Hypergraph reconstruction from uncertain pairwise observations.
\newblock \emph{Scientific Reports}, 13\penalty0 (1):\penalty0 21364, 2023.

\bibitem[Newman(2004)]{newman2004coauthorship}
Mark~EJ Newman.
\newblock Coauthorship networks and patterns of scientific collaboration.
\newblock \emph{Proceedings of the national academy of sciences}, 101\penalty0 (suppl\_1):\penalty0 5200--5205, 2004.

\bibitem[Newman(2006)]{newman2006modularity}
Mark~EJ Newman.
\newblock Modularity and community structure in networks.
\newblock \emph{Proceedings of the national academy of sciences}, 103\penalty0 (23):\penalty0 8577--8582, 2006.

\bibitem[Polyanskiy and Wu(2025)]{itcoding}
Yury Polyanskiy and Yihong Wu.
\newblock \emph{Information Theory: From Coding to Learning}.
\newblock Cambridge University Press, 2025.

\bibitem[Reeves et~al.(2019)Reeves, Xu, and Zadik]{sparselinear}
Galen Reeves, Jiaming Xu, and Ilias Zadik.
\newblock {The All-or-Nothing Phenomenon in Sparse Linear Regression}.
\newblock In \emph{Proceedings of the Thirty-Second Conference on Learning Theory}, volume~99 of \emph{Proceedings of Machine Learning Research}, pages 2652--2663. PMLR, 25--28 Jun 2019.

\bibitem[Wang and Kleinberg(2024)]{graphstohypergraphs}
Yanbang Wang and Jon Kleinberg.
\newblock From graphs to hypergraphs: Hypergraph projection and its reconstruction.
\newblock In \emph{The Twelfth International Conference on Learning Representations}, 2024.

\bibitem[Wu et~al.(2022)Wu, Xu, and Yu]{graphmatch}
Yihong Wu, Jiaming Xu, and Sophie~H. Yu.
\newblock {Settling the Sharp Reconstruction Thresholds of Random Graph Matching}.
\newblock \emph{IEEE Transactions on Information Theory}, 68\penalty0 (8):\penalty0 5391--5417, 2022.

\bibitem[Young et~al.(2021)Young, Petri, and Peixoto]{young2021hypergraph}
Jean-Gabriel Young, Giovanni Petri, and Tiago~P Peixoto.
\newblock Hypergraph reconstruction from network data.
\newblock \emph{Communications Physics}, 4\penalty0 (1):\penalty0 135, 2021.

\end{thebibliography}

\begin{appendix}

\section{Partial recovery results}
\label{sec:partial}

\subsection{Proof of \texorpdfstring{\Cref{thm:highintersect}}{}}

\label{subsec:highintersectproof}

As a reminder, $q=\Pr[\binom{[d]}{2} \subset \text{Proj}(\mathcal{H})]$ is the hyperedge density of $\mathcal{H}_c$.
\begin{lemma}
\label{lemma:squareprob}
Suppose $\delta>\frac{d-1}{d+1}$. Let $\mathcal{G}' = \text{Proj}(\mathcal{Z})$. Then
\[
\sum_{\substack{H, H'\in\mathcal{Z}, \\ \text{Proj}(H)=\text{Proj}(H')}} \frac{\Pr[\mathcal{H}=H]\Pr[\mathcal{H}=H']}{\Pr[\text{Proj}(\mathcal{H})=\text{Proj}(H)]^2}\leq |\mathcal{G}'| \leq \left(\frac{(1+o_n(1))e^{\binom{d}{2}}}{q}\right)^{(1+\epsilon)p\binom{n}{d}}.
\]
\end{lemma}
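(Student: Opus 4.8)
The plan is to prove the two inequalities separately; the first is an elementary factorization and the second carries the real content.

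For the left inequality I would group the sum according to the common projection $G:=\proj(H)=\proj(H')$. For each fixed $G$ the contribution equals $\left(\sum_{H\in\mathcal{Z},\,\proj(H)=G}\Pr[\mathcal{H}=H]\right)^{2}\big/\Pr[\proj(\mathcal{H})=G]^{2}$, and since $\sum_{H\in\mathcal{Z},\,\proj(H)=G}\Pr[\mathcal{H}=H]$ is a sub-sum of $\sum_{H:\,\proj(H)=G}\Pr[\mathcal{H}=H]=\Pr[\proj(\mathcal{H})=G]$, this contribution is at most $1$. Only graphs $G\in\proj(\mathcal{Z})=\mathcal{G}'$ appear, and each such $G$ has $\Pr[\proj(\mathcal{H})=G]>0$ because $p\in(0,1)$, so the division is legitimate; summing over these $G$ gives at most $|\mathcal{G}'|$.

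For the right inequality, the key observation is that every $G\in\mathcal{G}'$ equals $\proj(H)$ for some $H\in\mathcal{Z}$, so $e(H)\le(1+\epsilon)p\binom{n}{d}$ and hence $|E(G)|\le M:=(1+\epsilon)p\binom{n}{d}\binom{d}{2}$, since each hyperedge contributes at most $\binom{d}{2}$ edges to its projection. Thus $\mathcal{G}'$ is contained in the family of graphs on $[n]$ with at most $M$ edges, so $|\mathcal{G}'|\le\sum_{k=0}^{M}\binom{\binom{n}{2}}{k}$. Since $\delta<1$ forces $M=o_n(\binom{n}{2})$, this sum is at most $(M+1)\binom{\binom{n}{2}}{M}\le(M+1)(e\binom{n}{2}/M)^{M}$. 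It remains to match $(e\binom{n}{2}/M)^{M}$ with the claimed form. Using $M=\binom{d}{2}\cdot(1+\epsilon)p\binom{n}{d}$ I would write $(e\binom{n}{2}/M)^{M}=\left((e\binom{n}{2}/M)^{\binom{d}{2}}\right)^{(1+\epsilon)p\binom{n}{d}}$, and then, using the identity $\binom{d}{2}\binom{n}{d}/\binom{n}{2}=\tfrac{1}{(d-2)!}\prod_{j=2}^{d-1}(n-j)=(1+o_n(1))\tfrac{n^{d-2}}{(d-2)!}$ together with $p=(c+o_n(1))n^{-d+1+\delta}$ and $\epsilon=o_n(1)$, I get $M/\binom{n}{2}=(1+o_n(1))\,cn^{\delta-1}/(d-2)!$. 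Raising to the $\binom{d}{2}$ power and using that $q=(1+o_n(1))(cn^{\delta-1}/(d-2)!)^{\binom{d}{2}}$ when $\delta>\frac{d-1}{d+1}$ (which follows from \Cref{thm:fakeedge} together with \Cref{corr:asyqp}, the latter guaranteeing the $p$ term in $q$ is of lower order), one gets $(M/\binom{n}{2})^{\binom{d}{2}}=(1+o_n(1))q$, hence $(e\binom{n}{2}/M)^{\binom{d}{2}}=(1+o_n(1))e^{\binom{d}{2}}/q$. Finally the prefactor $M+1$ is harmless, since $(M+1)^{1/((1+\epsilon)p\binom{n}{d})}=e^{O(\log n)/\Theta(n^{1+\delta})}=1+o_n(1)$ (using $\delta>\frac{d-1}{d+1}>0$, so $p\binom{n}{d}=\Theta(n^{1+\delta})\to\infty$), and it folds into the $(1+o_n(1))$ inside the base, giving the stated bound.

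The only step requiring genuine care, as opposed to routine binomial estimation, is verifying that the constant produced by $\binom{d}{2}\binom{n}{d}/\binom{n}{2}$ equals exactly $\tfrac{1}{(d-2)!}$, which is precisely the constant appearing inside $q$ in \Cref{thm:fakeedge}; this exact cancellation is what makes $(M/\binom{n}{2})^{\binom{d}{2}}$ equal $(1+o_n(1))q$ with no leftover constant and hence makes the final bound come out in the clean form $(e^{\binom{d}{2}}/q)^{(1+\epsilon)p\binom{n}{d}}$.
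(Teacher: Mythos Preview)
Your proof is correct and follows essentially the same approach as the paper's. The first inequality is handled identically (factoring by the common projection $G$ and bounding each term by $1$), and for the second you bound $|\mathcal{G}'|$ via the number of edges, apply $\binom{N}{k}\le(eN/k)^k$, and then identify $(M/\binom{n}{2})^{\binom{d}{2}}$ with $(1+o_n(1))q$ using \Cref{thm:fakeedge} --- exactly as the paper does, with only cosmetic differences in how the identity $\binom{d}{2}p\binom{n}{d}/\binom{n}{2}=(1+o_n(1))cn^{\delta-1}/(d-2)!$ is written out and how the polynomial prefactor is absorbed.
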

\begin{proof}
We have that
\[
\sum_{\substack{H, H'\in\mathcal{Z}, \\ \text{Proj}(H)=\text{Proj}(H')}} \frac{\Pr[\mathcal{H}=H]\Pr[\mathcal{H}=H']}{\Pr[\text{Proj}(\mathcal{H})=\text{Proj}(H)]^2} \leq \sum_{H\in\mathcal{Z}} \frac{\Pr[\mathcal{H}=H]}{\Pr[\text{Proj}(\mathcal{H})=\text{Proj}(H)]} \leq |\mathcal{G}'|.
\]
The maximum number of edges in some $G\in\mathcal{G}'$ is $(1+\epsilon)\binom{d}{2}p\binom{n}{d}$. Therefore,
\[
|\mathcal{G}'| \leq (1+\epsilon)\binom{d}{2}p\binom{n}{d}\binom{\binom{n}{2}}{(1+\epsilon)\binom{d}{2}p\binom{n}{d}},
\]
if $n$ is sufficiently large so that $(1+\epsilon)\binom{d}{2}p\binom{n}{d}< \frac{1}{2}\binom{n}{2}$. Stirling's approximation gives that $k!\geq\left(\frac{k}{e}\right)^k$. 

When $\delta>\frac{d-1}{d+1}$, the fraction of the hyperedges of $\mathcal{H}$ in the hyperedges of $\mathcal{H}_c$ is $o_n(1)$, so we have that $q=(1+o_n(1))\left(\frac{cn^{\delta-1}}{(d-2)!}\right)^{\binom{d}{2}} = (1+o_n(1))\left(\frac{\binom{d}{2}p\binom{n}{d}}{\binom{n}{2}}\right)^{\binom{d}{2}}$, see \Cref{thm:fakeedge}. This gives that
\[
\binom{\binom{n}{2}}{(1+\epsilon)\binom{d}{2}p\binom{n}{d}} \leq \left(\frac{e\binom{n}{2}}{(1+\epsilon)\binom{d}{2}p\binom{n}{d}}\right)^{(1+\epsilon)\binom{d}{2}p\binom{n}{d}} = \left(\frac{(1+o_n(1))e^{\binom{d}{2}}}{q}\right)^{(1+\epsilon)p\binom{n}{d}}.
\]
\end{proof}

\begin{proof}[Proof of \Cref{thm:highintersect}]
If $H'\in\mathcal{Z}$ then
\[
\Pr[\mathcal{H}=H'] \leq \left(\frac{p}{1-p}\right)^{(1-\epsilon)p\binom{n}{d}}(1-p)^{\binom{n}{d}}.
\]
Suppose $I\in [Mp\binom{n}{d}, (1+\epsilon)p\binom{n}{d}]$ is an integer. Next we upper bound the number of choices for $H'$ given $H$ and $|E(H)\cap E(H')|=I$. 

Suppose $H\in\mathcal{Z}$. The number of choices for $E(H)\cap E(H')$ is at most
\[
\binom{\lfloor(1+\epsilon)p\binom{n}{d}\rfloor}{I}.
\]
Since $E(H')$ is a subset of the $d$-cliques of $H$, the number of choices for $E(H')\backslash E(H)$ is at most
\[
\binom{\lfloor(1+\epsilon)q\binom{n}{d}\rfloor}{\lfloor (1+\epsilon)p\binom{n}{d}\rfloor - I}
\]
assuming that $n$ is sufficiently large so that $p\leq \frac{q}{2}$. Hence
\begin{align*}
& \sum_{\substack{H, H'\in\mathcal{Z}, \\ \text{Proj}(H)=\text{Proj}(H'), \\
|E(H)\cap E(H')|=I}} \Pr[\mathcal{H}=H]\Pr[\mathcal{H}=H'] \\ 
& \leq \sum_{H\in\mathcal{Z}} \Pr[\mathcal{H}=H]\left(\frac{p}{1-p}\right)^{(1-\epsilon)p\binom{n}{d}}(1-p)^{\binom{n}{d}}\binom{\lfloor(1+\epsilon)p\binom{n}{d}\rfloor}{I}\binom{\lfloor(1+\epsilon)q\binom{n}{d}\rfloor}{\lfloor (1+\epsilon)p\binom{n}{d}\rfloor - I} \\
& \leq \left(\frac{p}{1-p}\right)^{(1-\epsilon)p\binom{n}{d}}e^{-p\binom{n}{d}}\left(\frac{e(1+\epsilon)}{I/(p\binom{n}{d})}\right)^I\left(\frac{e(1+\epsilon)\frac{q}{p}}{1+\epsilon-I/(p\binom{n}{d})}\right)^{(1+\epsilon - I/(p\binom{n}{d}))p\binom{n}{d}} \\
& \leq (1+o_n(1)) p^{(I/(p\binom{n}{d})-2\epsilon)p\binom{n}{d}}q^{(1+\epsilon-I/(p\binom{n}{d}))p\binom{n}{d}} \\
& \quad \cdot \left(\frac{1+o_n(1)}{(I/(p\binom{n}{d}))^{I/(p\binom{n}{d})}(1+\epsilon - I/(p\binom{n}{d}))^{1 + \epsilon -I/(p\binom{n}{d})}}\right)^{p\binom{n}{d}}.
\end{align*}
Observe that the inequalities are true even for the edge-case $I=\lfloor (1+\epsilon)p\binom{n}{d}\rfloor$. Let \\$\alpha=\max_{m\in [M, 1)} \frac{2}{m^m(1-m)^{1-m}}$. Then if $n$ is sufficiently large,
\begin{equation}
\label{eq:sumprobpair}
\sum_{\substack{H, H'\in\mathcal{Z}, \\ \text{Proj}(H)=\text{Proj}(H'), \\
|E(H)\cap E(H')|=I}} \Pr[\mathcal{H}=H]\Pr[\mathcal{H}=H'] \leq (1+o_n(1))p^{(I/(p\binom{n}{d})-2\epsilon)p\binom{n}{d}}q^{(1+\epsilon-I/(p\binom{n}{d}))p\binom{n}{d}} \alpha^{p\binom{n}{d}}.
\end{equation}

Using Cauchy-Schwarz gives that
\begin{align*}
& \sum_{\substack{H, H'\in\mathcal{Z}, \\ \text{Proj}(H)=\text{Proj}(H'), \\
Mp\binom{n}{d} \leq |E(H)\cap E(H')|\leq(1+\epsilon)p\binom{n}{d}}} \frac{\Pr[\mathcal{H}=H]\Pr[\mathcal{H}=H']}{\Pr[\text{Proj}(\mathcal{H})=H]} \\
& \leq \left(\sum_{Mp\binom{n}{d}\leq I \leq (1+\epsilon)p\binom{n}{d} }\sum_{\substack{H, H'\in\mathcal{Z}, \\ \text{Proj}(H)=\text{Proj}(H'), \\
|E(H)\cap E(H')|=I}} \Pr[\mathcal{H}=H]\Pr[\mathcal{H}=H']\right)^{\frac{1}{2}} \\
& \quad \times \left(\sum_{\substack{H, H'\in\mathcal{Z}, \\ \text{Proj}(H)=\text{Proj}(H')}} \frac{\Pr[\mathcal{H}=H]\Pr[\mathcal{H}=H']}{\Pr[\text{Proj}(\mathcal{H})=\text{Proj}(H)]^2}\right)^{\frac{1}{2}}
\end{align*}
Afterwards, using \pref{eq:sumprobpair} and \Cref{lemma:squareprob} gives that
\begin{align*}
&\sum_{\substack{H, H'\in\mathcal{Z}, \\ \text{Proj}(H)=\text{Proj}(H'), \\
Mp\binom{n}{d} \leq |E(H)\cap E(H')|\leq(1+\epsilon)p\binom{n}{d}}} \frac{\Pr[\mathcal{H}=H]\Pr[\mathcal{H}=H']}{\Pr[\text{Proj}(\mathcal{H})=H]} \\ & \leq \Bigg(\sum_{Mp\binom{n}{d}\leq I \leq (1+\epsilon)p\binom{n}{d}}(1+o_n(1))p^{(I/(p\binom{n}{d})-2\epsilon)p\binom{n}{d}}q^{(1+\epsilon-I/(p\binom{n}{d}))p\binom{n}{d}} \alpha^{p\binom{n}{d}} \\ & \quad\cdot \left(\frac{(1+o_n(1))e^{\binom{d}{2}}}{q}\right)^{(1+\epsilon)p\binom{n}{d}}\Bigg)^{\frac{1}{2}} \\
& = \Bigg(\sum_{Mp\binom{n}{d}\leq I \leq (1+\epsilon)p\binom{n}{d}}\left((1+o_n(1))\left(\frac{p}{q}\right)^{I/(p\binom{n}{d})} p^{-2\epsilon} \alpha e^{\binom{d}{2}}\right)^{p\binom{n}{d}}\Bigg)^{\frac{1}{2}}.
\end{align*}
Observe that using \Cref{thm:fakeedge} gives that
\[
\frac{q}{p} \geq (1+o_n(1))\frac{c^{\binom{d}{2}-1}}{(d-2)!^{\binom{d}{2}}}n^{\binom{d}{2}(\delta-1)+d-1-\delta}.
\]
Because $\delta>\frac{d-1}{d+1}$, $\binom{d}{2}(\delta-1)+d-1-\delta>0$. Suppose $Mp\binom{n}{d}\leq I \leq (1+\epsilon)p\binom{n}{d}$. Then $I/(p\binom{n}{d})\geq M$ so
\[
(1+o_n(1))\left(\frac{p}{q}\right)^{I/(p\binom{n}{d})} p^{-2\epsilon} \alpha e^{\binom{d}{2}} = O_n(n^{-M\left(\binom{d}{2}(\delta-1)+d-1-\delta\right) + 2\epsilon(d-1-\delta)}).
\]
Particularly, if $n$ is sufficiently large then
\[
(1+o_n(1))\left(\frac{p}{q}\right)^{I/(p\binom{n}{d})} p^{-2\epsilon} \alpha e^{\binom{d}{2}} = O_n(n^{-\frac{M}{2}\left(\binom{d}{2}(\delta-1)+d-1-\delta\right)})
\]
since $\epsilon=o_n(1)$. We therefore have that 
\begin{align*}
& \sum_{\substack{H, H'\in\mathcal{Z}, \\ \text{Proj}(H)=\text{Proj}(H'), \\
Mp\binom{n}{d} \leq |E(H)\cap E(H')|\leq(1+\epsilon)p\binom{n}{d}}} \frac{\Pr[\mathcal{H}=H]\Pr[\mathcal{H}=H']}{\Pr[\text{Proj}(\mathcal{H})=H]} \\
& \leq \left((1+\epsilon)p\binom{n}{d}O_n(n^{-\frac{M}{2}\left(\binom{d}{2}(\delta-1)+d-1-\delta\right)})^{p\binom{n}{d}}\right)^{\frac{1}{2}} = o_n(1).
\end{align*}
\end{proof}

\subsection{Weighted partial recovery}
\label{subsec:weightedpartial}

For weighted partial recovery, we can prove a similar result as \Cref{thm:apimpossible} using a similar method.

\begin{theorem}
\label{thm:weightedapimpossible}
Suppose $\delta>\frac{d-1}{d+1}$. Then the weighted partial recovery loss is $1-o_n(1)$.
\end{theorem}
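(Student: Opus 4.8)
\textbf{Proof proposal for Theorem~\ref{thm:weightedapimpossible}.}

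The plan is to mirror the unweighted argument of \Cref{thm:apimpossible} essentially verbatim, tracking the one place where the weighted projection actually changes the combinatorics. By the weighted analogue of \Cref{lemma:exactpartialequiv} (which holds with the identical proof, replacing $\proj$ by $\projw$ and $\mathcal{B}^*$ by $\mathcal{B}_W^*$), it suffices to show $\E_{\rhg,\rhg'}[|E(\rhg)\cap E(\rhg')|] = o_n(p\binom{n}{d})$, where now $\rhg'$ is an i.i.d.\ copy of $\rhg$ conditioned on $\projw(\rhg)=\projw(\rhg')$. The weighted analogue of \Cref{lemma:highprob} (same Cauchy--Schwarz argument) lets us restrict to the high-probability event that both $e(\rhg)$ and $e(\mathcal{H}_c)$ are concentrated, so as in \pref{eq:intersectequiv} we may work on the set $\mathcal{Z}$. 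Then, exactly as in the proof of \Cref{thm:apimpossible}, it is enough to prove the weighted analogue of \Cref{thm:highintersect}: for $M\in(0,1)$,
\[
\Pr\big(\rhg,\rhg'\in\mathcal{Z},\ |E(\rhg)\cap E(\rhg')|\ge Mp\tbinom{n}{d},\ \projw(\rhg)=\projw(\rhg')\big)=o_n(1),
\]
and a limiting argument in $M\to 0$ finishes.

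The heart of the matter is thus the weighted version of the counting bound. The key observation is that if $\projw(H)=\projw(H')$ and $H,H'\in\mathcal{Z}$, then $H'$ need not be a subhypergraph of the $d$-cliques of $H$ (as it was in the unweighted case, via $E(H')\subseteq E(\mathcal{H}_c)$), but it is still heavily constrained: every hyperedge of $H'$ must have all $\binom{d}{2}$ of its pairs carrying positive weight in $\projw(H)$, i.e.\ $E(H')\subseteq E(\Cli(\proj(H)))=E(\mathcal{H}_c(H))$ still holds, since the support of the weighted projection equals the unweighted projection. So the crude enumeration is unchanged: the number of choices for $E(H')\setminus E(H)$ given $H$ is still at most $\binom{\lfloor(1+\epsilon)q\binom{n}{d}\rfloor}{\lfloor(1+\epsilon)p\binom{n}{d}\rfloor-I}$, and the number of choices for the overlap $E(H)\cap E(H')$ is at most $\binom{\lfloor(1+\epsilon)p\binom{n}{d}\rfloor}{I}$. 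Likewise $|\mathcal{G}'_W|=|\projw(\mathcal{Z})|$ is bounded by $|\mathcal{G}'|=|\proj(\mathcal{Z})|$ times the number of weight assignments, but since the total weight is $d$-uniform and bounded ($\sum_e w_e = \binom{d}{2}e(H)\le (1+\epsilon)\binom{d}{2}p\binom{n}{d}$, with at most $(1+\epsilon)\binom{d}{2}p\binom{n}{d}$ nonzero coordinates each at most that value), the number of weightings is at most $p^{-O(p\binom{n}{d})}$ up to $n^{o(p\binom{n}{d})}$ factors; crucially this is subexponential in $n^{\delta}$ compared to the gain $(q/p)^{M}=n^{\Theta(1)\cdot p\binom{n}{d}}$ coming from $\delta>\frac{d-1}{d+1}$ (note $\binom{d}{2}(\delta-1)+d-1-\delta>0$ there). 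Plugging these into the same Cauchy--Schwarz split as in \Cref{lemma:squareprob} and the proof of \Cref{thm:highintersect}, the dominant factor is still $\big((p/q)^{I/(p\binom{n}{d})}\cdot \mathrm{poly}(1/p)\cdot e^{O(1)}\big)^{p\binom{n}{d}}=O_n\big(n^{-\frac{M}{2}(\binom{d}{2}(\delta-1)+d-1-\delta)}\big)^{p\binom{n}{d}}$, which is $o_n(1)$.

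The main obstacle I anticipate is bookkeeping the extra $|\mathcal{G}'_W|/|\mathcal{G}'|$ factor: one must check that counting the possible weight vectors on the (few) edges of $\proj(H)$ contributes only an $\exp(O(p\binom{n}{d}\log(1/p)))=n^{o(p\binom{n}{d})}$ multiplicative overhead, which is then absorbed because $\delta>\frac{d-1}{d+1}$ forces a genuinely polynomial-in-$n$ gap between $q$ and $p$ per hyperedge. A secondary point to verify is that the weighted analogues of \Cref{corr:varnumclique} and of the concentration defining $\mathcal{Z}$ go through unchanged --- they do, since $e(\mathcal{H})$ and $e(\mathcal{H}_c)$ depend only on $\mathcal{H}$ and its $d$-cliques, not on any weights. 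With those two checks in place, every inequality in \Cref{subsec:highintersectproof} carries over, and the conclusion follows exactly as for \Cref{thm:apimpossible}.
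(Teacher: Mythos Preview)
Your overall architecture is correct and matches the paper's: reduce via the weighted analogues of \Cref{lemma:exactpartialequiv} and \Cref{lemma:highprob} to a weighted version of \Cref{thm:highintersect}, note that $E(H')\subseteq E(\Cli(\proj(H)))$ still holds so the counting of $H'$ given $H$ is unchanged, and isolate the only new ingredient as the bound on $|\projw(\mathcal{Z})|$ in the analogue of \Cref{lemma:squareprob}.

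The gap is precisely in that new ingredient. You assert that the number of weight vectors is $\exp\bigl(O(p\binom{n}{d}\log(1/p))\bigr)=n^{o(p\binom{n}{d})}$, but this equality is false: since $\log(1/p)=(d-1-\delta+o(1))\log n=\Theta(\log n)$, your overhead is $n^{\Theta(p\binom{n}{d})}$, not $n^{o(p\binom{n}{d})}$. Concretely, bounding each of $(1+\epsilon)\binom{d}{2}p\binom{n}{d}$ weights by the total weight gives a per-unit factor of order $n^{(1+\delta)\binom{d}{2}}$ inside the $p\binom{n}{d}$-th power, whereas the gain $(p/q)^{I/(p\binom{n}{d})}$ contributes only $n^{-M(\binom{d}{2}(\delta-1)+d-1-\delta)}$. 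For small $M$ the latter does not dominate the former, so the weighted analogue of \Cref{thm:highintersect} would fail for small $M$, and the $M\to 0$ step in the proof of \Cref{thm:apimpossible} breaks.

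The paper repairs this with one extra high-probability restriction you did not use: fix a constant $k$ (depending only on $\delta$) so large that $\Pr[\exists\,\{i,j\}\text{ in }\ge k\text{ hyperedges of }\mathcal{H}]=O(n^{2+k(\delta-1)})=o_n(1)$, and replace $\mathcal{Z}$ by $\mathcal{Z}'=\{H\in\mathcal{Z}:\text{every edge-multiplicity }<k\}$. Then $|\projw(\mathcal{Z}')|\le (k-1)^{(1+\epsilon)\binom{d}{2}p\binom{n}{d}}\,|\mathcal{G}'|$, so the per-unit overhead in the \Cref{lemma:squareprob} analogue is the genuine constant $(k-1)^{\binom{d}{2}}$, which is absorbed by the $1+o_n(1)$ factor, and your remaining steps go through verbatim. (An alternative fix that stays closer to your sketch: you already noted $\sum_e w_e=\binom{d}{2}e(H)$; the number of compositions of this into at most that many positive parts is $\le 2^{\binom{d}{2}e(H)}\le 2^{(1+\epsilon)\binom{d}{2}p\binom{n}{d}}$, again a constant per-unit overhead. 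Your error was discarding this total-weight constraint when bounding each weight individually.)
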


\begin{proof}
Suppose $k\geq 1$. For $i,j\in [n]$, $i\not=j$, the probability that $\{i,j\}$ is contained in at least $k$ hyperedges of $\mathcal{H}$ is
\[
(1-p)^{\binom{n-2}{d-2}}\sum_{m=k}^{\binom{n-2}{d-2}} \binom{\binom{n-2}{d-2}}{m} \left(\frac{p}{1-p}\right)^m \leq \sum_{m\geq k} \left(\binom{n-2}{d-2}p\right)^m = O_n(n^{k(\delta-1)}).
\]
Then, the expected number of $\{i,j\}$ that are contained in at least $k$ hyperedges of $\mathcal{H}$ is $\\O_n(n^{2+k(\delta-1)})$. By selecting $k$ to be sufficiently large, this expected value will be $o_n(1)$, so the probability that there exists an edge $\{i, j\}$ that is contained in at least $k$ hyperedges is $o_n(1)$. 

Suppose $k$ is sufficiently large. Then, set $\mathcal{Z}'$ to be the set of $H\in\mathcal{Z}$ such that each edge $\{i, j\}$ is contained in less than $k$ hyperedges. We have that $\Pr[\mathcal{H}\in\mathcal{Z}']=1-o_n(1)$, so we can use $\mathcal{Z}'$ in place of $\mathcal{Z}$ by \Cref{lemma:highprob}.

We can now essentially use the same proof as \Cref{thm:apimpossible}, of course after replacing $\text{Proj}$ with $\text{Proj}_W$ and $\mathcal{Z}$ with $\mathcal{Z}'$. The only significant step is to prove the analogue of \Cref{thm:highintersect}; the remaining steps are straightforward to verify. For this proof, we can follow the framework given in \Cref{subsec:highintersectproof}.

The most important step is to justify the analogue of \Cref{lemma:squareprob}. Letting $\mathcal{G}''=\text{Proj}_W(\mathcal{Z}')$, since each edge of $\text{Proj}(H)$ is contained in less than $k$ hyperedges and the number of edges in $\text{Proj}(H)$ is at most $(1+\epsilon)\binom{d}{2}p\binom{n}{d}$ for $H\in\mathcal{Z}'$, we have that
\[
|\mathcal{G}''|\leq (k-1)^{(1+\epsilon)\binom{d}{2}p\binom{n}{d}}|\mathcal{G}'|,
\]
so using \Cref{lemma:squareprob} gives that
\[
\sum_{\substack{H, H'\in\mathcal{Z}', \\ \text{Proj}_W(H)=\text{Proj}_W(H')}} \frac{\Pr[\mathcal{H}=H]\Pr[\mathcal{H}=H']}{\Pr[\text{Proj}_W(\mathcal{H})=\text{Proj}_W(H)]^2}\leq |\mathcal{G}''| \leq \left(\frac{(1+o_n(1))(ke)^{\binom{d}{2}}}{q}\right)^{(1+\epsilon)p\binom{n}{d}}.
\]
Furthermore, it is clear that
\[
\sum_{\substack{H, H'\in\mathcal{Z}', \\ \text{Proj}_W(H) =\text{Proj}_W(H'), \\
|E(H)\cap E(H')|=I}} \Pr[\mathcal{H}=H]\Pr[\mathcal{H}=H'] \leq \sum_{\substack{H, H'\in\mathcal{Z}, \\ \text{Proj}(H)=\text{Proj}(H'), \\
|E(H)\cap E(H')|=I}} \Pr[\mathcal{H}=H]\Pr[\mathcal{H}=H']
\]
for all $I\in [Mp\binom{n}{d}, (1+\epsilon)p\binom{n}{d}]$, so the analogue 
\[
\sum_{\substack{H, H'\in\mathcal{Z},\, \text{Proj}_W(H) \\=\text{Proj}_W(H'), \\
|E(H)\cap E(H')|=I}} \Pr[\mathcal{H}=H]\Pr[\mathcal{H}=H'] \leq (1+o_n(1))p^{(I/(p\binom{n}{d})-2\epsilon)p\binom{n}{d}}q^{(1+\epsilon-I/(p\binom{n}{d}))p\binom{n}{d}} \alpha^{p\binom{n}{d}}
\]
of \pref{eq:sumprobpair} is true. Afterwards, we can follow the same steps to prove the analogue of \Cref{thm:highintersect}.
\end{proof}

\begin{proof}[Proof of \texorpdfstring{\Cref{thm:mainpartialweighted}}{}]
Since the partial recovery loss is $o_n(1)$ for $\delta<\frac{d-1}{d+1}$ by \Cref{thm:mainpartial}, the same is true for the weighted partial recovery loss, see \Cref{lemma:fundamental}. Afterwards using \Cref{thm:weightedapimpossible} and \Cref{thm:denseweighted} finishes the proof. 
\end{proof}

\section{Additional partial recovery results}

\subsection{The dense regime \texorpdfstring{$\delta \geq 1$}{}}
\label{subsec:dense}

In this section, we consider when $1\leq \delta<d-1$. In any other parts of the paper other than \Cref{sec:intro}, it is assumed that $\delta<1$. The goal of this subsection is to prove that the partial recovery and weighted partial recovery losses are $1-o_n(1)$ in the regime $\delta\geq 1$, see \Cref{thm:mainpartial} and \Cref{thm:mainpartialweighted}. We first prove that the weighted partial recovery loss is $1-o_n(1)$, since the partial recovery loss being $1-o_n(1)$ would directly follow, see \Cref{lemma:fundamental}.

\begin{theorem}
\label{thm:denseweighted}
Suppose $1\leq \delta<d-1$. The weighted partial recovery loss is $1-o_n(1)$.
\end{theorem}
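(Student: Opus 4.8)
The plan is to show that the overlap $\E_{\rhg,\rhg'}[|E(\rhg)\cap E(\rhg')|]$ is $o_n(p\binom{n}{d})$ and then invoke \Cref{lemma:exactpartialequiv}(a) (which holds verbatim in the weighted case, with $\proj$ replaced by $\projw$). By \Cref{lemma:exactpartialequiv}(b), the overlap equals $\binom{n}{d}\sum_{G}\Pr[\projw(\mathcal{H})=G]\Pr[[d]\in\mathcal{H}\mid\projw(\mathcal{H})=G]^2$, so it suffices to bound the second moment of the posterior. First I would note that in the regime $1\le\delta<d-1$ the hypergraph is extremely dense: $p = (c+o_n(1))n^{-d+1+\delta}$ with $\delta\ge 1$, so $p\binom{n}{d} = \Theta(n^{1+\delta})$ hyperedges on average, while the weighted projection $\projw(\mathcal{H})\in\mathbb{Z}_{\ge0}^{\binom{n}{2}}$ has only $\binom{n}{2}=\Theta(n^2)$ coordinates, each of which is a sum of at most $\binom{n-2}{d-2}=\Theta(n^{d-2})$ Bernoulli$(p)$ variables with mean $\Theta(n^{d-2}p)=\Theta(n^{\delta-1})$. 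Hence the typical edge weight concentrates and carries at most $O(\log n)$ bits, giving $H(\projw(\mathcal{H}))=O(n^2\log n)=o(n^{1+\delta})=o(H(\mathcal{H}))$ as soon as $\delta>1$; the borderline $\delta=1$ needs a slightly more careful entropy count but still yields $H(\projw(\mathcal{H}))=o(H(\mathcal{H}))$.

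From here I would mirror the information-theoretic argument of \Cref{lemma:weakpartial}, but pushed to its all-or-nothing conclusion. Since $H(\mathcal{H}\mid\projw(\mathcal{H}))=H(\mathcal{H})-H(\projw(\mathcal{H}))=(1-o_n(1))H(\mathcal{H})$, and $H(\mathcal{H})=\binom{n}{d}H_B(p)$ while $H(\1\{[d]\in\mathcal{H}\}\mid\projw(\mathcal{H}))\le H_B(P_e)$ where $P_e=\ell_W(\mathcal{B}_W^*)\cdot p$, the chain $\binom{n}{d}H_B(P_e)\ge H(\mathcal{H}\mid\projw(\mathcal{H}))=(1-o_n(1))\binom{n}{d}H_B(p)$ forces $H_B(P_e)\ge(1-o_n(1))H_B(p)$. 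Because $P_e\le p=o_n(1)$ and $t\mapsto H_B(t)$ is strictly increasing on $[0,1/2]$ with $H_B(t)\sim t\log(1/t)$, the inequality $H_B(P_e)\ge(1-o_n(1))H_B(p)$ together with $P_e\le p$ forces $P_e=(1-o_n(1))p$: indeed if $P_e\le(1-\eta)p$ for fixed $\eta>0$ then $H_B(P_e)\le(1-\eta)H_B(p)/(1-o_n(1))$, a contradiction. Therefore $\ell_W(\mathcal{B}_W^*)=P_e/p=1-o_n(1)$.

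The main obstacle is making the entropy comparison $H(\projw(\mathcal{H}))=o(H(\mathcal{H}))$ fully rigorous at and near $\delta=1$, and in particular justifying that the "correct" base rate in Fano's inequality is $H_B(p)$ rather than something weaker — concretely, one needs $H(\mathcal{H}\mid\projw(\mathcal{H}))$ to dominate $\binom{n}{d}H(\1\{[d]\in\mathcal{H}\}\mid\projw(\mathcal{H}))$ in the right direction, which follows from superadditivity of conditional entropy over the coordinate decomposition $H(\mathcal{H}\mid\projw(\mathcal{H}))\le\sum_{h}H(\1\{h\in\mathcal{H}\}\mid\projw(\mathcal{H}))=\binom{n}{d}H(\1\{[d]\in\mathcal{H}\}\mid\projw(\mathcal{H}))$ by symmetry. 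A cleaner alternative, which I would pursue in parallel, bypasses entropy entirely: directly bound the posterior second moment by a union bound over preimages, using that any weighted projection with $e(\projw(\mathcal{H}))$-bounded total weight has at most $\big((Cke)/q\big)^{O(n^2)}$ distinct preimages (as in \Cref{lemma:squareprob} and its weighted analogue in \Cref{thm:weightedapimpossible}), while each typical $\mathcal{H}$ has probability $\le\big(\tfrac{p}{1-p}\big)^{(1-o_n(1))p\binom{n}{d}}(1-p)^{\binom{n}{d}}$; since $p\binom{n}{d}=\omega(n^2\log(1/q))$ when $\delta\ge 1$, the preimage-count factor is dwarfed, and the argument of \Cref{thm:highintersect} goes through to give overlap $o_n(p\binom{n}{d})$. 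Finally, \Cref{thm:mainpartial} (dense regime) then follows immediately from \Cref{lemma:fundamental}.
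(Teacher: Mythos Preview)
Your proposal is correct and takes essentially the same approach as the paper: bound $H(\projw(\mathcal{H}))=o_n(H(\mathcal{H}))$ via the trivial per-coordinate entropy bound ($O(n^2\log n)$ for $\delta>1$, $O(n^2)$ for $\delta=1$ using that a Binomial with $O(1)$ mean has $O(1)$ entropy), then apply Fano and subadditivity to get $H_B(P_e)\ge(1-o_n(1))H_B(p)$, and conclude $P_e=(1-o_n(1))p$ from $P_e\le p$ and $H_B(t)\sim t\log(1/t)$. The paper does exactly this; your ``parallel'' counting alternative is not needed here.
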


\begin{proof} 
Suppose the function $f: \mathcal{G}_W\rightarrow\{0, 1\}$ satisfies the condition that if $G\in \mathcal{G}$ then $f(G) = \1\{[d] \in \mathcal{B}^*(G)\}$. Using \pref{eq:loss_equiv} gives that
\begin{equation}
\label{eq:losserr}
\E[|\mathcal{B}^*(\text{Proj}_W(\mathcal{H}))\Delta\mathcal{H}|] = \binom{n}{d}\Pr_\mathcal{H}[f(\text{Proj}_W(\mathcal{H}))\not= \1\{[d]\in E(\mathcal{H})\}].
\end{equation}
From Fano's inequality,
\[
H(\1\{[d]\in E(\mathcal{H})\}|\text{Proj}_W(\mathcal{H})) \leq H_B(\Pr_\mathcal{H}[f(\text{Proj}_W(\mathcal{H}))\not= \1\{[d]\in E(\mathcal{H})\}]).
\]
Using this gives that
\begin{equation}
\label{eq:entropylower}
\begin{split}
\binom{n}{d}H_B(\Pr_\mathcal{H}[f(\text{Proj}_W(\mathcal{H}))\not= \1\{[d]\in E(\mathcal{H})\}]) & \geq \sum_{h\in \binom{[n]}{d}} H(\1\{h\in\mathcal{H}\})|\text{Proj}_W(\mathcal{H})) \\
& \geq H(\mathcal{H}|\text{Proj}_W(\mathcal{H})) = H(\mathcal{H})-H(\text{Proj}_W(\mathcal{H})).
\end{split}
\end{equation}

First suppose $\delta>1$. Then, for all $i,j\in [n]$, $i\not=j$, we have that
\[
H(|\{h\in E(\mathcal{H}): \{i,j\}\subset h\}|) \leq \log\left(\binom{n-2}{d-2}\right).
\]
Thus,
\[
H(\text{Proj}_W(\mathcal{H})) \leq \sum_{1\leq i<j\leq n} H(|\{h\in E(\mathcal{H}): \{i,j\}\subset h\}|) \leq \binom{n}{2}\log\left(\binom{n-2}{d-2}\right).
\]
Since $H(\mathcal{H})=\binom{n}{d}H_B(p) = \Omega_n(n^{1+\delta}\log(n))$, we then have that $H(\text{Proj}_W(\mathcal{H}))=o_n(H(\mathcal{H}))$. Hence, \pref{eq:entropylower} gives that $\Pr[f(\text{Proj}_W(\mathcal{H}))\not= \1\{[d]\in E(\mathcal{H})\}]=(1-o_n(1))p$ so using \pref{eq:losserr} gives that
\[
\ell(\mathcal{B}^*)=\frac{\E_\mathcal{H}[|\mathcal{B}^*(\pg)\Delta\mathcal{H}|]}{p\binom{n}{d}} = \frac{\Pr_\mathcal{H}[f(\pg)\not= \1\{[d]\in E(\mathcal{H})\}]}{p} = 1-o_n(1).
\]

Next suppose $\delta=1$. Suppose $i,j\in [n]$, $i\not=j$. We have that 
\[
|\{h\in E(\mathcal{H}): \{i,j\}\subset h\}| \sim \text{Binomial}\left(\binom{n-2}{d-2}, p\right)
\]
so 
\[
H(|\{h\in E(\mathcal{H}): \{i,j\}\subset h\}|) = O_n(1)
\]
since $|\{h\in E(\mathcal{H}): \{i,j\}\subset h\}|$ has mean $\binom{n-2}{d-2}p=O_n(1)$, see \cite[Exercise I.4]{itcoding}. Then,
\[
H(\text{Proj}_W(\mathcal{H})) \leq \sum_{1\leq i<j\leq n} H(|\{h\in E(\mathcal{H}): \{i,j\}\subset h\}|) = O_n(n^2).
\]
Since $H(\mathcal{H})=\Omega_n(n^2\log(n))$, $H(\text{Proj}_W(\mathcal{H}))=o_n(H(\mathcal{H}))$ and we conclude similarly as the case $\delta>1$.
\end{proof}

\begin{corollary}
\label{corr:dense}
Suppose $1\leq \delta < d-1$. The partial recovery loss is $1-o_n(1)$.
\end{corollary}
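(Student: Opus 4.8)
The plan is to derive \Cref{corr:dense} directly from \Cref{thm:denseweighted} via the comparison inequality already established in \Cref{lemma:fundamental}. Recall that \Cref{lemma:fundamental} states $\ell(\mathcal{B}^*) \geq \ell_W(\mathcal{B}_W^*)$, which expresses the intuitive fact that the unweighted projection discards strictly more information than the weighted one, so the optimal unweighted partial recovery algorithm cannot outperform the optimal weighted one. Combined with the trivial upper bound $\ell(\mathcal{B}^*) \leq 1$ (achieved by the algorithm outputting the empty hypergraph, as noted after \pref{eq:optloss}), we obtain the sandwich $\ell_W(\mathcal{B}_W^*) \leq \ell(\mathcal{B}^*) \leq 1$.

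The key step is then immediate: \Cref{thm:denseweighted} asserts that for $1 \leq \delta < d-1$ the weighted partial recovery loss is $\ell_W(\mathcal{B}_W^*) = 1 - o_n(1)$. Plugging this into the lower bound gives $\ell(\mathcal{B}^*) \geq 1 - o_n(1)$, and together with $\ell(\mathcal{B}^*) \leq 1$ this forces $\ell(\mathcal{B}^*) = 1 - o_n(1)$, which is exactly the claim of \Cref{corr:dense}. So the proof is a two-line deduction and there is essentially no obstacle; all the analytic work — bounding $H(\proj_W(\mathcal{H}))$ by $\binom{n}{2}\log\binom{n-2}{d-2}$ or by $O_n(n^2)$ and comparing it to $H(\mathcal{H}) = \Omega_n(n^{1+\delta}\log n)$ via Fano's inequality — has already been carried out in \Cref{thm:denseweighted}.

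One could alternatively re-run the Fano-type argument of \Cref{thm:denseweighted} directly for the unweighted projection, noting that $H(\proj(\mathcal{H})) \leq \binom{n}{2} \leq H(\proj_W(\mathcal{H}))$ is even smaller, so the entropy deficit argument goes through verbatim. But this duplicates work unnecessarily; invoking \Cref{lemma:fundamental} is cleaner. The only point worth stating carefully is that $\ell$ and $\ell_W$ are both normalized by the same quantity $p\binom{n}{d}$, so the inequality in \Cref{lemma:fundamental} is the correct one to use and no rescaling is needed.

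\begin{proof}
By \Cref{lemma:fundamental}, $\ell(\mathcal{B}^*) \geq \ell_W(\mathcal{B}_W^*)$. By \Cref{thm:denseweighted}, $\ell_W(\mathcal{B}_W^*) = 1 - o_n(1)$ when $1 \leq \delta < d-1$. Since the trivial algorithm outputting the empty hypergraph has loss $1$, we also have $\ell(\mathcal{B}^*) \leq 1$. Combining these, $\ell(\mathcal{B}^*) = 1 - o_n(1)$.
\end{proof}
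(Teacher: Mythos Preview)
Your proof is correct and follows exactly the paper's approach: the paper's proof of \Cref{corr:dense} simply says ``This follows from \Cref{lemma:fundamental} and \Cref{thm:denseweighted},'' which is precisely the deduction you wrote out.
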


\begin{proof}
This follows from \Cref{lemma:fundamental} and \Cref{thm:denseweighted}.
\end{proof}

\subsection{Correlation inequality}

It is in fact possible to extend the results from \Cref{thm:apimpossible} from one hyperedge to multiple hyperedges. As explained in the proof of the following result, the case $k=1$ is equivalent to \Cref{thm:apimpossible}.

\begin{theorem}
\label{thm:corr}
Suppose $\delta>\frac{d-1}{d+1}$ and $k\geq 1$. Then
\[
\sup_{\substack{h_1, \ldots, h_k \in \binom{[n]}{d} \\ \text{distinct}}} \sum_{G\in\mathcal{G}} \Pr[h_1,\ldots, h_k \in E(\mathcal{H})|\text{Proj}(\mathcal{H})=G]^2\Pr[\text{Proj}(\mathcal{H})=G] = o_n(p^k).
\]
\end{theorem}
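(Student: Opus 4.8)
The plan is to reduce the statement to the $k=1$ case, which is exactly \Cref{thm:apimpossible} (or rather the overlap bound underlying its proof via \Cref{lemma:exactpartialequiv}(b) and \Cref{thm:highintersect}). First I would observe that, by a union bound and symmetry, it suffices to get the bound $o_n(p^k)$ for a fixed tuple $h_1,\dots,h_k$, uniformly over how the $h_i$ intersect one another; there are only finitely many isomorphism types of the ``pattern hypergraph'' on $h_1,\dots,h_k$, so a worst-case constant suffices. The quantity $\sum_G \Pr[h_1,\dots,h_k\in E(\mathcal{H})\mid \pg=G]^2\Pr[\pg=G]$ equals $\E_{\mathcal{H},\mathcal{H}'}[\1\{h_1,\dots,h_k\in E(\mathcal{H})\}\1\{h_1,\dots,h_k\in E(\mathcal{H}')\}]$, where $\mathcal{H}'$ is the conditionally-iid copy from \Cref{lemma:exactpartialequiv}; so what must be shown is that the event that a fixed $k$-tuple of hyperedges is simultaneously present in both $\mathcal{H}$ and $\mathcal{H}'$ has probability $o_n(p^k)$.

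The key steps, in order: (1) Restrict to the high-probability event $\mathcal{H},\mathcal{H}'\in\mathcal{Z}$ using \Cref{lemma:highprob}; on the complement the contribution is $o_n(p\binom{n}{d})\cdot$(something bounded), which after dividing through is $o_n(p^k)$ for $k\le$ anything since $p\binom{n}{d}\cdot p^{k-1}$ dominates appropriately — I would be careful here and instead bound the off-event contribution directly by $\Pr[\mathcal{H}\notin\mathcal{Z}]=o_n(1)$ times $p^{k}$-order terms, or simply note $\Pr[h_1,\dots,h_k\in E(\mathcal{H})\cap E(\mathcal{H}')\,,\,\mathcal{H}\notin\mathcal{Z}]\le \Pr[\mathcal{H}\notin\mathcal{Z}]^{1/2}(\cdots)^{1/2}$ by Cauchy–Schwarz as in \Cref{lemma:highprob}. (2) On the event $\mathcal{H},\mathcal{H}'\in\mathcal{Z}$ with $\pg=\pg'=G$, run the same counting argument as in the proof of \Cref{thm:highintersect}: for a fixed $G\in\mathcal{G}'$ the number of preimages $H\in\mathcal{Z}$ with $h_1,\dots,h_k\in E(H)$ is controlled by choosing $E(H)$ as a subset of the $d$-cliques of $G$ that contains the prescribed $h_i$, and the probability weight $\Pr[\mathcal{H}=H]$ contributes the factor $(p/(1-p))^{e(H)}(1-p)^{\binom{n}{d}}$. (3) Combine via Cauchy–Schwarz exactly as in \Cref{subsec:highintersectproof}: split $\sum_G\Pr[\cdots\mid\pg=G]^2\Pr[\pg=G]$ using that $\Pr[\cdots\mid\pg=G]\le 1$ and pulling out one copy to get $\le \max_G \Pr[h_1,\dots,h_k\in E(\mathcal{H})\mid\pg=G]\cdot \Pr[h_1,\dots,h_k\in E(\mathcal{H})]$; then bound $\Pr[h_1,\dots,h_k\in E(\mathcal{H})]\le p^{k - O(1)}$ accounting for forced overlaps, and bound the conditional max by the ratio $|\mathcal{G}'|^{-1}$-type estimate from \Cref{lemma:squareprob} together with the $q/p$ gap $n^{\binom{d}{2}(\delta-1)+d-1-\delta}>n^0$ valid since $\delta>\frac{d-1}{d+1}$. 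Tracking exponents, each of the $k$ prescribed hyperedges forced into the conditionally-resampled copy $\mathcal{H}'$ costs an extra factor $q/p$ which is $\omega_n(1)$, yielding the desired $o_n(p^k)$ after the square root.

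The main obstacle is bookkeeping the exponents uniformly in $k$ and uniformly over the intersection pattern of $h_1,\dots,h_k$: when the $h_i$ share vertices the ``naive'' bound $\Pr[h_1,\dots,h_k\in E(\mathcal{H})]=p^k$ is still correct (distinct hyperedges are independent), but in the counting step (2) the sets $h_i$ may already be among the $d$-cliques of $G$ forced by other structure, so one must argue that prescribing $h_1,\dots,h_k\in E(H)$ genuinely removes $k$ degrees of freedom from the clique-subset count rather than fewer. I expect this is handled by the same concentration of $e(\mathcal{H}_c)$ around $q\binom{n}{d}$ used in \Cref{thm:highintersect}: the number of $d$-cliques is $\Theta(q\binom{n}{d})$, so fixing $k=O(1)$ of them to be edges changes the binomial coefficients $\binom{\lfloor(1+\epsilon)q\binom{n}{d}\rfloor}{\cdots}$ only by lower-order factors, and the real gain is the $p^k$ versus the absence of the corresponding $q^k$ — i.e. the $h_i$ are ``paid for'' at rate $p$ in $\mathcal{H}$ but their forced presence in $\mathcal{H}'$ is what produces the extra $(p/q)^k = o_n(1)$. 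Once this is pinned down, the remainder is the verbatim computation of \Cref{thm:highintersect} with $M$ replaced by a fixed small constant and an extra clean factor $(p/q)^k$, and letting $n\to\infty$ finishes it. Since the excerpt says the case $k=1$ is equivalent to \Cref{thm:apimpossible}, I would present the general $k$ proof as a direct adaptation, emphasizing only the two modified estimates.
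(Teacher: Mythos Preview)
There is a genuine gap. The Cauchy--Schwarz engine in \Cref{subsec:highintersectproof} works because, under the constraint $|E(H)\cap E(H')|\ge Mp\binom{n}{d}$, the ``first factor'' $\sum_{H,H'}\Pr[\mathcal{H}=H]\Pr[\mathcal{H}=H']$ is exponentially small in $p\binom{n}{d}$ with base essentially $(p/q)^M$, and this is what beats the large ``second factor'' $|\mathcal{G}'|^{1/2}$ from \Cref{lemma:squareprob}. Prescribing $k=O(1)$ specific hyperedges in $E(H)\cap E(H')$ forces only $I\ge k$; dropping the same-projection constraint the first factor is at best $p^{2k}$, hence $p^k$ after the square root, which is nowhere near enough to cancel $|\mathcal{G}'|^{1/2}$. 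Your alternative in step~(3), bounding by $\max_G\Pr[\,\cdot\mid\pg=G]\cdot p^k$, also fails outright: the maximum is $1$ (take $G=\proj(\{h_1,\dots,h_k\})$). Your $(p/q)^k$ heuristic is correct, but nothing in the outline produces it rigorously; the obstacle you flag in your final paragraph is real and is not resolved by the concentration of $e(\mathcal{H}_c)$.

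The paper supplies the missing structural idea: choose disjoint vertex blocks $T_1,\dots,T_k$ of size $\lfloor n/k\rfloor$ with $h_i\subset T_i$, and let $\mathcal{H}_i$ be the restriction of $\mathcal{H}$ to $\binom{T_i}{d}$. The $\mathcal{H}_i$ (together with the complementary piece $\mathcal{H}^C$) are mutually independent, and each $\mathcal{H}_i$ is an instance of the same model on $\lfloor n/k\rfloor$ vertices, still with $\delta>\frac{d-1}{d+1}$ and a rescaled constant. Expanding $\Pr[\pg=G]$ and $\Pr[h_1,\dots,h_k\in E(\mathcal{H}),\,\pg=G]$ as sums over compatible tuples $(G_1,\dots,G_k,G^C)$ of sub-projections and applying Cauchy--Schwarz in the form $(\sum_j x_j)^2\big/\sum_j y_j\le\sum_j x_j^2/y_j$ yields
\[
\sum_{G} \Pr[h_1,\dots,h_k\in E(\mathcal{H})\mid\pg=G]^2\Pr[\pg=G]\ \le\ \prod_{i=1}^k \sum_{G_i}\frac{\Pr[\proj(\mathcal{H}_i)=G_i,\,h_i\in E(\mathcal{H}_i)]^2}{\Pr[\proj(\mathcal{H}_i)=G_i]},
\]
and each factor on the right is $o_n(p)$ by the $k=1$ case applied to the smaller model. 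This decoupling-by-vertex-partition, and the resulting clean product structure, is the step your proposal lacks.
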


\begin{proof}
First we resolve the case $k=1$. From \Cref{lemma:exactpartialequiv}, it suffices to prove that $\E_{\rhg,\rhg'}[E(\rhg)\cap E(\rhg')] = o_n\left(p\binom{n}{d}\right)$, which we show in the proof of \Cref{thm:apimpossible}.

Next, assume that $k\geq 2$. Suppose $h_1,\ldots, h_k\in\binom{[n]}{d}$. Suppose $S_i$, $1\leq i\leq k$ are disjoint sets of $\lfloor \frac{n}{k} \rfloor -d$ vertices that are disjoint from $h_i$, $1\leq i\leq k$, assuming that $n\geq kd$. Let $T_i=S_i\cup h_i$ for $1\leq i\leq k$. Let $\mathcal{H}_i$ be $\mathcal{H}$ with vertex set restricted to $T_i$ for $1\leq i\leq k$. Observe that the $\mathcal{H}_i$ do not have any overlapping hyperedges. Furthermore, let $\mathcal{H}^C$ denote $\mathcal{H}\backslash \left(\bigcup_{i=1}^k \mathcal{H}_i\right)$, that is, $\mathcal{H}^C$ is $\mathcal{H}$ restricted to $\binom{[n]}{d}\backslash \left(\bigcup_{i=1}^k \binom{T_i}{d}\right)$.

Suppose $G\in\mathcal{G}$. Where $G_i$ is some graph with vertex set $T_i$ for $1\leq i\leq k$ and $G^C$ is some graph with vertex set $[n]$, we have that 
\[
\Pr[\text{Proj}(\mathcal{H})=G] = \sum_{\substack{G_i, 1\leq i\leq k,\,G^C, \\ \text{projection is } G}}\prod_{i=1}^k\Pr[\text{Proj}(\mathcal{H}_i)=G_i] \Pr[\text{Proj}(\mathcal{H}^C)=G^C]
\]
and
\begin{align*}
& \Pr[h_1,\ldots,h_k\in E(\mathcal{H}), \text{Proj}(\mathcal{H})=G] \\ 
& = \sum_{\substack{G_i, 1\leq i\leq k,\,G^C, \\ \text{projection is } G}} \prod_{i=1}^k\Pr[\text{Proj}(\mathcal{H}_i)=G_i, h_i\in E(\mathcal{H}_i)] \Pr[\text{Proj}(\mathcal{H}^C)=G^C].
\end{align*}
Using the Cauchy-Schwarz inequality gives that
\begin{align*}
& \Pr[h_1,\ldots, h_k \in E(\mathcal{H})|\text{Proj}(\mathcal{H})=G]^2\Pr[\text{Proj}(\mathcal{H})=G] \\
& = \frac{\left(\sum_{\substack{G_i, 1\leq i\leq k,\,G^C, \\ \text{projection is } G}} \prod_{i=1}^k\Pr[\text{Proj}(\mathcal{H}_i)=G_i, h_i\in E(\mathcal{H}_i)] \Pr[\text{Proj}(\mathcal{H}^C)=G^C]\right)^2}{\sum_{\substack{G_i, 1\leq i\leq k,\,G^C, \\ \text{projection is } G}} \prod_{i=1}^k\Pr[\text{Proj}(\mathcal{H}_i)=G_i, h_i\in E(\mathcal{H}_i)] \Pr[\text{Proj}(\mathcal{H}^C)=G^C]}
\\ & \leq \sum_{\substack{G_i, 1\leq i\leq k,\,G^C, \\ \text{projection is } G}} \prod_{i=1}^k\frac{\Pr[\text{Proj}(\mathcal{H}_i)=G_i, h_i\in E(\mathcal{H}_i)]^2}{\Pr[\text{Proj}(\mathcal{H}_i)=G_i]}\Pr[\text{Proj}(\mathcal{H}^C)=G^C].
\end{align*}
Afterwards summing over the $G$ gives that
\begin{align*}
& \sum_{G\in\mathcal{G}} \Pr[h_1,\ldots, h_k \in E(\mathcal{H})|\text{Proj}(\mathcal{H})=G]^2\Pr[\text{Proj}(\mathcal{H})=G] \\
& \leq \sum_{G\in\mathcal{G}}\sum_{\substack{G_i, 1\leq i\leq k,\,G^C, \\ \text{projection is } G}} \prod_{i=1}^k\frac{\Pr[\text{Proj}(\mathcal{H}_i)=G_i, h_i\in E(\mathcal{H}_i)]^2}{\Pr[\text{Proj}(\mathcal{H}_i)=G_i]}\Pr[\text{Proj}(\mathcal{H}^C)=G^C] \\
& = \sum_{\substack{G_i, 1\leq i\leq k,\,G^C}} \prod_{i=1}^k\frac{\Pr[\text{Proj}(\mathcal{H}_i)=G_i, h_i\in E(\mathcal{H}_i)]
^2}{\Pr[\text{Proj}(\mathcal{H}_i)=G_i]}\Pr[\text{Proj}(\mathcal{H}^C)=G^C] \\
& = \sum_{G_i, 1\leq i\leq k} \prod_{i=1}^k\frac{\Pr[\text{Proj}(\mathcal{H}_i)=G_i, h_i\in E(\mathcal{H}_i)]^2}{\Pr[\text{Proj}(\mathcal{H}_i)=G_i]} \\
& = \prod_{i=1}^k\sum_{G_i}\frac{\Pr[\text{Proj}(\mathcal{H}_i)=G_i, h_i\in E(\mathcal{H}_i)]^2}{\Pr[\text{Proj}(\mathcal{H}_i)=G_i]}.
\end{align*}

Observe that the $\mathcal{H}_i$ follow the same random model as $\mathcal{H}$ but with a different value of $p$. The $\mathcal{H}_i$ have $\lfloor \frac{n}{k}\rfloor$ vertices and each hyperedge appears with probability $(c+o_n(1))n^{-d+1+\delta}$. Thus, when the $\mathcal{H}_i$ have $n$ vertices, each hyperedge appears with probability $(c+o_n(1))k^{-d+1+\delta}n^{-d+1+\delta}$. From repeating the case $k=1$ for this random model,
\begin{align*}
& \sum_{G\in\mathcal{G}} \Pr[h_1,\ldots, h_k \in E(\mathcal{H})|\text{Proj}(\mathcal{H})=G]^2\Pr[\text{Proj}(\mathcal{H})=G] \\
& \leq \prod_{i=1}^k\sum_{G_i}\frac{\Pr[\text{Proj}(\mathcal{H}_i)=G_i, h_i\in E(\mathcal{H}_i)]^2}{\Pr[\text{Proj}(\mathcal{H}_i)=G_i]} = o_n(p^k).
\end{align*}
\end{proof}

\begin{remark}
The previous proof exhibits an advantage of considering the parameterization $ p=(c+o_n(1))n^{-d+1+\delta}$.
\end{remark}

\section{Structures of the Projected Graph}
\label{sec:proj}

\subsection{Projection covers and convex relaxations}
\label{subsec:projcover}

First we give two lemmas that generalize ideas from \cite[Proof of Lemma 39]{reconstruct}. We do not include the proofs, which are straightforward.

\begin{lemma}
\label{lemma:intersectionsprob} Suppose $k\geq 2$ and $\mathcal{U}\subset 2^{[k]}$. The probability that there exists $h\in E(\mathcal{H})$ such that $h\cap [k]=u$ for all $u\in\mathcal{U}$ is 
\[
\prod_{u\in\mathcal{U}} (1-(1-p)^{\binom{n-k}{d-|u|}}).
\]
Furthermore this probability is at most
\[
p^{|\mathcal{U}|}\prod_{u\in\mathcal{U}} \binom{n-k}{d-|u|} = \Theta_n(n^{(1+\delta)|\mathcal{U}|-\sum_{u\in\mathcal{U}} |u|}).
\]
\end{lemma}

\begin{lemma}
\label{lemma:projcoverprob}
Suppose $k\geq 2$ and $\mathcal{E}\subset\binom{[k]}{2}$. The probability that $\mathcal{E}\subset E(\text{Proj}(\mathcal{H}))$ is at most
\[
\sum_\mathcal{U}p^{|\mathcal{U}|}\prod_{u\in\mathcal{U}} \binom{n-k}{d-|u|},
\]
where the sum is over $\mathcal{U}\subset 2^{[k]}$ satisfying the following conditions: 
\begin{itemize}
\item For all $u\in\mathcal{U}$, $|u|\geq 2$ and $\binom{u}{2}\not\subset \bigcup_{u'\in\mathcal{U}\backslash\{u\}} \binom{u'}{2}$. 
\item $\mathcal{E}\subset\bigcup_{u\in\mathcal{U}}\binom{u}{2}$.
\end{itemize}
\end{lemma}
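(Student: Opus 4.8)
The plan is to rewrite the event $\mathcal{E}\subset E(\text{Proj}(\mathcal{H}))$ entirely in terms of which subsets of $[k]$ occur as $h\cap[k]$ over the hyperedges $h$ of $\mathcal{H}$, and then to union bound over a \emph{minimal} witnessing family, invoking \Cref{lemma:intersectionsprob} for each term. The conceptual core is a single set-theoretic equivalence; everything else is bookkeeping.

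First I would record the basic observation: for $\{i,j\}\subset[k]$ we have $\{i,j\}\in E(\text{Proj}(\mathcal{H}))$ exactly when some $h\in E(\mathcal{H})$ satisfies $\{i,j\}\subseteq h\cap[k]$. Writing $\mathcal{F}\defeq\{\,h\cap[k]\ :\ h\in E(\mathcal{H}),\ |h\cap[k]|\ge 2\,\}$, this makes $\mathcal{E}\subset E(\text{Proj}(\mathcal{H}))$ the same event as $\mathcal{E}\subseteq\bigcup_{u\in\mathcal{F}}\binom{u}{2}$. On this event $\mathcal{F}$ is a finite family ``covering'' $\mathcal{E}$, so one can prune it to a subfamily $\mathcal{U}\subseteq\mathcal{F}$ that is minimal subject to $\mathcal{E}\subseteq\bigcup_{u\in\mathcal{U}}\binom{u}{2}$ (discard elements one at a time as long as the covering property survives). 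Minimality forces, for each $u\in\mathcal{U}$, a pair in $\mathcal{E}\cap\binom{u}{2}$ lying in no other $\binom{u'}{2}$; hence $\binom{u}{2}\not\subset\bigcup_{u'\in\mathcal{U}\setminus\{u\}}\binom{u'}{2}$, and $|u|\ge 2$ because $u\in\mathcal{F}$. So any $\mathcal{U}$ produced this way meets both bulleted conditions, and by construction each $u\in\mathcal{U}$ equals $h\cap[k]$ for some $h\in E(\mathcal{H})$.

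Therefore the event $\mathcal{E}\subset E(\text{Proj}(\mathcal{H}))$ is contained in the union, over the finitely many admissible families $\mathcal{U}\subset 2^{[k]}$, of the events $\{\forall u\in\mathcal{U}\ \exists h\in E(\mathcal{H}):h\cap[k]=u\}$. A union bound followed by \Cref{lemma:intersectionsprob} then gives
\[
\Pr\B[\mathcal{E}\subset E(\text{Proj}(\mathcal{H}))\B]\ \le\ \sum_{\mathcal{U}}\ \prod_{u\in\mathcal{U}}\B(1-(1-p)^{\binom{n-k}{d-|u|}}\B)\ \le\ \sum_{\mathcal{U}}p^{|\mathcal{U}|}\prod_{u\in\mathcal{U}}\binom{n-k}{d-|u|},
\]
summed over exactly the $\mathcal{U}$ described in the statement (terms with some $|u|>d$ contribute $0$, since the binomial coefficient vanishes). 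This is the desired bound. The only step needing real care is the pruning — verifying that a minimal covering subfamily always exists on the event in question and that its minimality is precisely the first bulleted condition — but this is elementary, and I do not anticipate any genuine obstacle.
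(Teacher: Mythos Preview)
Your argument is correct and is precisely the straightforward union-bound-over-minimal-covers argument the paper has in mind (the paper omits the proof as ``straightforward''). The one subtlety you flag --- that minimality with respect to covering $\mathcal{E}$ yields a witness edge in $\mathcal{E}\cap\binom{u}{2}$ uncovered by the rest, hence the stronger non-redundancy condition $\binom{u}{2}\not\subset\bigcup_{u'\neq u}\binom{u'}{2}$ --- is handled correctly.
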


\begin{remark}
$\text{Proj}(\mathcal{H})$ is a $2$-uniform hypergraph so $E(\text{Proj}(\mathcal{H}))$ is a set of edges, which are hyperedges with size $2$.
\end{remark}

Next we discuss a technique from \cite{reconstruct} that involves using a relaxation technique to establish a convex optimization problem after applying \Cref{lemma:intersectionsprob,lemma:projcoverprob}. Suppose $k\geq 2$ and $\mathcal{E}\subset\binom{[k]}{2}$. The goal is to upper bound the probability that $\mathcal{E}\subset E(\text{Proj}(\mathcal{H}))$ using \Cref{lemma:projcoverprob}. Suppose $\mathcal{U}$ satisfies the conditions of \Cref{lemma:projcoverprob}; the lemma implies that an upper bound on $p^{|\mathcal{U}|}\prod_{u\in\mathcal{U}}\binom{n-k}{d-|u|}$ is an upper bound on the probability that $\mathcal{E}\subset E(\text{Proj}(\mathcal{H}))$ after scaling by some constant since the number of $\mathcal{U}$ is finite.

First observe that  $|\mathcal{E}|\leq \sum_{u\in\mathcal{U}, |u|\geq 2}\binom{|u|}{2}$. The relaxation technique is to replace $\binom{x}{2}$ for some real variable $x\geq 2$ with the real variable $y\geq 1$; that is, $y=\binom{x}{2}$ and $x=\frac{1+\sqrt{1+8y}}{2}$. If $y_u=\binom{x_u}{2}$ for $u\in\mathcal{U}$ such that $|u|\geq 2$ then $|\mathcal{E}|\leq \sum_{u\in\mathcal{U}, |u|\geq 2} y_u$. From \Cref{lemma:intersectionsprob},
\[
p^{|\mathcal{U}|}\prod_{u\in\mathcal{U}}\binom{n-k}{d-|u|}=\Theta_n(n^{(1+\delta)|\mathcal{U}|-\sum_{u\in\mathcal{U}} |u|})= \Theta_n(n^{(1+\delta)|\mathcal{U}|-\sum_{u\in\mathcal{U}} \frac{1+\sqrt{1+8y_u}}{2}}).
\]
Suppose $M=|\mathcal{U}|$; the conditions of \Cref{lemma:projcoverprob} imply that $1\leq M\leq |\mathcal{E}|$. Then maximizing the quantity $p^{|\mathcal{U}|}\prod_{u\in\mathcal{U}}\binom{n-k}{d-|u|}$  corresponds to maximizing $(1+\delta)M-\sum_{i=1}^M\frac{1+\sqrt{1+8y_i}}{2}$ given that $\sum_{i=1}^M y_i\geq |\mathcal{E}|$ and $y_i\geq 1$ for $1\leq i\leq M$, which is a convex optimization problem. Particularly, since the function $(1+\delta)M-\sum_{i=1}^M\frac{1+\sqrt{1+8y_i}}{2}$ is convex, it is maximized at a vertex of the set of inputs.

Some results of \cite{reconstruct} that are proved using the methods described in this subsection are \cite[Lemmas 35, 39, and 40]{reconstruct}. In this paper we prove results in \Cref{sec:proj}, \Cref{thm:fakeedge}, and \Cref{ratio} using these methods.

\begin{remark}
For the proof of \Cref{lemma:covarq} we also impose the constraint that $|u|\leq d$ for all $u\in\mathcal{U}$, which corresponds to $y_i\leq \binom{d}{2}$ for $1\leq i\leq M$, because the hyperedges of $\mathcal{H}$ have size $d$. It is necessary to impose this constraint because the value $k$ in \Cref{lemma:intersectionsprob} and \Cref{lemma:projcoverprob} may be greater than $d$ in the context of \Cref{lemma:covarq}.
\end{remark}

\subsection{Proof of \texorpdfstring{\Cref{thm:fakeedge}}{}}
\label{subsec:fakeedge}
First we lower bound $q$. Note that if $[d]\in E(\mathcal{H})$ then $[d]\in E(\mathcal{H}_c)$ so $q\geq p$. Suppose $[d]\notin E(\mathcal{H})$; this event occurs with probability $1-p$. Then, $[d]\in E(\mathcal{H}_c)$ if and only if for each edge $\{i, j\}$ for $1\leq i<j\leq d$, there exists $h\in\binom{[n]}{d}\backslash \{[d]\}$ such that $\{i,j\}\subset h$ and $h\in E(\mathcal{H})$. 

Suppose $1\leq i<j\leq d$. The probability there exists $h\in\binom{[n]}{d}\backslash \{[d]\}$ such that $\{i,j\}\subset h$ and $h\in E(\mathcal{H})$ is $1-(1-p)^{\binom{n-2}{d-2}-1}$. Using the Harris inequality gives that 
\begin{align*}
\Pr[[d]\in E(\mathcal{H}_c)|[d]\notin E(\mathcal{H})] & \geq (1-(1-p)^{\binom{n-2}{d-2}-1})^{\binom{d}{2}} \\ & \geq \left(\binom{n-2}{d-2}p - p -O_n((n^{d-2}p)^2))\right)^{\binom{d}{2}} \\
& \geq (1-o_n(1))\left(\binom{n-2}{d-2}p\right)^{\binom{d}{2}} \\
& = (1-o_n(1))\left(\frac{cn^{\delta-1}}{(d-2)!}\right)^{\binom{d}{2}}.
\end{align*}
Hence,
\begin{equation}
\label{eq:qlower}
\begin{split}
q=\Pr[[d]\in E(\mathcal{H}_c)] & \geq p + (1-p)(1-o_n(1))\left(\frac{cn^{\delta-1}}{(d-2)!}\right)^{\binom{d}{2}} \\ 
& = p + \left(\frac{cn^{(\delta-1)}}{(d-2)!}\right)^{\binom{d}{2}} + o_n(n^{\binom{d}{2}(\delta-1)}).
\end{split}
\end{equation}

Next we upper bound $q$ using the technique discussed in \Cref{subsec:projcover}. From \Cref{lemma:projcoverprob} with $[k]$ replaced by $[d]$ and $\mathcal{E}$ replaced by $\binom{[d]}{2}$,
\[
q=\Pr[[d]\in E(\mathcal{H}_c)]\leq\sum_\mathcal{U} p^{|\mathcal{U}|}\prod_{u\in\mathcal{U}} \binom{n-d}{d-|u|},
\]
where the sum is over $\mathcal{U}\subset 2^{[d]}$ satisfying the conditions of the lemma. For convenience, denote the set of such $\mathcal{U}$ by $\mathcal{P}$.

Suppose $\mathcal{U}\in\mathcal{P}$ and $M=|\mathcal{U}|$. It is clear that $1\leq M\leq \binom{d}{2}$. If $M=1$ ($\mathcal{U}=\{[d]\}$) the probability is $p$ and if $M=\binom{d}{2}$ ($\mathcal{U}=\binom{[d]}{2}$) then the probability is $(1+o_n(1))\left(\frac{cn^{\delta-1}}{(d-2)!}\right)^{\binom{d}{2}}$. Using the union bound gives that
\begin{equation}
\label{eq:qupper1}
q\leq p + (1+o_n(1))\left(\frac{cn^{\delta-1}}{(d-2)!}\right)^{\binom{d}{2}} + \sum_{\substack{2\leq M\leq \binom{d}{2}-1, \\ \mathcal{U}\in\mathcal{P},\, |\mathcal{U}|=M}} \Theta_n(n^{(1+\delta)M - \sum_{u\in\mathcal{U}} |u|}).
\end{equation}

Suppose $\mathcal{U}\in\mathcal{P}$ and $M=|\mathcal{U}|$; recall that $1\leq M\leq \binom{d}{2}$. Furthermore suppose $y_u=\binom{|u|}{2}$ for $u\in\mathcal{U}$. Then $y_u\geq 1$ for $u\in\mathcal{U}$ and $\sum_{u\in\mathcal{U}} y_u \geq \binom{d}{2}$. Furthermore
\begin{equation}
\label{eq:qupper2}
M(1+\delta)-\sum_{u\in\mathcal{U}} |u| = M(1+\delta) - \sum_{u\in\mathcal{U}} \frac{1+\sqrt{8y_u+1}}{2}.
\end{equation}

Suppose $1\leq M\leq \binom{d}{2}$. Let $\mathcal{R}_M$ be the set of $(y_i)_{1\leq i\leq M}$ such that $y_i\geq 1$ for $1\leq i\leq M$ and $\sum_{i=1}^M y_i\geq \binom{d}{2}$. Let
\[
f(y_1, \ldots, y_M) = M(1+\delta) - \sum_{i=1}^m \frac{1+\sqrt{8y_i+1}}{2}.
\]
An upper bound of \pref{eq:qupper2} is the maximal value of bound $f$ over $\mathcal{R}_M$. Since $f$ is convex, this maximal value occurs at the vertex $y_i=1$ for $1\leq i\leq M-1$ and $y_M = \binom{d}{2}-M+1$. The value of $f$ at this vertex is
\[
g(M) := M(1-\delta) + 2 - \frac{1+\sqrt{8(\binom{d}{2}-M+1) + 1}}{2}
\]
and $\max_{y\in\mathcal{R}_M} f(y)=g(M)$. Observe that $g$ is convex in $M$ over $[1, \binom{d}{2}]$. Hence, the maximum value of $g$ for $M\in[1, \binom{d}{2}]$ is $g(1)=-d+1+\delta$ or $g(\binom{d}{2})=\binom{d}{2}(\delta-1)$. Particularly, if $1<M<\binom{d}{2}$ then $g(M)<\max(-d+1+\delta, \binom{d}{2}(\delta-1))$. Using the fact that an upper bound of \pref{eq:qupper2} is $g(M)$ then gives that if $\mathcal{U}\in\mathcal{P}$ and $1<|\mathcal{U}|<\binom{d}{2}$,
\[
|\mathcal{U}|(1+\delta)-\sum_{u\in\mathcal{U}} |u| < \max(-d+1+\delta, \binom{d}{2}(\delta-1)).
\]
Using \pref{eq:qupper1} then gives that
\[
\Pr[[d]\in E(\mathcal{H}_c)] \leq p+(1+o_n(1))\left(\frac{cn^{\delta-1}}{(d-2)!}\right)^{\binom{d}{2}} + o_n(p + n^{\binom{d}{2}(\delta-1)}).
\]
Using this inequality and \pref{eq:qlower} completes the proof.

\begin{remark}
Observe that \Cref{thm:fakeedge} for $\delta>\frac{d-1}{d+1}$ is implied by \Cref{thm:evcalc} with $l=m=0$.
\end{remark}

\subsection{Results about combinatorial structures in \texorpdfstring{$\mathcal{H}_c$}{}}

The main goal of this section is to prove that $E(\mathcal{H}_c)$ is concentrated around its mean if $\delta>\frac{d-1}{d+1}$, see \Cref{corr:varnumclique}. Observe that \Cref{lemma:evq,lemma:covarq} have similar statements and proofs as \cite[Lemma 35 and Lemma 40]{reconstruct}, but some differences are that we require different bounds and only consider when $\delta>\frac{d-1}{d+1}$. Furthermore we use methods discussed in \Cref{subsec:projcover} in this section.

\begin{lemma}
\label{lemma:evq}
Suppose $\delta>\frac{d-1}{d+1}$. Suppose $m$ is an integer such that $0\leq m\leq d-1$. Assume that $\{K_i: 1\leq i\leq M\}$ is a set of subsets $S$ of $[d]$ such that $2\leq |S|\leq d$. Assume that
\[
\bigcup_{i=1}^M \binom{K_i}{2} \supset \binom{[d]}{2}\backslash\binom{[m]}{2}.
\]
Then
\[
(1+\delta)M - \sum_{i=1}^M |K_i| \leq \left(\binom{d}{2}-\binom{m}{2}\right)(\delta-1).
\]
Equality occurs if and only if the $K_i$ are distinct and $\{K_i: 1\leq i\leq M\}=\binom{[d]}{2}\backslash\binom{[m]}{2}$.
\end{lemma}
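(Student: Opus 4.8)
The plan is to combine a combinatorial ``irredundancy'' reduction with the convex-relaxation technique of \Cref{subsec:projcover}. Write $C:=\binom{d}{2}-\binom{m}{2}$ for the size of the target pair set $\binom{[d]}{2}\setminus\binom{[m]}{2}$ and set $k_i:=|K_i|$, so the quantity to bound is $(1+\delta)M-\sum_i k_i=\sum_i\bigl((1+\delta)-k_i\bigr)$, whose summands are each at most $1-\delta<0$. First I would reduce to the case where the family $\{K_i\}$ is \emph{irredundant}: if some $K_i$ can be removed while the remaining sets still cover the target, deleting it keeps the hypothesis valid and strictly increases the objective, so it suffices to treat irredundant families. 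In an irredundant family each $K_i$ has a private target pair $e_i\in\binom{K_i}{2}\cap\bigl(\binom{[d]}{2}\setminus\binom{[m]}{2}\bigr)$ that lies in no $\binom{K_j}{2}$ with $j\ne i$; since the $e_i$ are distinct this gives the key constraint $M\le C$.

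Next I would relax via $y_i:=\binom{k_i}{2}\in[1,\binom{d}{2}]$, so that $k_i=\phi(y_i)$ with $\phi(y)=\tfrac{1+\sqrt{1+8y}}{2}$ strictly concave. The chord of $\phi$ from $(1,2)$ to $(\binom{d}{2},d)$ has slope $\tfrac{2}{d+1}$ (using $\binom{d}{2}-1=\tfrac{(d-2)(d+1)}{2}$), so concavity gives $k_i\ge 2+\tfrac{2}{d+1}(y_i-1)$ with equality iff $k_i\in\{2,d\}$. Since the covering hypothesis forces $\sum_i y_i\ge C$, summing the chord bounds yields $\sum_i k_i\ge \tfrac{2d}{d+1}M+\tfrac{2C}{d+1}$, hence
\[
(1+\delta)M-\sum_i k_i\ \le\ \frac{\delta(d+1)-(d-1)}{d+1}\,M-\frac{2C}{d+1}.
\]
As $\delta>\tfrac{d-1}{d+1}$ the coefficient of $M$ is strictly positive, so the right side is maximized at $M=C$, where it equals $(\delta-1)C$. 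For equality one traces back: $M=C$, every chord bound tight (so each $k_i\in\{2,d\}$), and $\sum_i\binom{k_i}{2}=C$; since $\binom{d}{2}-1>0$, the last two conditions force all $k_i=2$, and then $C$ distinct pairs whose union contains the $C$ target pairs must be exactly $\binom{[d]}{2}\setminus\binom{[m]}{2}$, which indeed achieves equality.

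The only subtle point is the irredundancy reduction: without it the relaxed objective is increasing in $M$ and the inequality genuinely fails for families padded with redundant pairs, so the combinatorial input ``a minimal cover of the target uses at most $C$ sets'' is what makes the argument close. Everything else is the routine concavity reasoning from \Cref{subsec:projcover}, here packaged into a single chord inequality rather than a full vertex enumeration.
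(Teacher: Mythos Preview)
Your proof is correct and follows the same two-stage template as the paper: first pass to an irredundant family to obtain $M\le C:=\binom{d}{2}-\binom{m}{2}$, then relax via $y_i=\binom{k_i}{2}$ and optimize. The difference lies in how the relaxed problem is solved. The paper maximizes the convex function $f(y)=(1+\delta)M-\sum_i\frac{1+\sqrt{1+8y_i}}{2}$ over the simplex $\{y_i\ge 1,\ \sum y_i\ge C\}$ by locating the extremal vertex $(1,\dots,1,C-M+1)$, then argues that the resulting one-variable function of $M$ is again convex and checks the two endpoints $M=1$ and $M=C$ separately (the $M=1$ case is dispatched ``by expansion''). You instead use a single secant inequality for the concave map $\phi(y)=\frac{1+\sqrt{1+8y}}{2}$ on $[1,\binom{d}{2}]$, which immediately linearizes the objective in both $\sum y_i$ and $M$; the constraint $M\le C$ together with the strict positivity of the $M$-coefficient (this is exactly where $\delta>\frac{d-1}{d+1}$ enters) then gives the bound directly. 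This is genuinely cleaner: it collapses the paper's two-level convexity argument and the ad hoc endpoint verification into one line, and it makes the equality analysis more transparent (tightness of the chord plus $M=C$ and $\sum y_i=C$ force all $k_i=2$). The paper's vertex approach has the mild advantage that it does not need the upper bound $y_i\le\binom{d}{2}$, but since here $K_i\subset[d]$ that bound is free.
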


\begin{proof}
First we may assume that $\binom{K_i}{2}\cap \left(\binom{[d]}{2}\backslash\binom{[m]}{2}\right)\not\subset\bigcup_{j\in [M]\backslash\{i\}} \binom{K_j}{2}$ for $1\leq i\leq M$. We can assume this because if the condition is not true we can remove $K_i$ and increase $(1+\delta)M-\sum_{i=1}^M |K_i|$. This condition implies that the $K_i$ are distinct and $M\leq\binom{d}{2}-\binom{m}{2}$.

Suppose $y_i=\binom{|K_i|}{2}$ for $1\leq i\leq M$, then $y_i\geq 1$ for $1\leq i\leq m$ and $\sum_{i=1}^M y_i \geq \binom{d}{2}-\binom{m}{2}$. We have that 
\[
(1+\delta)M-\sum_{i=1}^M |K_i| = (1+\delta)M - \sum_{i=1}^M \frac{1+\sqrt{1+8y_i}}{2}.
\]
Let
\[
f(y_1, \ldots, y_M) = (1+\delta)M - \sum_{i=1}^M \frac{1+\sqrt{1+8y_i}}{2}.
\]

Since $f$ is convex, its maximal value occurs at a vertex. Suppose $(y_i)_{1\leq i\leq M}$ is the vertex such that $y_i=1$ for $1\leq i\leq M-1$ and $y_M = \binom{d}{2}-\binom{m}{2}-M+1$. Then
\[
f(y_1, \ldots, y_M) = (\delta-1)M + 2 - \frac{1+\sqrt{1+8(\binom{d}{2}-\binom{m}{2}-M+1)}}{2}.
\]
Since $f(y_1, \ldots, y_M)$ is convex with respect to $M$, the maximum value of $f(y_1, \ldots, y_M)$ for $1\leq M\leq \binom{d}{2}-\binom{m}{2}$ occurs when $M\in\{1, \binom{d}{2}-\binom{m}{2}\}$. 

Suppose $M=1$. Then
\[
f(y_1, \ldots, y_M) = 1+\delta - \frac{1+\sqrt{1+8(\binom{d}{2}-\binom{m}{2})}}{2}.
\]
Because the number of edges in $[d]$ but not $[m]$ is greater than $M$, we also must prove that equality does not hold. Because $\delta>\frac{d-1}{d+1}$ it suffices to prove that 
\[
1+\frac{d-1}{d+1} - \frac{1+\sqrt{1+8(\binom{d}{2}-\binom{m}{2})}}{2} \leq -\left(\binom{d}{2}-\binom{m}{2}\right)\frac{2}{d+1}.
\]
This can be proved using expansion.

Next suppose $M=\binom{d}{2}-\binom{m}{2}$. Then $y_M=1$ so
\[
f(y_1, \ldots, y_M) = \left(\binom{d}{2}-\binom{m}{2}\right)(\delta-1).
\]
Afterwards it is straightforward to verify the equality case.
\end{proof}

\begin{lemma}
\label{lemma:covarq}
Suppose $\delta>\frac{d-1}{d+1}$. Suppose $m$ and $k$ are integers such that $ m\in\{0,1,2,d-1\}$ and $m\leq k\leq d-1$. Assume that $\{K_i: 1\leq i\leq M\}$ is a set of subsets $S$ of $[1, 2d-k]$ such that $2\leq |S|\leq d$. Assume that
\[
\bigcup_{i=1}^M \binom{K_i}{2}\supset \left(\binom{[d]}{2}\bigcup \binom{[k]\cup \{i: d+1\leq i\leq 2d-k\}}{2}\right)\backslash \binom{[m]}{2}
\]
Then
\begin{equation}
\label{eq:covar_ineq}
(1 + \delta)M-\sum_{i=1}^M |K_i| \leq k-m + (d(d-1)-m(m-1))(\delta-1).
\end{equation}
Equality occurs if and only if $k=m$, the $K_i$ are distinct, and $\\ \{K_i: 1\leq i\leq M\}= \left(\binom{[d]}{2}\bigcup \binom{[k]\cup \{i: d+1\leq i\leq 2d-k\}}{2}\right)\backslash \binom{[m]}{2}$.
\end{lemma}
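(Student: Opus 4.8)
The plan is to run the convex-relaxation argument of \Cref{subsec:projcover}, just as in the proof of \Cref{lemma:evq}, but now taking seriously the extra constraint $|K_i|\le d$ (equivalently $\binom{|K_i|}{2}\le\binom{d}{2}$) flagged in the remark before that subsection. First I would record the size of the target. The edge set $T:=\bigl(\binom{[d]}{2}\cup\binom{[k]\cup\{i:d+1\le i\le 2d-k\}}{2}\bigr)\setminus\binom{[m]}{2}$ is a union of the edge sets of two $d$-vertex cliques overlapping exactly in $\binom{[k]}{2}$, with $\binom{[m]}{2}\subseteq\binom{[k]}{2}$ removed, so $|T|=2\binom{d}{2}-\binom{k}{2}-\binom{m}{2}$, which I will call $E$. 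From the covering hypothesis $\bigcup_{i}\binom{K_i}{2}\supseteq T$ I will use only the crude consequence $\sum_{i=1}^{M}\binom{|K_i|}{2}\ge E$.

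The crux is the elementary inequality that, for every integer $j$ with $2\le j\le d$,
\[
(1+\delta)-j\ \le\ (\delta-1)\binom{j}{2},
\]
with equality precisely when $j=2$; this rearranges to $(j-1)(j-2)\le\delta(j-2)(j+1)$, i.e.\ for $j\ge3$ to $\delta\ge\frac{j-1}{j+1}$, which holds strictly since $\delta>\frac{d-1}{d+1}\ge\frac{j-1}{j+1}$. (Equivalently, in the relaxation variables $y_i=\binom{|K_i|}{2}\in[1,\binom{d}{2}]$, the concave map $y\mapsto(\delta-1)y-\bigl((1+\delta)-\tfrac{1+\sqrt{1+8y}}{2}\bigr)$ vanishes at $y=1$ and is nonnegative at $y=\binom{d}{2}$ because $\delta\ge\frac{d-1}{d+1}$, hence is nonnegative on the whole interval.) Summing this over $i$ and using $\sum_i\binom{|K_i|}{2}\ge E$ together with $\delta-1<0$ (recall $\delta<1$ in the appendix) gives
\[
(1+\delta)M-\sum_{i=1}^{M}|K_i|\ =\ \sum_{i=1}^{M}\bigl((1+\delta)-|K_i|\bigr)\ \le\ (\delta-1)\sum_{i=1}^{M}\binom{|K_i|}{2}\ \le\ (\delta-1)E.
\]
Note this step genuinely needs $|K_i|\le d$: for $y>\binom{d}{2}$ the auxiliary function eventually turns negative, so without the bound a single large clique would beat the edge-by-edge cover and the displayed estimate would fail.

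The last and most delicate step is the purely arithmetic inequality $(\delta-1)E\le k-m+(d(d-1)-m(m-1))(\delta-1)$. Using $d(d-1)-m(m-1)=2\binom{d}{2}-2\binom{m}{2}$ and $\binom{k}{2}-\binom{m}{2}=\tfrac{(k-m)(k+m-1)}{2}$, the right side minus $(\delta-1)E$ simplifies to $(k-m)\bigl(1-(1-\delta)\tfrac{k+m-1}{2}\bigr)$, which is $0$ when $k=m$. When $k>m$ one has $m\neq d-1$ (else $k=m$), so $m\in\{0,1,2\}$ and hence $k+m\le(d-1)+2=d+1$; combined with $1-\delta<\frac{2}{d+1}$ this forces $(1-\delta)\tfrac{k+m-1}{2}<\tfrac{d}{d+1}<1$, so the difference is strictly positive. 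This is exactly where the hypothesis $m\in\{0,1,2,d-1\}$ is used; without it the crude bound $(\delta-1)E$ would not be strong enough for large $d$, and I expect verifying this point to be the main obstacle. Finally, for the equality characterization: equality in \pref{eq:covar_ineq} forces equality throughout both displays and in the arithmetic step, so $k=m$, every $|K_i|=2$, and $M=\sum_i\binom{|K_i|}{2}=E$; an $E$-element multiset of edges covering the $E$-edge set $T$ must therefore consist of $E$ distinct edges equal to $T$, and the converse implication is immediate.
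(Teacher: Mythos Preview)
Your argument is correct and is a genuinely different, more elementary route than the paper's. The paper follows the standard convex-relaxation template: after passing to the minimal cover and introducing $y_i=\binom{|K_i|}{2}$, it fixes $M$, maximizes $f(y_1,\dots,y_M)=(1+\delta)M-\sum_i\frac{1+\sqrt{1+8y_i}}{2}$ over the polytope $\{1\le y_i\le\binom{d}{2},\ \sum y_i\ge E\}$ by checking vertices, and then optimizes over $M$, splitting into several subcases depending on how $M$ compares to $\binom{d}{2}-\binom{k}{2}-\binom{m}{2}$; the case $m=d-1$ is handled by a separate combinatorial argument. You instead observe the single pointwise inequality $(1+\delta)-j\le(\delta-1)\binom{j}{2}$ for every integer $2\le j\le d$ (equivalent to $\delta\ge\frac{j-1}{j+1}$, hence implied by $\delta>\frac{d-1}{d+1}$, with equality only at $j=2$), sum it over $i$, and use $\sum_i\binom{|K_i|}{2}\ge E$ together with $\delta<1$ to get $(1+\delta)M-\sum|K_i|\le(\delta-1)E$ in one stroke, bypassing the vertex enumeration and the separate $m=d-1$ case entirely. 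Your final arithmetic step, reducing the comparison to $(k-m)\bigl(1-(1-\delta)\tfrac{k+m-1}{2}\bigr)\ge0$ and noting that the restriction $m\in\{0,1,2,d-1\}$ is exactly what forces $k+m-1\le d$ when $k>m$, is clean and pinpoints precisely where that hypothesis enters. The paper's longer analysis has the side benefit of exhibiting the other near-extremal configurations (those with a single $y_i=\binom{d}{2}$), but for the statement as written your proof is shorter and conceptually sharper; your remark that the constraint $|K_i|\le d$ is genuinely needed for the per-term inequality is also on point.
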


\begin{proof}
Let 
\[
\mathcal{E} = \left(\binom{[d]}{2}\cup \binom{[k]\bigcup \{i: d+1\leq i\leq 2d-k\}}{2}\right)\backslash \binom{[m]}{2}.
\]
Similarly to the proof of \Cref{lemma:evq}, assume the condition \condition{cond:minimal} that for $1\leq i\leq M$, $\binom{K_i}{2}\cap \mathcal{E}\not\subset\bigcup_{j\in [M]\backslash\{i\}} \binom{K_j}{2}$. Note that \refcondition{cond:minimal} implies that the $K_i$ are distinct and that
\[
M \leq |\mathcal{E}| = 2\binom{d}{2} - \binom{k}{2} - \binom{m}{2}.
\]
We must prove that the equality case of \pref{eq:covar_ineq} occurs if and only if $M=2\binom{d}{2}-\binom{k}{2}-\binom{m}{2}$. Furthermore, since each element of $[2d-k]$ must be a vertex of one of the $K_i$ and $2d-k>d$, $M\geq 2$.

\noindent\textbf{Case 1: $m\in \{0, 1, 2\}$, $m < d-1$}

Suppose $m\in \{0, 1, 2\}$ and $m<d-1$. (We do not consider when $m=2$ and $d=3$.)

Suppose $y_i=\binom{|K_i|}{2}$ for $1\leq i\leq M$. Then, $1\leq y_i\leq \binom{d}{2}$ for $1\leq i\leq M$ and
\[
\sum_{i=1}^M y_i \geq 2\binom{d}{2}-\binom{k}{2}-\binom{m}{2}.
\]
Furthermore, the left hand side of \pref{eq:covar_ineq} is
\[
(1+\delta)M - \sum_{i=1}^M \frac{1+\sqrt{1+8y_i}}{2}.
\]
Let $\mathcal{R}_M$ be the set of $(y_i)_{1\leq i\leq M}$ such that $2\leq y_i\leq \binom{d}{2}$ for $1\leq i\leq M$ and $\sum_{i=1}^M y_i \geq 2\binom{d}{2}-\binom{k}{2}-\binom{m}{2}$. Additionally, let
\[
f(y_1, \ldots, y_M) =(1+\delta)M - \sum_{i=1}^M \frac{1+\sqrt{1+8y_i}}{2}.
\]
Observe that $f$ is convex so the maximum value of $f$ over $\mathcal{R}_M$ occurs at a vertex of $\mathcal{R}_M$. Suppose $(y_i)_{1\leq i\leq M}$ is a vertex of $\mathcal{R}_M$ such that $y_i\in\{2, \binom{d}{2}\}$ for $1\leq i\leq M-1$.

Suppose $2\leq M \leq \binom{d}{2}-\binom{k}{2}-\binom{m}{2}$. If $y_i=1$ for $1\leq i\leq M-1$ then
\[
y_M \geq 2\binom{d}{2}-\binom{k}{2}-\binom{m}{2} - M + 1 > \binom{d}{2},
\]
which is a contradiction. Assume that one of the $y_i$, $1\leq i\leq M-1$ equals $\binom{d}{2}$; it is clearly not optimal if two distinct $y_i$ for $1\leq i\leq M-1$ equal $\binom{d}{2}$ since we will then have that $\sum_{i=1}^M y_i > 2\binom{d}{2}$, so we can increase the value of $f$ by decreasing some of the $y_i$. Without loss of generality, assume that $y_i=1$ for $1\leq i\leq M-2$, $y_{M-1}=\binom{d}{2}$, and 
\[
y_M = \binom{d}{2}-\binom{k}{2}-\binom{m}{2}-M + 2.
\]
Then,
\[
f(y_1, \ldots, y_M) = (\delta-1)M+2(\delta+1) +4 -d -\frac{1+\sqrt{1+8y_M}}{2}.
\]
Since $M<2\binom{d}{2}-\binom{k}{2}-\binom{m}{2}$, we must prove that equality does not occur. Therefore, we must prove that
\[
(\delta-1)M +4 -d -\frac{1+\sqrt{1+8y_M}}{2} < k-m+(d(d-1)-m(m-1))(\delta-1)
\]
 It suffices to prove that this inequality is true for $2\leq M\leq \binom{d}{2}-\binom{k}{2}-\binom{m}{2}+1$ (observe that we add $\binom{d}{2}-\binom{k}{2}-\binom{m}{2}+1$ as a value for $M$ to simplify calculations). The left hand side is convex with respect to $M$, so it suffices to prove that the inequality is true for $M\in\{2, \binom{d}{2}-\binom{k}{2}-\binom{m}{2}+1\}$.

First, suppose $M=2$. The required inequality is
\begin{align*}
& 2(\delta-1) + 4 -d -\frac{1+\sqrt{1+8(\binom{d}{2}-\binom{k}{2}-\binom{m}{2})}}{2} \\
& < k-m + (d(d-1)-m(m-1))(\delta-1).
\end{align*}
Since $\delta>\frac{d-1}{d+1}$ and $d(d-1)-m(m-1) > 2$, it suffices to prove that
\[
-\frac{4}{d+1} + 4 - d - \frac{1+\sqrt{1+8(\binom{d}{2}-\binom{k}{2}-\binom{m}{2})}}{2} \leq k - m - \frac{2}{d+1}(d(d-1)-m(m-1)).
\]
This inequality can be verified by expansion. Note that equality occurs if and only if $k=m=0$ or $k=m=1$.

Although \Cref{lemma:covarq} is true for $m\in \{0, 1, 2, d-1\}$, we only use the case $m=0$, see \Cref{remark:validm}. Therefore, for completeness, we include the expansion for the case $m=0$ and equivalently $m=1$. It suffices to prove that
\begin{align*}
& -\frac{4}{d+1} + 4 - d - \frac{1+\sqrt{1+4d^2-4d-4k^2 + 4k}}{2} \leq k - \frac{2d(d-1)}{d+1} \\
\Leftrightarrow & \frac{2d^2 - 2d - 4}{d+1} + 4 - d - k - \frac{1}{2} \leq \frac{\sqrt{1+4d^2-4d-4k^2 + 4k}}{2} \\
\Leftrightarrow & 2d - 2k - 1 \leq \sqrt{1+4d^2 - 4d - 4k^2 + 4k} \Leftrightarrow 8k^2 \leq 8dk,
\end{align*}
which follows from $0\leq k \leq d-1$.

Next, suppose $M=\binom{d}{2}-\binom{k}{2}-\binom{m}{2}+1$. Then, $y_M=1$ so the required inequality is
\[
(\delta-1)\left(\binom{d}{2}-\binom{k}{2}-\binom{m}{2}+1\right) + 2 -d < k-m + (d(d-1)-m(m-1))(\delta-1).
\]
This is equivalent to
\[
(1-\delta)\left(\binom{d}{2} - \binom{m}{2} + \binom{k}{2} - 1\right) + 2 < k-m+d.
\]
We need to prove this inequality for $\delta>\frac{d-1}{d+1}$, so it suffices to prove that
\[
\frac{2}{d+1}\left(\binom{d}{2} - \binom{m}{2} + \binom{k}{2} - 1\right) + 2\leq k-m+d.
\]
This inequality is equivalent to
\[
(k-m)(d+2-k-m)\geq 0,
\]
which is true since $m\leq k\leq d-1$ and $m\leq 2$.

Suppose $\binom{d}{2}-\binom{k}{2}-\binom{m}{2}+1 \leq M \leq 2\binom{d}{2}-\binom{k}{2}-\binom{m}{2}$. It is clearly not optimal for two of the $y_i$ for $1\leq i\leq M-1$ to equal $\binom{d}{2}$. Suppose one of the $y_i$ for $1\leq i\leq M-1$ equals $\binom{d}{2}$. Without loss of generality, suppose $y_i=1$ for $1\leq i\leq M-2$ and $y_{M-1}=\binom{d}{2}$. Then,
\[
y_M \geq \binom{d}{2} - \binom{k}{2} - \binom{m}{2} - M + 2.
\]
Since $\binom{d}{2} - \binom{k}{2} - \binom{m}{2} - M + 2 \leq 1$, 
\begin{align*}
f(y_1, \ldots, y_{M-2}, y_{M-1}, y_M) & \leq f\left(y_1, \ldots, y_{M-2},\binom{d}{2}, 1\right) \\
& \leq f\left(y_1, \ldots, y_{M-2}, 2\binom{d}{2} - \binom{k}{2} -\binom{m}{2} - M + 1, 1\right) \\
& = f\left(y_1, \ldots, y_{M-2}, 1, 2\binom{d}{2} - \binom{k}{2} -\binom{m}{2} - M + 1\right). 
\end{align*}
Then, we may assume that $y_i=1$ for $1\leq i\leq M-1$. We have that
\[
y_M = 2\binom{d}{2} - \binom{k}{2}-\binom{m}{2} - M + 1.
\]
Furthermore, 
\[
f(y_1, \ldots, y_M) = (\delta-1)M + 2 - \frac{1+\sqrt{1+8(2\binom{d}{2} - \binom{k}{2}-\binom{m}{2} - M + 1)}}{2}.
\]
Observe that $f$ convex with respect to $M$. Therefore, $f$ is maximized over $\binom{d}{2}-\binom{k}{2}-\binom{m}{2}+1\leq M \leq 2\binom{d}{2}-\binom{k}{2}-\binom{m}{2}$ when $M\in \{\binom{d}{2}-\binom{k}{2}-\binom{m}{2}+1, 2\binom{d}{2}-\binom{k}{2}-\binom{m}{2}\}$.

Suppose $M=\binom{d}{2}-\binom{k}{2}-\binom{m}{2}+1$. Then $y_M=\binom{d}{2}$ so this case is equivalent to the previous case we considered where $(y_1, \ldots, y_M) = (1, \ldots, 1, \binom{d}{2}, 1)$.

Next suppose $M=2\binom{d}{2}-\binom{k}{2}-\binom{m}{2}$. Then $y_M=1$ so the required inequality is 
\[
(\delta - 1)\left(2\binom{d}{2}-\binom{k}{2}-\binom{m}{2}\right) \leq k-m + (d(d-1)-m(m-1))(\delta-1).
\]
This is equivalent to
\[
(1-\delta)\left(\binom{k}{2}-\binom{m}{2}\right) \leq k-m.
\]
If $k=m$ then it is clear that equality occurs. Suppose $k>m$. We must prove that 
\[
(1-\delta)\left(\binom{k}{2}-\binom{m}{2}\right) < k-m.
\]
Since $\delta>\frac{d-1}{d+1}$, it suffices to prove that
\[
\frac{2}{d+1}\left(\binom{k}{2}-\binom{m}{2}\right) \leq k-m.
\]
This is equivalent to 
\[
\frac{1}{d+1}(k-m)(k+m-1) \leq k-m,
\]
which is true since $m\leq 2$ and $k\leq d-1$.

\noindent\textbf{Case 2: $m= d-1$}

Next suppose $m=d-1$. Since $k=m=d-1$, we must prove that
\[
(1+\delta)M - \sum_{i=1}^M|K_i| \leq 2(d-1)(\delta-1)
\]
and that equality occurs if and only if $\{K_i: 1\leq i\leq M\}=\mathcal{E}$. Note that
\[
\mathcal{E}=\{\{d, j\}: j\in [d-1]\}\cup\{\{d+1, j\}: j\in [d-1]\}\subset \bigcup_{i=1}^M \binom{K_i}{2}.
\]
Suppose
\[
\{K_i: 1\leq i\leq M\} = S_a\sqcup S_b\sqcup S_{ab},
\]
where $K_i\in S_a$ if $K_i\cap \{d, d+1\} = \{d\}$, $K_i\in S_b$ if $K_i\cap \{d, d+1\}=\{d+1\}$, and $K_i\in S_{ab}$ if $K_i\cap \{d, d+1\} = \{d, d+1\}$ for $1\leq i\leq M$.

Let $Z=\{j: j\in [d-1], \exists k\in S_{ab} \text{ such that } j\in k\}$. For all $j\in Z$, both edges in $\mathcal{E}$ that contain $j$ are covered by an element of $S_{ab}$. Suppose $k\in S_a$ contains an element $j$ of $Z$. If $k=\{d,j\}$ then \refcondition{cond:minimal} will be contradicted so $|k|\geq 3$. If we remove $j$ from $k$ then the left hand side of \pref{eq:covar_ineq} will decrease but all of the edges of $\mathcal{E}$ will remain covered. Hence, we can assume that no element of $S_a$ contains an element of $Z$. We can similarly assume that no element of $S_b$ contains an element of $Z$.

Furthermore, assume that $j\in [d-1]$ and $k_1, k_2\in S_a$ satisfy $k_1\not=k_2$ and $j\in k_1\cap k_2$. If $k_1=\{d,j\}$ then \refcondition{cond:minimal} will be contradicted so $|k_1|\geq 3$. If we remove $j$ from $k_1$ then the left hand side of \pref{eq:covar_ineq} will decrease but all of the edges of $\mathcal{E}$ will remain covered. Hence, we can assume that no element of $[d-1]$ is contained in two elements of $S_a$. We can similarly assume that no element of $[d-1]$ is contained in two elements of $S_b$ and that no element of $[d-1]$ is contained in two elements of $S_{ab}$. 

Suppose $k\in S_a$ and $|k|\geq 3$. Suppose $j\in [d-1]\cap k$. Suppose we remove $j$ from $k$ and add $\{d, j\}$ to $\{K_i: 1\leq i\leq M\}$. Then, the left hand side of \pref{eq:covar_ineq} will increase by $\delta$. Hence, we can assume that $|k|=2$ for all $k\in S_a$ and similarly that $|k|=2$ for all $k\in S_b$. 

Since each element of $S_a$ is $\{d,j\}$ for some $j\in [d-1]\backslash Z$, $|S_a|=d-1-|Z|$. Similarly, $|S_b|=d-1-|Z|$. Furthermore, the left hand side of \pref{eq:covar_ineq} is 
\begin{align*}
2(\delta-1)(d-1-|Z|) + \sum_{k\in S_{ab}} 1+\delta - |k| = (\delta-1)(2d-2-2|Z| + |S_{ab}|) - |Z|.
\end{align*}
The inequality 
\[
(\delta-1)(2d-2-2|Z| + |S_{ab}|) - |Z| \leq (\delta-1)(2d-2)
\]
is equivalent to
\[
-(1-\delta)|S_{ab}| \leq (2\delta - 1)|Z|.
\]
If $|Z|=|S_{ab}|=0$, then the inequality holds with equality. Suppose $|Z|>0$. Then, the inequality is strict because $\delta>\frac{d-1}{d+1} \geq \frac{1}{2}$. Hence, equality holds if and only if $|Z|=|S_{ab}|=0$. Furthermore, $|Z|=|S_{ab}|=0$ if and only if $\{K_i: 1\leq i\leq M\} = \mathcal{E}$.
\end{proof}

\begin{remark}
\label{remark:validm}
It may be possible to generalize the previous result to more values of $m$. We use the case $m=0$ and intermediate results to prove \Cref{corr:varnumclique}, which is used in \Cref{subsec:impossibility}.
\end{remark}

\begin{theorem}
\label{thm:evcalc}
Suppose $\delta>\frac{d-1}{d+1}$. Suppose $m$ is an integer such that $0\leq m\leq d-1$ and $l$ is an integer such that $l\geq m$. Let $X$ be the set of elements $h$ of $\binom{[n]}{d}$ such that $h\cap[l]=[m]$ and $\binom{h}{2}\backslash\binom{[m]}{2}\subset E(\pg)$, where $[0]$ is the empty set. Then 
\[
\E[|X|] = (1+o_n(1))\binom{n}{d-m}\left(\frac{cn^{\delta-1}}{(d-2)!}\right)^{\binom{d}{2}-\binom{m}{2}}.
\]
\end{theorem}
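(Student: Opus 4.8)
The plan is to evaluate $\E[|X|]$ by linearity of expectation together with the vertex-transitivity of the random hypergraph model. First I would write $|X| = \sum_{h\in\binom{[n]}{d}} \1\{h\cap[l]=[m]\}\,\1\{\binom{h}{2}\setminus\binom{[m]}{2}\subset E(\pg)\}$, so that $\E[|X|] = N\cdot p_0$, where $N$ is the number of $h\in\binom{[n]}{d}$ with $h\cap[l]=[m]$ and $p_0 = \Pr[\binom{h_0}{2}\setminus\binom{[m]}{2}\subset E(\pg)]$ for any one fixed such $h_0$. Counting $N$ is elementary: such an $h$ must contain $[m]$, avoid $\{m+1,\dots,l\}$, and take its remaining $d-m$ vertices from outside $[l]$, so $N = \binom{n-l}{d-m} = (1+o_n(1))\binom{n}{d-m}$ since $l$ and $d$ are fixed. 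Thus the theorem reduces to showing $p_0 = (1+o_n(1))\left(\frac{cn^{\delta-1}}{(d-2)!}\right)^{\binom{d}{2}-\binom{m}{2}}$, noting that $\binom{h_0}{2}\setminus\binom{[m]}{2}$ contains exactly $\binom{d}{2}-\binom{m}{2}$ pairs.

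For the lower bound on $p_0$ I would invoke the Harris (FKG) inequality. Each event $\{e\in E(\pg)\}$ is increasing in the independent hyperedge indicators, and $\Pr[e\in E(\pg)] = 1-(1-p)^{\binom{n-2}{d-2}} = (1+o_n(1))\binom{n-2}{d-2}p = (1+o_n(1))\frac{cn^{\delta-1}}{(d-2)!}$, where I use $\delta<1$ so that $\binom{n-2}{d-2}p\to 0$ and hence $1-(1-p)^N \sim Np$. Applying Harris to the $\binom{d}{2}-\binom{m}{2}$ increasing events indexed by the pairs in $\binom{h_0}{2}\setminus\binom{[m]}{2}$ gives $p_0 \ge (1+o_n(1))\left(\frac{cn^{\delta-1}}{(d-2)!}\right)^{\binom{d}{2}-\binom{m}{2}}$.

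The real work is the matching upper bound, which I would obtain from the projection-cover machinery of \Cref{subsec:projcover}. After relabelling $h_0$ as $[d]$ with distinguished subset $[m]\subset[d]$, I apply \Cref{lemma:projcoverprob} with $k=d$ and $\mathcal{E}=\binom{[d]}{2}\setminus\binom{[m]}{2}$, getting $p_0 \le \sum_{\mathcal{U}} p^{|\mathcal{U}|}\prod_{u\in\mathcal{U}}\binom{n-d}{d-|u|} = \sum_{\mathcal{U}}\Theta_n\!\left(n^{(1+\delta)|\mathcal{U}|-\sum_{u\in\mathcal{U}}|u|}\right)$, the sum being over the finite family of $\mathcal{U}\subset 2^{[d]}$ satisfying the two conditions of that lemma. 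The collection $\mathcal{U}_\star = \binom{[d]}{2}\setminus\binom{[m]}{2}$ of all required pairs is itself one such $\mathcal{U}$, and its term equals $\left(p\binom{n-d}{d-2}\right)^{\binom{d}{2}-\binom{m}{2}} = (1+o_n(1))\left(\frac{cn^{\delta-1}}{(d-2)!}\right)^{\binom{d}{2}-\binom{m}{2}}$, of order $n^{(\delta-1)(\binom{d}{2}-\binom{m}{2})}$. Now \Cref{lemma:evq} applies to every such $\mathcal{U}$ (viewing its elements as the sets $K_i$, which are subsets of $[d]$ of size in $[2,d]$): the hypothesis of that lemma is precisely the covering condition $\mathcal{E}\subset\bigcup_{u\in\mathcal{U}}\binom{u}{2}$, and it yields $(1+\delta)|\mathcal{U}| - \sum_{u\in\mathcal{U}}|u| \le (\delta-1)(\binom{d}{2}-\binom{m}{2})$, with equality only when $\mathcal{U}=\mathcal{U}_\star$. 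Hence every other term has strictly smaller exponent in $n$, so (there being finitely many $\mathcal{U}$) their total is $o_n\!\left(\left(\frac{cn^{\delta-1}}{(d-2)!}\right)^{\binom{d}{2}-\binom{m}{2}}\right)$; combined with the lower bound this pins down $p_0$, hence $\E[|X|]$.

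The only points needing care, beyond assembling the pieces, are (i) that the constant on the dominant $\mathcal{U}_\star$-term is exactly $\left(\frac{c}{(d-2)!}\right)^{\binom{d}{2}-\binom{m}{2}}$, using $p\binom{n-d}{d-2} = (1+o_n(1))\frac{cn^{\delta-1}}{(d-2)!}$, and (ii) that the inequality in \Cref{lemma:evq} is \emph{strict} off its maximizer, so the finitely many subdominant terms are genuinely negligible. The hypothesis $\delta>\frac{d-1}{d+1}$ is used only through \Cref{lemma:evq}, while $\delta<1$ is used only in the elementary estimate $1-(1-p)^{\binom{n-2}{d-2}} = (1+o_n(1))\binom{n-2}{d-2}p$; taking $l=m=0$ recovers \Cref{thm:fakeedge} in the regime $\delta>\frac{d-1}{d+1}$, as already noted in the remark following that theorem.
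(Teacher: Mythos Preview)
Your proposal is correct and follows essentially the same approach as the paper: reduce $\E[|X|]$ to counting $\binom{n-l}{d-m}$ admissible $h$ times a single probability, lower-bound that probability by Harris, and upper-bound it via \Cref{lemma:projcoverprob} together with the exponent inequality of \Cref{lemma:evq}, whose strictness off $\mathcal{U}_\star$ kills the subdominant terms. Your remarks on where each hypothesis is used and on the $l=m=0$ specialization are accurate and match the paper as well.
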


\begin{proof}
The proof of this theorem has the same structure as the proof of \Cref{thm:fakeedge}. Suppose $h\in\binom{[n]}{d}$ and $h\cap [l]=[m]$. The number of $h$ is $\binom{n-l}{d-m}$. Hence, it suffices to prove that
\[
\Pr[h\in X] = \Pr\left[\binom{h}{2}\backslash \binom{[m]}{2}\subset E(\pg)\right] = (1+o_n(1))\left(\frac{cn^{\delta-1}}{(d-2)!}\right)^{\binom{d}{2}-\binom{m}{2}}.
\]

Using the Harris inequality gives that
\begin{align*}
\Pr[h\in X] & \geq (1-(1-p)^{\binom{n-2}{d-2}})^{\binom{d}{2}-\binom{m}{2}} \\ & \geq \left(\binom{n-2}{d-2}p - p -O_n((n^{d-2}p)^2)\right)^{\binom{d}{2}-\binom{m}{2}} \\
& \geq (1-o_n(1))\left(\binom{n-2}{d-2}p\right)^{\binom{d}{2}-\binom{m}{2}} \\
& = (1-o_n(1))\left(\frac{cn^{\delta-1}}{(d-2)!}\right)^{\binom{d}{2}-\binom{m}{2}}.
\end{align*}

From \Cref{lemma:projcoverprob} with $[k]$ replaced by $h$ and $\mathcal{E}$ replaced by $\binom{h}{2}\backslash\binom{[m]}{2}$, 
\[
\Pr\left[\binom{h}{2}\backslash\binom{[m]}{2}\subset E(\mathcal{H}_c)\right] \leq \sum_\mathcal{U} p^{|\mathcal{U}|}\prod_{u\in\mathcal{U}}\binom{n-d}{d-|u|},
\]
where the sum is over $\mathcal{U}\subset 2^h$ satisfying the conditions of the lemma.

If $\mathcal{U}=\binom{h}{2}\backslash\binom{[m]}{2}$, then
\[
p^{|\mathcal{U}|}\prod_{u\in\mathcal{U}}\binom{n-d}{d-|u|}\leq (1+o_n(1))\left(\frac{cn^{\delta-1}}{(d-2)!}\right)^{\binom{d}{2}-\binom{m}{2}}.
\]
Otherwise 
\[
p^{|\mathcal{U}|}\prod_{u\in\mathcal{U}}\binom{n-d}{d-|u|} = \Theta_n(n^{(1+\delta)|\mathcal{U}| - \sum_{u\in\mathcal{U}} |u|}) = o_n\left(n^{\left(\binom{d}{2}-\binom{m}{2}\right)(\delta-1)}\right)
\]
by \Cref{lemma:evq}. Therefore
\[
\Pr[[d]\in X] \leq (1+o_n(1))\left(\frac{cn^{\delta-1}}{(d-2)!}\right)^{\binom{d}{2}-\binom{m}{2}},
\]
which finishes the proof.
\end{proof}

\begin{theorem}
\label{thm:lowvar}
Suppose $\delta>\frac{d-1}{d+1}$. Suppose $m$ is an integer such that $m\in\{0, 1, 2, d-1\}$ and $l$ is an integer such that $l\geq m$. Let $X$ be the set of elements $h$ of $\binom{[n]}{d}$ such that $h\cap [l]=[m]$ and $\binom{h}{2}\backslash\binom{[m]}{2}\subset E(\pg)$, where $[0]$ is the empty set. Then $\Var[|X|] = o_n(\E[|X|]^2)$.
\end{theorem}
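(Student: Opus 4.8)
The plan is a second moment computation. Writing $|X|=\sum_{h}\ind{h\in X}$ with the sum over $h\in\binom{[n]}{d}$ satisfying $h\cap[l]=[m]$, we have $\E[|X|^2]=\sum_{h,h'}\Pr[h\in X,\,h'\in X]$, and since $\Var[|X|]\ge 0$ it suffices to prove the one-sided bound $\E[|X|^2]\le(1+o_n(1))\E[|X|]^2$. I would organize the double sum by the overlap $k:=|h\cap h'|$. Every admissible pair has $h\cap h'\supseteq[m]$ and $(h\cap h')\cap[l]=[m]$, so $m\le k\le d$. The term $k=d$ (the diagonal $h=h'$) contributes exactly $\E[|X|]$; by \Cref{thm:evcalc}, $\E[|X|]=\Theta_n\bigl(n^{d-m+(\binom{d}{2}-\binom{m}{2})(\delta-1)}\bigr)$ with positive exponent (the condition $d-m+(\binom{d}{2}-\binom{m}{2})(\delta-1)>0$ is $\delta>\tfrac{d+m-3}{d+m-1}$, which for $m\in\{0,1,2\}$ is implied by $\delta>\tfrac{d-1}{d+1}$), so $\E[|X|]\to\infty$ and the diagonal is $o_n(\E[|X|]^2)$. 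It then remains to handle $m\le k\le d-1$.

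For a fixed pair $h\ne h'$ with $|h\cap h'|=k$, relabel the vertices so that $h=[d]$, $h\cap h'=[k]\supseteq[m]$, and $h'=[k]\cup\{d+1,\dots,2d-k\}$; by exchangeability of $\pg=\mathrm{Proj}(\mathcal H)$,
\[
\Pr[h\in X,\,h'\in X]=\Pr\Bigl[\Bigl(\textstyle\binom{[d]}{2}\cup\binom{[k]\cup\{d+1,\dots,2d-k\}}{2}\Bigr)\setminus\binom{[m]}{2}\subset E(\pg)\Bigr].
\]
This is precisely the quantity controlled by \Cref{lemma:projcoverprob} (with its ``$k$'' equal to the ground-set size $2d-k$ here), so it is at most $\sum_{\mathcal U}p^{|\mathcal U|}\prod_{u\in\mathcal U}\binom{n-(2d-k)}{d-|u|}$ over the finitely many admissible covering families $\mathcal U$ (each $u$ with $2\le|u|\le d$), and by \Cref{lemma:intersectionsprob} each summand is $\Theta_n\bigl(n^{(1+\delta)|\mathcal U|-\sum_{u}|u|}\bigr)$. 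Now \Cref{lemma:covarq} applies verbatim: it gives $(1+\delta)|\mathcal U|-\sum_u|u|\le k-m+2\bigl(\binom{d}{2}-\binom{m}{2}\bigr)(\delta-1)$, with equality only if $k=m$ and $\mathcal U$ equals the full edge set $\mathcal E$. Hence for $m<k\le d-1$ every admissible $\mathcal U$ is strictly below the bound, so $\Pr[h\in X,\,h'\in X]=o_n\bigl(n^{k-m+2(\binom{d}{2}-\binom{m}{2})(\delta-1)}\bigr)$; multiplying by the $\Theta_n(n^{2d-m-k})$ ordered pairs with overlap $k$ and summing the finitely many such $k$ gives a total contribution $o_n\bigl(n^{2d-2m+2(\binom{d}{2}-\binom{m}{2})(\delta-1)}\bigr)=o_n(\E[|X|]^2)$, again using \Cref{thm:evcalc}. (When $m=d-1$ this intermediate range is empty.)

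Finally, for $k=m$ the edge sets $\binom{h}{2}\setminus\binom{[m]}{2}$ and $\binom{h'}{2}\setminus\binom{[m]}{2}$ are disjoint, $|\mathcal E|=2(\binom{d}{2}-\binom{m}{2})$, and among the admissible families the choice $\mathcal U=\mathcal E$ (all $|u|=2$) contributes
\[
\Bigl(p\tbinom{n-2d+m}{d-2}\Bigr)^{2(\binom{d}{2}-\binom{m}{2})}=(1+o_n(1))\Bigl(\tfrac{cn^{\delta-1}}{(d-2)!}\Bigr)^{2(\binom{d}{2}-\binom{m}{2})}=(1+o_n(1))\Pr[h\in X]^2
\]
by \Cref{thm:evcalc}, while by the equality clause of \Cref{lemma:covarq} every other admissible $\mathcal U$ contributes $o_n\bigl(n^{2(\binom{d}{2}-\binom{m}{2})(\delta-1)}\bigr)=o_n(\Pr[h\in X]^2)$; so $\Pr[h\in X,\,h'\in X]\le(1+o_n(1))\Pr[h\in X]^2$ for each such pair. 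There are $(1+o_n(1))\binom{n-l}{d-m}^2$ ordered pairs with $h\cap h'=[m]$, so their total is $\le(1+o_n(1))\binom{n-l}{d-m}^2\Pr[h\in X]^2=(1+o_n(1))\E[|X|]^2$. Combining the three regimes yields $\E[|X|^2]\le(1+o_n(1))\E[|X|]^2$, hence $\Var[|X|]=o_n(\E[|X|]^2)$. The substantive input is \Cref{lemma:covarq} — already established — which supplies both the exponent bound for the non-diagonal terms and, via its equality characterization, the fact that the only pairs contributing at order $\E[|X|]^2$ are those with $h\cap h'=[m]$, whose count matches $\E[|X|]^2$ asymptotically; so the main obstacle is not any single estimate but the bookkeeping: correctly matching the $k=m$ leading term to $\E[|X|]^2$ and checking the diagonal is negligible, i.e. $\E[|X|]\to\infty$, which holds for $m\in\{0,1,2\}$ (in particular the case $m=0$ used in \Cref{corr:varnumclique}).
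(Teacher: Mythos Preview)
Your argument is essentially identical to the paper's: both organize $\E[|X|^2]$ by the overlap $k=|h\cap h'|$, apply \Cref{lemma:projcoverprob} and \Cref{lemma:covarq} to bound the pair probabilities for $m\le k\le d-1$, and use \Cref{thm:evcalc} to match the $k=m$ contribution to $(1+o_n(1))\E[|X|]^2$. Your caveat about $m=d-1$ is well placed and applies equally to the paper's proof, which also tacitly needs $\E[|X|]\to\infty$ to absorb the diagonal term $\E[|X|]$; for $d\ge 4$ and $\tfrac{d-1}{d+1}<\delta<\tfrac{d-2}{d-1}$ one has $\E[|X|]\to 0$ and the stated conclusion actually fails for $m=d-1$, but only the case $m=0$ is used downstream (\Cref{corr:varnumclique}).
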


\begin{proof}
Let $\mathcal{S}$ be the set of elements of $\binom{[n]}{d}$ that have intersection with $[l]$ equal to $[m]$. We have that
\[
\E[|X|^2] = \sum_{a,b\in\mathcal{S}} \Pr[a,b \in X].
\]

Suppose $m\leq k\leq d-1$. Suppose $a,b\in\mathcal{S}$ and $k=|a\cap b|$. From \Cref{lemma:projcoverprob} with $[k]$ replaced by $a\cup b$ and $\mathcal{E}$ replaced by $(a\cup b)\backslash\binom{[m]}{2}$, 
\begin{equation}
\label{eq:probab}
\Pr[a,b\in X]=\Pr\left[\left(\binom{a}{2}\cup \binom{b}{2}\right)\backslash\binom{[m]}{2}\subset E(\mathcal{H}_c)\right]\leq \sum_\mathcal{U} p^{|\mathcal{U}|}\prod_{u\in\mathcal{U}}\binom{n-2d + k}{d-|u|},
\end{equation}
where the sum is over $\mathcal{U}\subset 2^{a\cup b}$ satisfying the conditions of the lemma. For convenience, denote the set of such $\mathcal{U}$ by $\mathcal{P}$.

Suppose $k>m$. From \Cref{lemma:covarq}, for $\mathcal{U}\in \mathcal{P}$ we have that
\[
(1+\delta)|\mathcal{U}|-\sum_{u\in\mathcal{U}} |u| < k-m + (d(d-1)-m(m-1))(\delta-1),
\]
where the equality case of the lemma cannot occur. Using \pref{eq:probab} then gives that
\[
\Pr[a,b\in X] = o_n(n^{k-m+(d(d-1)-m(m-1))(\delta-1)}).
\]

Suppose $k=m$. From \Cref{lemma:covarq}, for $\mathcal{U}\in\mathcal{P}$ we have that
\[
(1+\delta)|\mathcal{U}|-\sum_{u\in\mathcal{U}} |u| \leq (d(d-1)-m(m-1))(\delta-1)
\]
with equality if and only if $\mathcal{U}=(\binom{a}{2}\cup\binom{b}{2})\backslash\binom{[m]}{2}$. Using \pref{eq:probab} then gives that
\[
\Pr[a,b\in X] \leq (1+o_n(1))\left(\frac{cn^{\delta-1}}{(d-2)!}\right)^{d(d-1)-m(m-1)}.
\]

We therefore have that
\begin{align*}
\E[|X|^2] = & \sum_{a,b\in\mathcal{S}} \Pr[a,b \in X] 
= \sum_{k=m}^d \sum_{a,b\in\mathcal{S},\,|a\cap b| = k} \Pr[a,b\in X] \\
\leq & \E[|X|] + \sum_{k=m+1}^{d-1} \binom{n-l}{d-m}\binom{d-m}{k-m}\binom{n-l-d+m}{d-k} o_n(n^{k-m+d(d-1)(\delta-1)}) \\ 
& + \binom{n-l}{d-m}\binom{n-l-d+m}{d-m}(1+o_n(1))\left(\frac{cn^{\delta-1}}{(d-2)!}\right)^{d(d-1)-m(m-1)} \\
= & \E[|X|]+(1+o_n(1))\binom{n}{d-m}^2\left(\frac{cn^{\delta-1}}{(d-2)!}\right)^{d(d-1)-m(m-1)}.
\end{align*}
Furthermore, from \Cref{thm:evcalc},
\[
\E[|X|] \geq (1-o_n(1))\binom{n}{d-m}\left(\frac{cn^{\delta-1}}{(d-2)!}\right)^{\binom{d}{2}-\binom{m}{2}}.
\]
It follows that
\[
\Var[|X|] = \E[|X|^2] - \E[|X|]^2 = o_n(\E[|X|]^2),
\]
which finishes the proof.
\end{proof}

\begin{corollary}
\label{corr:varnumclique}
Suppose $\delta>\frac{d-1}{d+1}$. Then $\Var[e(\mathcal{H}_c)]=o_n(\E[e(\mathcal{H}_c)]^2)$.
\end{corollary}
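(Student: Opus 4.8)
The plan is to recognize this corollary as nothing more than the $l=m=0$ specialization of \Cref{thm:lowvar}. Recall that, by definition, $\mathcal{H}_c$ has edge set equal to the set of $d$-cliques of $\proj(\mathcal{H})=\pg$, so $h\in E(\mathcal{H}_c)$ exactly when $\binom{h}{2}\subset E(\pg)$, and hence $e(\mathcal{H}_c)=\bigl|\{h\in\binom{[n]}{d}:\binom{h}{2}\subset E(\pg)\}\bigr|$. Now set $l=m=0$ in \Cref{thm:lowvar}: the constraint ``$h\cap[l]=[m]$'' becomes ``$h\cap\varnothing=\varnothing$'', which holds for every $h\in\binom{[n]}{d}$, and ``$\binom{h}{2}\backslash\binom{[m]}{2}\subset E(\pg)$'' becomes ``$\binom{h}{2}\subset E(\pg)$''. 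Therefore the random set $X$ of that theorem is precisely $E(\mathcal{H}_c)$, so $|X|=e(\mathcal{H}_c)$.

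First I would check that the hypotheses of \Cref{thm:lowvar} are satisfied in this instance: $m=0\in\{0,1,2,d-1\}$, $l=0\ge m$, and $\delta>\frac{d-1}{d+1}$ holds by assumption. With these in place, \Cref{thm:lowvar} yields $\Var[e(\mathcal{H}_c)]=\Var[|X|]=o_n(\E[|X|]^2)=o_n(\E[e(\mathcal{H}_c)]^2)$, which is exactly the claim. (As a sanity check, with $l=0$ the quantity $\binom{n-l}{d-m}$ appearing in \Cref{thm:evcalc,thm:lowvar} is just $\binom{n}{d}$, so the asymptotics $\E[e(\mathcal{H}_c)]=q\binom{n}{d}=(1+o_n(1))\binom{n}{d}\bigl(\tfrac{cn^{\delta-1}}{(d-2)!}\bigr)^{\binom d2}$ from \Cref{corr:asyqp} and \Cref{thm:fakeedge} line up with $\E[|X|]$.)

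There is essentially no obstacle at the level of this corollary: it is a direct corollary of \Cref{thm:lowvar}. The only real work has already been done upstream — namely the second-moment bound in \Cref{thm:lowvar}, which rests on the case $m=0$ of the covariance optimization \Cref{lemma:covarq} (the $M=2$, $k=m=0$ boundary case there being the binding constraint). So the proof I would write is just the two-line reduction above, pointing out that $e(\mathcal{H}_c)=|X|$ for the $X$ of \Cref{thm:lowvar} with $l=m=0$ and invoking that theorem.
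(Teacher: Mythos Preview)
Your proposal is correct and matches the paper's own proof exactly: the paper also simply invokes \Cref{thm:lowvar} with $l=m=0$, noting that this makes $X=E(\mathcal{H}_c)$.
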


\begin{proof}
This follows from \Cref{thm:lowvar} with $l=m=0$.
\end{proof}

\begin{remark} We expect \Cref{corr:varnumclique} to be true for $\delta\leq\frac{d-1}{d+1}$ as well although we omit a rigorous proof. For $\delta<\frac{d-1}{d+1}$, the idea is that almost all hyperedges of $\mathcal{H}_c$ are hyperedges of $\mathcal{H}$, the number of which we know is concentrated.
\end{remark}

\section{Exact Recovery}
\label{sec:exactproof}

\subsection{Proof of \texorpdfstring{\Cref{thm:mainexact}}{}}

From \cite[Theorem 4]{reconstruct}, the probability of exact recovery is $1-o_n(1)$ if $d=3$ and $\delta<\frac{2d-4}{2d-1}$. In this section we address the remaining cases of \Cref{thm:mainexact}. First we address the lower bound of the exact recovery threshold.

\begin{corollary}
\label{corr:threshold4}
Suppose $d=4$. If $\delta<\frac{2d-4}{2d-1}$ then the optimal probability of exact recovery is $1-o_n(1)$.
\end{corollary}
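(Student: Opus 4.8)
The plan is to deduce the corollary from the three ingredients assembled above — the ambiguous-graph reduction \Cref{lemma:ambiguous}, the appearance threshold \Cref{lemma:threshold}, and \Cref{thm:ambiguous_threshold}. The first point is that for $d=4$ the exact-recovery threshold $\frac{2d-4}{2d-1}=\frac47$ is strictly smaller than the partial-recovery threshold $\frac{d-1}{d+1}=\frac35$, so the hypothesis $\delta<\frac47$ automatically puts us in the regime $\delta<\frac{d-1}{d+1}$ in which \Cref{lemma:ambiguous} applies. By that lemma, the optimal (MAP) algorithm recovers $\mathcal{H}$ with probability $1-o_n(1)$ as soon as, for each \emph{fixed} finite ambiguous graph $G_a$, one has $\Pr[\Cli(G_a)\text{ is a two-connected component of }\mathcal{H}_c]=o_n(1)$; the aggregation over the infinitely many ambiguous graphs is already absorbed into \Cref{lemma:ambiguous}.

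The second step is a short structural observation, identical to the one used for $d=3$ in \cite{reconstruct}. If $\Cli(G_a)$ is a two-connected component of $\mathcal{H}_c$, then any hyperedge of $\mathcal{H}$ covering an edge of the corresponding labeled copy of $G_a$ projects to a $d$-clique of $\proj(\mathcal{H})$ sharing at least two vertices with a clique of $\Cli(G_a)$, hence — by maximality of the component — lying in $\Cli(G_a)$; in particular all such hyperedges are supported on the $v(G_a)$ vertices of the component, and together they form a labeled copy inside $\mathcal{H}$ of some preimage of $G_a$. Since a finite graph has only finitely many preimages, this gives
\[
\Pr\bigl[\Cli(G_a)\text{ is a two-connected component of }\mathcal{H}_c\bigr]\;\le\;\sum_{h\ \text{preimage of }G_a}\Pr[h\subset\mathcal{H}].
\]

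The third step bounds each summand. The event $\{h\subset\mathcal{H}\}$ forces a copy of the densest subgraph $K^\star\le h$ (the one with $\alpha(K^\star)=m(h)$) to appear in $\mathcal{H}$, and a first-moment count gives $\Pr[K^\star\subset\mathcal{H}]=O\bigl(n^{\,v(K^\star)-(d-1-\delta)e(K^\star)}\bigr)=O\bigl(n^{-e(K^\star)(d-1-1/m(h)-\delta)}\bigr)$, which vanishes exactly when $\delta<d-1-\frac1{m(h)}$ — this is \Cref{lemma:threshold}. Because $h$ is a preimage of the ambiguous graph $G_a$, \Cref{thm:ambiguous_threshold} gives $d-1-\frac1{m(h)}\ge\frac{2d-4}{2d-1}>\delta$, so every term is $o_n(1)$ and, there being finitely many of them, the proof concludes.

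The argument is thus a routine assembly once \Cref{thm:ambiguous_threshold} is in hand; the genuine obstacle is that theorem for $d=4$, i.e.\ that no preimage $h$ of an ambiguous graph has $d-1-\frac1{m(h)}<\frac{2d-4}{2d-1}$, equivalently that $G_{a,4}$ is extremal, which is proved separately (\Cref{thm:min4}). The only other delicate point is the densest-subgraph routing in the third step, which is the reason the appearance threshold is governed by $m(h)$ rather than $\alpha(h)$.
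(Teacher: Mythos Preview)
Your proposal is correct and follows essentially the same route as the paper's proof: reduce via \Cref{lemma:ambiguous} to showing each ambiguous $G_a$ has $o_n(1)$ probability of appearing as a two-connected component, bound that by $\sum_{h:\,\proj(h)=G_a}\Pr[h\subset\mathcal{H}]$, and kill each summand with \Cref{thm:ambiguous_threshold} plus \Cref{lemma:threshold}. You add two clarifications the paper leaves implicit---the check that $\tfrac{4}{7}<\tfrac{3}{5}$ so that \Cref{lemma:ambiguous} applies, and the structural reason the preimage union bound is valid---but the argument is the same.
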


\begin{proof}
Suppose $\delta<\frac{2d-4}{2d-1}$. From \Cref{lemma:ambiguous}, it suffices to prove that for all ambiguous graphs $G_a$, the probability that $\text{Cli}(G_a)$ is a 2-connected component of $\mathcal{H}_c$ is $o_n(1)$. Suppose $G_a$ is an ambiguous graph. Let $P$ be the set of hypergraphs $h$ such that $\text{Proj}(h)=G_a$. Then,
\begin{equation}
\label{preimage}
\Pr[\text{Cli}(G_a) \text{ is a two-connected component of $\mathcal{H}_c$}] \leq \sum_{h\in P} \Pr[h\subset \mathcal{H}].
\end{equation}
Suppose $h\in P$. Using \Cref{thm:ambiguous_threshold} gives that
\[
\delta <  \frac{2d-4}{2d-1}\leq d-1-\frac{1}{m(h)}.
\]
Then, \Cref{lemma:threshold} implies that $\Pr[h\subset \mathcal{H}]=o_n(1)$. Using \pref{preimage} finishes the proof.
\end{proof}

\begin{corollary}
\label{corr:threshold5}
Suppose $d\geq 5$. If $\delta<\frac{d-1}{d+1}$ then the optimal probability of exact recovery is $1-o_n(1)$.
\end{corollary}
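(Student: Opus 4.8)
The plan is to reduce everything to the $d=4$ case already handled in \Cref{corr:threshold4}. The one extra observation needed is the elementary inequality $\frac{d-1}{d+1}\leq\frac{2d-4}{2d-1}$, which after cross-multiplying is equivalent to $d\geq 5$; it degenerates to an equality at $d=5$ (both sides equal $\tfrac23$). Consequently, in the regime $\delta<\frac{d-1}{d+1}$ we automatically have $\delta<\frac{2d-4}{2d-1}$, which is exactly the hypothesis under which the argument for $d=4$ goes through.

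Given this, I would proceed verbatim as in \Cref{corr:threshold4}. By \Cref{lemma:ambiguous}, since $\delta<\frac{d-1}{d+1}$, it suffices to show that for every finite ambiguous graph $G_a$ the probability that $\Cli(G_a)$ is a two-connected component of $\mathcal{H}_c$ is $o_n(1)$. Fixing $G_a$ and letting $P$ be the (finite, since $G_a$ has finitely many vertices) set of preimages $h$ with $\proj(h)=G_a$, I would invoke \pref{preimage} to bound this probability by $\sum_{h\in P}\Pr[h\subset\mathcal{H}]$. For each $h\in P$, \Cref{thm:ambiguous_threshold} gives $d-1-\frac{1}{m(h)}\geq\frac{2d-4}{2d-1}\geq\frac{d-1}{d+1}>\delta$, so \Cref{lemma:threshold} makes each summand $o_n(1)$, and finiteness of $P$ then gives that the sum is $o_n(1)$.

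I do not expect any genuine obstacle: all the substance is carried by \Cref{thm:ambiguous_threshold} (the general-$d$ lower bound on $d-1-\frac{1}{m(h)}$ over preimages of ambiguous graphs) together with \Cref{lemma:ambiguous} and \Cref{lemma:threshold}. The only things worth double-checking are bookkeeping: that \pref{preimage}, as derived inside the proof of \Cref{corr:threshold4}, does not implicitly use $d=4$ (it does not — it only says that if a copy of $\Cli(G_a)$ occurs as a two-connected component of $\mathcal{H}_c$ then some preimage of $G_a$ must appear inside $\mathcal{H}$), and that the inequality chain runs in the right direction, i.e. that for $d\geq 5$ it is the partial-recovery threshold $\frac{d-1}{d+1}$ that is the binding constraint.
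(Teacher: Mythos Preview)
Your proposal is correct and follows essentially the same argument as the paper: observe $\frac{d-1}{d+1}\le\frac{2d-4}{2d-1}$ for $d\ge 5$, then rerun the proof of \Cref{corr:threshold4} verbatim, using \Cref{lemma:ambiguous}, the union bound \pref{preimage}, \Cref{thm:ambiguous_threshold}, and \Cref{lemma:threshold}. The paper even notes, in the remark following its proof, that the weaker \Cref{lemma:min} would already suffice here in place of \Cref{thm:ambiguous_threshold}.
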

\begin{proof}
Since $\frac{2d-4}{2d-1}\geq \frac{d-1}{d+1}$ when $d\geq 5$, we can prove this theorem using the same argument as the proof of \Cref{corr:threshold4}. The only difference is in the application of \Cref{thm:ambiguous_threshold}. We have that using the theorem gives that if $h\in P$, then
\[
\delta < \frac{d-1}{d+1} \leq \frac{2d-4}{2d-1} \leq d-1-\frac{1}{m(h)},
\]
so \Cref{lemma:threshold} implies that $\Pr[h\subset \mathcal{H}]=o_n(1)$.
\end{proof}

\begin{remark}
It suffices to the weaker result \Cref{lemma:min} rather than \Cref{thm:ambiguous_threshold} in the proof of \Cref{corr:threshold5}. Using the lemma gives that if $h\in P$, then
\[
\delta < \frac{d-1}{d+1} \leq d-1-\frac{1}{m(h)},
\]
and similarly \Cref{lemma:threshold} implies that $\Pr[h\subset \mathcal{H}]=o_n(1)$.
\end{remark}

Next we address the upper bound of the exact recovery threshold. Observe that it suffices to prove that if $\delta> \min(\frac{d-1}{d+1}, \frac{2d-4}{2d-1})$, then the probability of exact recovery is $o_n(1)$. From \Cref{thm:mainpartial}, the partial recovery loss is $o_n(1)$ if $\delta>\frac{d-1}{d+1}$, which implies that the probability of exact recovery is $o_n(1)$ in this regime. Proving the following result completes the proof of \Cref{thm:mainexact}.

\begin{theorem}
\label{thm:exactupper1}
Suppose $d\geq 3$. If $\delta=\frac{2d-4}{2d-1}$ then the probability of exact recovery is $1-\Omega_n(1)$ and if $\delta>\frac{2d-4}{2d-1}$ then the probability of exact recovery is $o_n(1)$.
\end{theorem}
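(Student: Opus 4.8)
The plan is to exhibit, when $\delta \geq \frac{2d-4}{2d-1}$, the ambiguous graph $G_{a,d}$ of \Cref{def:ambiguous} as a ``bottleneck'' that appears as a two-connected component of $\mathcal{H}$ with probability bounded away from $0$ (and with probability $1-o_n(1)$ when $\delta > \frac{2d-4}{2d-1}$), and to show that whenever such a copy appears the MAP estimator $\mathcal{A}^*$ fails. Recall that $G_{a,d} = \proj(H_0)$ where $H_0$ is the explicit $d$-uniform hypergraph on $2 + (d-1) + 2(d-1)(d-2)$ vertices described in \Cref{def:ambiguous}, that $H_0$ has two distinct minimal preimages $H_0, H_0'$ (swap the hyperedge $\{u\}\cup\{v_i\}$ for $\{w\}\cup\{v_i\}$), and that the minimal preimage $H_0$ has $m(H_0)$ satisfying $d-1-\frac{1}{m(H_0)} = \frac{2d-4}{2d-1}$, as noted after \Cref{lemma:threshold}.

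First I would make precise the ``failure from a planted copy'' step. If $\mathcal{H}$ contains, as an induced-on-its-vertex-set sub-hypergraph, a copy of $H_0$ whose edge set forms a two-connected component of $\mathcal{H}$ (equivalently, of $\mathcal{H}_c$ restricted appropriately, since the relevant cliques are isolated from the rest in the two-connection graph), then we may swap this component for the alternative minimal preimage $H_0'$ to obtain a hypergraph $\widetilde{\mathcal{H}}$ with $\proj(\widetilde{\mathcal{H}}) = \proj(\mathcal{H})$ and $e(\widetilde{\mathcal{H}}) = e(\mathcal{H})$. Hence $\Pr[\mathcal{H} \mid \pg] = \Pr[\widetilde{\mathcal{H}} \mid \pg]$, so conditioned on the projection the posterior mass splits (at least) equally between $\mathcal{H}$ and $\widetilde{\mathcal{H}}$, giving $\Pr[\mathcal{A}^*(\pg) = \mathcal{H} \mid \pg] \leq \tfrac12$. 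Actually, to push the failure probability to $1-o_n(1)$ when $\delta$ is strictly above threshold, one should exhibit many (growing number of) disjoint regions each of which independently has a chance to host such a swappable copy, so that the number of independent ``coin flips'' the MAP must get right diverges; I would set this up via a second-moment / Poissonization argument on the count of vertex-disjoint planted $G_{a,d}$'s. This mirrors \Cref{lemma:ambiguous} run in reverse: the converse direction there already packages ``$\Cli(G_a)$ is a two-connected component with non-vanishing probability $\Rightarrow$ exact recovery fails,'' so the main content is the probabilistic lower bound on that event.

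The core estimate is therefore: for $\delta \geq \frac{2d-4}{2d-1}$, $\Pr[\mathcal{H}\text{ contains an induced copy of }H_0\text{ that is a two-connected component}] = \Omega_n(1)$, and $=1-o_n(1)$ when the inequality is strict. By \Cref{lemma:threshold} applied to $K = H_0$, $\Pr[H_0 \subset \mathcal{H}\text{ (induced)}]$ is $\Omega_n(1)$ at $\delta = d-1-\tfrac1{m(H_0)} = \frac{2d-4}{2d-1}$ and $1-o_n(1)$ above it; since $m(H_0)$ is the maximum density subgraph, $H_0$ itself is the densest piece, so appearing as an induced subgraph at this threshold is exactly the statement of the lemma. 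The remaining point is to upgrade ``induced copy somewhere'' to ``induced copy that is a \emph{two-connected component}'' — i.e. no extra hyperedge of $\mathcal{H}$ shares two vertices with the planted copy, linking it to the rest. This is the step I expect to be the main obstacle, but it should be controllable: the expected number of hyperedges of $\mathcal{H}$ meeting a fixed small vertex set in $\geq 2$ points is $O_n(n^{d-2}\cdot n^{-d+1+\delta}) = O_n(n^{\delta-1}) = o_n(1)$ per attachment point since $\delta < 1$ in this section, so with probability $1-o_n(1)$ a given planted copy is in fact an isolated two-connected component. Combining: at $\delta = \frac{2d-4}{2d-1}$ a copy exists with probability $\Omega_n(1)$ and is isolated with conditional probability $1-o_n(1)$, giving $\Omega_n(1)$ overall; above threshold, union over $\Theta(n^{c})$ disjoint potential locations (for suitable $c>0$) makes the existence probability $1-o_n(1)$, and a union bound over those locations keeps ``all isolated'' at $1-o_n(1)$ as well, so exact recovery fails with probability $1-o_n(1)$. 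Finally, I would note that for $d=3$ this recovers the $\delta > 2/5$ impossibility of \cite{reconstruct}, so the statement is consistent and the same packaging applies uniformly in $d\geq 3$.
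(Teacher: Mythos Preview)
Your approach is correct in spirit and close to the paper's, but the paper streamlines two of your steps.

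First, the ``two-connected component'' isolation you insist on is unnecessary. The swap $\{u,v_1,\ldots,v_{d-1}\} \leftrightarrow \{w,v_1,\ldots,v_{d-1}\}$ preserves $\proj(\mathcal{H})$ whenever $H_0\subset\mathcal{H}$, isolated or not: the edges $\{u,v_i\}$ lost in the swap remain covered by $\{u,v_i\}\cup S_i^u\in H_0$, and the edges $\{w,v_i\}$ gained were already covered by $\{w,v_i\}\cup S_i^w\in H_0$. The only caveat is that the swapped-in hyperedge might already lie in $\mathcal{H}$, but then $\mathcal{H}$ is not a minimal preimage and MAP fails anyway. The paper therefore splits into ``$\mathcal{H}$ not minimal'' (automatic failure) and ``$\mathcal{H}$ minimal'' (the swap produces a genuinely distinct equiprobable preimage), bypassing any isolation argument. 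Your $O(n^{\delta-1})$ attachment bound is correct but superfluous.

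Second, for $\delta>\frac{2d-4}{2d-1}$ your ``disjoint potential locations / second-moment'' plan is vaguer than needed, and the wording ``union over $\Theta(n^c)$ disjoint potential locations'' suggests fixed-size blocks, which would not work (a single block of size $v(H_0)$ hosts a labeled copy only with probability $p^{e(H_0)}=n^{-\Theta(1)}$). The paper instead fixes any $m\geq 1$, takes $h$ to be the vertex-disjoint union of $m$ copies of $H_0$, notes $m(h)=m(H_0)$ (a disjoint union does not raise the maximal subgraph density), and invokes \Cref{lemma:threshold} \emph{once} on $h$ to get $\Pr[h\subset\mathcal{H}]=1-o_n(1)$ directly. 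On that event the $m$ independent swaps yield at least $m+1$ distinct minimal preimages with equal prior, and a short counting argument gives $\Pr[\mathcal{A}^*(\pg)\neq\mathcal{H}]\geq\tfrac{m}{m+1}\Pr[h\subset\mathcal{H}]$; letting $m\to\infty$ finishes. This is exactly your ``many coin flips'' intuition, packaged so that no second-moment or Poissonization computation is ever carried out.
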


\begin{remark}
The case where $\delta\geq\frac{2d-4}{2d-1}$ implies exact recovery having $1-\Omega_n(1)$ probability has been proved in \cite[Appendix A]{reconstruct} for $3\leq d\leq 5$ using two-connected components. Furthermore, when $d\geq 5$ the probability of exact recovery is $o_n(1)$ if $\delta>\frac{2d-4}{2d-1}\geq \frac{d-1}{d+1}$ from \Cref{thm:mainpartial}. Hence the main contribution of this result is proving that the probability of exact recovery is $o_n(1)$ if $\delta>\frac{2d-4}{2d-1}$ and $d=3,4$.
\end{remark}

\begin{proof}[Proof of \Cref{thm:exactupper1}]
Suppose $\delta\geq \frac{2d-4}{2d-1}$. It suffices to prove that $\mathcal{A}^*$ fails with probability $\Omega_n(1)$ and $1-o_n(1)$ if $\delta>\frac{2d-4}{2d-1}$.

We consider a graph that is $G_{a, d}$ repeated over many two-connected components. Suppose $m\geq 1$ and $h$ is a $d$-uniform hypergraph with vertex set $V$. Suppose $V=\bigsqcup_{i=1}^m V_i$. For $1\leq i\leq m$ suppose 
\[
V_i = \{v_j^i: 1\leq j\leq d+1\} \bigsqcup_{1\leq j\leq d-1} S_j^{1; i}\bigsqcup_{1\leq j\leq d-1} S_j^{2; i},
\]
where $|S_j^{1; i}|=|S_j^{2; i}|=d-2$ for $1\leq j\leq d-1$. Furthermore, suppose that for $1\leq i\leq m$ the hyperedges of the subgraph of $h$ induced by $V_i$ are $\{v_1^i, \ldots, v_d^i\}$, $\{v_d^i, v_j^i, S_j^{1; i}\}$ for $j\in [d-1]$, and $\{v_{d+1}^i, v_j^i, S_j^{2; i}\}$ for $j\in[d-1]$. Additionally assume that $h$ has no other hyperedges.

We have that
\[
\Pr[\mathcal{A}^*(\pg)\not=\mathcal{H}|h\subset\mathcal{H}, \mathcal{H} \text{ is not minimal}] = 1.
\]

For all $G\in\{0,1\}^{\binom{[n]}{2}}$, let $S(G)$ be the set of hypergraphs $H$ such that $\text{Proj}(H)=G$, $h\subset H$, and $H$ is minimal. Let $\mathcal{G}^*$ be the set of $G$ such that $|S(G)|\geq 1$. 

Suppose $G\in\mathcal{G}^*$. Suppose $H\in S(G)$. For $1\leq i\leq m$, let $H^i$ be the hypergraph obtained from $H$ after the hyperedge for $\{v_1^i,\ldots,v_d^i\}$ is removed and the hyperedge for $\{v_1^i,\ldots,v_{d-1}^i, v_{d+1}^i\}$ is added. Note that $H$ and $H^i$ are distinct elements of $S(G)$ for $1\leq i\leq m$ so $|S(G)|\geq m+1$.

Furthermore, for all $G\in\mathcal{G}^*$, let $P(G)$ be $\Pr[\mathcal{H}=H]$ for a hypergraph $H$ such that $\text{Proj}(H)=G$ and $H$ is minimal. We have that
\begin{align*}
\Pr[h\subset\mathcal{H}, \mathcal{H} \text{ is minimal}] & = \sum_{h\subset H, H\text{ is minimal}} \Pr[\mathcal{H} = H] = \sum_{G\in\mathcal{G}^*}\sum_{H\in S(G)} \Pr[\mathcal{H}=H] \\
& = \sum_{G\in\mathcal{G}^*} P(G)|S(G)|.
\end{align*}
Furthermore,
\[
\Pr[\mathcal{A}^*(\pg)=\mathcal{H}, h\subset\mathcal{H}, \mathcal{H} \text{ is minimal}] = \sum_{G\in\mathcal{G}^*} \sum_{H\in S(G)} \Pr[\mathcal{A}(\pg)=H, \mathcal{H}=H].
\]
Suppose $G\in\mathcal{G}^*$. We have that
\begin{align*}
\sum_{H\in S(G)} \Pr[\mathcal{A}^*(\pg)=H, \mathcal{H}=H] & = \sum_{H\in S(G)} \Pr[\mathcal{A}^*(\pg)=H |\mathcal{H}=H]\Pr[\mathcal{H}=H] \\
& = P(G)\sum_{\substack{\text{Proj}(H)=G, h\subset H, \\ H \text{ is minimal}}} \Pr[\mathcal{A}^*(\pg)=H |\pg=G] \\
& \leq P(G).
\end{align*}
Therefore,
\[
\Pr[\mathcal{A}^*(\pg)=\mathcal{H}, h\subset\mathcal{H}, \mathcal{H} \text{ is minimal}] \leq \sum_{G\in\mathcal{G}^*} P(G).
\]
Since $|S(G)|\geq m$ for all $G\in\mathcal{G}^*$,
\begin{align*}
& \Pr[\mathcal{A}^*(\pg)=\mathcal{H}, h\subset\mathcal{H}, \mathcal{H} \text{ is minimal}] \leq \sum_{G\in\mathcal{G}^*} P(G) \leq \frac{1}{m+1}\sum_{G\in\mathcal{G}^*} P(G)|S(G)| \\
& = \frac{1}{m+1}\Pr[h\subset\mathcal{H}, \mathcal{H} \text{ is minimal}].
\end{align*}
It follows that 
\[
\Pr[\mathcal{A}^*(\pg)\not=\mathcal{H}, h\subset\mathcal{H}, \mathcal{H} \text{ is minimal}] \geq \frac{m}{m+1}\Pr[h\subset\mathcal{H}, \mathcal{H} \text{ is minimal}].
\]

Thus,
\begin{align*}
\Pr[\mathcal{A}^*(\pg)\not= \mathcal{H}]  \geq & \Pr[\mathcal{A}^*(\pg)\not=\mathcal{H}, h\subset\mathcal{H}, \mathcal{H} \text{ is not minimal}] \\ & + \Pr[\mathcal{A}^*(\pg)\not=\mathcal{H}, h\subset\mathcal{H}, \mathcal{H} \text{ is minimal}] \\
\geq &\Pr[h\subset\mathcal{H}, \mathcal{H} \text{ is not minimal}] + \frac{m}{m+1}\Pr[h\subset\mathcal{H}, \mathcal{H} \text{ is minimal}] \\
\geq & \frac{m}{m+1}\Pr[h\subset\mathcal{H}].
\end{align*}
Because 
\[
-\frac{1}{m(h)}=-d+1+\frac{2d-4}{2d-1}\leq -d+1+\delta,
\]
$\Pr[h\subset\mathcal{H}] = \Omega(1)$ so $\Pr[\mathcal{A}^*(\pg)\not=\mathcal{H}]=\Omega(1)$. Assume that $\delta>\frac{2d-4}{2d-1}$. Then, $-\frac{1}{m(h)}<-d+1+\delta$ so $\Pr[h\subset\mathcal{H}]=1-o_n(1)$. It follows that for all $m\geq 1$,
\[
\Pr[\mathcal{A}^*(\pg)\not= \mathcal{H}] \geq \frac{m}{m+1}(1-o_n(1)).
\]
Therefore $\Pr[\mathcal{A}^*(\pg)\not= \mathcal{H}]=1-o_n(1)$.
\end{proof}

\subsection{Proof of \texorpdfstring{\Cref{thm:mainexactweighted}}{}}

Observe that the probability of weighted exact recovery being $o_n(1)$ if $\delta>\frac{d-1}{d+1}$ follows from the weighted partial recovery loss being $1-o_n(1)$ if $\delta>\frac{d-1}{d+1}$ by \Cref{thm:mainpartialweighted}. Thus it suffices to prove that the probability of weighted exact recovery is $1-o_n(1)$ if $\delta<\frac{d-1}{d+1}$. 

First note that the analog of \Cref{lemma:ambiguous} is true with projections replaced by weighted projections. Afterwards we can use \Cref{thm:weightedratio} and the same argument as the proof of \Cref{corr:threshold4}.

\section{Ambiguous graph results}
\label{sec:ambiguous}

The goal of this section is to prove combinatorial results that will eventually justify upper bounds of the thresholds for exact recovery. We apply these results in \Cref{sec:exactproof} to prove \Cref{thm:mainexact,thm:mainexactweighted}.

\subsection{Optimization result}
\label{subsec:optimization}

\begin{lemma}
\label{ratio}
Suppose $d\geq 3$ and $\gamma\in [\frac{d-1}{d+1}, \infty)$. Assume that $h$ is a $d$-uniform hypergraph. Assume that the set of edges of $h$ is $E_h\sqcup I$, where $E_h$ and $I$ are disjoint. Suppose $U$ is a set of vertices of $h$ such that each hyperedge in $E_h$ is a subset of $U$. Suppose $\mathcal{P}$ is a set of edges such that for all $\{a, b\}\in\mathcal{P}$, $\{a,b\}\subset U$ and there exists $i\in I$ such that $\{a, b\}\subset i$. If
\[
\frac{(d-1)|E_h|-|U|+|\mathcal{P}|}{|E_h| + |\mathcal{P}|} \geq \gamma,
\]
then $d-1-\frac{1}{m(h)}\geq \min(\gamma, 1)$.
\end{lemma}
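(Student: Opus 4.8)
The plan is to exhibit a single subgraph $K\leq h$ that is dense enough to force the conclusion. Since $m(h)\geq \alpha(K)=e(K)/v(K)$ for every subgraph, and since $d-1-\mu>0$ for $\mu:=\min(\gamma,1)\le 1\le d-1$, it suffices to produce $K$ with
\[
(d-1)\,e(K)-v(K)\ \geq\ \mu\, e(K),
\]
because this rearranges to $\alpha(K)\geq \tfrac{1}{d-1-\mu}$, i.e. $\tfrac{1}{m(h)}\le d-1-\mu$. I will assume $|E_h|+|\mathcal P|\geq 1$ so that the hypothesis ratio is well defined (otherwise $h$ is edgeless and the statement is vacuous).

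\textbf{The easy regime.} First I would dispose of the case $(d-2)|E_h|\geq |U|$ (which in particular occurs whenever $\gamma\geq 1$, since then the hypothesis already forces $(d-1)|E_h|-|U|+|\mathcal P|\geq |E_h|+|\mathcal P|$): take $K$ with edge set $E_h$ and vertex set $\bigcup_{e\in E_h}e\subseteq U$, so that $(d-1)e(K)-v(K)\geq (d-1)|E_h|-|U|\geq |E_h|=e(K)$, giving $\alpha(K)\geq\tfrac1{d-2}$ and $d-1-\tfrac1{m(h)}\geq 1\geq\mu$. So from here on I may assume $(d-2)|E_h|<|U|$; then the hypothesis ratio is strictly below $1$, hence $\gamma<1$, $\mu=\gamma$, and the target becomes $(d-1)e(K)-v(K)\geq \gamma\, e(K)$.

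\textbf{The construction and the counting.} For each $\{a,b\}\in\mathcal P$, fix (using the hypothesis) a hyperedge $\iota(\{a,b\})\in I$ containing it, and set $I'=\{\iota(\{a,b\}):\{a,b\}\in\mathcal P\}\subseteq I$. Let $K$ have edge set $E_h\cup I'$ and vertex set $V(K)=\bigcup_{e\in E_h}e\cup\bigcup_{i\in I'}i$. Writing $s_i=|i\cap U|$ and using $\bigcup_{e\in E_h}e\subseteq U$, one gets $e(K)=|E_h|+|I'|$ and $v(K)\leq |U|+\sum_{i\in I'}(d-s_i)$, hence $(d-1)e(K)-v(K)\geq \big((d-1)|E_h|-|U|\big)+\sum_{i\in I'}(s_i-1)$. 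Since clearing denominators in the hypothesis gives $(d-1)|E_h|-|U|\geq \gamma|E_h|-(1-\gamma)|\mathcal P|$, the target $(d-1)e(K)-v(K)\geq \gamma|E_h|+\gamma|I'|$ reduces to showing $\sum_{i\in I'}(s_i-1-\gamma)\geq (1-\gamma)|\mathcal P|$.

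\textbf{The per-hyperedge inequality (the hard part).} Assigning each pair to $\iota(\{a,b\})$ and writing $P_i$ for the number of pairs assigned to $i$, we have $\sum_i P_i=|\mathcal P|$, the assigned pairs are distinct $2$-subsets of $i\cap U$ so $P_i\le\binom{s_i}{2}$, and $2\le s_i\le d$. Thus it suffices to prove the elementary bound $s-1-\gamma\geq (1-\gamma)\binom{s}{2}$ for every integer $2\le s\le d$, and this is exactly where the hypothesis $\gamma\geq\frac{d-1}{d+1}$ is needed in an essentially tight way: the difference factors as
\[
\big(s-1-\gamma\big)-(1-\gamma)\binom{s}{2}\ =\ (s-2)\Big(1-\tfrac{(1-\gamma)(s+1)}{2}\Big),
\]
and both factors are nonnegative on $2\le s\le d$ precisely because $1-\gamma\le\frac{2}{d+1}$ forces $1-\tfrac{(1-\gamma)(s+1)}{2}\geq 1-\tfrac{(1-\gamma)(d+1)}{2}\geq 0$. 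The main obstacle is exactly this amortization step: one hyperedge of $I$ may witness up to $\binom{d}{2}$ pairs of $\mathcal P$ while contributing only a single edge and possibly very few new vertices to $K$, so the delicate point is that charging $\binom{s_i}{2}$ pairs against one hyperedge never loses — the factorization above makes this transparent, and it is tight (equality at $s=2$ for every $\gamma$, and at $s=d$ when $\gamma=\frac{d-1}{d+1}$), which is why no weaker lower bound on $\gamma$ would suffice.
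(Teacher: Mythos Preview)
Your proof is correct, and it is genuinely cleaner than the paper's. Both arguments build a subgraph on $E_h$ together with some hyperedges from $I$ and then lower-bound its density, but the two diverge at the key combinatorial step. The paper keeps \emph{all} hyperedges $i\in I$ with $|i\cap U|\ge 2$, substitutes $y_i=\binom{|i\cap U|}{2}$, and runs a convex optimization over the polytope $\{1\le y_i\le \binom{d}{2},\ \sum y_i\ge |\mathcal P|\}$; this forces a three-case split on $M=|I\setminus Z|$ (including the awkward regime $M>|\mathcal P|$) and an analysis of polytope vertices. You instead keep only a minimal witnessing set $I'$ (one hyperedge per pair of $\mathcal P$), which by construction makes $\sum_i P_i=|\mathcal P|$ with $P_i\le\binom{s_i}{2}$, so the whole optimization collapses to the single per-hyperedge inequality $s-1-\gamma\ge(1-\gamma)\binom{s}{2}$, which you dispatch with the neat factorization $(s-2)\bigl(1-\tfrac{(1-\gamma)(s+1)}{2}\bigr)\ge 0$ on $2\le s\le d$. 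That factorization is exactly where $\gamma\ge\frac{d-1}{d+1}$ enters, and it makes the tightness at $s=2$ and at $s=d,\ \gamma=\frac{d-1}{d+1}$ transparent. The paper's route has the advantage of illustrating the general ``projection cover / convex relaxation'' machinery it reuses elsewhere; yours is shorter, avoids the case analysis entirely, and isolates the content of the lemma in a one-line inequality.
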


\begin{proof}
Assume that
\[
\frac{(d-1)|E_h|-|U|+|\mathcal{P}|}{|E_h| + |\mathcal{P}|} \geq \gamma.
\]
For all $i\in I$, let $x_i=|i\cap U|$. Let $Z$ be the set of $i\in I$ such that $x_i\leq 1$. Let $V$ be the set of vertices in $U$ or some hyperedge in $I\backslash Z$; that is, $V=U\bigcup_{i\in I\backslash Z} i$. Suppose $h'$ is the subgraph of $h$ that is induced by $V$.

Start with the vertex set $U$. The number of vertices is $|U|$. After adding the hyperedges $i\in I\backslash Z$, the number of vertices $|V|$ satisfies 
\[
|V|\leq |U| + \sum_{i\in I\backslash Z} |i\backslash U| =|U| + \sum_{i\in I\backslash Z} (d-x_i).
\]
Therefore,
\[
\alpha(h') \geq \frac{|E_h| + |I\backslash Z|}{|V|} \geq \frac{|E_h| + |I\backslash Z|}{|U|+\sum_{i\in I\backslash Z} (d-x_i)}.
\]
Note that
\begin{align*}
d-1-\frac{1}{m(h)}  & \geq d-1-\frac{1}{\alpha(h')} \\
& \geq d-1-\frac{|U|+\sum_{i\in I\backslash Z} (d-x_i)}{|E_h|+|I\backslash Z|} \\ & = \frac{(d-1)|E_h| -|U| +(d-1)|I\backslash Z| -\sum_{i\in I\backslash Z} (d-x_i)}{|E_h|+|I\backslash Z|} \\
& = \frac{(d-1)|E_h| -|U| - |I\backslash Z| +\sum_{i\in I\backslash Z} x_i}{|E_h|+|I\backslash Z|}.
\end{align*}
Hence, it suffices to prove that
\[
\frac{(d-1)|E_h| -|U| - |I\backslash Z| +\sum_{i\in I\backslash Z} x_i}{|E_h|+|I\backslash Z|} \geq \min(\gamma, 1).
\]

Next we use the technique from \Cref{subsec:projcover} to finish the proof. Observe that
\[
\sum_{i\in I\backslash Z} \binom{x_i}{2} \geq |\mathcal{P}|
\]
from the definition of $\mathcal{P}$. For all $i\in I\backslash Z$, let $y_i=\binom{x_i}{2}$. Note that
\[
x_i = \frac{1+\sqrt{1+8y_i}}{2}
\]
for all $i\in I\backslash Z$. Because $x_i\geq 2$ for all $i\in I\backslash Z$, $1\leq y_i \leq \binom{d}{2}$ for all $i\in I\backslash Z$. We have that
\begin{align*}
& \frac{(d-1)|E_h| -|U| - |I\backslash Z| +\sum_{i\in I\backslash Z} x_i}{|E_h|+|I\backslash Z|} \\ & \geq \min_{\substack{1\leq y_i\leq\binom{d}{2},\,i\in I\backslash Z, \\ \sum_{i\in I\backslash Z} y_i \geq |\mathcal{P}|}} \frac{(d-1)|E_h| -|U| - |I\backslash Z| +\sum_{i\in I\backslash Z} \frac{1+\sqrt{1+8y_i}}{2}}{|E_h|+|I\backslash Z|}.
\end{align*}
Hence, it suffices to prove that
\[
\min_{\substack{1\leq y_i\leq\binom{d}{2},\,i\in I\backslash Z, \\ \sum_{i\in I\backslash Z} y_i \geq |\mathcal{P}|}} \frac{(d-1)|E_h| -|U| - |I\backslash Z| +\sum_{i\in I\backslash Z} \frac{1+\sqrt{1+8y_i}}{2}}{|E_h|+|I\backslash Z|} \geq \min(\gamma, 1).
\]

Assume that $M=|I\backslash Z|$ and replace $I\backslash Z$ with $\{1, \ldots, M\}$, for simplicity. Furthermore, let $\mathcal{R}_M=\{(y_i)_{1\leq i\leq M}: 1\leq y_i\leq \binom{d}{2}, 1\leq i\leq M, \sum_{i=1}^M y_i\geq |\mathcal{P}|\}$ for $M\geq 1$.

\noindent\textbf{Case 1:} $M = 0$
\\ 
\indent If $M=0$, then $|\mathcal{P}|=0$, so
\begin{align*}
\frac{(d-1)|E_h| -|U| - M+\sum_{i=1}^M \frac{1+\sqrt{1+8y_i}}{2}}{|E_h|+M} & = \frac{(d-1)|E_h| -|U|}{|E_h|} \\ & = \frac{(d-1)|E_h|-|U|+|\mathcal{P}|}{|E_h| + |\mathcal{P}|} \geq \gamma.
\end{align*}

\noindent\textbf{Case 2:} $1\leq M\leq |\mathcal{P}|$
\\ 
\indent Suppose $1\leq M\leq |\mathcal{P}|$. Observe that 
\[
\frac{(d-1)|E_h| -|U| - M+\sum_{i=1}^M \frac{1+\sqrt{1+8y_i}}{2}}{|E_h|+M} 
\]
is concave in $(y_i)_{1\leq i\leq M}$, so the function is minimized over $\mathcal{R}_M$ at a vertex. 

At the vertex, $M-1$ of the values must be elements of $\{1, \binom{d}{2}\}$. Without loss of generality, assume that $y_j\in\{1,\binom{d}{2}\}$ for $1\leq j\leq M-1$. Assume that $A$ of these values equal $\binom{d}{2}$ and $M-A-1$ equal $1$. Then, since $\sum_{i=1}^M y_i \geq |\mathcal{P}|$,
\begin{equation}
\label{eq:linearcombo}
y_M - 1 + A\left(\binom{d}{2}-1\right) \geq |\mathcal{P}| - M
\Rightarrow (d-2)A + \frac{2(y_M-1)}{d+1} \geq \frac{2(\mathcal{P}-M)}{d+1}.
\end{equation}

Furthermore,
\begin{align*}
& \frac{(d-1)|E_h| -|U| - M+\sum_{i=1}^M \frac{1+\sqrt{1+8y_i}}{2}}{|E_h|+M}  \\
& = \frac{(d-1)|E_h| -|U| -M + Ad + 2M-2A-2 + \frac{1+\sqrt{1+8y_M}}{2}}{|E_h|+M} \\
& = \frac{(d-1)|E_h| -|U| + |\mathcal{P}| - (|\mathcal{P}|-M) + Ad-2A+\frac{-3+\sqrt{1+8y_M}}{2}}{|E_h|+|\mathcal{P}| - (|\mathcal{P}|-M)}.
\end{align*}
Let $X=(d-1)|E_h|-|U|+|\mathcal{P}|$, $Y=|E_h|+|\mathcal{P}|$, $W = |\mathcal{P}|-M$, and $\Delta = Ad-2A+\frac{-3+\sqrt{1+8y_M}}{2}$. Then,
\[
\frac{(d-1)|E_h| -|U| - M+\sum_{i=1}^M \frac{1+\sqrt{1+8y_i}}{2}}{|E_h|+M} = \frac{X-W+\Delta}{Y-W},
\]
and we know that $\frac{X}{Y} \geq \gamma \geq \frac{d-1}{d+1}$. Observe that
\[
\frac{(d-1)|E_h| -|U| - M+\sum_{i=1}^M \frac{1+\sqrt{1+8y_i}}{2}}{|E_h|+M}- \frac{X}{Y} = \frac{W(X-Y)+\Delta Y}{(Y-W)Y} = \frac{W(\frac{X-Y}{Y}) + \Delta}{Y-W}.
\]
Because $\frac{X-Y}{Y} \geq \gamma - 1 \geq -\frac{2}{d+1}$,
\[
\frac{(d-1)|E_h| -|U| - M+\sum_{i=1}^M \frac{1+\sqrt{1+8y_i}}{2}}{|E_h|+M}- \frac{X}{Y} \geq \frac{\Delta - \frac{2W}{d+1}}{Y-W}.
\]
The goal is to prove that $\frac{(d-1)|E_h| -|U| - M+\sum_{i=1}^M \frac{1+\sqrt{1+8y_i}}{2}}{|E_h|+M}\geq \frac{X}{Y}$. For this, it suffices to prove that $\Delta\geq \frac{2W}{d+1}$.

Since $\Delta= (d-2)A + \frac{-3+\sqrt{1+8y_M}}{2}$, in order to prove that $\Delta\geq \frac{2W}{d+1}$, using \pref{eq:linearcombo} gives that it suffices to prove that
\[
\frac{-3+\sqrt{1+8y_M}}{2} \geq \frac{2(y_M-1)}{d+1},
\]
where $1\leq y_M\leq \binom{d}{2}$. We have that $f(x) = \frac{-3+\sqrt{1+8x}}{2} - \frac{2(x-1)}{d+1}$ is concave, so $f(x)$ is minimized over the interval $[1, \binom{d}{2}]$ at its endpoints. Since $f(1)=f(\binom{d}{2}) = 0$, $f(x) \geq 0$ over $[1, \binom{d}{2}]$, which shows that $\Delta \geq \frac{2W}{d+1}$. Thus,
\[
\frac{(d-1)|E_h| -|U| - M+\sum_{i=1}^M \frac{1+\sqrt{1+8y_i}}{2}}{|E_h|+M} \geq \frac{X}{Y} \geq \gamma.
\]

\noindent\textbf{Case 3:} $M>|\mathcal{P}|$
\\ 
\indent Next, suppose $M>|\mathcal{P}|$. Then, 
\[
\frac{(d-1)|E_h| -|U| - M+\sum_{i=1}^M \frac{1+\sqrt{1+8y_i}}{2}}{|E_h|+M} 
\]
is minimized over $\mathcal{R}_M$ at $y_i=1$ for $1\leq i\leq M$. Therefore, over $\mathcal{R}_M$ we have that
\begin{align*}
\frac{(d-1)|E_h| -|U| - M+\sum_{i=1}^M \frac{1+\sqrt{1+8y_i}}{2}}{|E_h|+M} & \geq \frac{(d-1)|E_h|-|U|+M}{|E_h| + M} \\
& = \frac{(d-1)|E_h|-|U|+|\mathcal{P}| + (M-|\mathcal{P}|)}{|E_h| + |\mathcal{P}| + (M-|\mathcal{P}|)} \\
& \geq \min(\gamma, 1).
\end{align*}
We are done.
\end{proof}

\subsection{Lower bounds of \texorpdfstring{$m(h)$}{} for a nonminimal unweighted preimage \texorpdfstring{$h$}{}}

Suppose $G\in\mathcal{G}$ has covers $h$ and $g$ (i.e. $\text{Proj}(h)=\text{Proj}(g)=G$) such that $h\not=g$ and $e(h)\geq e(g)$. Note that $G$ is not necessarily ambiguous (despite the title of this section) and $g$ is not necessarily minimal. In this section we derive lower bounds for $d-1-\frac{1}{m(h)}$ for $d\geq 4$.

Let $E_h$ be the set of hyperedges in $h$ but not $g$ and $E_g$ be the set of hyperedges in $g$ but not $h$. Furthermore, let $I$ be the set of hyperedges in both $h$ and $g$. 

Let $\mathcal{E}_h$ be the set of edges of $\text{Proj}(E_h)$ and $\mathcal{E}_g$ be the set of edges of $\text{Proj}(E_g)$. Furthermore, let $\mathcal{E}$ be the set of edges of $\text{Proj}(I)$.

\begin{lemma}
\label{subset}
The set $\mathcal{E}_h$ is a subset of $\mathcal{E}\cup\mathcal{E}_g$ and the set $\mathcal{E}_g$ is a subset of $\mathcal{E}\cup\mathcal{E}_g$.
\end{lemma}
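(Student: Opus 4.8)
The plan is simply to unwind the definitions; this is a bookkeeping lemma and needs no real combinatorics. Recall the setup: $E(h)=E_h\sqcup I$, $E(g)=E_g\sqcup I$, and $\text{Proj}(h)=\text{Proj}(g)=G$, where $\mathcal{E}_h=E(\text{Proj}(E_h))$, $\mathcal{E}_g=E(\text{Proj}(E_g))$, and $\mathcal{E}=E(\text{Proj}(I))$. (As stated, the second assertion ``$\mathcal{E}_g\subseteq\mathcal{E}\cup\mathcal{E}_g$'' is a tautology; the intended statement is ``$\mathcal{E}_g\subseteq\mathcal{E}\cup\mathcal{E}_h$'', and since swapping the roles of $h$ and $g$ exchanges the two halves, it suffices to prove $\mathcal{E}_h\subseteq\mathcal{E}\cup\mathcal{E}_g$.)

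First I would fix an edge $\{a,b\}\in\mathcal{E}_h$. By definition of $\mathcal{E}_h$ there is a hyperedge $f\in E_h$ with $\{a,b\}\subset f$; since $f\in E_h\subseteq E(h)$, the pair $\{a,b\}$ is an edge of $\text{Proj}(h)=G$. Now I use the hypothesis $G=\text{Proj}(g)$: there is some $f'\in E(g)=E_g\sqcup I$ with $\{a,b\}\subset f'$. If $f'\in I$ then $\{a,b\}\in E(\text{Proj}(I))=\mathcal{E}$, and if $f'\in E_g$ then $\{a,b\}\in E(\text{Proj}(E_g))=\mathcal{E}_g$; either way $\{a,b\}\in\mathcal{E}\cup\mathcal{E}_g$. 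This proves $\mathcal{E}_h\subseteq\mathcal{E}\cup\mathcal{E}_g$, and applying the same argument with $h$ and $g$ interchanged yields $\mathcal{E}_g\subseteq\mathcal{E}\cup\mathcal{E}_h$.

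There is no real obstacle here. The only points requiring a little care are the (apparent) typo in the statement and keeping straight which projection each symbol refers to — $\mathcal{E}_h$, $\mathcal{E}_g$, and $\mathcal{E}$ are projections of the \emph{subsets} $E_h$, $E_g$, $I$ of hyperedges, whereas $G$ is the projection of all of $h$ (equivalently all of $g$); the key move is simply that an edge witnessed by an $E_h$-hyperedge lies in $G$, hence must also be witnessed by some hyperedge of $g$, which is either in $I$ or in $E_g$.
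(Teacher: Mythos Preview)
Your proof is correct and follows exactly the same approach as the paper's: an edge of $\mathcal{E}_h$ lies in $G=\proj(h)=\proj(g)$, hence is covered by some hyperedge of $g$, which lies in either $I$ or $E_g$. You also correctly identify and handle the typo in the second assertion.
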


\begin{proof}
Suppose $\{i,j\}\in \mathcal{E}_h$. Then, $\{i,j\}$ is an edge of $\text{Proj}(h)=G$. Hence, $\{i,j\}$ is an edge of $\text{Proj}(g)=G$, which implies that $\{i,j\}\in \mathcal{E}\cup \mathcal{E}_g$.
\end{proof}

Let $\mathcal{P}$ be the symmetric difference of $\mathcal{E}_h$ and $\mathcal{E}_g$. From \Cref{subset}, $\mathcal{P}\subset\mathcal{E}$. Let $U$ be the set of vertices in some hyperedge in $E_h\cup E_g$. Observe that for all $\{a, b\}\in\mathcal{P}$, $\{a,b\}\subset U$ and because $\mathcal{P}\subset\mathcal{E}$, there exists $i\in I$ such that $\{a,b\}\subset i$. Hence, $E_h$, $I$, $U$, and $\mathcal{P}$ satisfy the conditions of \Cref{ratio}.

Let $V_h$ be the set of vertices that are in $E_h$ but not $E_g$, $V_g$ be the set of vertices that are in $E_g$ but not $E_h$, and $V_I$ be the set of vertices that are in both $E_g$ and $E_h$. Observe that
\[
U = V_h\sqcup V_g \sqcup V_I.
\]

Suppose $v\in U$. Let $d_h(v)$ be the number of elements of $E_h$ that contain $v$ and $d_h^*(v)$ be the number of $u\in U$ such that $\{u, v\}\in \mathcal{E}_h\backslash\mathcal{E}_g$. Similarly, let $d_g(v)$ be the number of elements of $E_g$ that contain $v$ and $d_g^*(v)$ be the number of $u\in U$ such that $\{u,v\}\in \mathcal{E}_g\backslash\mathcal{E}_h$.

Suppose $v\in V_I$. Let $k(v)$ be the largest positive integer $k$ such that there exists $i_h\in E_h$ and $i_g\in E_g$ such that $v\in i_h, i_g$ and $|i_h\backslash i_g| = |i_g\backslash i_h| = k$. Assume that $i_h\in E_h$ and $i_g\in E_g$ satisfy $v\in i_h, i_g$ and $|i_h\backslash i_g| = k(v)$. We have that $i_h\not=i_g$ so $ i_h\backslash i_g$ and $i_g\backslash i_h$ are nonempty. Let $i_h(v)=i_h$ and $i_g(v)=i_g$. If there are multiple choices for $(i_h, i_g)$, we can select one choice randomly. Let $n_h(v)$ be the number of $w\in i_h\backslash i_g$ such that $\{v, w\}\in \mathcal{E}_h\backslash\mathcal{E}_g$ and $n_g(v)$ be the number of $w\in i_g\backslash i_h$ such that $\{v, w\}\in \mathcal{E}_g\backslash\mathcal{E}_h$.

The following lemma is implied by \Cref{thm:general}. However, we include its proof since its contents motivate later methods.

\begin{lemma}
\label{lemma:min} 
Suppose $d\geq 5$. Then $d-1-\frac{1}{m(h)}\geq \frac{d-1}{d+1}$.
\end{lemma}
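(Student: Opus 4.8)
The plan is to reduce the lemma to the optimization result \Cref{ratio}. Since $d\ge 5$ we may take $\gamma=\frac{d-1}{d+1}\in[\frac{d-1}{d+1},\infty)$, so $\min(\gamma,1)=\gamma$; as already observed above, $E_h$, $I$, $U$, and $\mathcal{P}$ satisfy the hypotheses of \Cref{ratio}. Hence the lemma will follow once we verify the numerical inequality
\[
\frac{(d-1)|E_h|-|U|+|\mathcal{P}|}{|E_h|+|\mathcal{P}|}\ \ge\ \frac{d-1}{d+1},
\]
which, after clearing denominators and using $U=V_h\sqcup V_g\sqcup V_I$, is equivalent to
\[
d(d-1)\,|E_h|+2|\mathcal{P}|\ \ge\ (d+1)\big(|V_h|+|V_g|+|V_I|\big).
\]

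The first ingredient is a lower bound on $|\mathcal{P}|$ coming from the boundary vertices. If $v\in V_h$, then $v$ lies in some hyperedge of $E_h$ but in none of $E_g$, so all of the (at least $d-1$) edges of $\mathcal{E}_h$ incident to $v$ lie in $\mathcal{E}_h\setminus\mathcal{E}_g\subseteq\mathcal{P}$; thus $d_h^*(v)\ge d-1$ and $v$ has at least $d-1$ incident edges of $\mathcal{P}$, and symmetrically for $v\in V_g$. Since $V_h,V_g,V_I$ partition $U$, the handshake identity $\sum_{v\in U}\deg_{\mathcal{P}}(v)=2|\mathcal{P}|$ yields
\[
2|\mathcal{P}|\ \ge\ (d-1)\big(|V_h|+|V_g|\big)+\sum_{v\in V_I}\deg_{\mathcal{P}}(v).
\]
Plugging this in, it suffices to prove the \emph{interior estimate}
\[
2\big(|V_h|+|V_g|\big)+(d+1)|V_I|\ \le\ d(d-1)|E_h|+\sum_{v\in V_I}\deg_{\mathcal{P}}(v);
\]
indeed this is a genuine identity for the configuration underlying $G_{a,d}$ when $d=5$, which is why the hypothesis $d\ge 5$ is needed.

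To prove the interior estimate I would charge the budget $d(d-1)$ of each hyperedge $e\in E_h$: $d-1$ to each of its vertices in $V_h$ (which already collect $d-1$ further units from their incident $\mathcal{P}$-edges), and the rest, together with the terms $\deg_{\mathcal{P}}(v)$ for $v\in V_I$, to the interior vertices. The crude bound $|V_h|+|V_I|=|V(E_h)|\le d|E_h|$ alone is too lossy (it is already tight for a single hyperedge), so one must track, for each $v\in V_I$, the witnessing pair $i_h(v)\in E_h$, $i_g(v)\in E_g$ with $|i_h(v)\setminus i_g(v)|=k(v)\ge 1$ and the counts $n_h(v),n_g(v)$ of edges from $v$ into $i_h(v)\setminus i_g(v)$ (resp.\ $i_g(v)\setminus i_h(v)$) that land in $\mathcal{P}$, splitting into the cases $k(v)=1$ and $k(v)\ge 2$; the bound $|E_g|\le|E_h|$ (from $e(h)\ge e(g)$) is available here. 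This hyperedge-by-hyperedge accounting is the main obstacle: a boundary vertex contributes $\ge d-1$ to $\mathcal{P}$ "for free," but an interior vertex can have $\mathcal{P}$-degree as small as $2$, so one must quantify precisely how many edges at each hyperedge of $E_h$ fall into $\mathcal{P}$ rather than into the common part $\mathcal{E}$ — and this is exactly where $d=3,4$ fails (matching the smaller threshold $\frac{2d-4}{2d-1}<\frac{d-1}{d+1}$ there). The bookkeeping set up for this step is the model that is reused, in more elaborate form, in the proof of \Cref{thm:general}.
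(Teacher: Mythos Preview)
Your reduction via \Cref{ratio} and the bound $2|\mathcal{P}|\ge (d-1)(|V_h|+|V_g|)+\sum_{v\in V_I}(d_h^*(v)+d_g^*(v))$ are exactly what the paper does. But you stop at the ``interior estimate'' and only describe a plan (``I would charge the budget\dots''); you never execute it, and your description of a hyperedge-by-hyperedge charging scheme is vaguer than what is actually needed. So the proposal is an incomplete outline with the main step missing.

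The concrete content you are missing is the following pair of per-vertex dichotomies for $v\in V_I$: if $n_h(v)=0$ then some edge $\{v,w\}$ with $w\in i_h(v)\setminus i_g(v)$ lies in $\mathcal{E}_g$, forcing $d_g(v)\ge 2$; otherwise $d_h^*(v)\ge 1$. Symmetrically with $h$ and $g$ swapped. Writing $N_h=\sum_{v\in V_I}\1\{n_h(v)=0\}$ and $N_g=\sum_{v\in V_I}\1\{n_g(v)=0\}$, this yields
\[
d|E_h|\ge |V_h|+|V_I|+N_g,\qquad d|E_g|\ge |V_g|+|V_I|+N_h,\qquad \sum_{v\in V_I}(d_h^*(v)+d_g^*(v))\ge 2|V_I|-N_h-N_g.
\]
Averaging the first two via $|E_h|\ge|E_g|$ gives $2d|E_h|\ge |U|+|V_I|+N_h+N_g$. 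Combining with your $2|\mathcal{P}|$ bound and simplifying gives
\[
d(d-1)|E_h|+2|\mathcal{P}|\ \ge\ (d+1)|U|+\tfrac{d-5}{2}(|V_h|+|V_g|)+\tfrac{d-3}{2}(N_h+N_g),
\]
and it is precisely the coefficient $\tfrac{d-5}{2}$ that makes $d\ge 5$ the correct hypothesis---not a per-hyperedge budget argument. Your proposal names the right objects but does not produce these inequalities or this final line.
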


\begin{proof}
From \Cref{ratio}, it suffices to prove that
\[
\frac{(d-1)|E_h|-|U|+|\mathcal{P}|}{|E_h| + |\mathcal{P}|} \geq \frac{d-1}{d+1}.
\]
Thus, it suffices to prove that
\begin{equation}
\label{ineq}
d(d-1)|E_h| + 2|\mathcal{P}| \geq (d+1)|U|.
\end{equation} 

We have that
\[
d|E_h| = \sum_{v\in U} d_h(v) = \sum_{v\in V_h\cup V_I} d_h(v) \text{ and } d|E_g| = \sum_{v\in V_g\cup V_I} d_g(v).
\]
Furthermore,
\[
2|\mathcal{P}| = \sum_{v\in U} d^*_h(v) + d^*_g(v),
\]
so
\begin{equation}
\label{sum2}
2|\mathcal{P}| = \sum_{v\in V_h} d^*_h(v) + \sum_{v\in V_g} d^*_g(v) + \sum_{v\in V_I} (d^*_h(v) + d^*_g(v)).
\end{equation}

Suppose $v\in V_I$. For simplicty, let $i_h=i_h(v)$ and $i_g=i_g(v)$.

Assume that $n_h(v)=0$. Suppose $w\in i_h\backslash i_g$. Then, there exists $i\in E_g$ such that $\{u, w\}\in i$ because $\{u,w\}\in\mathcal{E}_h$ and $\{u,w\}\notin \mathcal{E}_h\backslash\mathcal{E}_g$. Because $w\notin i_g$, $i\not=i_g$. As $v\in i, i_g$, $d_g(v)\geq 2$. 

Assume that $n_h(v)>0$. Then, there exists $w\in i_h\backslash i_g$ such that $\{u, w\}\in \mathcal{E}_h\backslash\mathcal{E}_g$, so $d_h^*(v)\geq 1$. Observe that $d_g(v)\geq 1$ because $v\in i_g$.

Hence,
\[
d_g(v)\geq 1 + \1\{n_h(v) = 0\} \text{ and } d_h^*(v) \geq 1-\1\{n_h(v)=0\}.
\]
We similarly have that
\[
d_h(v)\geq 1 + \1\{n_g(v) = 0\} \text{ and } d_g^*(v) \geq 1-\1\{n_g(v)=0\}.
\]
Let
\[
N_h = \sum_{v\in V_I} \1\{n_h(v)=0\} \text{ and } N_g = \sum_{v\in V_I} \1\{n_g(v) = 0\}.
\]

We have that $d_h(v)\geq 1$ for all $v\in V_h$ and $d_g(v)\geq 1$ for all $v\in V_g$. Thus, 
\begin{equation}
\label{ineqh}
d|E_h| = \sum_{v\in V_h\cup V_I} d_h(v) \geq |V_h|+\sum_{v\in V_I} (1+\1\{n_g(v)=0\}) \geq |V_h|+|V_I|+N_g
\end{equation}
and similarly,
\begin{equation}
\label{ineqg}
d|E_g| \geq |V_g|+|V_I|+N_h.
\end{equation}
Adding \pref{ineqh} and \pref{ineqg} gives that
\begin{align*}
d(|E_h|+|E_g|) & \geq |V_h| + |V_g| + \sum_{v\in V_I} (2+\1\{n_h(v)=0\} + \1\{n_g(v)=0\}) \\
& = |V_h|+|V_g|+2|V_I| + N_h + N_g \\
& = |U| + |V_I| + N_h + N_g.
\end{align*}
Since $|E_h|\geq|E_g|$ because $g$ is minimal,
\begin{equation}
\label{ineqhg}
d|E_h| \geq \frac{|U|+|V_I|+N_h+N_g}{2}.
\end{equation}

Suppose $v\in V_h$. Suppose $v\in i$ for $i\in h$. We have that for all $w\in i$ such that $v\not=w$, $\{v, w\}\in \mathcal{E}_h\backslash\mathcal{E}_g$ because $v\in V_h$. Therefore, $d_h^*(v) \geq d-1$. Similarly, if $v\in V_g$, $d_g^*(v)\geq d-1$. Hence, \pref{sum2} gives that
\begin{equation}
\label{ineqp}
\begin{split}
2|\mathcal{P}| & \geq (d-1)|V_h| + (d-1)|V_g| + \sum_{v\in V_I} (2-\1\{n_h(v) = 0\} - \1\{n_g(v) = 0\}) \\
& = (d-1)(|V_h|+|V_g|) + 2|V_I| - N_h-N_g.
\end{split}
\end{equation}
We have that
\begin{align*}
d(d-1)|E_h| + 2|\mathcal{P}| & \geq \frac{(d-1)|U|}{2} + \frac{(d+3)|V_I|}{2}+(d-1)(|V_h|+|V_g|)+\frac{d-3}{2}(N_h+N_g) \\
& = \frac{(d-1)|U|}{2} + \frac{(d+3)|U|}{2} + \frac{d-5}{2}(|V_h|+|V_g|) + \frac{d-3}{2}(N_h+N_g) \\
& = (d+1)|U| + \frac{d-5}{2}(|V_h|+|V_g|) + \frac{d-3}{2}(N_h+N_g) \\
& \geq (d+1)|U|.
\end{align*}
This proves that \pref{ineq} is true, which finishes the proof.
\end{proof}

\begin{theorem}
\label{thm:min4}
Suppose $d=4$. Then, $d-1-\frac{1}{m(h)}\geq \frac{2d-4}{2d-1}=\frac{4}{7}$.
\end{theorem}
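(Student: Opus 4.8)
The plan is to follow the framework of \Cref{lemma:min}, keeping the decomposition $U=V_h\sqcup V_g\sqcup V_I$ together with the quantities $d_h(v),d_g(v),d_h^*(v),d_g^*(v)$ and, for $v\in V_I$, the gap size $k(v)$, the indicators $\1\{n_h(v)=0\},\1\{n_g(v)=0\}$ and their sums $N_h,N_g$ — but to carry the bookkeeping further. The end goal is to exhibit a subgraph $h'\le h$ with $\alpha(h')\ge\frac{7}{17}$, since then $m(h)\ge\alpha(h')$ forces $d-1-\frac1{m(h)}\ge 3-\frac{17}{7}=\frac{4}{7}=\frac{2d-4}{2d-1}$, and this is tight at $G_{a,4}$, which has $\alpha=\frac{7}{17}$. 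Note at the outset that the clean reduction of \Cref{lemma:min} cannot work verbatim: applying \Cref{ratio} would demand $\frac{(d-1)|E_h|-|U|+|\mathcal P|}{|E_h|+|\mathcal P|}\ge\frac{d-1}{d+1}=\frac35$, whereas for $G_{a,4}$ this ratio equals exactly $\frac47<\frac35$, so a genuinely new argument is needed near $G_{a,4}$.

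The first step is a dichotomy. If there exist a sub-collection $S\subseteq E(h)$, a vertex set $U\supseteq\bigcup S$, and an edge set $\mathcal P$ each of whose edges lies inside $U$ and inside some hyperedge of $E(h)\setminus S$, with $\frac{(d-1)|S|-|U|+|\mathcal P|}{|S|+|\mathcal P|}\ge\frac35$, then \Cref{ratio} applied to this choice (with $\gamma=\frac35$) yields $d-1-\frac1{m(h)}\ge\frac35>\frac47$ and we are done. So I would henceforth assume that no such ``dense sub-configuration'' exists; this rules out, for instance, any mono-side cluster of $E_h$-hyperedges glued along their projections, and forces the $E_h$-hyperedges and the $I$-hyperedges responsible for $\mathcal P$ to interleave in the $G_{a,4}$ pattern.

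Under that assumption I would take $h'$ to be the subgraph of $h$ induced by $U\cup\bigcup_{i\in I\setminus Z}i$, where $Z$ collects the common hyperedges meeting $U$ in at most one vertex, exactly as in the proof of \Cref{ratio}. Then $e(h')\ge|E_h|+|I\setminus Z|$ and $v(h')\le|U|+\sum_{i\in I\setminus Z}(4-|i\cap U|)$, so $\alpha(h')\ge\frac{7}{17}$ reduces to the single inequality $17|E_h|+7\sum_{i\in I\setminus Z}|i\cap U|\ge 7|U|+11|I\setminus Z|$. I would prove this by refining the degree count of \Cref{lemma:min}: retain $4|E_h|=\sum_{v\in V_h\cup V_I}d_h(v)$ and its $E_g$-analogue, the inequality $|E_h|\ge|E_g|$, and $2|\mathcal P|=\sum_v(d_h^*(v)+d_g^*(v))$ with $d_h^*(v)\ge 3$ on $V_h$, $d_h^*(v)\ge 1-\1\{n_h(v)=0\}$ and $d_g(v)\ge 1+\1\{n_h(v)=0\}$ on $V_I$; and then use the feature special to $d=4$ — every hyperedge has four vertices — so $k(v)\in\{1,2,3\}$ and, when $n_h(v)=0$, a single $E_g$-hyperedge through $v$ can absorb at most two of the $k(v)$ vertices of $i_h(v)\setminus i_g(v)$ (absorbing three would reproduce $i_h(v)\notin E_g$), giving the sharper $d_g(v)\ge 1+\lceil k(v)/2\rceil$; one should similarly bound $|i\cap U|$ from below by the number of $\mathcal P$-edges that $i$ is responsible for. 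Feeding these into $|E_h|\ge\frac14(|V_h|+|V_I|+N_g)$ or $|E_h|\ge\frac14(|V_g|+|V_I|+N_h)$ — whichever is sharp for the configuration at hand — reduces the displayed inequality to an estimate of the shape $|V_h|+|V_I|+5N_g\ge 4|V_g|$, which follows from the structural constraints that the vertices with $n_h(v)=0$ impose on $E_g$.

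The main obstacle is tightness: $G_{a,4}$ attains equality in every step above, so each degree inequality must be applied in exactly its sharp form, and the argument must split into several cases — by the comparative sizes of $V_h,V_g,V_I$, by the values $k(v)$, and by which of $n_h(v),n_g(v)$ vanish — in order to cover all configurations lying ``close to'' $G_{a,4}$. A secondary difficulty is that the two resources $|\mathcal P|$ and $\sum_i|i\cap U|$ partly compete: a common hyperedge meeting $U$ in exactly two vertices contributes $1$ to $|\mathcal P|$ but only $2$ to $\sum_i|i\cap U|$, so the accounting must be arranged so that in the extremal case — all such intersections of size two — the displayed inequality degenerates to the $G_{a,4}$ identity $17|E_h|+3|\mathcal P|\ge 7|U|$ rather than to something stronger that would fail.
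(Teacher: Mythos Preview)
Your opening dichotomy is essentially what the paper does too: assuming $d-1-\frac{1}{m(h)}<\frac47$ forces, via \Cref{ratio} with $\gamma=\frac35$, the inequality $12|E_h|+2|\mathcal P|<5|U|$, and from this the paper extracts (by the degree bookkeeping you describe, using \pref{ineqhg}, \pref{ineqg}, and \pref{ineqp}) the structural fact $|V_h|>0$. Up to this point your plan and the paper's proof agree.

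The divergence, and the gap in your proposal, is what happens next. You propose to continue with global degree counting---to prove $17|E_h|+7\sum_{i\in I\setminus Z}|i\cap U|\ge 7|U|+11|I\setminus Z|$ by refining the bounds on $d_h,d_g,d_h^*,d_g^*,N_h,N_g$. But you never carry this out: the claimed reduction ``to an estimate of the shape $|V_h|+|V_I|+5N_g\ge 4|V_g|$'' is not derived, and the assertion that this ``follows from the structural constraints'' is not a proof. As you yourself note, every inequality is tight at $G_{a,4}$, so there is no slack to absorb error terms; a global linear combination that happens to be sharp at one configuration need not be sharp (or even true) at all near-extremal configurations. Moreover, your fixed choice of $h'$ may simply be the wrong subgraph: $m(h)$ is a maximum over \emph{all} subgraphs, and there is no reason the particular $h'$ built from $U\cup\bigcup_{i\in I\setminus Z}i$ should always witness $\alpha\ge\frac{7}{17}$.

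The paper's route is different in kind: having secured a vertex $v\in V_h$, it abandons degree counting and instead \emph{explicitly constructs} a small subgraph $\kappa$ around $v$. Since $v\in V_h$, every edge $\{v,u_j\}$ of the hyperedge $i\ni v$ lies in $\mathcal E_h\setminus\mathcal E_g\subset\mathcal E$, so must be covered by some $I$-hyperedge; the paper case-splits on how many $I$-hyperedges are needed (two or three), then on how the remaining edges $\{u_1,u_2\},\{u_2,u_3\},\{u_3,u_1\}$ are covered by $E_g$, and in each branch exhibits a concrete $\kappa$ with $e(\kappa)/v(\kappa)\ge\frac{7}{17}$ by direct vertex/edge counting. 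This local, case-by-case construction is the missing idea in your plan; it is not a refinement of the degree method but a replacement for it.
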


\begin{proof}
For the sake of contradiction, that $d-1-\frac{1}{m(h)}<\frac{4}{7} < \frac{3}{5}$, where $\frac{d-1}{d+1}=\frac{3}{5}$. From \Cref{ratio},
\[
\frac{(d-1)|E_h|-|U|+|\mathcal{P}|}{|E_h|+|\mathcal{P}|}<\frac{3}{5}.
\]
This is equivalent to
\begin{equation}
\label{inequ}
12|E_h|+2|\mathcal{P}|<5|U|.
\end{equation}

For the sake of contradiction, assume that $|V_h|=0$. Using \pref{ineqhg} gives that
\[
4|E_h|\geq \frac{|U|+|V_I|+N_h+N_g}{2}
\]
and using \pref{ineqp} gives that
\[
2|\mathcal{P}| \geq 3|V_g|+2|V_I|-N_h-N_g = 2|U|+|V_g|-N_h-N_g.
\]
Hence,
\begin{align*}
12|E_h|+2|\mathcal{P}| &\geq \frac{12}{8}(|U|+|V_I|+N_h+N_g)+2|U|+|V_g|-N_h-N_g \\
& = \frac{7}{2}|U|+(|V_I|+|V_g|) + \frac{1}{2}(|V_I| + N_h+N_g) \\  
& = \frac{9}{2}|U| + \frac{1}{2}(|V_I|+N_h+N_g).
\end{align*}
Therefore, \pref{inequ} implies that
\[
\frac{9}{2}|U| + \frac{1}{2}(|V_I|+N_h+N_g)<5|U|,
\]
so
\begin{equation}
\label{ineq3}
|V_I|+N_h+N_g<|U|.
\end{equation}

Additionally, using \pref{ineqg} gives that,
\[
4|E_h|\geq 4|E_g| \geq |V_g|+|V_I|+N_h=|U|+N_h.
\]
We therefore have that
\begin{align*}
12|E_h|+2|\mathcal{P}| &\geq 3(|U|+N_h)+2|U|+|V_g|-N_h-N_g \\
& =  5|U|+|V_g| + 2N_h-N_g.
\end{align*}
Thus, \pref{inequ} gives that
\[
5|U|+|V_g| + 2N_h-N_g<5|U| \Rightarrow 2N_h+|V_g| < N_g.
\]
Substituting this in \pref{ineq3} implies that 
\[
|V_I| + 3N_h + |V_g| < |U|,
\]
which is a contradiction to $|U|=|V_I|+|V_g|$. Thus, $|V_h|>0$.

Next we prove that $m(h)\geq \frac{1}{d-1-\frac{2d-4}{2d-1}}=\frac{7}{17}$. Suppose $v\in V_h$. Assume that $i\in h$ and $v\in i$. Suppose $i=\{v, u_1, u_2, u_3\}$. Observe that because $v\in V_h$, $\{v, u_1\}, \{v, u_2\}, \{v, u_3\} \in \mathcal{E}_h\backslash\mathcal{E}_g$. Hence, each of these edges is contained in an element of $I$ by \Cref{subset}. Let 
\[
\mathcal{I} = \{\{v, u_1\}, \{v, u_2\}, \{v, u_3\}\}.
\]

\noindent\textbf{Step 1: Covering the edges of $\mathcal{I}$}

We cannot cover the edges in $\mathcal{I}$ with one element of $I$. Suppose we cover the edges with $a, b\in I$, where $\{v, u_1, u_2\} \subset a$ and $\{v, u_3\}\subset b$ without loss of generality. Then, if $\kappa$ is the subgraph of $h$ induced by the vertices of $a$, $b$, and $i$, $e(\kappa)\geq 3$ and $v(\kappa)\leq 7$ so
\[
\frac{e(\kappa)}{v(\kappa)} \geq \frac{3}{7} > \frac{7}{17}.
\]
Hence, $m(h)>\frac{7}{17}$. 

Suppose we cover the edges in $\mathcal{I}$ with $a,b,c\in I$, where $\{v, u_1\}\subset a$, $\{v, u_2\}\subset b$, and $\{v, u_3\}\subset c$. Let $\mathfrak{f}(a)=\{v, u_1\}$, $\mathfrak{f}(b)=\{v, u_2\}$, $\mathfrak{f}(c)=\{v, u_3\}$, and $\mathfrak{f}(i)=\{v, u_1, u_2, u_3\}$. For any two distinct elements $x,y\in\{a,b,c,i\}$, $\mathfrak{f}(x)\cap \mathfrak{f}(y) \subset x\cap y$ since $\mathfrak{f}(x)\subset x$ and $\mathfrak{f}(y)\subset y$. Suppose there exists two distinct elements $x,y\in \{a,b,c,i\}$ such that $\mathfrak{f}(x)\cap \mathfrak{f}(y)$ is a strict subset of $x\cap y$. Then, if $\kappa$ is the subgraph of $h$ induced by the vertices of $a$, $b$, $c$, and $i$, then $e(\kappa)\geq 4$ and $v(\kappa)\leq 9$ so
\[
\frac{e(\kappa)}{v(\kappa)} \geq \frac{4}{9} > \frac{7}{17}.
\]
Assume that for any two distinct elements $x,y\in\{a,b,c,i\}$, $\mathfrak{f}(x)\cap \mathfrak{f}(y) = x\cap y$. Then, if $\kappa$ is the subgraph of $h$ induced by the vertices of $a$, $b$, $c$, and $i$, $e(\kappa)\geq 4$ and $v(\kappa)=10$.

\noindent\textbf{Step 2: Covering $\{u_1, u_2\}$, $\{u_2, u_3\}$, and $\{u_3, u_1\}$}

We have that the edges $\{u_1, u_2\}$, $\{u_2, u_3\}$, and $\{u_3, u_1\}$ must be covered by elements of $E_g\cup I$ by \Cref{subset}. 

Assume that $d\in (E_h\cup I)\backslash\{a,b,c,i\}$ and $|d\cap \{u_1, u_2, u_3\}|\geq 2$. If $\kappa$ is the subgraph of $h$ induced by the vertices of $a$, $b$, $c$, $d$, and $i$, then
\[
\frac{e(\kappa)}{v(\kappa)}\geq \frac{5}{12}>\frac{7}{17}.
\]

Next, assume the condition \condition{cond:intersection} that there does not exist $d\in (E_h\cup I) \backslash \{a,b,c,i\}$ such that $|d\cap \{u_1, u_2, u_3\}|\geq 2$. In particular, this implies that the edges $\{u_1, u_2\}$, $\{u_2, u_3\}$, and $\{u_3, u_1\}$ must be covered by elements of $E_g$.

Let $\mathcal{V}$ be the set of vertices of $a$, $b$, $c$, and $i$; observe that $|\mathcal{V}|=10$. 

\noindent\textbf{Step 2.1: Covering $\{u_1, u_2\}$}

Suppose $d\in E_g$ and $\{u_1, u_2\}\subset d$. The two cases are $d\cap \{u_1, u_2, u_3\}$ equals $\{u_1, u_2\}$ or $\{u_1, u_2, u_3\}$.

\noindent \textbf{Step 2.1.1: $d\cap \{u_1, u_2, u_3\} = \{u_1, u_2, u_3\}$}

Assume that $d\cap \{u_1, u_2, u_3\}=\{u_1, u_2, u_3\}$. Suppose $d=\{w, u_1, u_2, u_3\}$. By \Cref{subset}, all edges in the set
\[
\mathcal{S}=\{\{w, u_1\}, \{w, u_2\}, \{w, u_3\}\}
\]
must be covered by an element of $E_h\cap I$.

Assume that $w\in \mathcal{V}$. For the sake of contradiction, assume that all edges in $\mathcal{S}$ are covered by an element of $\{a,b,c,i\}$. Then, $\{w, u_1\}$ is covered by some element of $\{a,b,c,i\}$. Thus, $w\in a\cup i$ since $u_1\notin b, c$. Since $w\notin i$, $w\in a$. Similarly, $w\in b, c$. Hence, $w\in a\cap b\cap c = \{v\}$, which is a contradiction to $d\not=i$. Suppose $e\in E_h\cup I$ is not an element of $\{a,b,c,i\}$ and covers some element of $\mathcal{S}$. If $\kappa$ is the subgraph of $h$ induced by the vertices of $a$, $b$, $c$, $e$, and $i$, $e(\kappa)\geq 5$ and $v(\kappa)\leq 12$ so
\[
\frac{e(\kappa)}{v(\kappa)}\geq \frac{5}{12}>\frac{7}{17}.
\]

Assume that $w\notin\mathcal{V}$. Then, no element of $\mathcal{S}$ is covered by an element of $\{a,b,c,i\}$. By \refcondition{cond:intersection}, each of the elements of $\mathcal{S}$ must be covered by a distinct element of $(E_h\cup I)\backslash \{a,b,c,i\}$, otherwise two elements of $\{u_1, u_2, u_3\}$ will be contained in a single element of $(E_h\cup I)\backslash \{a,b,c,i\}$. Hence, this is the only case that we consider.

Suppose the elements of $\mathcal{S}$ can be covered by three elements of $(E_h\cup I)\backslash \{a,b,c,i\}$ but not less than three. Then, there exists $e,f,j\in E_h\cup I$ such that $\{w, u_1\}\subset e$, $\{w, u_2\}\subset f$, and $\{w, u_3\}\subset j$. If $\kappa$ is the subgraph of $h$ induced by the vertices $a$, $b$, $c$, $e$, $f$, $j$, and $i$, $e(\kappa)\geq 7$ and $v(\kappa)\leq 17$ so
\[
\frac{e(\kappa)}{v(\kappa)} \geq \frac{7}{17}.
\]

\noindent\textbf{Step 2.1.2: $d\cap\{u_1, u_2, u_3\} = \{u_1, u_2\}$}

Suppose $d\cap \{u_1, u_2, u_3\} = \{u_1, u_2\}$. Suppose $d=\{u_1, u_2, w_1, w_2\}$. Observe that the five edges in the set
\[
\mathcal{S} = \{\{u_1, w_1\}, \{u_1, w_2\}, \{u_2, w_1\}, \{u_2, w_2\}, \{w_1, w_2\}\}
\]
must be covered by elements of $E_h\cup I$ by \Cref{subset}.

Assume that $w_1, w_2\in\mathcal{V}$. For the sake of contradiction, assume that all edges in $\mathcal{S}$ are covered by some element of $\{a, b, c, i\}$. Then, $\{w_1, u_1\}$ must be covered by $a$ or $i$, so $w_1\in a\cup i$. Also, $\{w_1, u_2\}$ must be covered by $b$ or $i$, so $w_1\in b\cup i$. Observe that $(a\cup i)\cap (b\cup i) = i$, so $w_1\in i$. Similarly, $w_2\in i$. This is a contradiction to $d\not= i$. Thus, some edge in $\mathcal{S}$ is not covered by some element of $\{a,b,c,i\}$. This edge must be covered by $e\in E_h\cup I$. Note that because $d\subset\mathcal{V}$ and $|d\cap e| \geq 2$, $|e\backslash \mathcal{V}|\leq 2$. Then, if $\kappa$ is the subgraph of $h$ induced by the vertices of $a$, $b$, $c$, $e$, and $i$, $e(\kappa)\geq 5$ and $v(\kappa)\leq 12$ so
\[
\frac{e(\kappa)}{v(\kappa)} \geq \frac{5}{12} > \frac{7}{17}.
\]

Assume that $|\{w_1, w_2\}\cap\mathcal{V}| = 1$. Without loss of generality, assume that $w_1\in\mathcal{V}$ and $w_2\notin\mathcal{V}$. We have that the edges $\{w_2, w_1\}$, $\{w_2, u_1\}$, and $\{w_2, u_2\}$ in $\mathcal{S}$ are not covered by elements of $\{a,b,c,i\}$. Let 
\[
\mathcal{Q}=\{\{w_2, w_1\},\{w_2, u_1\}, \{w_2, u_2\}\}.
\]
The cases that we must consider are that the elements of $\mathcal{Q}$ are covered by two or three elements of $E_h\cup I$. By \refcondition{cond:intersection}, they cannot be covered by one element of $E_h\cup I$.

Suppose the elements of $\mathcal{Q}$ can be covered by two elements of $E_h\cup I$. There exists $e,f\in E_h\cup I$ such that $w_2\in e, f$ and $\{w_1, u_1, u_2\}\subset e\cup f$. Suppose $e,f\in E_h\cup I$ such that $\{w_2, w_1, u_1\} \subset e$ and $\{w_2, u_2\} \subset f$, without loss of generality; note that the vertices $\{w_1, u_1, u_2\}$ can be considered to be equivalent for the purposes of this computation. If $\kappa$ is the subgraph of $h$ induced by the vertices of $a$, $b$, $c$, $e$, $f$, and $i$, $e(\kappa)\geq 6$ and $v(\kappa)\leq 14$ so
\[
\frac{e(\kappa)}{v(\kappa)} \geq \frac{6}{14} > \frac{7}{17}.
\]

Suppose the elements of $\mathcal{Q}$ can be covered by three elements of $E_h\cup I$ but not less than three elements. Then, there exists $e,f, j\in E_h\cup I$ such that $\{w_2, u_1\} \subset e$ and $\{w_2, u_2\} \subset f$, and $\{w_2, w_1\}\subset j$. Assume that $e$, $f$, and $j$ satisfy this condition. If $\kappa$ is the subgraph of $h$ induced by the vertices of $a$, $b$, $c$, $e$, $f$, $j$, and $i$, $e(\kappa)\geq 7$ and $v(\kappa)\leq 17$ so
\[
\frac{e(\kappa)}{v(\kappa)} \geq \frac{7}{17}.
\]

Next, assume that $w_1, w_2\notin\mathcal{V}$. Then, none of the elements of $\mathcal{S}$ are covered by elements of $\{a, b, c, i\}$. The cases we consider are when the elements of $\mathcal{S}$ are covered by at least $2$ and at most $5$ elements of $E_h\cup I$. Furthermore, recall that there does not exist an element of $(E_h\cup I)\backslash \{a, b,c,i\}$ that contains $\{u_1, u_2\}$ by \refcondition{cond:intersection}.

Suppose the elements of $\mathcal{S}$ can be covered by two elements of $E_h\cup I$. There exists $e,f\in E_h\cup I$ such that $\{w_1, w_2, u_1\}\subset e$ and $\{w_1, w_2, u_2\}\subset f$. Assume that $e$ and $f$ satisfy this condition. If $\kappa$ is the subgraph of $h$ induced by the vertices of $a$, $b$, $c$, $e$, $f$, and $i$, then $e(\kappa)\geq 6$ and $v(\kappa)\leq 14$ so
\[
\frac{e(\kappa)}{v(\kappa)}\geq \frac{6}{14} > \frac{7}{17}.
\]

Suppose the elements of $\mathcal{S}$ can be covered by three elements of $E_h\cup I$ but not less than three elements. Then, there exists $e,f,j\in E_h\cup I$ such that either $\{w_1, w_2, u_1\}\subset e$, $\{w_1, u_2\}\subset f$, and $\{w_2, u_2\}\subset j$ or $\{w_1, w_2, u_2\}\subset e$, $\{w_1, u_1\}\subset f$, and $\{w_2, u_1\}\subset j$. Without loss of generality, suppose $e,f,j\in E_h\cup I$ satisfy the condition that $\{w_1, w_2, u_1\}\subset e$, $\{w_1, u_2\}\subset f$, and $\{w_2, u_2\}\subset j$. If $\kappa$ is the subgraph of $h$ induced by the vertices of $a$, $b$, $c$, $e$, $f$, $j$, and $i$, then $e(\kappa)\geq 7$ and $v(\kappa)\leq 17$ so
\[
\frac{e(\kappa)}{v(\kappa)}\geq\frac{7}{17}.
\]

Next, suppose the elements of $\mathcal{S}$ can be covered by four elements of $E_h\cup I$ but not less than four elements. Suppose $e,f,j,k\in E_h\cup I$ cover the elements of $\mathcal{S}$. Since $|\mathcal{S}|=5$, at least one of $e,f,j,k$ must contain three elements of $\{u_1, u_2, w_1, w_2\}$. Without loss of generality, suppose $e$ satisfies this condition. Since $e$ cannot contain $\{u_1, u_2\}$, suppose $\{w_1, w_2, u_1\}\subset e$, without loss of generality. The uncovered edges of $\mathcal{S}$ are $\{w_1, u_2\}$ and $\{w_2, u_2\}$. There exists two elements of $\{f, j, k\}$ that covers both of these edges, so the elements of $\mathcal{S}$ can be covered by at most three elements, which is a contradiction.

Suppose the elements of $\mathcal{S}$ are covered by five elements of $E_h\cup I$ but not less than five elements. Suppose $e,f,j,k,l\in E_h\cup I$ and $\{u_1, w_1\}\subset e$, $\{u_1, w_2\}\subset f$, $\{u_2, w_1\}\subset j$, $\{u_2, w_2\}\subset k$, and $\{w_1, w_2\}\subset l$. Let $\mathfrak{f}(e)=\{u_1, w_1\}$, $\mathfrak{f}(f)=\{u_1, w_2\}$, $\mathfrak{f}(j)=\{u_2, w_1\}$, $\mathfrak{f}(k)=\{u_2, w_2\}$, and $\mathfrak{f}(l)=\{w_1, w_2\}$. For any two distinct elements $x,y\in\{a,b,c,e,f,j,k,l,i\}$, $\mathfrak{f}(x)\cap \mathfrak{f}(y) \subset x\cap y$ since $\mathfrak{f}(x)\subset x$ and $\mathfrak{f}(y)\subset y$. Suppose there exists two distinct elements $x,y\in \{a,b,c,e,f,j,k,l,i\}$ such that $\mathfrak{f}(x)\cap \mathfrak{f}(y)$ is a strict subset of $x\cap y$. Then, if $\kappa$ is the subgraph of $h$ induced by the vertices of $a$, $b$, $c$, $e$, $f$, $j$, $k$, $l$, and $i$, then $e(\kappa)\geq 9$ and $v(\kappa)\leq 21$ so
\[
\frac{e(\kappa)}{v(\kappa)} \geq \frac{9}{21} > \frac{7}{17}.
\]
Assume that for any two distinct elements $x,y\in\{a,b,c,e,f,j,k,l,i\}$, $\mathfrak{f}(x)\cap \mathfrak{f}(y) = x\cap y$. Then, if $\kappa$ is the subgraph of $h$ induced by the vertices of $a$, $b$, $c$, $e$, $f$, $j$, $k$, $l$, and $i$, $e(\kappa)\geq 9$ and $v(\kappa)=22$.

\noindent\textbf{Step 2.2: Covering $\{u_3, u_1\}$}

There exists $d'\in E_g$ such that $\{u_3, u_1\}\subset d'$. Assume that $d'\cap \{u_1, u_2, u_3\} = \{u_3, u_1\}$; we have already considered the case $d'\cap\{u_1, u_2, u_3\}=\{u_1, u_2, u_3\}$ in the case $d\cap \{u_1, u_2, u_3\}=\{u_1, u_2, u_3\}$. Suppose $d'=\{u_3, u_1, w_1', w_2'\}$. By symmetry, we have that the only case we must consider is if $w_1',w_2'\notin\mathcal{V}$ and the edges in the set
\[
\{\{u_3, w_1'\}, \{u_3, w_2'\}, \{u_1, w_1'\}, \{u_1, w_2'\}, \{w_1', w_2'\}\}
\]
are covered by five elements $e'$, $f'$, $j'$, $k'$, and $l'$ of $E_h\cap I$. Suppose $\{u_3, w_1'\}\subset e'$, $\{u_3, w_2'\}\subset f'$, $\{u_1, w_1'\}\subset j'$, $\{u_1, w_2'\}\subset k'$, and $\{w_1', w_2'\}\subset l'$. Let $\mathfrak{f}(e')=\{u_3, w_1'\}$, $\mathfrak{f}(f')=\{u_3, w_2'\}$, $\mathfrak{f}(j')=\{u_1, w_1'\}$, $\mathfrak{f}(k')=\{u_1, w_2'\}$, and $\mathfrak{f}(l')=\{w_1', w_2'\}$. By symmetry, we may further assume that for any two distinct elements $x,y\in\{a,b,c,e',f',j',k',l',i\}$, $\mathfrak{f}(x)\cap \mathfrak{f}(y) = x\cap y$.

Let $\mathcal{V}'$ be the set of vertices of elements of $\{a,b,c,e,f,j,k,l,i\}$; observe that $|\mathcal{V}'|=22$.

Suppose $w_1', w_2'\in \mathcal{V}'\backslash\mathcal{V}$. Note that the edges $\{w_1', u_3\}$ and $\{w_2', u_3\}$ are not covered by any element of $\{e, f, j, k, l\}$ since no element of $\{e,f,j,k,l\}$ contains $u_3$. Because, $w_1', w_2'\notin\mathcal{V}$, $\{w_1', u_3\}$ and $\{w_2', u_3\}$ are not covered by any element of $\{a,b, c, i\}$. Since $\{w_1', u_3\}$ is covered by $e'$ and $\{w_2', u_3\}$ is covered by $f'$, we have that $e', f'\notin \{a,b,c,e,f,j,k,l,i\}$. Then, if $\kappa$ is the subgraph of $h$ induced by the vertices of $a$, $b$, $c$, $e$, $e'$, $f$, $f'$, $j$, $k$, $l$, and $i$, $e(\kappa)\geq 11$ and $v(\kappa)\leq 26$ so
\[
\frac{e(\kappa)}{v(\kappa)}\geq \frac{11}{26}>\frac{7}{17}.
\]

Assume that $|\{w_1', w_2'\}\cap\mathcal{V}'\backslash\mathcal{V}| = 1$. Without loss of generality, assume that $w_1'\in \mathcal{V}'\backslash\mathcal{V}$ and $w_2'\notin \mathcal{V}'$. Using the argument from the previous case gives that $\{w_1', u_3\}$ is not covered by any element of $\{a,b,c,e,f,j,k,l,i\}$, so $e'\notin \{a,b,c,e,f,j,k,l,i\}$. Because $w_2'\notin \mathcal{V}'$, any element of $\{e',f',j',k',l'\}$ that contains $w_2'$ is not an element of $\{a,b,c,e,f,j,k,l,i\}$. Thus, $f', k', l'\notin \{a,b,c,e,f,j,k,l,i\}$. Then, if $\kappa$ is the subgraph of $h$ induced by the vertices of $a$, $b$, $c$, $e$, $e'$, $f$, $f'$, $j$, $k$, $k'$, $l$, $l'$, and $i$, $e(\kappa)\geq 13$ and $v(\kappa)\leq 31$ so
\[
\frac{e(\kappa)}{v(\kappa)}\geq \frac{13}{31}>\frac{7}{17}.
\]

Assume that $w_1', w_2'\notin \mathcal{V'}$. Then, $e',f', j', k', l'\notin \{a,b,c,e,f,j,k,l,i\}$. Thus, if $\kappa$ is the subgraph of $h$ induced by the vertices of $a$, $b$, $c$, $e$, $e'$, $f$, $f'$, $j$, $j'$, $k$, $k'$, $l$, $l'$, and $i$, $e(\kappa)\geq 14$ and $v(\kappa)\leq 34$ so
\[
\frac{e(\kappa)}{v(\kappa)}\geq \frac{14}{34}=\frac{7}{17}.
\] 
This proves that $m(h)\geq \frac{7}{17}$.
\end{proof}

\begin{theorem}
\label{thm:general}
Suppose $d\geq 5$. Then, $d-1-\frac{1}{m(h)}\geq \frac{2d-4}{2d-1}$.
\end{theorem}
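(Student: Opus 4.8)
The plan is to run the same two-pronged argument as for \Cref{thm:min4}, now with $\gamma=\frac{2d-4}{2d-1}$. Since $\frac{2d-4}{2d-1}\ge\frac{d-1}{d+1}$ for $d\ge 5$ — with equality exactly at $d=5$, so that case is literally \Cref{lemma:min} and only $d\ge 6$ is genuinely new — \Cref{ratio} applies with this $\gamma$ and $\min(\gamma,1)=\gamma$. Hence it suffices to prove that either
\[
(2d^2-5d+5)\,|E_h|+3\,|\mathcal{P}|\ \ge\ (2d-1)\,|U|\qquad\Bigl(\text{equivalently, }\tfrac{(d-1)|E_h|-|U|+|\mathcal{P}|}{|E_h|+|\mathcal{P}|}\ge\tfrac{2d-4}{2d-1}\Bigr),
\]
or, if this fails, that $h$ contains a subgraph of density at least $\frac{2d-1}{2d^2-5d+5}=\bigl(d-1-\tfrac{2d-4}{2d-1}\bigr)^{-1}$, which again gives $d-1-\tfrac1{m(h)}\ge\tfrac{2d-4}{2d-1}$. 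Note that the hypergraph of \Cref{def:ambiguous} makes the displayed inequality an equality (there $|E_h|=1$, $|\mathcal{P}|=2(d-1)$, $|U|=d+1$), so the estimates below have to be tight.

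First I would try the displayed inequality by counting. Substituting the bounds already obtained in the proof of \Cref{lemma:min} — \pref{ineqh} and \pref{ineqg}, \pref{ineqhg}, and \pref{ineqp}, together with $|E_h|\ge|E_g|$ — into the target and rewriting in terms of $|V_h|,|V_g|,|V_I|,N_h,N_g$, one checks that the coefficients of $|V_h|+|V_g|$ and of $N_h+N_g$ come out with slack for all $d\ge 5$, while the coefficient of $|V_I|$ falls just short of $2d-1$ once $d\ge 6$. To recover the deficit I would sharpen \pref{ineqp} by using $d_h^*(v)\ge n_h(v)$ and $d_g^*(v)\ge n_g(v)$ for $v\in V_I$ (rather than the cruder $d_h^*(v)+d_g^*(v)\ge 2-\1\{n_h(v)=0\}-\1\{n_g(v)=0\}$), and stratify $V_I$ by the overlap parameter $k(v)$: a vertex with $k(v)\ge 2$ either puts extra edges into $\mathcal{P}$ through $i_h(v)$ and $i_g(v)$ or forces a second hyperedge of $h$ or $g$ through it, while the residual $k(v)=1$ vertices realize the $G_{a,d}$ pattern, where each such $v$ drags a vertex into $V_h$ and one into $V_g$; bounding how many $V_I$-vertices a single vertex of $V_h\cup V_g$ (or a single hyperedge of $E_h$) can account for should supply the missing $|V_I|$ mass.

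If a configuration still escapes the counting, I would close it exactly as in \Cref{thm:min4}. The counting step already forces $V_h\ne\emptyset$ under the contradiction hypothesis $m(h)<\frac{2d-1}{2d^2-5d+5}$; pick $v\in V_h$ and a hyperedge $i=\{v,u_1,\dots,u_{d-1}\}\in E_h$. By \Cref{subset} each edge $\{v,u_j\}$ lies in $\mathcal{E}_h\setminus\mathcal{E}_g$ and so is covered by a hyperedge of $I$, and each edge $\{u_j,u_k\}$ is covered by a hyperedge of $E_g\cup I$. Following these covers outward from $i$ and bounding, for an inclusion-minimal family, the number of fresh vertices each covering hyperedge contributes against the edges it is responsible for (at most $d-2$ fresh vertices per hyperedge against at least one new edge, plus an extra edge whenever two covering hyperedges meet in more vertices than the edge they are forced to share), one assembles a subgraph $\kappa\le h$ with $\frac{e(\kappa)}{v(\kappa)}\ge\frac{2d-1}{2d^2-5d+5}$, a contradiction.

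The hard part will be the $|V_I|$ term — equivalently, the bookkeeping around hyperedges of $E_h$ and $E_g$ that overlap heavily, which is the part of the structure realized by $G_{a,d}$. There \Cref{lemma:min}'s elementary degree bounds are too lossy, and the sharpened estimates must be combined so as to return precisely the right amount, with the extremal configurations (the analogues of the $\frac{14}{34}=\frac{7}{17}$ equalities in \Cref{thm:min4}) tracked to the last vertex. In the dense-subgraph route the number of subcases — how the $\binom{d}{2}$ edges incident to or inside $i$ distribute among covering hyperedges, and whether their new vertices are fresh or shared with previously selected hyperedges — grows with $d$, so one really wants a uniform estimate (say a convexity or linear-programming bound over the ways the edge set of a $d$-clique can be split across $d$-element sets) in place of an enumeration.
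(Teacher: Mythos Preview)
Your setup is right: apply \Cref{ratio} with $\gamma=\frac{2d-4}{2d-1}$, note that the crude bounds \pref{ineqhg}, \pref{ineqp} from \Cref{lemma:min} fall just short on the $|V_I|$ coefficient, and look for the deficit in the $G_{a,d}$-like vertices. But the two fixes you propose both have real gaps.

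For the counting prong, ``stratify by $k(v)$'' and ``charge $k(v)=1$ vertices to $V_h\cup V_g$'' is not the right bookkeeping. In $G_{a,d}$ all $d-1$ intersection vertices share a single $u\in V_h$, so a vertex of $V_h$ can absorb up to $d-1$ such charges, and the slack you have on the $|V_h|+|V_g|$ coefficient is not enough to cover this. The paper instead isolates the set $\mathcal{V}\subset V_I$ of vertices with $d_h=d_g=1$ and $k=n_h=n_g=1$, and introduces a dichotomy you are missing: either some pair $(i_h,i_g)$ with $|i_h\setminus i_g|=1$ has $i_h\cap i_g\subset\mathcal{V}$ --- in which case one builds the $G_{a,d}$ subgraph directly around that pair and applies \Cref{ratio} to it, getting exactly $\frac{2d-4}{2d-1}$ --- or no such pair exists, so every $v\in\mathcal{V}$ has a neighbour $u\in i_h(v)\cap i_g(v)$ with $d_h(u)+d_g(u)\ge 3$. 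In the second branch one charges $\mathcal{V}$ not to $V_h\cup V_g$ but to these high-degree $V_I$ vertices, split into $\mathcal{S}$ (degree sum $3$) and $\mathcal{T}$ (degree sum $\ge 4$), via two counting claims bounding $|\mathcal{V}|$ by $\frac{d-1}{3}\sum_{\mathcal{S}}(d_h+d_g)$ and $\frac{d-1}{2}\sum_{\mathcal{T}}(d_h+d_g)$ respectively; combining these with the per-vertex functional $f(v)=\frac{d-1-\gamma}{2d}(d_h+d_g)+\frac{1-\gamma}{2}(d_h^*+d_g^*)$ and a carefully chosen slack parameter $\beta=\frac13$ closes the inequality uniformly in $d$.

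For the dense-subgraph prong, you assert that the counting already forces $V_h\neq\emptyset$, but you have not shown this for $d\ge 5$ (the paper's $d=4$ argument for this uses the specific numerics of that case and is not repeated for general $d$), and the subsequent cover-following casework you sketch is, as you yourself note, a $d$-dependent enumeration with no uniform bound supplied. The paper sidesteps all of this: once the dichotomy above produces the pair $(i_h,i_g)$, the dense subgraph is just $i_h$, $i_g$, and one $I$-hyperedge through each of the $2(d-1)$ edges of $\mathcal{P}'=\{\{v_h,v\},\{v_g,v\}:v\in i_h\cap i_g\}$ --- no case analysis needed.
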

\begin{proof}
For the sake of contradiction, assume that $d-1-\frac{1}{m(h)}<\frac{2d-4}{2d-1}$. Let $\gamma=\frac{2d-4}{2d-1}$. We must have that $d-1-\frac{1}{m(h)}<\gamma$. Because $\gamma \geq \frac{d-1}{d+1}$, \Cref{ratio} gives that
\[
\frac{(d-1)|E_h|-|U|+|\mathcal{P}|}{|E_h|+|\mathcal{P}|}<\gamma.
\]
This is equivalent to
\[
(d-1-\gamma)|E_h| + (1-\gamma)|\mathcal{P}| < |U|.
\]
Observe that
\[
|E_h| \geq \frac{1}{2d}(|V_h| + |V_g| + \sum_{v\in V_I} (d_h(v)+d_g(v)))
\]
and from \pref{sum2},
\[
|\mathcal{P}| \geq \frac{1}{2}((d-1)(|V_h| + |V_g|) + \sum_{v\in V_I} (d_h^*(v)+d_g^*(v))).
\]
Hence,
\begin{align*}
&\frac{d-1-\gamma}{2d}(|V_h| + |V_g| + \sum_{v\in V_I} (d_h(v)+d_g(v))) \\
& + \frac{1-\gamma}{2}((d-1)(|V_h| + |V_g|) + \sum_{v\in V_I} (d_h^*(v)+d_g^*(v))) < |U|.
\end{align*}
For $v\in V_I$, let
\[
f(v) = \frac{d-1-\gamma}{2d}(d_h(v)+d_g(v)) + \frac{1-\gamma}{2}(d_h^*(v)+d_g^*(v)).
\]
We have that
\begin{equation}
\label{firstineq}
\left(\frac{d-1-\gamma}{2d} + \frac{(d-1)(1-\gamma)}{2}\right)(|V_h|+|V_g|) +\sum_{v\in V_I} f(v) < |U|.
\end{equation}

\begin{claim}
\label{ineqdeg}
Suppose $v\in V_I$. If $n_h(v)<k(v)$ then $d_g(v)>1$ and if $n_g(v)<k(v)$ then $d_h(v)>1$.
\end{claim}

\begin{proof}
Suppose $v\in V_I$, $i_g=i_g(v)$, and $i_h=i_h(v)$. Assume that $n_h(v)<k(v)$; the case $n_g(v)<k(v)$ follows similarly. Then, there exists $w\in i_h\backslash i_g$ such that $\{v, w\}\in \mathcal{E}_h$ and $\{v, w\}\notin \mathcal{E}_h\backslash \mathcal{E}_g$. Assume that $w$ satisfies this condition. Then, there exists $i\in E_g$ such that $i\not=i_g$ and $\{v, w\}\subset i$, so $d_g(v) \geq 2$.
\end{proof}

Let $\mathcal{V}$ be the set of $v\in V_I$ such that:
\begin{enumerate}
\item $d_h(v)=d_g(v)=1$.
\item $k(v) = n_h(v) = n_g(v) = 1$.
\end{enumerate}
Observe that if $v\in\mathcal{V}$, $i_h(v)$ and $i_g(v)$ are deterministic since $d_h(v)=1$ and $d_g(v)=1$, respectively. Furthermore, the formulation of $\mathcal{V}$ corresponds to the ambiguous graph $G_{a,d}$ from \cite{reconstruct}. For the sake of contradiction, assume the condition \condition{cond:noambiguous} that there does not exist $i_h\in E_h$ and $i_g\in E_g$ such that $|i_h\backslash i_g|=1$ and $i_h\cap i_g\subset \mathcal{V}$.

Suppose $v\in \mathcal{V}$. Since $d_h(v)=d_g(v)=1$, $d_h^*(v)\geq n_h(v) = 1$, and $d_g^*(v)\geq n_g(v)=1$,
\begin{equation}
\label{fval}
f(v) \geq \frac{d-1-\gamma}{d} + (1-\gamma).
\end{equation}
Let $C = \frac{d-1-\gamma}{d} + (1-\gamma)$. 

Suppose $v\in V_I\backslash \mathcal{V}$. If $n_h(v)<k(v)$ or $n_g(v)<k(v)$, then using \Cref{ineqdeg} gives that
\[
f(v) \geq C + \frac{d-1-\gamma}{2d} - \frac{1-\gamma}{2}.
\]
If $k(v)=n_h(v)=n_g(v)>1$, then,
\[
f(v) \geq C + (1-\gamma).
\]
If $d_h(v)>1$ or $d_g(v)>1$ and $k(v)=n_h(v)=n_g(v)=1$, then
\[
f(v) \geq C + \frac{d-1-\gamma}{2d}.
\]

Suppose $\beta\in\mathbb{R}$ such that $\frac{d-1-\gamma}{d} \geq (1+2\beta)(1-\gamma)$ and $0\leq\beta\leq 1$. Then, since $\frac{d-1-\gamma}{2d} - \frac{1-\gamma}{2}\geq \beta(1-\gamma)$ and $\beta\leq 1$, 
\begin{equation}
\label{ineqf}
f(v) \geq C+\beta(1-\gamma)=\frac{d-1-\gamma}{d}+(1+\beta)(1-\gamma).
\end{equation}
Note that $\frac{d-1-\gamma}{d} \geq (1+2\beta)(1-\gamma)$ if and only if
\begin{equation}
\label{betaineq}
\beta \leq \frac{1}{2}\left(\frac{d-1-\gamma}{d(1-\gamma)}-1\right) = \frac{1}{2}\left(\frac{2d}{3}-\frac{8}{3}+\frac{5}{3d}\right).
\end{equation}

Let $\mathcal{A}$ be the set of $v\in V_I$ such that $d_h(v)+d_g(v)\geq 3$ and there exists $i_h\in E_h$ and $i_g\in E_g$ such that $v\in i_h\cap i_g$ and $|i_h\backslash i_g|=1$. Let $\mathcal{S}$ be the set of $v\in \mathcal{A}$ such that $d_h(v)+d_g(v)=3$ and $\mathcal{T}$ be the set of $v\in\mathcal{A}$ such that $d_h(v)+d_g(v)>3$.

Let $\mathcal{V}^*$ be the set of $v\in \mathcal{V}$ such that $i_h(v)\cap i_g(v)\cap \mathcal{S}$ is nonempty. Furthermore, let $\mathcal{V}^{**}$ be the set of $v\in \mathcal{V}$ such that $i_h(v)\cap i_g(v)\cap \mathcal{T}$ is nonempty. 

Suppose $v\in \mathcal{V}$. By \refcondition{cond:noambiguous}, there exists $u\in i_h(v)\cap i_g(v)$ such that $u\notin \mathcal{V}$. Assume that $u$ satisfies this condition. We have that there exists $i\in E_h\cup E_g\backslash\{i_h(v), i_g(v)\}$ such that $u\in i$ because $u\notin\mathcal{V}$, so $d_g(u)+d_h(u)\geq 3$ and $u\in\mathcal{A}$. This implies that
$\mathcal{V}^*\cup \mathcal{V}^{**} = \mathcal{V}$.

\begin{claim}
\label{ineqS}
$(d-1)(\sum_{v\in \mathcal{S}} d_h(v) + d_g(v)) \geq 3|\mathcal{V}^*|$.
\end{claim}
\begin{proof}
Suppose $v\in\mathcal{S}$. Without loss of generality, assume that $d_h(v)=2$ and $d_g(v)=1$. Suppose $i_h^1, i_h^2\in E_h$, $i_h^1\not=i_h^2$, $i_g\in E_g$, $|i_h^1\backslash i_g|=1$, and $v\in i_h^1\cap i_h^2\cap i_g$. Let 
\[
s(v)=((i_h^1\cap i_g)\cup (i_h^2\cap i_g))\cap\mathcal{V}^*.
\]
Note that $s(v)\subset i_g\backslash\{v\}$ so $|s(v)|\leq d-1$. Hence,
\[
(d-1)(d_h(v)+d_g(v)) = 3(d-1) \geq 3|s(v)|.
\]
This implies that
\[
(d-1)\sum_{v\in\mathcal{S}} (d_h(v) + d_g(v)) \geq 3\sum_{v\in\mathcal{S}} |s(v)|.
\]

Suppose $u\in\mathcal{V}^*$ and suppose $v\in i_h(u)\cap i_g(u)\cap \mathcal{S}$; note that $i_h(u)\cap i_g(u)\cap \mathcal{S}$ is nonempty by the definition of $\mathcal{V}^*$. We have that $u\in s(v)$. Hence, $\mathcal{V}^*\subset \bigcup_{v\in\mathcal{S}} s(v)$ so $|\mathcal{V}^*|\leq \sum_{v\in\mathcal{S}}|s(v)|$. This finishes the proof.
\end{proof}

\begin{claim}
\label{ineqT}
$(d-1)(\sum_{v\in\mathcal{T}} d_h(v) + d_g(v)) \geq 2|\mathcal{V}^{**}|$.
\end{claim}
\begin{proof}
Suppose $v\in\mathcal{T}$. Define
\[
t(v)=\left(\bigcup_{\substack{i_h\in E_h, i_g\in E_g,\\v\in i_h\cap i_g, \\
|i_h\backslash i_g|=1}} (i_h\cap i_g)\right)\cap \mathcal{V}^{**}.
\]
Suppose $i_h\in E_h$ and $v\in i_h$. Note that $i_h\cap t(v)\subset i_h\backslash\{v\}$ so $|i_h\cap t(v)|\leq d-1$. We have that
\[
|t(v)| \leq \sum_{i_h\in E_h: v\in i_h} |i_h\cap t(v)| \leq (d-1)d_h(v).
\]
Similarly, $|t(v)| \leq (d-1)d_g(v)$. Hence,
\[
(d-1)(d_h(v)+d_g(v))\geq 2|t(v)|.
\]
We then have that
\[
(d-1)\sum_{v\in\mathcal{T}}(d_h(v)+d_g(v))\geq 2\sum_{v\in\mathcal{T}} |t(v)|.
\]

Suppose $u\in \mathcal{V}^{**}$. We have that $i_h(u)\cap i_g(u)\cap \mathcal{T}$ is nonempty by the definition of $\mathcal{V}^{**}$. If $v\in i_h(u)\cap i_g(u)\cap \mathcal{T}$ then $u\in t(v)$. Hence, $\mathcal{V}^{**}\subset\bigcup_{v\in\mathcal{T}} t(v)$, so $|\mathcal{V}^{**}|\leq \sum_{v\in\mathcal{T}} |t(v)|$. We are done.
\end{proof}

Observe that \pref{firstineq} implies that
\[
(-\frac{d-1-\gamma}{2d} + \frac{(d-1)(1-\gamma)}{2})(|V_h|+|V_g|) + \sum_{v\in V_I} (f(v) - \frac{d-1-\gamma}{d})<\frac{1+\gamma}{d}|U|.
\]
Note that $-\frac{d-1-\gamma}{2d} + \frac{(d-1)(1-\gamma)}{2}\geq \frac{1+\gamma}{d}$, so
\[
\frac{1+\gamma}{d}(|V_h|+|V_g|) + \sum_{v\in V_I} (f(v) - \frac{d-1-\gamma}{d}) < \frac{1+\gamma}{d}|U|.
\]
This implies that
\begin{equation}
\label{mainineq}
\sum_{v\in V_I} (f(v) - \frac{d-1-\gamma}{d}) < \frac{1+\gamma}{d}|V_I|.
\end{equation}

If $v\in\mathcal{V}$, then \pref{fval} gives that
\[
f(v)-\frac{d-1-\gamma}{d} \geq 1-\gamma.
\]
Hence, using \pref{ineqf} gives that
\begin{equation}
\label{sum}
\begin{split}
&\sum_{v\in V_I} (f(v)-\frac{d-1-\gamma}{d}) \\ & \geq (1-\gamma)|\mathcal{V}| + \sum_{v\in\mathcal{S}\cup\mathcal{T}} (f(v)-\frac{d-1-\gamma}{d}) + \sum_{v\in V_I\backslash (\mathcal{V}\cup\mathcal{S}\cup\mathcal{T})} (f(v)-\frac{d-1-\gamma}{d}) \\ &\geq (1-\gamma)|\mathcal{V}| + \sum_{v\in \mathcal{S}\cup\mathcal{T}} (f(v)-\frac{d-1-\gamma}{d}) + (|V_I|-|\mathcal{V}|-|\mathcal{S}\cup \mathcal{T}|)(1+\beta)(1-\gamma) \\
& = (1-\gamma)|\mathcal{V}| + \sum_{v\in \mathcal{S}\cup\mathcal{T}} (f(v)-\frac{d-1-\gamma}{d}-(1+\beta)(1-\gamma)) + (|V_I|-|\mathcal{V}|)(1+\beta)(1-\gamma).
\end{split}
\end{equation}

Suppose $v\in\mathcal{S}$. Then,
\[
\frac{d-1-\gamma}{2d}(d_h(v)+d_g(v)) \geq 3\frac{d-1-\gamma}{2d}.
\]
Without loss of generality, assume that $d_h(v)=2$ and $d_g(v)=1$. Suppose $i_h\in E_h$, $i_g\in E_g$, and $v\in i_h\cap i_g$. Observe that for all $w\in i_h\backslash i_g$, $\{w,v\}\in \mathcal{E}_h\backslash\mathcal{E}_g$ since $d_g(v)=1$. Since $|i_h\backslash i_g|\geq 1$, $d_h^*(v)\geq 1$. Similarly, if $d_h(v)=1$ and $d_g(v)=2$, $d_g^*(v)\geq 1$. It follows that 
\[
d_h^*(v)+d_g^*(v)\geq 1.
\]
Hence,
\[
\sum_{v\in \mathcal{S}} (f(v) - \frac{d-1-\gamma}{d}-(1+\beta)(1-\gamma)) \geq |\mathcal{S}|\cdot (\frac{d-1-\gamma}{2d} - (\frac{1}{2}+\beta)(1-\gamma)).
\]
Furthermore, using \Cref{ineqS} gives that
\begin{align*}
& \sum_{v\in \mathcal{S}} (f(v) - \frac{d-1-\gamma}{d}-(1+\beta)(1-\gamma)) \\
& \geq \frac{d-1-\gamma}{2d}\sum_{v\in\mathcal{S}} (d_h(v)+d_g(v)) - (\frac{d-1-\gamma}{d}+(\frac{1}{2}+\beta)(1-\gamma))|\mathcal{S}| \\
& \geq \frac{d-1-\gamma}{2d}\cdot \frac{3|\mathcal{V}^*|}{d-1} - (\frac{d-1-\gamma}{d}+(\frac{1}{2}+\beta)(1-\gamma))|\mathcal{S}|.
\end{align*}
Next, we consider a linear combination of these two inequalities. If 
\[
c^* =\frac{\frac{d-1-\gamma}{2d}-(\frac{1}{2}+\beta)(1-\gamma)}{\left(\frac{d-1-\gamma}{2d}-(\frac{1}{2}+\beta)(1-\gamma)\right) + \left(\frac{d-1-\gamma}{d}+(\frac{1}{2}+\beta)(1-\gamma)\right)} =\frac{\frac{d-1-\gamma}{2d}-(\frac{1}{2}+\beta)(1-\gamma)}{3\frac{d-1-\gamma}{2d}},
\]
then
\begin{align*}
&\sum_{v\in \mathcal{S}} (f(v) - \frac{d-1-\gamma}{d}-(1+\beta)(1-\gamma)) 
\\
& \geq (1-c^*)|\mathcal{S}|\cdot (\frac{d-1-\gamma}{2d} - (\frac{1}{2}+\beta)(1-\gamma)) \\ 
& + c^*(\frac{d-1-\gamma}{2d}\cdot \frac{3|\mathcal{V}^*|}{d-1} - (\frac{d-1-\gamma}{d}+(\frac{1}{2}+\beta)(1-\gamma))|\mathcal{S}|) \\
&= (\frac{d-1-\gamma}{2d(d-1)}-\frac{(\frac{1}{2}+\beta)(1-\gamma)}{d-1})|\mathcal{V}^*|.
\end{align*}
Observe that $c^*\geq 0$ because $\frac{d-1-\gamma}{d} \geq (1+2\beta)(1-\gamma)$.

Suppose $v\in\mathcal{T}$. Then,
\[
f(v) \geq 2\frac{d-1-\gamma}{d}. 
\]
Hence,
\[
\sum_{v\in \mathcal{T}} (f(v) - \frac{d-1-\gamma}{d}-(1+\beta)(1-\gamma)) \geq |\mathcal{T}|\cdot (\frac{d-1-\gamma}{d} - (1+\beta)(1-\gamma)).
\]
Furthermore, using \Cref{ineqT} gives that
\begin{align*}
& \sum_{v\in \mathcal{T}} (f(v) - \frac{d-1-\gamma}{d}-(1+\beta)(1-\gamma)) \\ & \geq \frac{d-1-\gamma}{2d}\sum_{v\in\mathcal{T}} (d_h(v)+d_g(v)) - (\frac{d-1-\gamma}{d}+(1+\beta)(1-\gamma))|\mathcal{T}| \\
& \geq \frac{d-1-\gamma}{d}\cdot \frac{|\mathcal{V}^{**}|}{d-1} - (\frac{d-1-\gamma}{d}+(1+\beta)(1-\gamma))|\mathcal{T}|.
\end{align*}
Next, we consider a linear combination of these two inequalities. If
\[
c^{**} =\frac{\frac{d-1-\gamma}{d}-(1+\beta)(1-\gamma)}{\left(\frac{d-1-\gamma}{d}-(1+\beta)(1-\gamma)\right) + \left(\frac{d-1-\gamma}{d}+(1+\beta)(1-\gamma)\right)} =\frac{\frac{d-1-\gamma}{d}-(1+\beta)(1-\gamma)}{2\frac{d-1-\gamma}{d}},
\]
then
\begin{align*}
&\sum_{v\in \mathcal{S}} (f(v) - \frac{d-1-\gamma}{d}-(1+\beta)(1-\gamma)) \geq (1-c^{**})|\mathcal{T}|\cdot (\frac{d-1-\gamma}{d} - (1+\beta)(1-\gamma)) \\ 
&+ c^{**}(\frac{d-1-\gamma}{d}\cdot \frac{|\mathcal{V}^{**}|}{d-1} - (\frac{d-1-\gamma}{d}+(1+\beta)(1-\gamma))|\mathcal{T}|) \\
&= (\frac{d-1-\gamma}{2d(d-1)}-\frac{(1+\beta)(1-\gamma)}{2(d-1)})|\mathcal{V}^{**}|.
\end{align*}
Similarly, $c^{**}\geq 0$ because $\frac{d-1-\gamma}{d} \geq (1+\beta)(1-\gamma)$.

Let $\beta^*=\frac{1}{2}+\beta$. Because $\frac{d-1-\gamma}{d} \geq (1+2\beta)(1-\gamma)$, $\frac{d-1-\gamma}{d} \geq 2\beta^*(1-\gamma)$. Furthermore, $\frac{1}{2}+\beta\geq \frac{1+\beta}{2}$ since $\beta\geq 0$. It follows that 
\begin{align*}
\sum_{v\in \mathcal{S}\cup\mathcal{T}} (f(v)-\frac{d-1-\gamma}{d}-(1+\beta)(1-\gamma)) \geq & (\frac{d-1-\gamma}{2d(d-1)}-(\frac{1}{2}+\beta)\frac{1-\gamma}{d-1})|\mathcal{V}^*|\\ & +(\frac{d-1-\gamma}{2d(d-1)}-\frac{(1+\beta)(1-\gamma)}{2(d-1)})|\mathcal{V}^{**}| \\
\geq & (\frac{d-1-\gamma}{2d(d-1)}-\beta^*\frac{1-\gamma}{d-1})(|\mathcal{V}^*|+|\mathcal{V}^{**}|) \\
\geq & (\frac{d-1-\gamma}{2d(d-1)}-\beta^*\frac{1-\gamma}{d-1})|\mathcal{V}|.
\end{align*}
 Using this inequality and \pref{sum} gives that
\[
\sum_{v\in V_I} (f(v)-\frac{d-1-\gamma}{d}) 
\geq (1-\gamma + \frac{d-1-\gamma}{2d(d-1)}-\beta^*\frac{1-\gamma}{d-1})|\mathcal{V}| + (|V_I|-|\mathcal{V}|)(1+\beta)(1-\gamma).
\]
Afterwards, \pref{mainineq} implies that
\[
(1-\gamma + \frac{d-1-\gamma}{2d(d-1)}-\beta^*\frac{1-\gamma}{d-1})|\mathcal{V}| + (|V_I|-|\mathcal{V}|)(1+\beta)(1-\gamma) < \frac{1+\gamma}{d}|V_I|.
\]

From \pref{betaineq}, we can set $\beta$ to be $\frac{1}{3}$. Let $\beta=\frac{1}{3}$ and $\beta^*=\frac{1}{2}+\beta=\frac{5}{6}$. Furthermore, let
\[
\ell(x)=(1-\gamma + \frac{d-1-\gamma}{2d(d-1)}-\beta^*\frac{1-\gamma}{d-1})x + (|V_I|-x)(1+\beta)(1-\gamma).
\]
We have that $\ell(|\mathcal{V}|)< \frac{1+\gamma}{d}|V_I|$. Note that $\ell$ is a linear function and $0\leq |\mathcal{V}|\leq |V_I|$. Then, $\ell(|\mathcal{V}|)<\frac{1+\gamma}{d}|V_I|$ implies that $\min(\ell(0), \ell(|V_I|))<\frac{1+\gamma}{d}|V_I|$. 

First, observe that
\[
\ell(0)- \frac{1+\gamma}{d}|V_I|= ((1+\beta)(1-\gamma)-\frac{1+\gamma}{d})|V_I|=\frac{1}{2d-1}(3(1+\beta)-\frac{4d-5}{d})|V_I|.
\]
Since
\[
3(1+\beta)-\frac{4d-5}{d} = \frac{5}{d} > 0,
\]
$\ell(0)\geq\frac{1+\gamma}{d}|V_I|$.

Furthermore,
\[
\ell(|V_I|) = (1-\gamma + \frac{d-1-\gamma}{2d(d-1)}-\beta^*\frac{1-\gamma}{d-1})|V_I|.
\]
We have that
\[
1-\gamma + \frac{d-1-\gamma}{2d(d-1)}-\beta^*\frac{1-\gamma}{d-1} - \frac{1+\gamma}{d} = \frac{7d-5-6\beta^*d}{2d(d-1)(2d-1)}.
\]
Since $\beta^* = \frac{5}{6}$,
\[
7d-5-6d\beta^* = 2d - 5 > 0.
\]
Thus, $\ell(|V_I|) \geq \frac{1+\gamma}{d}|V_I|$. This is a contradiction to $\min(\ell(0), \ell(|V_I|))<\frac{1+\gamma}{d}|V_I|$.

Therefore, there exists $i_h\in E_h$ and $i_g\in E_g$ such that $|i_h\backslash i_g|=1$ and $i_h\cap i_g\subset\mathcal{V}$. Assume that $i_h$ and $i_g$ satisfy this condition. Suppose $i_h = (i_h\cap i_g)\cup \{v_h\}$ and $i_g=(i_h\cap i_g)\cup \{v_g\}$. Because $i_h\cap i_g\subset \mathcal{V}$, $d_h(v)=d_g(v)=1$ for all $v\in i_h\cap i_g$, which implies that $\{v_h, v\}\in\mathcal{E}_h\backslash \mathcal{E}_g$ and $\{v_g, v\}\in\mathcal{E}_g\backslash\mathcal{E}_h$ for all $v\in i_h\cap i_g$. 

Let $E_h'=\{i_h\}$, $E_g'=\{i_g\}$, $U'=i_h\cup i_g$, and $\mathcal{P}'=\{\{v_h, v\}: v\in i_h\cap i_g\}\cup \{\{v_g, v\}: v\in i_h\cap i_g\}$. Let $I'$ be the set of $i\in I$ such that there exists $e\in\mathcal{P}'$ such that $e\subset i'$. Because $\mathcal{P}'\subset \mathcal{P}$, each edge in $\mathcal{P}'$ is contained in some element of $I'$. Let $V'$ be the set of vertices in $i_h$, $i_g$, or some hyperedge in $I'$. Let $h'$ be the subgraph of $h$ induced by $V'$. Furthermore, let $h''$ be the graph with vertex set $V'$ and edge set $E_h'\cup I'$. We have that
\[
d-1-\frac{1}{m(h)} \geq d-1-\frac{1}{\alpha(h')} \geq d-1-\frac{1}{\alpha(h'')}
\]
so it suffices to prove that
\[
d-1-\frac{1}{\alpha(h'')}\geq \frac{2d-4}{2d-1}.
\]
Using \Cref{ratio} gives that to prove that $d-1-\frac{1}{\alpha(h'')} \geq \frac{2d-4}{2d-1}$, it suffices to prove that 
\[
\frac{(d-1)|E_h'|-|U'| + |\mathcal{P}'|}{|E_h'|+|\mathcal{P}'|} \geq \frac{2d-4}{2d-1}.
\]
Observe that
\[
\frac{(d-1)|E_h'|-|U'| + |\mathcal{P}'|}{|E_h'|+|\mathcal{P}'|} = \frac{d-1 - (d+1) + 2(d-1)}{1 + 2(d-1)} = \frac{2d-4}{2d-1},
\]
which finishes the proof.
\end{proof}

\subsection{Lower bounds of \texorpdfstring{$m(h)$}{} for a nonminimal weighted preimage \texorpdfstring{$h$}{}}
\label{subsec:weightedambiguous}

For the weighted random model, we consider when two hypergraphs have the same weighted projections, which would imply that they have the same projection. First continue using the notation of the previous section. Additionally, add the condition that $\text{Proj}_W(h)=\text{Proj}_W(g)$. Furthermore, observe that $e(h)=e(g)$ rather than $e(h)\geq e(g)$, which follows from $\text{Proj}_W(h)=\text{Proj}_W(g)$. The goal is to prove the following result:

\begin{theorem}
\label{thm:weightedratio}
Suppose $d\geq 3$. Then $d-1-\frac{1}{m(h)} \geq \frac{d}{2} - 1 \geq \frac{d-1}{d+1}$.
\end{theorem}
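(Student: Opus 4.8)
The plan is to avoid \Cref{ratio} and instead exhibit a dense subgraph of $h$ directly. Concretely, I will show that the sub-hypergraph $K \le h$ whose edge set is $E_h$ satisfies $\alpha(K) \ge \tfrac 2d$, so that $m(h) \ge \tfrac 2d$; then $d-1-\tfrac1{m(h)} \ge d-1-\tfrac d2 = \tfrac d2 - 1$, and the remaining inequality $\tfrac d2 - 1 \ge \tfrac{d-1}{d+1}$ is equivalent to $(d-2)(d+1) \ge 2(d-1)$, i.e.\ $d(d-3)\ge 0$, which holds for $d\ge 3$. Feeding $\gamma = \tfrac d2 - 1$ into \Cref{ratio} would not be enough, since that lemma only yields $d-1-\tfrac1{m(h)}\ge\min(\gamma,1)$, which is weaker than $\tfrac d2 - 1$ as soon as $d\ge 4$.

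First I would extract the relevant consequences of $\text{Proj}_W(h) = \text{Proj}_W(g)$. Summing all edge weights gives $\binom d2 e(h) = \binom d2 e(g)$, hence $e(h)=e(g)$; together with $h\ne g$ this forces $E_h\ne\emptyset$, since $E_h=\emptyset$ would give $E(h)\subseteq E(g)$ and then equality of sizes forces $h=g$. Next, because $E(h) = E_h \sqcup I$ and $E(g) = E_g \sqcup I$, for every pair $\{i,j\}$ the number of hyperedges of $E_h$ containing $\{i,j\}$ equals the number of hyperedges of $E_g$ containing it; that is, $\text{Proj}_W(E_h) = \text{Proj}_W(E_g)$ as multigraphs, and in particular $\mathcal E_h = \mathcal E_g$.

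The crux is the claim that every vertex $v$ lying in some hyperedge of $E_h$ satisfies $d_h(v)\ge 2$. Suppose instead $d_h(v) = 1$, so $v$ lies in a unique $A\in E_h$; then the neighbours of $v$ in $\text{Proj}(E_h)$ are exactly the $d-1$ vertices of $A\setminus\{v\}$. Since $\mathcal E_h=\mathcal E_g$, the neighbours of $v$ in $\text{Proj}(E_g)$ form the same set, so $v$ lies in some $C\in E_g$, and every such $C$ has $C\setminus\{v\}\subseteq A\setminus\{v\}$; as $|C\setminus\{v\}| = d-1 = |A\setminus\{v\}|$ this gives $C = A$, contradicting $E_h\cap E_g=\emptyset$. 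With the claim in hand, double counting the incidences in $E_h$ gives $d\,e(K) = d\,|E_h| = \sum_{v\in V(K)} d_h(v) \ge 2\,v(K)$, so $\alpha(K) = e(K)/v(K) \ge \tfrac 2d$ and therefore $m(h)\ge\tfrac 2d$, which completes the argument as above.

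The main obstacle, and the only place the weighted hypothesis is really used, is the step that upgrades $\text{Proj}_W(h) = \text{Proj}_W(g)$ to the two-sided identity $\text{Proj}_W(E_h) = \text{Proj}_W(E_g)$ (equivalently $\mathcal E_h = \mathcal E_g$) by cancelling the common hyperedges $I$; in the unweighted model one only has the one-sided containments of \Cref{subset}, which is precisely why the threshold $\tfrac d2 - 1$ is special to the weighted setting. Given that identity, the degree-$\ge 2$ claim and the ensuing double count are routine.
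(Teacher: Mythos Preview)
Your proof is correct and follows essentially the same approach as the paper: both arguments show that every vertex touched by $E_h$ lies in at least two hyperedges of $E_h$, then double-count to get $\alpha\ge 2/d$ on that subgraph. The paper first strips the common hyperedges $I$ (reducing to disjoint $E(h),E(g)$) and then proves the degree claim via $d_h(v)=d_g(v)$ from the weighted projection; you instead keep $I$ in place, cancel it to obtain $\text{Proj}_W(E_h)=\text{Proj}_W(E_g)$, and derive the degree claim from the resulting equality $\mathcal E_h=\mathcal E_g$ by a neighbour-set containment argument --- a slightly slicker variant of the same step.
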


First, observe that we can remove all hyperedges in both $h$ and $g$; afterwards, the condition $\text{Proj}_W(h)= \text{Proj}_W(g)$ will still be satisfied, and $d-1-\frac{1}{m(h)}$ will be decreased. Hence, assume that $E(h)$ and $E(g)$ are disjoint. Since $h\not=g$, both $E(h)$ and $E(g)$ are nonempty.

\begin{lemma}
\label{lemma:vertexdegree}
Suppose $v\in V$. Then, $d_h(v)\not=1$.
\end{lemma}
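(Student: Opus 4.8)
The plan is a short argument by contradiction that exploits two facts already available: $\text{Proj}_W(h) = \text{Proj}_W(g)$ forces $\text{Proj}(h) = \text{Proj}(g)$ (as noted at the start of this subsection), so $h$ and $g$ share a common projected graph $G$ on vertex set $V$; and we have reduced to the case where $E(h)$ and $E(g)$ are disjoint.

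First I would set up the contradiction: suppose $d_h(v) = 1$ for some $v \in V$, and let $i_h$ be the unique hyperedge of $h$ containing $v$, written $i_h = \{v\}\cup W$ with $|W| = d-1$. The crucial observation is that the $G$-neighborhood of $v$ is exactly $W$: every edge of $G = \text{Proj}(h)$ incident to $v$ arises from a hyperedge of $h$ through $v$, and $i_h$ is the only such hyperedge, so $N_G(v) = i_h\setminus\{v\} = W$. This is the one place where the hypothesis $d_h(v) = 1$ is used.

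Next I would transfer this structure to $g$. Since $\text{Proj}(g) = G$, any hyperedge $g' \in E(g)$ with $v \in g'$ satisfies $g'\setminus\{v\} \subseteq N_G(v) = W$, hence $g' \subseteq \{v\}\cup W = i_h$; as $|g'| = |i_h| = d$ this forces $g' = i_h$. Moreover $v$ does lie in some hyperedge of $g$: for any $w \in W$ the edge $\{v,w\}$ has weight $1$ in $\text{Proj}_W(h)$ (the only hyperedge of $h$ through $\{v,w\}$ is $i_h$, again because $d_h(v)=1$), hence positive weight in $\text{Proj}_W(g)$, so some hyperedge of $g$ contains $\{v,w\}$ and in particular $v$. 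Combining the two displays yields $i_h \in E(g)$, contradicting $E(h)\cap E(g) = \emptyset$, which completes the proof. I do not anticipate a genuine obstacle: the only nontrivial step is the neighborhood identity $N_G(v) = W$, and the rest is bookkeeping with the definitions; this lemma is really a warm-up for the structural analysis that follows in the subsection.
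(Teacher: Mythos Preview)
Your proof is correct. Both you and the paper argue by contradiction from $d_h(v)=1$ and the standing assumption $E(h)\cap E(g)=\emptyset$, but the endgames differ slightly. The paper picks, for each $u\in i_h\setminus\{v\}$, a hyperedge $i_u\in E(g)$ covering $\{v,u\}$, and then splits into cases: if all the $i_u$ coincide they must equal $i_h$ (contradicting disjointness), and otherwise $d_g(v)\ge 2$, which contradicts the degree identity $d_g(v)=d_h(v)$ that one reads off from $\projw(h)=\projw(g)$ by summing edge weights at $v$. You instead pin down $N_G(v)=W$ from the unweighted projection, which forces \emph{every} $g$-hyperedge through $v$ to equal $i_h$, and then the existence of one such hyperedge gives $i_h\in E(g)$ directly. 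Your route sidesteps both the case split and the (implicit) degree identity, making it a touch more self-contained; the paper's version leans more on the weighted structure but is equally short.
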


\begin{proof}
For the sake of contradiction, assume that $d_h(v) = 1$. Suppose $i$ is the hyperedge of $h$ that contains $v$. Then, the weight of $\{v, u\}$ for all $u\in i\backslash\{v\}$ in $\text{Proj}_W(h)$ equals one. Since $\text{Proj}_W(g)=\text{Proj}_W(h)$, each edge $\{v, u\}$ for $u\in i\backslash\{v\}$ is contained in a hyperedge $i_u$ of $g$. If all of the $i_u$ are equal, then they must all equal $i$, which is a contradiction to $E(h)$ and $E(g)$ being disjoint. Therefore, some two of the $i_u$ are distinct, which implies that $d_g(v)\geq 2$. However, $\text{Proj}_W(h)=\text{Proj}_W(g)$ implies that $d_g(v)=d_h(v)=1$, which is a contradiction.
\end{proof}

\begin{proof}[Proof of \Cref{thm:weightedratio}]
Let $V'$ be the set of $v\in V$ such that $d_h(v) \geq 1$. Let $h'$ be the subgraph of $h$ induced by $V'$. Using \Cref{lemma:vertexdegree} implies that
\[
de(h') = de(h) = \sum_{v\in V'} d_h(v) \geq 2v(h'), 
\]
so $m(h) \geq \alpha(h') \geq \frac{2}{d} \Rightarrow d-1-\frac{1}{m(h)} \geq \frac{d}{2} - 1$. It is straightforward to verify that $\frac{d}{2}-1\geq \frac{d-1}{d+1}$.
\end{proof}

Observe that the threshold in \Cref{thm:weightedratio} is greater than the threshold in \Cref{thm:ambiguous_threshold}. The reason for this is that the graph $G_{a,d}$ is not ambiguous under the weighted projection. In fact, we show that the threshold $\frac{d}{2}-1$ in \Cref{thm:weightedratio} is achievable. 

Define the hypergraph $H=(V,E)$ as follows:
\begin{itemize}
\item $V=S_1\sqcup S_2\sqcup \{w_1, w_2, w_3, w_4\}$, where $|S_1|=|S_2|=d-2$.
\item $E$ consists of $S_1\cup \{w_1, w_2\}$, $S_1\cup \{w_3, w_4\}$, $S_2\cup \{w_2, w_3\}$, and $S_2\cup \{w_4, w_1\}$.
\end{itemize}
Then, define $G_{a,d}^w:= \text{Proj}(H)$.

Suppose $H'$ has the same vertex set as $H$ and edge set $S_2\cup \{w_1, w_2\}$, $S_2\cup \{w_3, w_4\}$, $S_1\cup \{w_2, w_3\}$, and $S_1\cup \{w_4, w_1\}$. Then, $H$ and $H'$ are two distinct minimal preimages of $G_{a,d}^w$, so $G_{a,d}^w$ is a weighted-ambiguous graph. Since each vertex of $H$ is contained in two hyperedges, we also observe that $H$ achieves the lower bound in \Cref{thm:weightedratio}.

\section{Entropy of the projected graph}
\label{sec:entropy}

For simplicity, assume that the logarithms in entropy are base $e$. In this section we mainly analyze the entropy of $\text{Proj}(\mathcal{H})$ and hence the conditional entropy $H(\mathcal{H}|\text{Proj}(\mathcal{H}))=H(\mathcal{H})-H(\text{Proj}(\mathcal{H}))$.

\begin{lemma}
\label{approx}
$e^{-p}\geq 1-p$ for all $p\geq 0$ and $e^{-2p} \leq 1-p$ if $p$ is sufficiently small.
\end{lemma}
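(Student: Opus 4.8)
The plan is to prove both inequalities by the standard sign-of-the-derivative argument, treating each as a one-variable calculus fact; no results from earlier in the paper are needed.

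For the first inequality I would set $f(p) = e^{-p} - (1-p)$ on $[0,\infty)$. Then $f(0) = 0$ and $f'(p) = -e^{-p} + 1 = 1 - e^{-p} \geq 0$ for all $p \geq 0$, since $e^{-p} \leq 1$ there. Hence $f$ is nondecreasing on $[0,\infty)$, so $f(p) \geq f(0) = 0$, i.e. $e^{-p} \geq 1-p$. Equivalently, one may invoke convexity of $e^{-p}$ together with the fact that $1-p$ is its tangent line at $p = 0$; this version actually gives the inequality for all real $p$, which is more than needed.

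For the second inequality I would set $g(p) = (1-p) - e^{-2p}$, so that $g(0) = 1 - 0 - 1 = 0$ and $g'(p) = -1 + 2e^{-2p}$, whence $g'(0) = 1 > 0$. By continuity of $g'$ there is a $p_0 > 0$ with $g'(p) > 0$ for all $p \in [0, p_0]$; consequently $g$ is increasing on $[0,p_0]$ and $g(p) \geq g(0) = 0$ there, which is exactly $e^{-2p} \leq 1-p$ for $p$ sufficiently small. If an explicit threshold is preferred, one can instead use the alternating-series bound $e^{-2p} \leq 1 - 2p + 2p^2$, valid for $0 \leq p \leq \tfrac12$ (there the terms $(2p)^k/k!$ are nonincreasing in $k$, so truncating after a positive term overestimates), which yields $1 - p - e^{-2p} \geq p - 2p^2 = p(1-2p) \geq 0$ for all $0 \leq p \leq \tfrac12$; thus $p_0 = \tfrac12$ works.

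There is essentially no obstacle here: both statements are elementary and self-contained. The only point requiring any care is to make the phrase \enquote{sufficiently small} precise, which the sign-of-$g'$ argument (or the explicit bound $p_0 = \tfrac12$) handles cleanly.
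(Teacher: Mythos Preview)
Your proof is correct. The paper actually states this lemma without proof, treating it as an elementary fact, so there is nothing to compare against; your derivative arguments and the explicit alternating-series bound giving $p_0=\tfrac12$ are all valid and entirely standard.
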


\begin{lemma}
\label{lemma:condentropy}
Suppose $\delta>\frac{d-1}{d+1}$. Then there exists $C>0$ such that $C$ does not depend on $c$ and
$H(\mathcal{H}|\pg)\geq (C+o_n(1))H(\mathcal{H})$.
\end{lemma}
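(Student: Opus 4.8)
The plan is to combine the chain rule $H(\mathcal H\mid\pg)=H(\mathcal H)-H(\pg)$ — valid since $\pg=\proj(\mathcal H)$ is a deterministic function of $\mathcal H$ — with a subadditivity upper bound on $H(\pg)$, and then to compare the leading-order asymptotics of $H(\pg)$ and $H(\mathcal H)$. It therefore suffices to exhibit a constant $C=C(d,\delta)>0$, not involving $c$, with $H(\pg)\le(1-C-o_n(1))H(\mathcal H)$.

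First I would bound $H(\pg)$. Writing $\pg$ as the family of edge indicators $Y_{ij}=\1\{\{i,j\}\in E(\pg)\}$ for $1\le i<j\le n$, subadditivity of entropy together with symmetry gives $H(\pg)\le\sum_{i<j}H(Y_{ij})=\binom n2 H_B(r)$, where $r:=\Pr[\{1,2\}\in E(\pg)]=1-(1-p)^{\binom{n-2}{d-2}}$ is the edge density of the projection (the edge $\{1,2\}$ is present iff at least one of the $\binom{n-2}{d-2}$ hyperedges through it is, and these events are independent). Since $\delta<1$ throughout this section, $\binom{n-2}{d-2}p=\Theta(n^{\delta-1})=o_n(1)$, so $\log(1-p)=-p+O(p^2)$ gives $r=(1+o_n(1))\binom{n-2}{d-2}p=\Theta(n^{\delta-1})$.

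Next I would plug in the expansion $H_B(x)=x\log\frac1x+(1-x)\log\frac1{1-x}$ for $x\in\{p,r\}$. As $p=\Theta(n^{-d+1+\delta})$ and $r=\Theta(n^{\delta-1})$ both tend to $0$, the lower-order contributions (the $-\log c$ inside $\log\frac1p$, and the terms $(1-x)\log\frac1{1-x}=\Theta(x)$) are negligible against $x\log n$, giving $H_B(p)=(1+o_n(1))p(d-1-\delta)\log n$ and $H_B(r)=(1+o_n(1))r(1-\delta)\log n$. Using $H(\mathcal H)=\binom nd H_B(p)$ and the identity $\binom n2\binom{n-2}{d-2}=\frac{d(d-1)}{2}\binom nd$,
\[
\frac{H(\pg)}{H(\mathcal H)}\ \le\ \frac{\binom n2 H_B(r)}{\binom nd H_B(p)}\ =\ (1+o_n(1))\,\frac{\binom n2\binom{n-2}{d-2}(1-\delta)}{\binom nd(d-1-\delta)}\ =\ (1+o_n(1))\,\frac{d(d-1)(1-\delta)}{2(d-1-\delta)}.
\]

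It then remains to record the elementary algebraic fact that for $d\ge3$ one has $\frac{d(d-1)(1-\delta)}{2(d-1-\delta)}<1$ exactly when $\delta>\frac{d-1}{d+1}$: clearing denominators this reduces to $(d-1)(d-2)<\delta(d-2)(d+1)$, and dividing by $d-2>0$ yields $\delta>\frac{d-1}{d+1}$. Hence under the hypothesis we may set $C:=1-\frac{d(d-1)(1-\delta)}{2(d-1-\delta)}>0$, which depends only on $d$ and $\delta$, and conclude $H(\mathcal H\mid\pg)=H(\mathcal H)-H(\pg)\ge(C-o_n(1))H(\mathcal H)=(C+o_n(1))H(\mathcal H)$. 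The only point requiring care is the bookkeeping in the third step — confirming that the $c$-dependent and additive corrections to $H_B(p)$ and $H_B(r)$ really are $o_n(1)$ on the $\Theta(n^{1+\delta}\log n)$ scale — so that the factors of $c$ cancel in the ratio and the resulting constant $C$ is genuinely $c$-free; there is no substantive obstacle beyond this.
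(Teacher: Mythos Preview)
Your proof is correct and follows essentially the same approach as the paper: both use the chain rule $H(\mathcal H\mid\pg)=H(\mathcal H)-H(\pg)$, bound $H(\pg)$ by subadditivity as $\binom n2 H_B(r)$ with $r=1-(1-p)^{\binom{n-2}{d-2}}$, expand the binary entropies to leading order in $\log n$, and reduce to the inequality $\frac{d(d-1)(1-\delta)}{2(d-1-\delta)}<1\Leftrightarrow\delta>\frac{d-1}{d+1}$. Your constant $C=1-\frac{d(d-1)(1-\delta)}{2(d-1-\delta)}$ is exactly the paper's $C=\frac{(d-1-\delta)/d!-(1-\delta)/(2(d-2)!)}{(d-1-\delta)/d!}$ after simplification.
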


\begin{proof}
Since $H(\mathcal{H}|\pg)=H(\mathcal{H})-H(\pg)$, it suffices to prove that $H(\mathcal{H})-H(\pg)=\Omega_n(H(\mathcal{H}))$ if $\delta>\frac{d-1}{d+1}$.

First observe that
\begin{equation}
\label{entropyexacth}
H(\mathcal{H}) = -\binom{n}{d}H_B(p)= c\frac{d-1-\delta}{d!}\log(n)n^{1+\delta} + o_n(\log(n)n^{1+\delta}).
\end{equation}

For all $i,j\in[n]$ such that $i\not=j$, let $X_{\{i,j\}}=\1\{\{i,j\}\in E(\pg)\}$. We have that
\begin{equation}
\label{unionbound}
H(\pg) = H(X_{\{i, j\}}: 1\leq i<j\leq n) \leq \sum_{1\leq i<j\leq n} H(X_{\{i, j\}})=\binom{n}{2}H_B(1-(1-p)^{\binom{n-2}{d-2}}).
\end{equation}
Bernoulli's inequality implies that
\[
1-(1-p)^{\binom{n-2}{d-2}} \leq \binom{n-2}{d-2}p
\]
and for sufficiently large $n$ we have that $\binom{n-2}{d-2}p\leq \frac{1}{e}$. Hence, for sufficiently large $n$ we have that
\[
-(1 - (1-p)^{\binom{n-2}{d-2}})\log(1 - (1-p)^{\binom{n-2}{d-2}}) \leq -\binom{n-2}{d-2}p\log(\binom{n-2}{d-2}p)
\]
since $-x\log(x)$ increases as $x$ increases over $[0, \frac{1}{e}]$. Then,
\begin{align*}
H_B(1-(1-p)^{\binom{n-2}{d-2}}) & \leq -\binom{n-2}{d-2}p\log(\binom{n-2}{d-2}p)- (1-p)^{\binom{n-2}{d-2}}\log((1-p)^{\binom{n-2}{d-2}}) \\
& = (1-\delta)\binom{n-2}{d-2}p\log(n) + o_n(\log(n)n^{\delta-1}).
\end{align*}
Using this inequality and \pref{unionbound} gives that
\[
H(\pg) \leq  c\frac{1-\delta}{2(d-2)!}\log(n)n^{1+\delta} + o_n(\log(n)n^{\delta+1}).
\]
Using this inequality and \pref{entropyexacth} then gives that it suffices to prove that
\[
\frac{1-\delta}{2(d-2)!} < \frac{d-1-\delta}{d!};
\]
afterwards we can let $C=\frac{\frac{d-1-\delta}{d!}-\frac{1-\delta}{2(d-2)!}}{\frac{d-1-\delta}{d!}}$, which does not depend on $c$. This inequality is true because $\delta>\frac{d-1}{d+1}$, which finishes the proof.
\end{proof}

Firstly observe that $H(\mathcal{H}|\pg)=H(\mathcal{H})-H(\pg)$ and
\[
H(\pg)\leq H(\mathcal{H}) = \Theta_n(n^{1+\delta}\log(n)).
\]
We analyze $H(\mathcal{H}|\pg)$ and $H(\pg)$ in the next result.

\begin{theorem}
\label{thm:entropycompare}
$H(\pg)=\Theta_n(n^{1+\delta}\log(n))$. Furthermore 
\[
H(\mathcal{H}|\pg) = 
\begin{cases}
o_n(n^{1+\delta}) & \text{if } \delta<\frac{d-1}{d+1}, \\
O_n(n^{1+\delta}) & \text{if } \delta=\frac{d-1}{d+1}, \\
\Theta_n(n^{1+\delta}\log(n)) & \text{if } \delta>\frac{d-1}{d+1}.
\end{cases}
\]
\end{theorem}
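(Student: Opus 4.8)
The plan is to use the identity $H(\rhg|\pg)=H(\rhg)-H(\pg)$ (valid since $\pg$ is a deterministic function of $\rhg$), together with $H(\rhg)=\binom{n}{d}H_B(p)=\Theta_n(n^{1+\delta}\log n)$ from \pref{entropyexacth} and the already-observed bound $H(\pg)\le H(\rhg)$. Everything then reduces to controlling $H(\rhg|\pg)$, and the single estimate doing most of the work is: writing $q=\Pr[\binom{[d]}{2}\subseteq E(\pg)]$ for the hyperedge density of $\mathcal H_c$, one has $H(\rhg|\pg)\le\binom{n}{d}\,q\,H_B(p/q)$. To prove this, note $\rhg$ is determined by the indicators $(\ind{h\in\rhg})_h$, so subadditivity of entropy and vertex symmetry give $H(\rhg|\pg)\le\sum_h H(\ind{h\in\rhg}|\pg)=\binom{n}{d}\,\E_\pg[H_B(\Pr[[d]\in\rhg|\pg])]$; the posterior $\Pr[[d]\in\rhg|\pg]$ vanishes off the event $\{\binom{[d]}{2}\subseteq E(\pg)\}$, which has probability $q$, and on that event Jensen's inequality (concavity of $H_B$) bounds the conditional average of $H_B(\Pr[[d]\in\rhg|\pg])$ by $H_B$ of the conditional average of the posterior, which equals $\Pr[[d]\in\rhg\mid\binom{[d]}{2}\subseteq E(\pg)]=p/q$.

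Next I would plug in the asymptotics of $q/p$. For $\delta<\frac{d-1}{d+1}$, \Cref{corr:asyqp} gives $q=(1+o_n(1))p$; writing $q=(1+\epsilon_n)p$ with $\epsilon_n\to0$ and expanding $qH_B(p/q)=(q-p)\log\frac{q}{q-p}+p\log\frac{q}{p}$, one gets $qH_B(p/q)=(1+o_n(1))\,p\,\epsilon_n\log(1/\epsilon_n)=o_n(p)$ since $x\log(1/x)\to0$ as $x\to0^+$. Hence $H(\rhg|\pg)=o_n(\binom{n}{d}p)=o_n(n^{1+\delta})$ and $H(\pg)=H(\rhg)-o_n(n^{1+\delta})=\Theta_n(n^{1+\delta}\log n)$. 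For $\delta=\frac{d-1}{d+1}$, \Cref{corr:asyqp} gives $q=p+\Theta_n(p)$, so $p/q$ stays bounded away from both $0$ and $1$, $H_B(p/q)=\Theta_n(1)$, $qH_B(p/q)=\Theta_n(p)$, giving $H(\rhg|\pg)=O_n(n^{1+\delta})$ and again $H(\pg)=\Theta_n(n^{1+\delta}\log n)$.

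The case $\delta>\frac{d-1}{d+1}$ is the one where the logarithmic factors matter. Here \Cref{thm:fakeedge} gives $q=\Theta_n(n^{\binom{d}{2}(\delta-1)})=\omega_n(p)$, so $\log(q/p)=(\beta+o_n(1))\log n$ with $\beta:=\binom{d}{2}(\delta-1)+d-1-\delta>0$; the expansion $qH_B(p/q)=(1+o_n(1))(p\log(q/p)+p)=(\beta+o_n(1))\,p\log n$ together with $H_B(p)=(d-1-\delta+o_n(1))\,p\log n$ yields $qH_B(p/q)=(\tfrac{\beta}{d-1-\delta}+o_n(1))H_B(p)$. Because $\beta<d-1-\delta$ (equivalently $\binom{d}{2}(\delta-1)<0$, which holds since $\delta<1$), this gives $H(\rhg|\pg)\le(1-c+o_n(1))H(\rhg)$ for a constant $c>0$, hence $H(\pg)=H(\rhg)-H(\rhg|\pg)\ge(c-o_n(1))H(\rhg)=\Omega_n(n^{1+\delta}\log n)$, and with $H(\pg)\le H(\rhg)$ this gives $H(\pg)=\Theta_n(n^{1+\delta}\log n)$. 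Finally, in this regime $H(\rhg|\pg)=\Theta_n(n^{1+\delta}\log n)$: the upper bound is $H(\rhg|\pg)\le H(\rhg)$, while the matching lower bound $H(\rhg|\pg)\ge(C+o_n(1))H(\rhg)$ for a constant $C>0$ is exactly \Cref{lemma:condentropy}.

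The calculation is elementary throughout; the one place to be careful is the bookkeeping of logarithmic factors — in particular, getting $qH_B(p/q)=o_n(p)$ (and not merely $o_n(p\log n)$) in the regime $q/p\to1$, which is exactly what yields the claimed $o_n(n^{1+\delta})$ bound on $H(\rhg|\pg)$ rather than the weaker $o_n(n^{1+\delta}\log n)$ that a Fano-type argument (using the vanishing of the partial-recovery loss) would give. The only external inputs are the clique-density asymptotics \Cref{corr:asyqp} and \Cref{thm:fakeedge} and, for the single lower bound on $H(\rhg|\pg)$ when $\delta>\frac{d-1}{d+1}$, \Cref{lemma:condentropy}.
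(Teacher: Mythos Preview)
Your proof is correct and essentially identical to the paper's. The paper phrases the key inequality as a lower bound on $H(\pg)$ via $I(\rhg;\pg)\ge\binom{n}{d}I(\ind{[d]\in\rhg};\pg)$, while you phrase it as an upper bound on $H(\rhg|\pg)$ via subadditivity of conditional entropy; since the hyperedge indicators are independent these are the same inequality, and both arguments then apply Jensen on the event $\{\binom{[d]}{2}\subseteq E(\pg)\}$ to obtain $H(\rhg|\pg)\le\binom{n}{d}\,qH_B(p/q)$, after which the case analysis using \Cref{corr:asyqp}, \Cref{thm:fakeedge}, and \Cref{lemma:condentropy} is the same (your treatment of the $\delta>\tfrac{d-1}{d+1}$ case, comparing $qH_B(p/q)$ directly to $H_B(p)$ via the exponent $\beta$, is arguably a bit cleaner than the paper's expansion of $H_B(p)-qH_B(p/q)$).
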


\begin{proof}
We have that 
\begin{align*}
H(\pg)=I(\mathcal{H}; \pg) &\geq \binom{n}{d}I(\1\{[d]\in E(\mathcal{H})\}; \pg) \\
& = \binom{n}{d}\left(H_B(p) - \sum_{G\in\mathcal{G}} \Pr[\pg=G]H(\1\{[d]\in E(\mathcal{H})\}|\pg=G)\right) \\
& = \binom{n}{d}\left(H_B(p) - \sum_{G\in\mathcal{G}, \binom{[d]}{2}\subset E(G)} \Pr[\pg=G]H(\1\{[d]\in E(\mathcal{H})\}|\pg=G)\right).
\end{align*}
Observe that $q = \sum_{G\in\mathcal{G}, \binom{[d]}{2}\subset E(G)} \Pr[\pg=G]$. Then Jensen's inequality implies that
\[
\sum_{G\in\mathcal{G}, \binom{[d]}{2}\subset E(G)} \Pr[\pg=G]H(\1\{[d]\in E(\mathcal{H})\}|\pg=G) \leq qH_B\left(\frac{p}{q}\right).
\]
Therefore
\begin{equation}
\label{eq:entropycompare}
\begin{split}
H(\pg) & \geq \binom{n}{d}\left(H_B(p)-qH_B\left(\frac{p}{q}\right)\right) \\ 
& = \binom{n}{d}\left(-(1-p)\ln(1-p)-p\ln(q)+(q-p)\ln(1-\frac{p}{q})\right).
\end{split}
\end{equation}

Assume that $\delta<\frac{d-1}{d+1}$. Then \Cref{corr:asyqp} gives that $q=(1+o_n(1))p$ so $qH_B(\frac{p}{q})=o_n(p)$ and \pref{eq:entropycompare} gives that $H(\pg)=H(\mathcal{H})-o_n(n^{1+\delta})$

Next assume that $\delta=\frac{d-1}{d+1}$. We have that $q=(1+\Theta_n(1))p$ from \Cref{corr:asyqp} so $qH_B(\frac{p}{q})=O_n(p)$ and \pref{eq:entropycompare} gives that $H(\pg)=H(\mathcal{H})-O_n(n^{1+\delta})$.

Suppose $\delta>\frac{d-1}{d+1}$. Then $q=\omega_n(p)$ from \Cref{corr:asyqp} so if $n$ is sufficiently large then \Cref{approx} gives that
\[
(q-p)\ln(1-\frac{p}{q})\geq -2(q-p)\frac{p}{q} = \Omega_n(p)
\]
and using \pref{eq:entropycompare} gives that $H(\pg)=\Omega_n(n^{1+\delta}\log(n))$. Furthermore \Cref{lemma:condentropy} gives that $\\ H(\mathcal{H}|\pg)=\Theta_n(H(\mathcal{H}))$.
\end{proof}

\begin{corollary}
\label{corr:entropydiff}
The conditional entropy $H(\mathcal{H}|\pg)$ is $o_n(H(\mathcal{H}))$ if $\delta\leq\frac{d-1}{d+1}$.
\end{corollary}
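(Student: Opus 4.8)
The plan is to obtain this as an immediate consequence of \Cref{thm:entropycompare} together with the known order of $H(\mathcal{H})$. First I would recall the estimate \pref{entropyexacth}, namely $H(\mathcal{H}) = \binom{n}{d} H_B(p) = \Theta_n(n^{1+\delta}\log n)$; this uses only that $p = (c+o_n(1)) n^{-d+1+\delta}$ so that $H_B(p) = \Theta_n\!\big(p\log(1/p)\big) = \Theta_n(n^{-d+1+\delta}\log n)$, and $\binom{n}{d} = \Theta_n(n^d)$.

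Next, I would invoke \Cref{thm:entropycompare}: in the regime $\delta < \frac{d-1}{d+1}$ it gives $H(\mathcal{H}|\pg) = o_n(n^{1+\delta})$, and in the boundary case $\delta = \frac{d-1}{d+1}$ it gives $H(\mathcal{H}|\pg) = O_n(n^{1+\delta})$. Hence in both subcases of $\delta \leq \frac{d-1}{d+1}$ we have $H(\mathcal{H}|\pg) = O_n(n^{1+\delta})$. Combining the two estimates,
\[
\frac{H(\mathcal{H}|\pg)}{H(\mathcal{H})} = \frac{O_n(n^{1+\delta})}{\Theta_n(n^{1+\delta}\log n)} = O_n\!\left(\frac{1}{\log n}\right) = o_n(1),
\]
which is exactly the assertion $H(\mathcal{H}|\pg) = o_n(H(\mathcal{H}))$.

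There is essentially no obstacle in this corollary itself: all the real work has already been done in \Cref{thm:entropycompare}, whose proof controls $H(\pg)$ via the mutual-information lower bound $H(\pg) = I(\mathcal{H};\pg) \ge \binom{n}{d} I(\1\{[d]\in E(\mathcal{H})\};\pg)$, Jensen's inequality to bound the conditional entropy by $q H_B(p/q)$, and \Cref{corr:asyqp} to compare $q$ with $p$. The only mild point to be careful about is that at $\delta = \frac{d-1}{d+1}$ one has merely $H(\mathcal{H}|\pg) = O_n(n^{1+\delta})$ rather than $o_n(n^{1+\delta})$, but since $H(\mathcal{H})$ carries an extra $\log n$ factor this still yields a $o_n(1)$ ratio, so the statement covers the full range $\delta \le \frac{d-1}{d+1}$ uniformly. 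Thus the corollary is a clean normalized restatement of the low-$\delta$ part of \Cref{thm:entropycompare}, in the form needed for the Fano-type arguments earlier in the paper.
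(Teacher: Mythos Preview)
Your proposal is correct and is essentially the same as the paper's own proof, which is the single line ``This follows from \Cref{thm:entropycompare}''; you have simply spelled out the comparison $H(\mathcal{H}|\pg)=O_n(n^{1+\delta})$ versus $H(\mathcal{H})=\Theta_n(n^{1+\delta}\log n)$ that is implicit there. The only remark is that your discussion of how \Cref{thm:entropycompare} is proved, while accurate, is not needed for the corollary itself.
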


\begin{proof}
This follows from \Cref{thm:entropycompare}.
\end{proof}

\begin{lemma}
\label{lemma:entropyorder}
If $n$ is sufficiently large then 
\[
H(\pg)\geq \left(1-\frac{\binom{n}{d-2}p}{1-\binom{n}{d-2}p}\right)\sum_{i=2}^{n-d+2} (i-1)H_B(1-(1-p)^{\binom{n-i}{d-2}})=\Omega(n^{1+\delta}\log(n)).
\]
\end{lemma}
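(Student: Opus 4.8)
The plan is to bound $H(\pg)$ from below by the joint entropy of a cleverly chosen sub-collection of its edge indicators, expand that by the chain rule in a convenient order, and show that almost every term of the expansion carries close to a full $H_B\!\big(1-(1-p)^{\binom{n-i}{d-2}}\big)$ bits. Concretely, for $1\le a<i\le n-d+2$ put $X_{\{a,i\}}=\1\{\{a,i\}\in E(\pg)\}$; since $H(\pg)$ dominates the joint entropy of any subset of edge indicators, expanding with the lexicographic order on pairs $(i,a)$ — process all edges into vertex $2$, then all edges into vertex $3$, and so on, and for a fixed larger endpoint $i$ process $a=1,\dots,i-1$ — yields
\[
H(\pg)\ \ge\ \sum_{i=2}^{n-d+2}\ \sum_{a=1}^{i-1} H\big(X_{\{a,i\}}\mid \mathrm{prev}_{a,i}\big),
\]
where $\mathrm{prev}_{a,i}$ denotes all edge indicators processed before $\{a,i\}$.

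Next I fix $(a,i)$ and lower bound $H\big(X_{\{a,i\}}\mid \mathrm{prev}_{a,i}\big)$. Call a $d$-uniform hyperedge \emph{good for $(a,i)$} if it contains both $a$ and $i$ and its remaining $d-2$ vertices all lie in $\{i+1,\dots,n\}$; there are exactly $\binom{n-i}{d-2}$ such potential hyperedges, and let $W_{a,i}$ be the indicator that at least one is present, so $W_{a,i}\sim\mathrm{Ber}\big(1-(1-p)^{\binom{n-i}{d-2}}\big)$. The combinatorial crux is that a good-for-$(a,i)$ hyperedge has only $a$ and $i$ among the vertices $\le i$, so the unique edge it induces inside $\{1,\dots,i\}$ is $\{a,i\}$ itself; in particular it covers no pair processed before $\{a,i\}$, whence $W_{a,i}$ is independent of $\mathrm{prev}_{a,i}$. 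Let $\mathcal E_{a,i}$ be the event that some hyperedge containing $\{a,i\}$ that is \emph{not} good for $(a,i)$ is present; a union bound gives $\Pr[\mathcal E_{a,i}]\le\big(\binom{n-2}{d-2}-\binom{n-i}{d-2}\big)p\le\binom{n}{d-2}p$, and $\mathcal E_{a,i}$ depends only on hyperedges disjoint from the good-for-$(a,i)$ ones, so $W_{a,i}$ is independent of the pair $\big(\mathrm{prev}_{a,i},\1_{\mathcal E_{a,i}}\big)$. On $\mathcal E_{a,i}^c$ we have $X_{\{a,i\}}=W_{a,i}$, so conditioning additionally on $\1_{\mathcal E_{a,i}}$ (which only lowers entropy) gives
\[
H\big(X_{\{a,i\}}\mid\mathrm{prev}_{a,i}\big)\ \ge\ H\big(X_{\{a,i\}}\mid\mathrm{prev}_{a,i},\1_{\mathcal E_{a,i}}\big)\ \ge\ \Pr[\mathcal E_{a,i}^c]\,H_B\big(1-(1-p)^{\binom{n-i}{d-2}}\big),
\]
where the last step uses that conditioned on $\mathcal E_{a,i}^c$ and on any value of $\mathrm{prev}_{a,i}$, the variable $X_{\{a,i\}}=W_{a,i}$ is still distributed as $\mathrm{Ber}\big(1-(1-p)^{\binom{n-i}{d-2}}\big)$. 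Since $\Pr[\mathcal E_{a,i}^c]\ge 1-\binom{n}{d-2}p\ge 1-\frac{\binom{n}{d-2}p}{1-\binom{n}{d-2}p}$, summing over $(a,i)$ gives the first inequality in the statement.

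For the asymptotic lower bound I would keep only the terms with $2\le i\le\lfloor n/2\rfloor$: there $\binom{n-i}{d-2}=\Theta(n^{d-2})$, so $1-(1-p)^{\binom{n-i}{d-2}}=\Theta(n^{d-2}p)=\Theta(n^{\delta-1})=o_n(1)$, and hence $H_B\big(1-(1-p)^{\binom{n-i}{d-2}}\big)=\Theta(n^{\delta-1}\log n)$ because $\log(1/p)=\Theta(\log n)$. Combined with $\sum_{i=2}^{\lfloor n/2\rfloor}(i-1)=\Theta(n^2)$ and the $1-o_n(1)$ prefactor, the right-hand side is $\Theta(n^2)\cdot\Theta(n^{\delta-1}\log n)=\Omega(n^{1+\delta}\log n)$.

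The step I expect to be the main obstacle is the independence bookkeeping in the middle paragraph: one must verify precisely that the good-for-$(a,i)$ hyperedges form a set of $d$-subsets of $[n]$ disjoint both from everything that determines $\mathrm{prev}_{a,i}$ and from everything that determines $\mathcal E_{a,i}$, so that all of the conditioning leaves the conditional law of $W_{a,i}$ unchanged. Everything else — the union bound controlling $\Pr[\mathcal E_{a,i}]$, the elementary inequality $1-\binom{n}{d-2}p\ge 1-\frac{\binom{n}{d-2}p}{1-\binom{n}{d-2}p}$, and the final $\Theta$-estimate using $H_B(r)\ge r\log(1/r)$ together with the explicit orders of $p$ and $\binom{n-i}{d-2}$ — is routine.
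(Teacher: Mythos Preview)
Your proof is correct and follows essentially the same approach as the paper: the same chain-rule expansion in the same lexicographic order on pairs $(i,a)$, and the same key observation that the ``good'' hyperedges through $\{a,i\}$ with all extra vertices in $\{i+1,\dots,n\}$ are independent of everything processed so far. The one technical difference is in how you extract the $H_B$ term: the paper lower-bounds the conditional probability $\Pr[X_{\{a,i\}}=1\mid\mathrm{prev}]$ by $1-(1-p)^{\binom{n-i}{d-2}}$ and then uses Markov's inequality on this conditional probability to show it is also at most $(1-p)^{\binom{n-i}{d-2}}$ with probability $\ge 1-\frac{\binom{n}{d-2}p}{1-\binom{n}{d-2}p}$, whence $H_B$-monotonicity applies; you instead condition on the indicator $\1_{\mathcal E_{a,i}}$ and use that on $\mathcal E_{a,i}^c$ the law of $X_{\{a,i\}}$ is \emph{exactly} $\mathrm{Ber}\big(1-(1-p)^{\binom{n-i}{d-2}}\big)$. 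Your route is slightly cleaner and in fact yields the better prefactor $1-\binom{n}{d-2}p$, which you then weaken to match the stated bound.
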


\begin{proof}
Assume that $n$ is sufficiently large throughout the proof. Suppose the set of vertices in $\mathcal{H}$ is $[n]$. We construct a sequence of edges $S$. Initialize $S$ as the empty sequence. From $i=2$ to $i=n$, add the edges $\{i, j\}$ to $S$ from $j=1$ to $j=i-1$. Note that $S$ contains each edge between two vertices of $[n]$ exactly once. It follows that
\[
H(\pg) = \sum_{a=1}^{\binom{n}{2}} H(X_{S_a}|X_{S_b}, 1\leq b\leq a-1).
\]

Suppose $1\leq a\leq \binom{n}{2}$. Assume that $S_a = \{i, j\}$ where $1\leq j<i\leq n$. Observe that the random variables $\1\{h\in\mathcal{H}\}$ for $h\in \binom{([n]\backslash [i-1])\cup\{j\}}{d}$ are independent of the random variables $X_{S_b}$ for $1\leq b\leq a-1$. This is because no hyperedge in $\binom{([n]\backslash [i-1])\cup\{j\}}{d}$ contains $S_b$ as an edge for $1\leq b\leq a-1$. The probability that a hyperedge in $\binom{([n]\backslash [i-1])\cup\{j\}}{d}$  that contains $S_a$ is present in $\mathcal{H}$ is
\[
1 - (1-p)^{\binom{n-i}{d-2}}.
\]
Suppose $x_b\in \{0,1\}$ for $1\leq b\leq a-1$. Hence,
\[
\Pr[X_{S_a}=1|X_{S_b}=x_b, 1\leq b\leq a-1] \geq 1 - (1-p)^{\binom{n-i}{d-2}}.
\]
Note that $1 - (1-p)^{\binom{n-i}{d-2}}\leq \binom{n-i}{d-2}p\leq \frac{1}{2}$ if $n$ is sufficiently large. Thus, if 
\[
\Pr[X_{S_a}=1|X_{S_b}=x_b, 1\leq b\leq a-1] \leq (1-p)^{\binom{n-i}{d-2}}
\]
as well, then
\begin{equation}
\label{eq:binaryentropy}
\begin{split}
H(X_{S_a}|X_{S_b}=x_b, 1\leq b\leq a-1) & = H_B(\Pr[X_{S_a}=1|X_{S_b}=x_b, 1\leq b\leq a-1]) \\ &\geq H_B(1-(1-p)^{\binom{n-i}{d-2}}).
\end{split}
\end{equation}
We have that
\[
\E_{X_{S_b}, 1\leq b\leq a-1}[\Pr[X_{S_a}=1|X_{S_b}=x_b, 1\leq b\leq a-1]] = \Pr[X_{S_a}=1] \leq \binom{n}{d-2}p.
\]
Therefore, Markov's inequality implies that
\[
\Pr_{X_{S_b}, 1\leq b\leq a-1}[\Pr[X_{S_a}=1|X_{S_b}=x_b, 1\leq b\leq a-1]>(1-p)^{\binom{n-i}{d-2}}] \leq \frac{\binom{n}{d-2}p}{(1-p)^{\binom{n-i}{d-2}}}. 
\]
Observe that $\frac{\binom{n}{d-2}p}{(1-p)^{\binom{n-i}{d-2}}} < \frac{\binom{n}{d-2}p}{1-\binom{n}{d-2}p}$. Using \pref{eq:binaryentropy} then gives that
\begin{align*}
& H(X_{S_a}|X_{S_b}, 1\leq b\leq a-1) = \E_{X_{S_b}, 1\leq b\leq a-1}[H(X_{S_a}|X_{S_b}=x_b, 1\leq b\leq a-1)] \\
& \geq (1-\Pr_{X_{S_b}, 1\leq b\leq a-1}[\Pr[X_{S_a}=1|X_{S_b}=x_b, 1\leq b\leq a-1] \leq (1-p)^{\binom{n-i}{d-2}}]) \\ 
& \cdot H_B(1-(1-p)^{\binom{n-i}{d-2}}) \\
& \geq (1-\frac{\binom{n}{d-2}p}{1-\binom{n}{d-2}p})H_B(1-(1-p)^{\binom{n-i}{d-2}}).
\end{align*}
Summing this inequality from $a=1$ to $a=\binom{n}{2}$ gives 
\[
H(\pg)\geq (1-\frac{\binom{n}{d-2}p}{1-\binom{n}{d-2}p})\sum_{i=2}^{n-d+2} (i-1)H_B(1-(1-p)^{\binom{n-i}{d-2}}).
\]
Observe that we use the fact that if $i>n-d+2$ then $H_B(1-(1-p)^{\binom{n-i}{d-2}})=0$. It is straightforward to check that this lower bound on $H(\pg)$ is $\Omega_n(n^{1+\delta}\log(n))$.
\end{proof}

\subsection{Exact recovery in a general setting}

The following result is relevant to the impossibility of exact recovery in a more general setting. Although we do not include all of the details, it is applicable in the problem we consider in this paper.

\begin{theorem}
\label{thm:prob_entropy_general}
Suppose $\mathcal{X}$ is a finite set. Suppose $\mathcal{S}\subset\mathbb{Z}_{\geq 1}$ contains infinitely many elements. Suppose $n\in\mathcal{S}$. Suppose $f_n:\mathcal{X}^n\rightarrow \mathcal{Y}_n$ and $\mathcal{A}_n: \mathcal{Y}_n\rightarrow\mathcal{X}^n$ are functions such that for all $y\in\mathcal{Y}_n$, $f_n(\mathcal{A}_n(y))=y$. Suppose $X^n\in\mathcal{X}^n$ is a random variable such that $X_i$, $i\geq 1$ are independent and identically distributed with distribution $p_n$. Assume that $x^-\in\mathcal{X}$ and $\lim_{n\rightarrow\infty} p_n(x)=0$ and $\lim_{n\rightarrow\infty}np_n(x)=\infty$ for all $x\in\mathcal{X}^-:=\mathcal{X}\backslash\{x^-\}$, where the limits are over $\mathcal{S}$. Also assume that $\sup_{y\in\mathcal{Y}_n} \Pr[f_n(X^n)=y] \leq \frac{1}{e}$ if $n$ is sufficiently large. The probability that $\mathcal{A}_n(f_n(X^n))$ is not $X^n$ is at least $1-\frac{H(f_n(X^n))}{H(X^n)}+o_n(1)$, where the asymptotic term $o_n(1)$ does not depend on $f_n$.
\end{theorem}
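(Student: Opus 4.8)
The plan is to recast the conclusion as an entropy inequality about the image of the section $\mathcal{A}_n$, and then to establish a triangular‑array asymptotic equipartition property (AEP) for $X^n$. Throughout, $n\to\infty$ means $n\to\infty$ along $\mathcal{S}$.

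First I would set $\mathcal{C}_n:=\mathcal{A}_n(\mathcal{Y}_n)\subseteq\mathcal{X}^n$ and $Y:=f_n(X^n)$. Since $f_n\circ\mathcal{A}_n=\mathrm{id}$, the map $f_n$ is injective on $\mathcal{C}_n$, and $\mathcal{A}_n(f_n(x^n))=x^n$ holds exactly when $x^n\in\mathcal{C}_n$; hence the error probability equals $1-\Pr[X^n\in\mathcal{C}_n]$, and it suffices to prove $\Pr[X^n\in\mathcal{C}_n]\le H(Y)/H(X^n)+o_n(1)$ with the error term uniform over $f_n$. By the chain rule, $H(Y)\ge H(Y\mid \1\{X^n\in\mathcal{C}_n\})-\log 2\ge \Pr[X^n\in\mathcal{C}_n]\,H(Y\mid X^n\in\mathcal{C}_n)-\log 2$, and conditioned on $X^n\in\mathcal{C}_n$ the variable $Y$ is a bijective function of $X^n$ (distinct points of $\mathcal{C}_n$ have distinct $f_n$‑images), so $H(Y\mid X^n\in\mathcal{C}_n)=H(X^n\mid X^n\in\mathcal{C}_n)$. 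Thus $\Pr[X^n\in\mathcal{C}_n]\le (H(Y)+\log 2)/H(X^n\mid X^n\in\mathcal{C}_n)$, and the whole problem reduces to showing that whenever $\Pr[X^n\in\mathcal{C}_n]$ is not too small, $H(X^n\mid X^n\in\mathcal{C}_n)\ge(1-o_n(1))H(X^n)$.

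For this I would use an AEP. With $q_n:=\Pr[X_1\ne x^-]=\sum_{x\in\mathcal{X}^-}p_n(x)$ the hypotheses give $q_n\to0$ and $nq_n\to\infty$, and $H(X^n)=nH(p_n)\ge nq_n\log(1/q_n)\to\infty$. Since $u\mapsto u(\log u)^2$ is increasing near $0$, one gets $\Var(-\log p_n(X_1))\le\sum_x p_n(x)(\log p_n(x))^2\le |\mathcal{X}|\,q_n(\log q_n)^2$ for large $n$, so Chebyshev yields, for each fixed $\epsilon\in(0,1)$, $\Pr[X^n\in\mathcal{L}_{n,\epsilon}]\le \bar g_n(\epsilon):=|\mathcal{X}|/(\epsilon^2 nq_n)\to0$, where $\mathcal{L}_{n,\epsilon}:=\{x^n:p_n^{\otimes n}(x^n)>e^{-(1-\epsilon)H(X^n)}\}$. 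Now split on $\Pr[X^n\in\mathcal{C}_n]$: if it is $<\sqrt{\bar g_n(\epsilon)}$ the target bound is trivial; if it is $\ge\sqrt{\bar g_n(\epsilon)}$, then in the identity $H(X^n\mid X^n\in\mathcal{C}_n)=\Pr[\mathcal{C}_n]^{-1}\sum_{x\in\mathcal{C}_n}p(x)\log(1/p(x))+\log\Pr[\mathcal{C}_n]$ I would keep only $x\in\mathcal{C}_n\setminus\mathcal{L}_{n,\epsilon}$ (on which $\log(1/p(x))\ge(1-\epsilon)H(X^n)$) and use $\Pr[\mathcal{C}_n\setminus\mathcal{L}_{n,\epsilon}]/\Pr[\mathcal{C}_n]\ge 1-\sqrt{\bar g_n(\epsilon)}$ together with $|\log\Pr[\mathcal{C}_n]|\le\tfrac12|\log\bar g_n(\epsilon)|=O(\log(nq_n))=o(H(X^n))$, obtaining $H(X^n\mid X^n\in\mathcal{C}_n)\ge(1-\epsilon)(1-o_n(1))H(X^n)$. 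Feeding this back and using $H(Y)\le H(X^n)$ gives $\Pr[X^n\in\mathcal{C}_n]\le H(Y)/H(X^n)+\epsilon/(1-\epsilon)+o_n(1)$ with all error terms uniform in $f_n$; taking $n\to\infty$ first and then $\epsilon\downarrow0$ closes the argument.

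I expect the last step to be the main obstacle: $\mathcal{C}_n$ is an arbitrary (adversarial) subset of $\mathcal{X}^n$ whose probability can be arbitrarily small, so one cannot simply say that $\mathcal{C}_n$ lies in an atypical set or control $|\mathcal{C}_n|$ — the two‑regime split and the bookkeeping that the correction terms $\sqrt{\bar g_n(\epsilon)}$ and $|\log\bar g_n(\epsilon)|/H(X^n)$ are $o_n(1)$ uniformly (which is exactly where $nq_n\to\infty$ and $q_n\to0$ enter, via $H(X^n)\ge nq_n\log(1/q_n)$) is the technical heart. I note that the hypothesis $\sup_y\Pr[f_n(X^n)=y]\le\tfrac1e$ does not seem to be essential for the route above, though it could be used to replace the $\log 2$ slack in the entropy decomposition by a quantitative bound.
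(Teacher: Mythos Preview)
Your approach is correct and takes a genuinely different route from the paper's. The paper decomposes $H(X^n)$ directly as $-\sum_{y}\Pr[X^n=\mathcal{A}_n(y)]\log\Pr[X^n=\mathcal{A}_n(y)]-\sum_{x^n\in\mathcal{U}_n}\Pr[X^n=x^n]\log\Pr[X^n=x^n]$; it controls the second sum via concentration of the \emph{type counts} $N_x(X^n)$ around $np_n(x)$ (Chebyshev plus a Cauchy--Schwarz step on the atypical set), and then uses the hypothesis $\sup_y\Pr[f_n(X^n)=y]\le 1/e$ together with the monotonicity of $-t\log t$ on $[0,1/e]$ to replace $\Pr[X^n=\mathcal{A}_n(y)]$ by $\Pr[f_n(X^n)=y]$ in the first sum, thereby bounding it by $H(f_n(X^n))$. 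By contrast, you pass through the conditional entropy identity $H(Y)\ge \Pr[\mathcal{C}_n]\,H(X^n\mid X^n\in\mathcal{C}_n)$ (using the bijectivity of $f_n$ on $\mathcal{C}_n$), and then lower bound $H(X^n\mid X^n\in\mathcal{C}_n)$ by a Shannon--McMillan--Breiman style AEP applied directly to $-\log p_n^{\otimes n}(X^n)$, with a two-regime split on $\Pr[\mathcal{C}_n]$ to absorb the $\log\Pr[\mathcal{C}_n]$ term. Your final diagonalization over $\epsilon$ can be made explicit by choosing, say, $\epsilon_n=(nq_n)^{-1/4}$, so that both $\epsilon_n\to0$ and $\bar g_n(\epsilon_n)\to0$; this yields a single $f_n$-independent $o_n(1)$ sequence as required. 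Your observation that the hypothesis $\sup_y\Pr[f_n(X^n)=y]\le 1/e$ is not needed is accurate for your argument: the paper uses it essentially in the step bounding $-\sum_y \Pr[X^n=\mathcal{A}_n(y)]\log\Pr[X^n=\mathcal{A}_n(y)]$ by $H(f_n(X^n))$, whereas you sidestep that comparison entirely via the bijection on $\mathcal{C}_n$. What the paper's route buys is that it avoids the two-regime split and the $\epsilon\downarrow 0$ diagonalization, at the cost of one extra hypothesis and a slightly heavier Cauchy--Schwarz step on the atypical set.
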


\begin{proof}
Suppose $n\in\mathcal{S}$. Let $\mathcal{U}_n$ be the set of $x^n\in\mathcal{X}^n$ such that there does not exist $y\in\mathcal{Y}_n$ such that $\mathcal{A}_n(y)=x^n$. Observe that $\Pr[\mathcal{A}_n(X^n)\not=X^n] =\Pr[X^n\in\mathcal{U}_n]$.

First observe that
\begin{equation}
\label{eq:decomp}
\begin{split}
H(X^n) = & -\sum_{y\in\mathcal{Y}_n} \Pr[X^n=\mathcal{A}_n(y)]\log(\Pr[X^n=\mathcal{A}_n(y)]) \\ &- \sum_{x^n\in\mathcal{U}_n}\Pr[X^n=x^n]\log(\Pr[X^n=x^n]).
\end{split}
\end{equation}
Suppose $x^n\in\mathcal{U}_n$. Then,
\[
\log(\Pr[X^n=x^n]) = (n-\sum_{x\in\mathcal{X}^-} N_x(x^n))\log(p_n(x^-))+\sum_{x\in\mathcal{X}^-}N_x(x^n)\log(p_n(x)).
\]
Using \Cref{approx} and the fact that $\lim_{n\rightarrow\infty} p_n(x)=0$ for $x\in\mathcal{X}^-$ gives that
\[
0\geq (n-\sum_{x\in\mathcal{X}^-} N_x(x^n))\log(p_n(x^-)) \geq -2n\sum_{x\in\mathcal{X}^-} p_n(x)
\]
if $n$ is sufficiently large. Observe that
\[
H(X^n)\geq -n\sum_{x\in\mathcal{X}^-} p_n(x)\log(p_n(x))
\]
and $-\log(p_n(x))=\Omega_n(1)$ since $p_n(x)=o_n(1)$ for all $x\in\mathcal{X}^-$. Hence 
\begin{equation}
\label{eq:remainder}
(n-\sum_{x\in\mathcal{X}^-} N_x(x^n))\log(p_n(x^-)) = o_n(H(X^n))
\end{equation}
so
\[
\log(\Pr[X^n=x^n]) = \sum_{x\in\mathcal{X}^-} N_x(x^n)\log(p_n(x)) + o_n(H(X^n)).
\]
This implies that
\begin{equation}
\label{eq:entropyapprox}
\begin{split}
&- \sum_{x^n\in \mathcal{U}_n}\Pr[X^n=x^n]\log(\Pr[X^n=x^n]) \\
& = o_n(H(X^n)) - \sum_{x^n\in\mathcal{U}_n} \Pr[X^n=x^n]\sum_{x\in\mathcal{X}^-}N_x(x^n)\log(p_n(x)).
\end{split}
\end{equation}

Let $\mathcal{W}_n$ be the set of $x^n\in\mathcal{X}^n$ such that  
\[
N_x(x^n) \in [np_n(x)-(np_n(x))^{\frac{3}{4}}, np_n(x)+(np_n(x))^{\frac{3}{4}}]
\]
for all $x\in\mathcal{X}^-$. For all $x\in\mathcal{X}^-$ the random variable variable $|N_x(X^n)|$ has mean $np_n(x)$ and variance $np_n(x)(1-p_n(x))$. Because $p_n(x)=o_n(1)$ for all $x\in\mathcal{X}^-$, Chebyshev's inequality implies that $X^n \in \mathcal{W}_n$ with probability $1-o_n(1)$. Therefore, $\Pr[X^n\in \mathcal{U}_n\backslash\mathcal{W}_n]=o_n(1)$. Observe that
\begin{align*}
& \sum_{x^n\in\mathcal{U}_n} \Pr[X^n=x^n]\sum_{x\in\mathcal{X}^-}N_x(x^n)\log(p_n(x)) \\ 
& = \sum_{x^n\in\mathcal{U}_n\cap \mathcal{W}_n} \Pr[X^n=x^n]\sum_{x\in\mathcal{X}^-}N_x(x^n)\log(p_n(x)) \\
& + \sum_{x^n\in\mathcal{U}_n\backslash\mathcal{W}_n} \Pr[X^n=x^n]\sum_{x\in\mathcal{X}^-}N_x(x^n)\log(p_n(x)) \\
& = \Pr[X^n\in \mathcal{U}_n]\left(n\sum_{x\in\mathcal{X}^-} p_n(x)\log(p_n(x))\right) \\ & +\sum_{x^n\in\mathcal{U}_n\backslash\mathcal{W}_n} \Pr[X^n=x^n]\sum_{x\in\mathcal{X}^-}N_x(x^n)\log(p_n(x)) + o_n(H(X^n)).
\end{align*}
Suppose $x\in\mathcal{X}^-$. We have that
\begin{align*}
\sum_{x^n\in\mathcal{U}_n\backslash\mathcal{W}_n} \Pr[X^n=x^n]N_x(x^n)  & \leq (\sum_{x^n\in\mathcal{U}_n\backslash\mathcal{W}_n} \text{Pr}[X^n=x^n])^{\frac{1}{2}}(\sum_{x^n\in\mathcal{U}_n\backslash\mathcal{W}_n} \text{Pr}[X^n=x^n]N_x(x^n)^2)^{\frac{1}{2}} \\
& \leq (\sum_{x^n\in\mathcal{U}_n\backslash\mathcal{W}_n} \text{Pr}[X^n=x^n])^{\frac{1}{2}}(n^2p_n(x)^2 + np_n(x)(1-p_n(x)))^{\frac{1}{2}} \\
& = o_n(np_n(x)).
\end{align*}
Hence
\[
\sum_{x^n\in\mathcal{U}_n\backslash\mathcal{W}_n} \Pr[X^n=x^n]\sum_{x\in\mathcal{X}^-}N_x(x^n)\log(p_n(x)) = o_n(H(X^n)).
\]
Therefore using \pref{eq:remainder} with $x^n=(x^-)^n$ gives that
\begin{align*}
& \sum_{x^n\in\mathcal{U}_n} \Pr[X^n=x^n]\sum_{x\in\mathcal{X}^-}N_x(x^n)\log(p_n(x)) \\ & = \Pr[X^n\in \mathcal{U}_n]\left(n\sum_{x\in\mathcal{X}^-} p_n(x)\log(p_n(x))\right) + o_n(H(X^n)) \\
& = \Pr[X^n\in\mathcal{U}_n]H(X^n) + o_n(H(X^n)).
\end{align*}
Then, using \pref{eq:decomp} gives that
\begin{equation}
\label{eq:entropyx}
H(X^n) =  -\sum_{y\in\mathcal{Y}_n} \Pr[X^n=\mathcal{A}_n(y)]\log(\Pr[X^n=\mathcal{A}_n(y)]) + \Pr[X^n\in\mathcal{U}_n]H(X^n) + o_n(H(X^n)).
\end{equation}

For all $y\in\mathcal{Y}_n$ we have that $\Pr[f_n(X^n)=y]\leq \frac{1}{e}$ if $n$ is sufficiently large and
\[
\Pr[f_n(X^n)=y] \geq \Pr[X^n=\mathcal{A}_n(y)].
\]
Afterwards using the fact that the function $-x\log(x)$ increases as $x$ increases over $[0, \frac{1}{e}]$ gives that
\begin{equation}
\label{eq:entropyf}
H(f_n(X^n))\geq -\sum_{y\in\mathcal{Y}_n} \Pr[X^n=\mathcal{A}(y)]\log(\Pr[X^n=\mathcal{A}(y)])
\end{equation}
if $n$ is sufficiently large. Afterwards using \pref{eq:entropyx} and \pref{eq:entropyf} gives that
\[
H(X^n)-H(f_n(X^n))\leq \Pr[X^n\in\mathcal{U}_n]H(X^n) + o_n(H(X^n)).
\]
This finishes the proof.
\end{proof}

\end{appendix}

\end{document}